\documentclass[11pt,a4paper,reqno]{amsart}

\usepackage[lmargin=1.2in, rmargin= 1.2in, tmargin=1.5in]{geometry}

\title{Nerves of enriched categories via necklaces}

\author{Arne Mertens}
\address[Arne Mertens]{Universiteit Antwerpen, Departement Wiskunde, Middelheimcampus,
Middelheimlaan 1,
2020 Antwerp, Belgium}
\email{arne.mertens@uantwerpen.be}

\makeatletter
\@namedef{subjclassname@2020}{
  \textup{2020} Mathematics Subject Classification}
\makeatother

\subjclass[2020]{18N50, 18D20 (Primary), 18N60 (Secondary)}

\keywords{}

\usepackage{graphicx}
\usepackage[english]{babel}
\usepackage{amsmath}
\usepackage{amssymb}
\usepackage{amsthm}
\usepackage[all,cmtip]{xy}
\usepackage{cancel}
\usepackage[alpine]{ifsym}
\usepackage{enumerate}
\usepackage{stmaryrd}
\usepackage{tikz}
\usepackage{tikz-cd}
\usepackage{quiver}
\usetikzlibrary{matrix}
\usepackage[toc,page]{appendix}
\usepackage{hyperref}
\usepackage[utf8]{inputenc}
\usepackage[style=alphabetic, maxnames = 50, maxalphanames=50, urldate=long]{biblatex}

\addbibresource{bibfile.bib}

%Basic operators
\DeclareMathOperator{\sgn}{sgn}

%Basic categorical operators
\DeclareMathOperator{\Ob}{Ob}
\DeclareMathOperator{\Mor}{Mor}

\DeclareMathOperator{\id}{id}

\DeclareMathOperator{\Fun}{Fun}

\DeclareMathOperator{\Lan}{Lan}

\DeclareMathOperator*{\colim}{colim}

%Special categories
\DeclareMathOperator{\Set}{Set}

\DeclareMathOperator{\Mod}{Mod}

\DeclareMathOperator{\Ch}{Ch}

\DeclareMathOperator{\SSet}{SSet}
\DeclareMathOperator{\CSet}{CSet}

\DeclareMathOperator{\Colax}{Colax}
\DeclareMathOperator{\StrMon}{StrMon}

\DeclareMathOperator{\Quiv}{Quiv}
\DeclareMathOperator{\Cat}{Cat}

%Simplicial set operators

%Necklace diagrams
\DeclareMathOperator{\const}{const}
\DeclareMathOperator{\Dusk}{Dusk}
\DeclareMathOperator{\hc}{hc}
\DeclareMathOperator{\dg}{dg}
\DeclareMathOperator{\cub}{cub}

%Commands
\newcommand{\fint}{\mathbf{\Delta}_{f}} %category of finite intervals
\newcommand{\simp}{\mathbf{\Delta}} %simplicial walking category
 %augmented simplicial walking category
 %narrow simplicial walking category
\newcommand{\nec}{\mathcal{N}ec} %category of necklaces
 %category of flagged necklaces
 %category of flanked flagged necklaces
\newcommand{\ts}{S_{\otimes}} %category of ts-objects
\newcommand{\Fs}{S^{Frob}_{\otimes}} %category of Fs-objects

\DeclareFontFamily{U}{min}{}
\DeclareFontShape{U}{min}{m}{n}{<-> dmjhira}{}
\newcommand{\yo}{\text{\usefont{U}{min}{m}{n}\symbol{'110}}}%Yoneda embedding

%Presentation

\newtheorem{Thm}{Theorem}[section]
\newtheorem*{Thm*}{Theorem}
\newtheorem{Lem}[Thm]{Lemma}
\newtheorem{Prop}[Thm]{Proposition}
\newtheorem*{Prop*}{Proposition}
\newtheorem{Cor}[Thm]{Corollary}
\newtheorem*{Cor*}{Corollary}
\newtheorem{Q}{Question}

\theoremstyle{definition}
\newtheorem{Def}[Thm]{Definition}
\newtheorem{Ex}[Thm]{Example}
\newtheorem{Exs}[Thm]{Examples}
\newtheorem{Con}[Thm]{Construction}

\theoremstyle{remark}
\newtheorem{Rem}[Thm]{Remark}

\begin{document}

\begin{abstract}
We introduce necklicial nerve functors from enriched categories to simplicial sets, which include the homotopy coherent \cite{cordier1982sur}, differential graded \cite{lurie2016higher} and cubical nerves \cite{legrignou2020cubical}. It is shown that every necklicial nerve can be lifted to the templicial objects of \cite{lowen2024enriched}. Building on \cite{dugger2011rigidification}, we give sufficient conditions under which the left-adjoint of a necklicial nerve can be described more explicitly. As an application, we obtain novel and simple expressions for the left-adjoints of the dg-nerve and cubical nerve.
\end{abstract}

\maketitle

\tableofcontents

\section{Introduction}\label{section: Introduction}

\subsection{Motivation and main results}

Nerve functors have proven valuable tools in comparing different models for higher (enriched) categories and relating their homotopical properties. Frequently these nerve functors take the form of a right-adjoint functor from categories enriched over a suitable monoidal category $\mathcal{W}$ to simplicial sets. Examples of interest include the classical nerve of categories, the Duskin nerve of $2$-categories \cite{duskin2001simplicial}, the homotopy coherent nerve by Cordier \cite{cordier1982sur}, the differential graded nerve by Lurie \cite{lurie2016higher} and the cubical nerve by Le Grignou \cite{legrignou2020cubical}. Nerves from $\mathcal{W}\Cat$ can often be endowed with more structure, e.g. landing in simplicial objects instead. This is done in \cite{lack2008nerves} and \cite{moser2024homotopy} for example. In \cite{lowen2024enriched}, we introduced and studied \emph{templicial objects} $\ts\mathcal{V}$ as an enriched variant of simplicial sets over a suitable monoidal category $(\mathcal{V},\otimes,I)$ which recover simplicial objects when $\mathcal{V}$ is cartesian monoidal \cite{mertens2024discrete}. Our primary goal in this paper is therefore to study the categorical properties of (right-adjoint) functors of the form
$$
\mathcal{W}\Cat\rightarrow \ts\mathcal{V}.
$$
leaving their homotopical properties to future research. Inspired by the work of Dugger and Spivak \cite{dugger2011rigidification}, the combinatorics of \emph{necklaces} will play a critical role. Restricting to a subclass of what we call \emph{necklicial nerves}, we provide a general procedure for lifting them to templicial objects, and give conditions under which their left-adjoint can be described more explicitly.

Before outlining our main results, let us recall the classical procedure for producing nerve functors landing in simplicial sets $\SSet$. If $\mathcal{W}$ is cocomplete, then so is $\mathcal{W}\Cat$ \cite{wolff1974Vcat}\cite{kelly2001locally}. Hence, any diagram $\mathbb{D}: \simp\rightarrow \mathcal{W}\Cat$ on the simplex category $\simp$ gives rise to an adjunction
\begin{equation}\label{equation: classical adjunction}
L^{\mathbb{D}}: \SSet\leftrightarrows \mathcal{W}\Cat: N^{\mathbb{D}}
\end{equation}
The right-adjoint, i.e. the \emph{nerve}, $N^{\mathbb{D}}$ is simply defined by, for all $\mathcal{C}\in \mathcal{W}\Cat$ and $n\geq 0$:
$$
N^{\mathbb{D}}(\mathcal{C})_{n} = \mathcal{W}\Cat(\mathbb{D}(n),\mathcal{C})
$$
whereas the left-adjoint $L^{\mathbb{D}}$ is constructed by left Kan extension of $\mathbb{D}$ along the Yoneda embedding $\yo: \simp\hookrightarrow \SSet$:
$$
L^{\mathbb{D}}(X) = \Lan_{\yo}\mathbb{D} = \colim_{\substack{n\geq 0\\ \sigma\in X_{n}}}\mathbb{D}(n)
$$
This construction is invertible in the sense that it defines an equivalence of categories:
\begin{equation}\label{equation: classical equivalence}
\mathrm{Nerve}(\mathcal{W}\Cat)\simeq \Fun(\simp,\mathcal{W}\Cat)^{op}
\end{equation}
where the left hand side denotes the category of right-adjoint functors $\mathcal{W}\Cat\rightarrow \SSet$ and natural transformations between them.

This procedure raises two relevant questions. Motivated by (non-commutative) algebraic geometry, where dg-categories are considered as models for spaces, we can wonder how much of the linear structure of a dg-category $\mathcal{C}$ is retained by its dg-nerve $N^{dg}(\mathcal{C})$. In \cite{lowen2023frobenius}, we answered this by lifting $N^{dg}(\mathcal{C})$ to a templicial module. In general, if the monoidal category $\mathcal{W}$ is itself tensored and enriched over a symmetric monoidal closed category $(\mathcal{V},\otimes,I)$, we ask the following.

\begin{Q}\label{question: enriching nerves}
Can the nerve $N^{\mathbb{D}}$ be lifted to an enriched nerve $N^{D}_{\mathcal{V}}: \mathcal{W}\Cat\rightarrow \ts\mathcal{V}$ along the canonical forgetful functor $\tilde{U}: \ts\mathcal{V}\rightarrow \SSet$?
\end{Q}

\noindent Secondly, colimits of (enriched) categories are notoriously hard to compute.

\begin{Q}\label{question: explicitizing left-adjoints}
Given a simplicial set $K$, how can the $\mathcal{W}$-category $L^{\mathbb{D}}(K)$ be described more explicitly in terms of $\mathbb{D}$?
\end{Q}

Several results in the literature provide answers to these questions for specific choices of $\mathcal{W}$ and $\mathcal{V}$. In view of Question \ref{question: enriching nerves}, let us first assume $\mathcal{W}$ and $\mathcal{V}$ to be cartesian monoidal (that is, their monoidal products are given by the cartesian product). Then the nerve $N^{\mathbb{D}}$ can often be lifted to a functor $\mathcal{W}\Cat\rightarrow S\mathcal{V}$ where $S\mathcal{V} = \mathcal{V}^{\simp^{op}}$ denotes the category of simplicial objects in $\mathcal{V}$. Examples from the literature include the the \emph{$2$-nerve} of Lack and Paoli \cite{lack2008nerves} ($\mathcal{W} = \mathcal{V} = \Cat$) and the \emph{homotopy coherent nerve for $(\infty,n)$-categories} by Moser, Rasekh and Rovelli \cite{moser2024homotopy} ($\mathcal{W} = \mathcal{V} = \SSet^{\Theta^{op}}$). We will return to the latter of in Section \ref{section: Examples}. If $\mathcal{V}$ is not cartesian however, it is no longer possible to define reasonable nerve functors landing in simplicial objects, but we must pass to templicial objects $\ts\mathcal{V}$ instead. We will recall templicial objects in \S\ref{subsection: Templicial objects}. Examples of such nerves $\mathcal{W}\Cat\rightarrow \ts\mathcal{V}$ include the homotopy coherent nerve \cite{lowen2024enriched} ($\mathcal{W} = S\mathcal{V}$, $\mathcal{V}$ non-cartesian), and the dg-nerve \cite{lowen2023frobenius} ($\mathcal{W} = \Ch(k)$, $\mathcal{V} = \Mod(k)$ for $k$ a commutative ring).

Concerning Question \ref{question: explicitizing left-adjoints}, Dugger and Spivak give an explicit description of the left-adjoint $\mathfrak{C}: \SSet\rightarrow \Cat_{\Delta}$ of the homotopy coherent nerve $N^{hc}$ (i.e. in the case $\mathcal{W} = \SSet$ and $\mathcal{V} = \Set$) in \cite{dugger2011rigidification}. Essential to their approach is the use of \emph{necklaces}, which were first introduced by Baues \cite{baues1980geometry} as ``cellular strings''.

A general approach to nerves $\mathcal{W}\Cat\rightarrow \SSet$ (i.e. the case $\mathcal{V} = \Set$) was put forward by Le Grignou in \cite{legrignou2020cubical} using the category of cubes with connections $\square$. Any \emph{strong monoidal} diagram $H: \square\rightarrow \mathcal{W}$ induces a monoidal adjunction $\CSet\leftrightarrows \mathcal{W}$ by left Kan extension, where $\CSet = \Set^{\square^{op}}$ denotes the category of cubical sets. Applying this adjunction to hom-objects, one obtains a nerve functor as the following composite
\begin{equation}\label{diagram: cubical nerve procedure}
N^{H}: \mathcal{W}\Cat\rightarrow \Cat_{\square}\xrightarrow{N^{\cub}} \SSet
\end{equation}
where $\Cat_{\square} = \CSet\Cat$ and $N^{\cub}$ denotes the cubical nerve from loc. cit.

In the present paper, we address both Questions \ref{question: enriching nerves} and \ref{question: explicitizing left-adjoints} simultaneously through a general procedure analogous to \eqref{diagram: cubical nerve procedure}, where we make use of necklaces instead of cubes, and we allow arbitrary $\mathcal{V}$. Full details are given in \S\ref{subsection: A general procedure}. Let us denote the category of necklaces by $\nec$. Starting now from a \emph{colax monoidal} diagram
\begin{equation}\label{diagram: conecklicial object}
D: \nec\rightarrow \mathcal{W}
\end{equation}
which again by left Kan extension produces an adjunction $\mathcal{V}^{\nec^{op}}\leftrightarrows \mathcal{W}$, the right-adjoint of which will always be lax-monoidal (with respect to the Day convolution on $\mathcal{V}^{\nec^{op}}$). Applying this right-adjoint to hom-objects, we obtain a nerve functor as the following composite (also see Construction \ref{construction: nerve generated by necklicial diagram}):
$$
N^{D}_{\mathcal{V}}: \mathcal{W}\Cat\rightarrow \mathcal{V}^{\nec^{op}}\Cat\xrightarrow{(-)^{temp}} \ts\mathcal{V}
$$
where the functor $(-)^{temp}$ was constructed in \cite{lowen2024enriched}. Moreover, this functor turns out to have a left-adjoint $L^{D}$ when $D$ is strong monoidal (Proposition \ref{proposition: nerve gen. by strong mon. diagram has left-adj.}). This procedure recovers that of \cite{legrignou2020cubical} as follows. There is the strong monoidal functor
$$
\dim: \nec\rightarrow \square
$$
constructed by Rivera and Zeinalian in \cite{rivera2018cubical}, which we will come back to in detail in \S\ref{subsection: Necklaces versus cubes}. Then we find that for any $H$ as above, precisely $N^{H\dim}_{\Set}\simeq N^{H}$ (Corollary \ref{proposition: cubical nerve is necklicial}). The construction via necklaces is thus more general than the one via cubes. Moreover, necklaces allow for explicit descriptions of the left-adjoints, and thus provide an answer to Question \ref{question: explicitizing left-adjoints}, which cubes don't seem to do.

Our first main result addresses Question \ref{question: enriching nerves} above.

\begin{Thm*}[Proposition \ref{proposition: embedding of necklace diagrams into necklicial functors} and Theorem \ref{theorem: underlying D-nerve is cat. nerve assoc. to D}]
There is a fully faithful functor
$$
\Phi: \StrMon(\nec,\mathcal{W})\hookrightarrow \Fun(\simp,\mathcal{W}\Cat)
$$
such that for any strong monoidal functor $D: \nec\rightarrow \mathcal{W}$ and $\mathbb{D} = \Phi(D)$, we have a natural isomorphism of functors $\mathcal{W}\Cat\rightarrow \SSet$:
$$
\tilde{U}\circ N^{D}_{\mathcal{V}}\simeq N^{\mathbb{D}}
$$
\end{Thm*}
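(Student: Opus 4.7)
The plan is to define $\Phi$ explicitly from the strong monoidal structure of $D$, to use naturality to force any natural transformation between the resulting simplicial diagrams to descend from a monoidal one on $\nec$, and then to read off the nerve identity by unpacking the hom-wise Day-convolution adjunction.

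For the construction of $\Phi$, I set $\mathbb{D}(n) := \Phi(D)(n)$ to be the $\mathcal{W}$-category with object set $\{0,\dots,n\}$ and hom-objects $\mathbb{D}(n)(i,j) = D(\Delta^{[i,j]})$ for $i\le j$, where $\Delta^{[i,j]}$ is the simplex on the interval $[i,j]\subseteq [n]$ viewed as a single-bead necklace. Composition is given by
$$
\mathbb{D}(n)(i,j) \otimes \mathbb{D}(n)(j,k) \;\cong\; D(\Delta^{[i,j]} \vee \Delta^{[j,k]}) \longrightarrow D(\Delta^{[i,k]}),
$$
using the strong monoidal structure of $D$ together with $D$ applied to the bead-collapse morphism in $\nec$; identities come from the unit iso $D(\Delta^0)\cong I$. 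Associativity and unitality are immediate from coherence of the monoidal structure and the strict associativity of the collapse maps in $\nec$. Functoriality of $\mathbb{D}$ in $\simp$ is induced by applying $D$ to the restrictions of simplicial maps to subintervals, with degenerate intervals handled by the unit iso. On morphisms, a monoidal natural transformation $\beta:D\Rightarrow D'$ yields $\Phi(\beta)$ whose component at $[n]$ is the identity-on-objects $\mathcal{W}$-functor with hom-components $\beta_{\Delta^{[i,j]}}$; monoidality of $\beta$ guarantees compatibility with the bead-collapse composition.

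For fully faithfulness, suppose $\alpha:\mathbb{D}\Rightarrow\mathbb{D}'$ is given in $\Fun(\simp,\mathcal{W}\Cat)$. Naturality with respect to each vertex map $[0]\to [n]$ forces $\alpha_n$ to be the identity on objects, and naturality with respect to the interval inclusion $[j-i]\hookrightarrow [n]$ onto $[i,j]$ reduces the hom-component of $\alpha_n$ at $D(\Delta^{[i,j]})$ to a single map $\beta_{j-i}: D(\Delta^{j-i})\to D'(\Delta^{j-i})$. Since every object and morphism of $\nec$ is built, via the wedge, from simplices and endpoint-preserving simplicial maps between beads, strong monoidality of $D$ and $D'$ extends the family $\{\beta_n\}$ uniquely to a monoidal natural transformation $\beta:D\Rightarrow D'$ with $\Phi(\beta)=\alpha$, supplying the inverse bijection on hom-sets.

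For the nerve identity, fix $\mathcal{C}\in\mathcal{W}\Cat$. The right-hand side $N^{\mathbb{D}}(\mathcal{C})_n = \mathcal{W}\Cat(\mathbb{D}(n),\mathcal{C})$ consists, by construction of $\mathbb{D}(n)$, of a tuple of objects $c_0,\dots,c_n\in\mathcal{C}$ together with morphisms $D(\Delta^{[i,j]})\to \mathcal{C}(c_i,c_j)$ compatible with the bead-collapse composition. For the left-hand side, the right adjoint $R:\mathcal{W}\to \mathcal{V}^{\nec^{op}}$ of the Day-convolution extension of $D$ is given by $R(W)(T) = \underline{\mathcal{W}}(D(T),W)$; applying $R$ hom-wise produces a $\mathcal{V}^{\nec^{op}}$-enriched category $R\mathcal{C}$. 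Unpacking the explicit description of $\tilde U\circ(-)^{temp}$ recalled from \cite{lowen2024enriched} in \S\ref{subsection: Templicial objects}, an $n$-simplex of $\tilde U N^D_{\mathcal{V}}(\mathcal{C})$ is precisely a choice of $(c_0,\dots,c_n)$ together with a compatible family of elements of $R\mathcal{C}(c_i,c_j)(\Delta^{[i,j]})$; by the adjunction these correspond to morphisms $D(\Delta^{[i,j]})\to \mathcal{C}(c_i,c_j)$, with the wedge composition in $R\mathcal{C}$ transported to the bead-collapse composition in $\mathcal{C}$. I expect the main technical obstacle to lie in this final identification of compositions, requiring careful bookkeeping of how the templicial structure encodes composition via the wedge.
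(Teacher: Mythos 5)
Your construction of $\Phi$ and the first half of the fully-faithfulness argument follow the paper: you use the single object with hom-object $I$ at level $0$ to force identity on objects, and the inert maps of $\simp$ to reduce the data to a single family $\{\beta_n\}$. Where you wave hands is the assertion that ``every object and morphism of $\nec$ is built, via the wedge, from simplices and endpoint-preserving simplicial maps between beads''—this misses the inert necklace maps $\nu_{k,l}: \Delta^k\vee\Delta^l\hookrightarrow\Delta^{k+l}$, which merge beads and are not wedges of maps between single beads. The compatibility of the extended $\alpha:D\Rightarrow D'$ with these inert maps is exactly where the paper has to invoke the $\mathcal{W}$-enriched functoriality of each $\beta^n$ (i.e.\ compatibility with the composition law of $\Phi(D)^n$, not merely naturality in $\simp$), plus associativity of the strong monoidal structure maps $\mu,\mu'$. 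That verification is the real content and you should spell it out rather than appeal to ``strong monoidality extends the family uniquely.''

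For the nerve identity $\tilde{U}\circ N^D_{\mathcal{V}}\simeq N^{\Phi(D)}$ you take a genuinely different route from the paper. The paper passes to left adjoints: it shows $L^D_{\mathcal{V}}\tilde F(\Delta^n)\simeq\Phi(D)^n$ naturally in $n$, which is a two-line computation of a weighted colimit over $\nec$ with weight the representable $\Delta^n_\bullet(i,j)\simeq\Delta^{j-i}_\bullet(0,j-i)$ (co-Yoneda), and then the nerve identity falls out of the hom-set chain $\SSet(\Delta^n,\tilde U N^D_{\mathcal{V}}\mathcal{C})\cong\ts\mathcal{V}(\tilde F\Delta^n,N^D_{\mathcal{V}}\mathcal{C})\cong\mathcal{W}\Cat(L^D_{\mathcal{V}}\tilde F\Delta^n,\mathcal{C})$, with no need to open up $(-)^{temp}$. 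You instead unpack $n$-simplices on both sides and match them. That also works, and it additionally reproves what the paper states as Corollary \ref{corollary: n-simplex of underlying D-nerve}, but it carries a hidden dependency you do not name: to describe an $n$-simplex of $\tilde U N^D_{\mathcal{V}}(\mathcal{C})$ via Example \ref{example: temp construction in Set-case} you need the compatibility $\tilde U\circ(-)^{temp}\simeq(-)^{temp}\circ\mathcal{U}$ (where $\mathcal{U}:\mathcal{V}\Cat_{\nec}\to\Cat_{\nec}$ forgets to sets), which is \cite[Proposition 3.14]{lowen2024enriched} and is precisely what the paper cites when it deduces Corollary \ref{corollary: n-simplex of underlying D-nerve} \emph{from} Theorem \ref{theorem: underlying D-nerve is cat. nerve assoc. to D}. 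Without that input, the ``explicit description of $\tilde U\circ(-)^{temp}$'' you appeal to is not actually available in \S\ref{subsection: Templicial objects}. So the route is sound but needs that citation made explicit; the paper's adjoint-side computation is both shorter and avoids this dependency.
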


We call the nerve functors $N^{\mathbb{D}}$ arising in this way \emph{necklicial} (Definition \ref{definition: necklicial nerve}). Certainly not every nerve $\mathcal{W}\Cat\rightarrow \SSet$ is necklicial, but as we'll see in Section \ref{section: Examples}, all the examples mentioned above are. Moreover, we can identify exactly which diagrams $\mathbb{D}: \simp\rightarrow \mathcal{W}$ produce necklicial nerves and can thus be lifted in this way (see Proposition \ref{proposition: necklicial nerve charac.}).

We then move on to Question \ref{question: explicitizing left-adjoints} by identifying conditions on a strong monoidal diagram $D$ which allows to describe the left-adjoint more explicitly. By $F: \Set\rightarrow \mathcal{V}: S\mapsto \coprod_{x\in S}I$ we denote the free functor, and by $\iota: \nec_{-}\hookrightarrow \nec$ the inclusion of active surjective necklace maps (see \S\ref{subsection: Necklaces and necklace categories} for more details). Then our second main result is the following.

\begin{Thm*}[Corollary \ref{corollary: explicitation of the left-adjoint for simp. sets}]
Let $D: \nec\rightarrow \mathcal{W}$ be a strong monoidal diagram and $\pi: \mathcal{W}\rightarrow \mathcal{V}$ a colimit and tensor preserving $\mathcal{V}$-functor. Suppose there exists $D': \nec_{-}\rightarrow \mathcal{V}$ such that $\pi D\simeq \Lan_{\iota}D'$. Then for any simplicial set $K$ with $a,b\in K_{0}$, we have a canonical isomorphism in $\mathcal{V}$:
$$
\pi(L^{D}(K)(a,b))\simeq \coprod_{T\in \nec}F(K^{nd}_{T}(a,b))\otimes D'(T)
$$
where $K^{nd}_{T}(a,b)$ is the set of totally non-degenerate maps $T\rightarrow K_{a,b}$ in $\SSet_{*,*}$.
\end{Thm*}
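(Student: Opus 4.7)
The strategy is to first translate $L^{D}(K)(a,b)$ into a colimit indexed by necklaces over the pointed simplicial set $K_{a,b}$, then apply $\pi$, substitute the hypothesis on $\pi D$, and finally use a factorization result to collapse the resulting double colimit into a coproduct over totally non-degenerate maps.

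My first step is to record a generic description of the hom-objects of $L^{D}(K)$. Since $L^{D}$ is obtained from the strong monoidal $D\colon \nec \to \mathcal{W}$ by a Day-convolution style left Kan extension, and since a necklace map into $K_{a,b}$ is exactly the data that picks out a tensor factor in such an extension, I expect a formula of the shape
$$
L^{D}(K)(a,b)\simeq \colim_{(T,g)\in \nec/K_{a,b}}D(T),
$$
where $\nec/K_{a,b}$ denotes the slice over $K_{a,b}$ in $\SSet_{*,*}$. This should either follow from a more general coend formula established earlier in the paper (applied to the free templicial object on $K$), or be obtained by unwinding the adjunction of Construction \ref{construction: nerve generated by necklicial diagram}; in particular the fact that $D$ is strong monoidal is what makes the two ends of a necklace compose.

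Next I apply $\pi$. Because $\pi$ is colimit- and tensor-preserving, and because the hypothesis $\pi D\simeq \Lan_{\iota}D'$ together with the pointwise formula for left Kan extensions gives $\pi D(T)\simeq \colim_{(S\twoheadrightarrow T)\in \iota/T}D'(S)$, interchanging the two colimits by Fubini yields
$$
\pi(L^{D}(K)(a,b))\simeq \colim D'(S),
$$
where the indexing category consists of triples $(S,T,S\twoheadrightarrow T\to K_{a,b})$ with the first arrow in $\nec_{-}$ and the composite in $\SSet_{*,*}$.

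The combinatorial core is then the following claim: every necklace map $g\colon S\to K_{a,b}$ in $\SSet_{*,*}$ factors uniquely, up to canonical isomorphism, as $S\twoheadrightarrow T\xrightarrow{f} K_{a,b}$ with $S\twoheadrightarrow T$ in $\nec_{-}$ and $f$ totally non-degenerate. I would prove this in close analogy with the rigidification argument of \cite{dugger2011rigidification}: iteratively collapse any bead of $S$ whose image in $K_{a,b}$ is a degenerate simplex, and check that the result is independent of the order of collapses. Granting this unique factorization, the indexing category above is final over the discrete full subcategory of totally non-degenerate maps, so the colimit reduces to
$$
\coprod_{T\in \nec}\coprod_{f\in K^{nd}_{T}(a,b)}D'(T)\simeq \coprod_{T\in \nec}F(K^{nd}_{T}(a,b))\otimes D'(T),
$$
where the last identification uses $F(S)\otimes V\simeq \coprod_{x\in S}V$ in $\mathcal{V}$. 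Naturality in $K$, $a$, $b$ is then automatic from the construction.

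The main obstacle is the factorization claim, and in particular the uniqueness of the active-surjective part: totally non-degenerate maps are defined in terms of avoiding degeneracies of $K_{a,b}$, not in terms of a lifting property, so confirming that they form the right half of an orthogonal factorization system on $\nec/K_{a,b}$ requires the careful combinatorial bookkeeping on beads and spines that is characteristic of necklace arguments. Everything else in the proof is formal colimit manipulation once this factorization is established.
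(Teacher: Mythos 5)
Your argument is correct and hinges on the same combinatorial fact the paper uses (iterated Eilenberg--Zilber factorization of necklace maps into $K_{a,b}$), but the organization differs from the paper's. The paper derives this corollary in two lines by applying Theorem~\ref{theorem: explicitation of left-adjoint for free temp. obj.} to $\tilde F(K)$, after observing $L^D_{\mathcal{V}}\circ\tilde F\simeq L^D$ and that $\tilde F(K)$ has non-degenerate simplices. The proof of that theorem performs the reduction to $\nec_-$ exactly as in your first two steps (this is Proposition~\ref{proposition: explicitation of left-adjoint to active surjective necklace maps}), but then records the Eilenberg--Zilber input as an isomorphism of the \emph{weight}, $X_T\simeq\coprod_{T\twoheadrightarrow U\ \text{in}\ \nec_-}X^{nd}_U$, so that the remaining coend over $\nec_-$ collapses formally by coYoneda. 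You instead express the same input as the claim that every $(S,g)\in\nec_-/K_{a,b}$ has a unique map to a totally non-degenerate $(T,f)$, and invoke finality of the resulting discrete subcategory (equivalently: each connected component of $\nec_-/K_{a,b}$ has a terminal object, given by a totally non-degenerate map); this is the route of Dugger--Spivak's \cite[Corollary 4.8]{dugger2011rigidification}, and it is concretely equivalent, but because it is phrased via slices over $K_{a,b}$ inside $\SSet_{*,*}$ it does not transfer verbatim to arbitrary $\mathcal{V}$, whereas the paper's packaging gives the templicial Theorem~\ref{theorem: explicitation of left-adjoint for free temp. obj.} at essentially no extra cost. One small imprecision to flag: the objects of $(\iota\downarrow T)$ in the pointwise formula for $\Lan_\iota D'$ are pairs $(S, S\to T)$ with $S\to T$ an \emph{arbitrary} necklace map, not only active surjections as your $S\twoheadrightarrow T$ suggests; the reduction of that slice to a coproduct over injections $U\hookrightarrow T$ is a separate finality argument (the paper's Lemma~\ref{lemma: left Kan ext. of surj. necklace maps}). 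Your subsequent Fubini-then-finality step absorbs this correctly, so it does not affect the conclusion, but stated literally that line is off.
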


This recovers \cite[Corollary 4.8]{dugger2011rigidification} which makes explicit the left-adjoint of the homotopy coherent nerve. What's more, it can now be applied to other nerves such as the dg-nerve and the cubical nerve.

\begin{Cor*}[Corollaries \ref{corollary: results for the dg-nerve} and \ref{corollary: results for the cubical nerve}]
Let $K$ be a simplicial set with $a,b\in K_{0}$.
\begin{enumerate}[1.]
\item Let $L^{dg}: \SSet\rightarrow k\Cat_{dg}$ denote the left-adjoint of the dg-nerve. Then for all $n\in \mathbb{Z}$, we have an isomorphism of $k$-modules
$$
L^{dg}(K)_{n}(a,b)\simeq \bigoplus_{\substack{T\in \nec\\ \dim(T) = n}}\!\!\!k.K^{nd}(a,b)
$$
\item Let $L^{cub}: \SSet\rightarrow \Cat_{\square}$ denote the left-adjoint of the cubical nerve. Then for all $n\geq 0$, we have a bijection
$$
L^{cub}(K)_{n}(a,b)\simeq \coprod_{\substack{T\in \nec\\ [1]^{n}\twoheadrightarrow [1]^{\dim(T)}\\ \text{surjective}}}\!\!\!\!\!\!K^{nd}_{T}(a,b)
$$
\end{enumerate}
\end{Cor*}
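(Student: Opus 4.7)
The plan is to obtain both identities as direct applications of Corollary \ref{corollary: explicitation of the left-adjoint for simp. sets}, by specifying in each case a strong monoidal diagram $D: \nec \to \mathcal{W}$, a colimit and tensor preserving $\mathcal{V}$-functor $\pi: \mathcal{W} \to \mathcal{V}$, and a functor $D': \nec_{-} \to \mathcal{V}$ satisfying $\pi D \simeq \Lan_{\iota} D'$.

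For part (1), take $\mathcal{W} = \Ch(k)$ and $\mathcal{V} = \Mod(k)$, and let $D^{dg}: \nec \to \Ch(k)$ be the strong monoidal diagram whose associated necklicial nerve is $N^{dg}$, as identified earlier in the paper via $\Phi$. I would take $\pi = (-)_{n}: \Ch(k) \to \Mod(k)$, which is colimit preserving and tensor compatible in the required sense, and define $D': \nec_{-} \to \Mod(k)$ by $D'(T) = k$ if $\dim T = n$ and $D'(T) = 0$ otherwise, with the evident action on active surjective necklace maps. The hypothesis $\pi D^{dg} \simeq \Lan_{\iota} D'$ then amounts to the statement that the degree-$n$ part of the chain complex $D^{dg}(T)$ is free on the set of active surjective quotients $T \twoheadrightarrow T'$ with $\dim T' = n$. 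Substituting into the main theorem and reindexing the resulting double coproduct over pairs $(T, T \twoheadrightarrow T')$ into a single sum indexed by $T'$ directly produces the claimed direct sum decomposition.

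For part (2), take $\mathcal{W} = \CSet$ and $\mathcal{V} = \Set$, and let $D^{cub}: \nec \to \CSet$ be the strong monoidal composite $\yo \circ \dim$, whose associated nerve is $N^{cub}$ by Corollary \ref{proposition: cubical nerve is necklicial}. Choose $\pi = (-)_{n}: \CSet \to \Set$ and define $D': \nec_{-} \to \Set$ by letting $D'(T)$ be the set of surjections $[1]^{n} \twoheadrightarrow [1]^{\dim T}$ in $\square$. Then $\pi D^{cub}(T) = \square([1]^{n},[1]^{\dim T})$, and the required factorization $\pi D^{cub} \simeq \Lan_{\iota} D'$ follows from the unique decomposition of a cube morphism into a surjection followed by an injection, of the same Eilenberg--Zilber type that underlies the analogous decomposition for simplices. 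Substituting into the main theorem yields the stated bijection.

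The principal obstacle in both parts is verifying the factorization $\pi D \simeq \Lan_{\iota} D'$, which rests on a concrete combinatorial understanding of either the non-degenerate simplices of a necklace (for the chain-complex case) or of surjective maps between cubes (for the cubical case). Both descriptions are essentially known, but they carry the actual content of the argument; once they are in hand, the corollaries are formal consequences of the main theorem.
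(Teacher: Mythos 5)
Your overall plan is the same as the paper's: apply Corollary~\ref{corollary: explicitation of the left-adjoint for simp. sets} with $\pi_n$ the evaluation-at-degree-$n$ functor and with the same $D'\colon\nec_-\to\mathcal{V}$ in each case. The choices of $D'$ you make are exactly the paper's. However, there is a substantive error in part~(1) in how you describe the content of the factorization $\pi_n\dg\simeq\Lan_\iota D'$: you claim it says $\dg(T)_n$ is free on the set of \emph{active surjective quotients} $T\twoheadrightarrow T'$ with $\dim(T')=n$. That is false. By Lemma~\ref{lemma: left Kan ext. of surj. necklace maps}, $\Lan_\iota D'(T)$ is a coproduct over \emph{injective} necklace maps $U\hookrightarrow T$ in $\nec_+$, and $\dg(T)_n$ is by definition free on the injective maps $U\hookrightarrow T$ with $\dim(U)=n$. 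These two sets agree; your proposed set of surjective quotients does not. Concretely, for $T=\Delta^2$ and $n=0$: $\dg(\Delta^2)_0\simeq k^2$, with basis the two injective maps $\delta_1\colon\Delta^1\hookrightarrow\Delta^2$ and $\nu_{1,1}\colon\Delta^1\vee\Delta^1\hookrightarrow\Delta^2$; but there are three active surjective quotients $\Delta^2\twoheadrightarrow T'$ with $\dim(T')=0$ (namely $\sigma_0,\sigma_1\colon\Delta^2\to\Delta^1$ and the collapse $\Delta^2\to\Delta^0$). If you carried out the verification with your stated characterization the ranks would not match. The correct verification is exactly a direct comparison of bases via Lemma~\ref{lemma: left Kan ext. of surj. necklace maps}, with no reindexing of a double coproduct needed: that reindexing is already internal to the proof of Theorem~\ref{theorem: explicitation of left-adjoint for free temp. obj.}.

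In part~(2) your choices again match the paper's, but you elide a non-trivial step. The surjection--injection factorization in $\square$ gives $\square([1]^n,[1]^{\dim T})\simeq\coprod_m\square_-([1]^n,[1]^m)\times\square_+([1]^m,[1]^{\dim T})$, a coproduct indexed by \emph{cube}-injections. To put this into the form $\coprod_{U\hookrightarrow T\text{ in }\nec_+}D'(U)$ required by Lemma~\ref{lemma: left Kan ext. of surj. necklace maps} you also need that $\dim\colon\nec_+\to\square_+$ is a discrete fibration, which is the content of Lemma~\ref{lemma: dim. discrete fibration on injective maps}. This identification of cube-injections into $[1]^{\dim T}$ with necklace-injections into $T$ carries real content (there are, a priori, more of the former than the latter), and without it the argument is incomplete.
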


Finally, we relate our necklicial nerves to \emph{quasi-categories in $\mathcal{V}$} and \emph{Frobenius structures} introduced in \cite{lowen2024enriched} and \cite{lowen2023frobenius} respectively. Quasi-categories in $\mathcal{V}$ are templicial objects in $\mathcal{V}$ satisfying an analogue of the weak Kan condition and they precisely recover Joyal's classical quasi-categories \cite{joyal2002quasi} when $\mathcal{V} = \Set$. Frobenius structures are associative multiplications on templicial objects which a lot of nerve functors come naturally equipped with. In particular it was shown in \cite[Proposition 3.16]{lowen2023frobenius} that the lift $N^{dg}_{k}: k\Cat_{dg}\rightarrow \ts\Mod(k)$ of the dg-nerve induces an equivalence between non-negatively graded dg-categories and templicial modules with a Frobenius structure. By $\overline{\nec}$ we denote the extended necklace category, which is detailed in \S\ref{subsection: Frobenius structures}. Our final main result is the following.

\begin{Thm*}[Theorem \ref{theorem: D-nerve is quasi-cat.} and Corollary \ref{corollary: D-nerve has Frob. structure}]
Let $\mathcal{C}\in \mathcal{W}\Cat$, and $D$ as in \eqref{diagram: conecklicial object}.
\begin{enumerate}[1.]
\item Assume that for all $A,B\in \Ob(\mathcal{C})$ and $0 < j < n$ the following lifting problem in $\mathcal{W}$ has a solution:
\[\begin{tikzcd}
	{\underset{\substack{T\rightarrow (\Lambda^{n}_{j})_{0,n} \text{in }\SSet_{*,*}\\ T\in \nec}}{\colim}D(T)} & {\mathcal{C}(A,B)} \\
	{D\left(\Delta^{n}\right)}
	\arrow[from=1-1, to=2-1]
	\arrow[from=1-1, to=1-2]
	\arrow[dashed, from=2-1, to=1-2]
\end{tikzcd}\]
Then $N^{D}_{\mathcal{V}}(\mathcal{C})$ is a quasi-category in $\mathcal{V}$. In particular, the simplicial set $N^{\mathbb{D}}(\mathcal{C})$ is an ordinary quasi-category for $\mathbb{D} = \Phi(D)$.
\item Suppose that $D$ extends to a colax monoidal diagram $\overline{\nec}\rightarrow \mathcal{W}$. Then $N^{D}_{\mathcal{V}}(\mathcal{C})$ has a Frobenius structure.
\end{enumerate} 
\end{Thm*}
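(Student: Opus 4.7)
The plan is to reduce both statements to properties of the adjunction underlying $N^{D}_{\mathcal{V}}$, so that the horn filling for (1) and the Frobenius multiplication for (2) follow directly from the stated hypotheses on $D$.

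For part (1), I will first unpack the definition of a quasi-category in $\mathcal{V}$ from \cite{lowen2024enriched}, which amounts to a templicial inner horn lifting property. By construction $N^{D}_{\mathcal{V}}(\mathcal{C})$ is obtained by applying the right-adjoint $R: \mathcal{W}\rightarrow \mathcal{V}^{\nec^{op}}$ of $\Lan_{\yo}D$ hom-wise and then templicializing. The defining adjunction yields natural $\mathcal{V}$-isomorphisms identifying, for each necklace $T$ and $A,B\in \Ob(\mathcal{C})$, maps $T\rightarrow R\,\mathcal{C}(A,B)$ with maps $D(T)\rightarrow \mathcal{C}(A,B)$ in $\mathcal{W}$. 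Combined with the necklicial presentation of the horn $(\Lambda^{n}_{j})_{0,n}$ as a colimit of necklaces mapping into it (in the spirit of \cite{dugger2011rigidification}), an inner horn in $N^{D}_{\mathcal{V}}(\mathcal{C})$ from $A$ to $B$ becomes equivalent to a map $\colim_{T\rightarrow (\Lambda^{n}_{j})_{0,n}}D(T)\rightarrow \mathcal{C}(A,B)$, and a filler is precisely an extension along $D(\Delta^{n})$. The hypothesis thus produces the required fillers. The statement that $N^{\mathbb{D}}(\mathcal{C})$ is an ordinary quasi-category then follows by combining the first main theorem (natural isomorphism $\tilde{U}\circ N^{D}_{\mathcal{V}}\simeq N^{\mathbb{D}}$) with the fact that $\tilde{U}$ takes quasi-categories in $\mathcal{V}$ to ordinary quasi-categories.

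For part (2), the strategy is to lift the entire construction of $N^{D}_{\mathcal{V}}$ along the inclusion $\nec\hookrightarrow \overline{\nec}$. The additional morphisms of $\overline{\nec}$ from \S\ref{subsection: Frobenius structures} encode precisely the multiplication maps comprising a Frobenius structure on a templicial object. Running Construction \ref{construction: nerve generated by necklicial diagram} with $\overline{\nec}$ in place of $\nec$ and with the given colax monoidal extension $\overline{D}: \overline{\nec}\rightarrow \mathcal{W}$ yields a refinement of $N^{D}_{\mathcal{V}}(\mathcal{C})$ valued in templicial objects with Frobenius structure, and restriction along the inclusion recovers the original nerve. Hence $N^{D}_{\mathcal{V}}(\mathcal{C})$ inherits a Frobenius structure.

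The main obstacle I anticipate lies in part (1): rigorously matching the templicial horn filling condition with the stated lifting problem in $\mathcal{W}$. This requires a careful analysis of how necklaces over $(\Lambda^{n}_{j})_{0,n}$ interact with the templicialization functor $(-)^{temp}$, and in particular why the colimit in $\mathcal{W}$ faithfully represents the relevant horn piece of $N^{D}_{\mathcal{V}}(\mathcal{C})$ at $(A,B)$. For part (2) the nontrivial point is checking that the extra data supplied by $\overline{D}$ induces the Frobenius multiplication compatibly with the templicial structure, which should reduce to a bookkeeping verification given the definition of $\overline{\nec}$ in \S\ref{subsection: Frobenius structures}.
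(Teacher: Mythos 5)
Your approach matches the paper's in both parts, but the obstacle you flag in part~(1) is exactly where the paper leans on an external result that you did not identify; closing that gap is the substance of the argument, so I'll spell it out.

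For part~(1): the hypothesis is, via the adjunction $\mathfrak{l}^{D}_{\mathcal{V}}\dashv\mathfrak{n}^{D}_{\mathcal{V}}$ and Lemma~\ref{lemma: weighted colimit for simp. sets}, equivalent to saying that $\mathfrak{n}^{D}_{\mathcal{V}}(\mathcal{C})_{\bullet}(A,B)$ lifts inner horns in $\mathcal{V}^{\nec^{op}}$ --- that much your argument gets right. The subtlety you correctly sense is that this is \emph{not} immediately the quasi-category condition on $N^{D}_{\mathcal{V}}(\mathcal{C})$: that condition concerns $N^{D}_{\mathcal{V}}(\mathcal{C})^{nec}(A,B) = \left(\mathfrak{n}^{D}_{\mathcal{V}}(\mathcal{C})^{temp}\right)^{nec}(A,B)$, and since $(-)^{nec}\circ(-)^{temp}$ is not the identity (only the counit of the adjunction \eqref{diagram: nec-temp adjunction} relates them), the two are different objects of $\mathcal{V}^{\nec^{op}}$ in general. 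The paper bridges this by invoking \cite[Proposition 5.10]{lowen2024enriched}, which asserts precisely that if all hom-objects of a necklace category $\mathcal{C}\in\mathcal{V}\Cat_{\nec}$ lift inner horns, then $\mathcal{C}^{temp}$ is a quasi-category in $\mathcal{V}$. Without this (or a re-proof of it), your step ``the hypothesis thus produces the required fillers'' does not go through. Your final step, passing to $\SSet$ via $\tilde{U}$, is correct and matches the paper's use of \cite[Corollary 5.13]{lowen2024enriched} together with Theorem~\ref{theorem: underlying D-nerve is cat. nerve assoc. to D}.

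For part~(2): your plan --- run Construction~\ref{construction: nerve generated by necklicial diagram} through $\overline{\nec}$ with the extension $\overline{D}$ and restrict --- is the paper's argument. The precise tool is Theorem~\ref{theorem: Frob. temp. nec. adjunction}, which lifts $(-)^{nec}\dashv(-)^{temp}$ to an adjunction between $\Fs\mathcal{V}$ and $\mathcal{V}\Cat_{\overline{\nec}}$; one then observes that $\mathrm{res}_{i}\circ\mathfrak{n}^{\overline{D}}_{\mathcal{V}}=\mathfrak{n}^{D}_{\mathcal{V}}$, so $N^{D}_{\mathcal{V}}=(-)^{temp}\circ\mathfrak{n}^{D}_{\mathcal{V}}$ factors through $\Fs\mathcal{V}\rightarrow \ts\mathcal{V}$. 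The ``bookkeeping verification'' you defer is the content of that theorem (constructed in the paper by an inductive argument on the degree of the templicial object), but your outline of what needs to happen is accurate.
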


The advantage of the first part of this theorem is that the colimit above is now considered in $\mathcal{W}$ (instead of $\mathcal{W}\Cat$). This is often more computable, especially in cases like $\mathcal{W} = \SSet$ and $\mathcal{W} = \Ch(k)$ where colimits are calculated pointwise.

The remainder of the paper is then devoted to applying the theorems above to examples of nerves from the literature, whenever applicable. For each example we identify the diagram $D: \nec\rightarrow \mathcal{W}$ which generates the nerve. A summary of these diagrams is given below. Here, we've denoted $\const_{I}: \nec\rightarrow \mathcal{V}$ for the constant functor on $I$, $N: \Cat\rightarrow \SSet$ for the classical nerve functor, $F$ for the free functor (which is the identity if $\mathcal{V} = \Set$), $N^{\square}_{\bullet}: C\Mod(k)\rightarrow \Ch(k)$ for the cubical chains functor, and $\yo$ for the Yoneda embedding. See the relevant subsections of Section \ref{section: Examples} for more details.

\begin{figure}[h]
\begin{center}
\begin{tabular}{|c|c|c|c|}
\hline
Nerve $N^{D}_{\mathcal{V}}$ & $\mathcal{V}$ & $\mathcal{W}$ & $D$\\
\hline
\hline
ordinary nerve & $\Set$ & $\Set$ & $\const_{*}$\\
\hline
templicial nerve \cite{lowen2024enriched} & $\mathcal{V}$ & $\mathcal{V}$ & $\const_{I}$\\
\hline
Duskin nerve \cite{duskin2001simplicial} & $\Set$ & $\Cat$ & $\Dusk = \dim$\\
\hline
homotopy coherent nerve \cite{cordier1982sur} & $\Set$ & $\SSet$ & $\hc = FN\dim$\\
\hline
templicial hc-nerve \cite{lowen2024enriched} & $\mathcal{V}$ & $S\mathcal{V}$ & $\hc$\\
\hline
hc-nerve for $(\infty,n)$-categories \cite{moser2024homotopy} & $\SSet^{\Theta^{op}}$ & $\SSet^{\Theta^{op}}$ & $\hc$\\
\hline
differential graded nerve \cite{lurie2016higher} & $\Set$ & $\Ch(k)$ & $dg_{\bullet} = N^{\square}_{\bullet}F\yo_{\square}\dim$\\
\hline
templicial dg-nerve \cite{lowen2023frobenius} & $\Mod(k)$ & $\Ch(k)$ & $dg_{\bullet}$\\
\hline
cubical nerve \cite{legrignou2020cubical} & $\Set$ & $\CSet$ & $\cub = F\yo_{\square}\dim$\\
\hline 
Frobenius forgetful functor \cite{lowen2023frobenius} & $\mathcal{V}$ & $\mathcal{V}^{\overline{\nec}^{op}}$ & $F\yo_{\overline{\nec}}\vert_{\nec}$\\
\hline
\end{tabular}
\end{center}
\end{figure}

Note that the majority of them factor through $\dim$ and thus also fit into the paradigm of \cite{legrignou2020cubical}. Further note that they are interrelated, most notably when $\mathcal{V} = \Mod(k)$. It is shown throughout Section \ref{section: Examples} that the following diagram commutes everywhere up to isomorphism, except in that $N_{\bullet}\circ tr$ and $N^{\square}_{\bullet}$ are only quasi-isomorphic:

\[\begin{tikzcd}
	\nec && {S\Mod(k)} \\
	& {C\Mod(k)} && {\Mod(k)} \\
	&& {\Ch(k)}
	\arrow["\hc"{description}, curve={height=-6pt}, from=1-1, to=1-3]
	\arrow["\cub"{description}, from=1-1, to=2-2]
	\arrow["{\const_{k}}"{description, pos=0.3}, curve={height=-12pt}, from=1-1, to=2-4]
	\arrow["{\dg}"{description}, curve={height=24pt}, from=1-1, to=3-3]
	\arrow["{H_{0}}", from=1-3, to=2-4]
	\arrow[""{name=0, anchor=center, inner sep=0}, "{N_{\bullet}}"', shift right, curve={height=1pt}, from=1-3, to=3-3]
	\arrow["tr", from=2-2, to=1-3]
	\arrow["{N^{\square}_{\bullet}}"', from=2-2, to=3-3]
	\arrow[""{name=1, anchor=center, inner sep=0}, "\Gamma"', shift right=2, curve={height=1pt}, from=3-3, to=1-3]
	\arrow["{\pi_{0}}"', from=3-3, to=2-4]
	\arrow["\dashv"{anchor=center}, draw=none, from=0, to=1]
	\arrow["\sim"{description, pos=0.7}, draw=none, from=0, to=2-2]
\end{tikzcd}\]
where $tr$ takes the triangulation of a cubical object,  $N_{\bullet}\dashv \Gamma$ denotes the Dold-Kan correspondence, and $H_{0}$ and $\pi_{0}$ denote the functors taking $0$th homology functor and connected components respectively. These comparisons between the diagrams $D: \nec\rightarrow \mathcal{W}$ induce comparisons between the nerves they generate, as explained in \S\ref{subsection: Comparison maps}. In particular, we recover a result by Faonte \cite{faonte2015simplicial} and Lurie \cite{lurie2016higher} which shows that the so-called `small' and `big' dg-nerves are equivalent (see Proposition \ref{corollary: comparisons with dg-nerve}).

The list of examples above is certainly not exhaustive. Other nerves from the literature which are likely also generated by a diagram \eqref{diagram: conecklicial object} include the $2$-nerve of $2$-categories \cite{lack2008nerves} and the Street nerve of $\omega$-categories \cite{street1987algebra}. We will investigate these examples in future work.

\subsection{Overview of the paper}

Let us give an overview of the contents of the paper. In Section \ref{section: Preliminaries}, we recall the necessary preliminaries on templicial objects, necklaces and necklace categories from \cite{lowen2024enriched}, as well as the Frobenius structures from \cite{lowen2023frobenius}.

In Section \ref{section: Nerves of enriched categories}, we restrict to the case $\mathcal{V} = \Set$, that is, we consider nerve functors $\mathcal{W}\Cat\rightarrow \SSet$. In \S\ref{subsection: Necklicial nerve functors}, we define \emph{necklicial} nerve functors (Definition \ref{definition: necklicial nerve}) as those which are generated by a strong monoidal diagram $\nec\rightarrow \mathcal{W}$, and characterize them completely in Proposition \ref{proposition: necklicial nerve charac.}. Then in \S\ref{subsection: Necklaces versus cubes}, we compare necklicial nerves to ones generated by a stong monoidal diagram $\square\rightarrow \mathcal{W}$ on the cube category $\square$ with connections, from \cite{legrignou2020cubical}.

We continue in Section \ref{section: Enriched nerves of enriched categories} by generalizing the approach from Section \ref{section: Nerves of enriched categories} to monoidal categories $\mathcal{V}$ different from $\Set$, to obtain nerves of the form $\mathcal{W}\Cat\rightarrow \ts\mathcal{V}$ from a strong monoidal diagram $D: \nec\rightarrow \mathcal{W}$. Each subsection is devoted to proving one main result about such nerves under certain conditions on $D$. In \S\ref{subsection: A general procedure}, we show how they lift the $\Set$-based nerves along the forgetful functor $\tilde{U}: \ts\mathcal{V}\rightarrow \SSet$ (Theorem \ref{theorem: underlying D-nerve is cat. nerve assoc. to D}). In \S\ref{subsection: Explicitation of the left-adjoint} we show when the left-adjoint can be described more explicitly (Theorem \ref{theorem: explicitation of left-adjoint for free temp. obj.}). In \S\ref{subsection: Quasi-categories in V} we recall the quasi-categories in $\mathcal{V}$ from \cite{lowen2024enriched} and show when the nerve of a $\mathcal{W}$-category is a quasi-category in $\mathcal{V}$ (Theorem \ref{theorem: D-nerve is quasi-cat.}). In \S\ref{subsection: Frobenius structures}, we show when the nerve of a $\mathcal{W}$-category has a Frobenius structure (Corollary \ref{corollary: D-nerve has Frob. structure}). Finally, in \S\ref{subsection: Comparison maps}, we show when the natural comparison map between nerves induces a trivial Kan fibration on underlying simplicial sets (Corollary \ref{corollary: trivial fib. between nerves}.2).

The largest part of the paper is contained in Section \ref{section: Examples}, which treats several examples from the literature. For each example, we first show by which diagram $D: \nec\rightarrow \mathcal{W}$ it is generated, and then apply the theorems and corollaries from Section \ref{section: Enriched nerves of enriched categories} to them.

We end the paper with Appendix \ref{section: The generating diagram of the differential graded nerve}, to which we postponed the proof of the generating diagram of the differential graded nerve. 

\subsection{Notations and conventions}\label{subsection: Notations and conventions}

\begin{enumerate}[1.]
\item Throughout the text, we let $(\mathcal{V},\otimes,I)$ be a symmetric monoidal closed category which is cocomplete and finitely complete. Up to natural isomorphism, there is a unique colimit preserving functor $F: \Set\rightarrow \mathcal{V}$ such that $F(\{*\}) = I$. This functor is left-adjoint to the forgetful functor $U = \mathcal{V}(I,-): \mathcal{V}\rightarrow \Set$. Endowing $\Set$ with the cartesian monoidal structure, $F$ is strong monoidal and $U$ is lax monoidal. These notations will remain fixed as well.

\item Let $(\mathcal{W},\otimes_{\mathcal{W}},I_{\mathcal{W}})$ be a $\mathcal{V}$-enriched monoidal category in the sense of \cite{batanin2012centers}. That is, a pseudomonoid in the monoidal $2$-category $\mathcal{V}\Cat$. Assume moreover that
\begin{itemize}
\item $\mathcal{W}$ is tensored over $\mathcal{V}$. We denote the tensoring of $\mathcal{W}$ over $\mathcal{V}$ by $-\cdot -: \mathcal{V}\times \mathcal{W}\rightarrow \mathcal{W}$ and the $\mathcal{V}$-enrichment by $[-,-]: \mathcal{W}\times \mathcal{W}\rightarrow \mathcal{V}$.
\item the underlying category of $\mathcal{W}$ is cocomplete and that $-\otimes_{\mathcal{W}}-$ preserves colimits in each variable.
\item the canonical morphism in the underlying category of $\mathcal{W}$,
\begin{equation}\label{diagram: interchange}
(V_{1}\otimes V_{2})\cdot (W_{1}\otimes_{\mathcal{W}} W_{2})\rightarrow (V_{1}\cdot W_{1})\otimes_{\mathcal{W}} (V_{2}\cdot W_{2})
\end{equation}
is an isomorphism for all $V_{1}, V_{2}\in \mathcal{V}$ and $W_{1}, W_{2}\in \mathcal{W}$.
\end{itemize}

\item To relate enriched categories to templicial objects (see \S\ref{subsection: A general procedure}), it will be more convenient for us to consider the composition $m$ of a $\mathcal{W}$-category to be given by a collection of morphisms in $\mathcal{W}$, for all $A,B,C\in \Ob(\mathcal{C})$:
$$
m_{A,B,C}: \mathcal{C}(A,B)\otimes \mathcal{C}(B,C)\rightarrow \mathcal{C}(A,C)
$$
as opposed to the more conventional $\mathcal{C}(B,C)\otimes \mathcal{C}(A,B)\rightarrow \mathcal{C}(A,C)$. We denote the category of small $\mathcal{W}$-categories and $\mathcal{W}$-functors between them by
$$
\mathcal{W}\Cat
$$
\end{enumerate}

%\begin{Rem}
%The canonical morphism \eqref{diagram: interchange} can be obtained as follows. The tensoring provides an adjunction $-\cdot W\dashv [W,-]$ for all $W\in \mathcal{W}$. Thus the counit of this adjunction yields morphisms $V_{i}\rightarrow [W_{i},V_{i}\cdot W_{i}]$ for $i\in \{1,2\}$. Now consider the following composite in $\mathcal{V}$:
%$$
%V_{1}\otimes V_{2}\rightarrow [W_{1},V_{1}\cdot W_{1}]\otimes [W_{2},V_{2}\cdot W_{2}]\rightarrow [W_{1}\otimes_{\mathcal{W}} W_{2}, (V_{1}\cdot W_{1})\otimes_{\mathcal{W}} (V_{2}\cdot W_{2})]
%$$
%where the second morphism is part of the structure of $\otimes_{\mathcal{W}}$ as a $\mathcal{V}$-functor $\mathcal{W}\times\mathcal{W}\rightarrow \mathcal{W}$. Finally, by the adjunction $-\cdot (W_{1}\otimes_{\mathcal{W}} W_{2})\dashv [W_{1}\otimes_{\mathcal{W}} W_{2},-]$ the above composite corresponds to the canonical morphism \eqref{diagram: interchange}.
%\end{Rem}

\vspace{0,3cm}
\noindent \emph{Acknowledgement.}
This project has received funding from the European Research Council (ERC) under the European Union’s Horizon 2020 research and innovation programme (grant agreement No. 817762).

I would like to thank Wendy Lowen and Lander Hermans for valuable feedback during the writing of this paper. I am also grateful to Miloslav \v{S}t\v{e}p\'an for pointing out the reference \cite{lack2008nerves} and to Clemens Berger for \cite{baues1980geometry}. Further thanks are extended to Bernhard Keller, Tom Leinster and Michel Van den Bergh for interesting comments and questions on the project.

\section{Preliminaries}\label{section: Preliminaries}

For this first section, we recall some preliminaries from \cite{lowen2024enriched} and \cite{lowen2023frobenius}, most notably the definitions of templicial objects, necklace categories and Frobenius structures. For more details, see loc. cit.

\subsection{Templicial objects}\label{subsection: Templicial objects}

For a set $S$, we consider the category $\mathcal{V}\Quiv_{S} = \mathcal{V}^{S\times S}$ of \emph{$\mathcal{V}$-enriched quivers}. That is, its objects are collections $Q = (Q(a,b))_{a,b\in S}$ with $Q(a,b)\in \mathcal{V}$ and a morphism $f: Q\rightarrow P$ is a collection $(f_{a,b})_{a,b\in S}$ with $f_{a,b}: Q(a,b)\rightarrow P(a,b)$ in $\mathcal{V}$. Note that $\mathcal{V}\Quiv_{S}$ is cocomplete and finitely complete since $\mathcal{V}$ is, and that it carries a monoidal structure $(\otimes_{S},I_{S})$ given by
$$
(Q\otimes_{S} P)(a,b) = \coprod_{c\in S}Q(a,c)\otimes P(c,b)\qquad \text{and}\qquad I_{S}(a,b) =
\begin{cases}
I & \text{if }a = b\\
0 & \text{if }a\neq b
\end{cases}
$$
for all $Q,P\in \mathcal{V}\Quiv_{S}$ and $a,b\in S$. Note that a monoid in $\mathcal{V}\Quiv_{S}$ is precisely a $\mathcal{V}$-enriched category with object set $S$.

Given a map of sets $f: S\rightarrow T$, we have an adjunction $f_{!}: \mathcal{V}\Quiv_{S}\leftrightarrows \mathcal{V}\Quiv_{T}: f^{*}$ where $f^{*}(Q)(a,b) = Q(f(a),f(b))$ for all $Q\in \mathcal{V}\Quiv_{T}$ and $a,b\in S$. Moreover, $f_{!}$ and $f^{*}$ have canonical colax and lax monoidal structures respectively.

Further, we let $\fint$ denote the category of \emph{finite intervals}, which is the subcategory of the simplex category $\simp$ containing all objects $[n] = \{0 < 1 < \dots  < n\}$ for integers $n\geq 0$ and all order morphisms $f: [m]\rightarrow [n]$ such that $f(0) = 0$ and $f(m) = n$. It carries a monoidal structure $(+,[0])$ given by $[m] + [n] = [m + n]$ on objects.

\begin{Def}[Definition 2.3, \cite{lowen2024enriched}]\label{definition: temp. obj.}
A \emph{tensor-simplicial} or \emph{templicial object} in $\mathcal{V}$ is a pair $(X,S)$ with $S$ a set and 
$$
X: \fint^{op}\rightarrow \mathcal{V}\Quiv_{S}
$$
a strongly unital, colax monoidal functor. A \emph{templicial morphism} $(X,S)\rightarrow (Y,T)$ is a pair $(\alpha,f)$ with $f: S\rightarrow T$ a map of sets and $\alpha: f_{!}X\rightarrow Y$ a monoidal natural transformation between colax monoidal functors $\fint^{op}\rightarrow \mathcal{V}\Quiv_{T}$. The composition of templicial morphisms is defined in the obvious way, and we denote the category of templicial objects in $\mathcal{V}$ and templicial morphisms between them by
$$
\ts\mathcal{V}
$$
\end{Def}

Given a templcial object $(X,S)$, we denote $X_{n} = X([n])\in \mathcal{V}\Quiv_{S}$ for all $n\geq 0$. The category $\fint$ contains all inner coface maps $\delta_{j}: [n-1]\hookrightarrow [n]$ for $0 < j < n$ and all codegeneracy maps $\sigma_{i}: [n+1]\twoheadrightarrow [n]$ for $0\leq i\leq n$. We denote the induced \emph{inner face morphisms} and \emph{degeneracy morphisms} in $\mathcal{V}\Quiv_{S}$ by
$$
d_{j} = X(\delta_{j}): X_{n}\rightarrow X_{n-1}\quad \text{and}\quad s_{i} = X(\sigma_{i}): X_{n}\rightarrow X_{n+1}
$$
These satisfy the usual simplicial identities. Moreover, the colax monoidal structure equips $X$ with \emph{comultiplications} and a \emph{counit} in $\mathcal{V}\Quiv_{S}$, which we'll denote by
$$
\mu_{k,l}: X_{k+l}\rightarrow X_{k}\otimes_{S} X_{l}\quad \text{and}\quad \epsilon: X_{0}\xrightarrow{\simeq} I_{S}
$$
for all $k,l\geq 0$ and where $\epsilon$ is a quiver isomorphism. The comultiplications satisfy coassociativity and counitality conditions with respect to $\epsilon$, as well as compatibility conditions with respect to $d_{j}$ and $s_{i}$.

\begin{Prop}[Proposition 2.8, \cite{lowen2024enriched} and Corollary 4.9, \cite{mertens2024discrete}]\label{proposition: temp. obj. results}
The following statements are true.
\begin{enumerate}[1.]
\item The category $\ts\mathcal{V}$ is cocomplete.
\item There is an adjunction $\tilde{F}: \SSet\leftrightarrows \ts\mathcal{V}: \tilde{U}$ where $\tilde{F}$ is induced by applying the free functor $F: \Set\rightarrow \mathcal{V}$ levelwise. This adjunction is an equivalence when $\mathcal{V} = \Set$.
\item If $\mathcal{V}$ is cartesian monoidal and satisfies (DISJ) of \cite[Condition 10.7.1]{simpson2012homotopy}, then $\ts\mathcal{V}$ is equivalent to the category $\mathbf{PC}(\mathcal{V})$ of $\mathcal{V}$-enriched precategories.
\end{enumerate}
\end{Prop}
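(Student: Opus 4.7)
The plan is to prove (1)--(3) in turn using the projection $P: \ts\mathcal{V} \to \Set$, $(X,S) \mapsto S$, together with the equivalence $\SSet \simeq \ts\Set$ of \cite[Corollary 4.9]{mertens2024discrete}. For (1), $P$ is an opfibration whose cocartesian lifts over $f: S \to T$ are given levelwise by the colax monoidal pushforward $f_!: \mathcal{V}\Quiv_S \to \mathcal{V}\Quiv_T$. The fiber over $S$ is the category of strongly unital colax monoidal functors $\fint^{op} \to \mathcal{V}\Quiv_S$; it is cocomplete because $\mathcal{V}\Quiv_S$ is cocomplete and $\otimes_S$ preserves colimits separately in each variable, which forces colax monoidal colimits to be pointwise, while strong unitality is stable under such colimits. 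To form the colimit of $\{(X_i,S_i)\}_i$ in $\ts\mathcal{V}$, I take $T = \colim_i S_i$, transport each $X_i$ to $f_{i,!}X_i: \fint^{op} \to \mathcal{V}\Quiv_T$ via the structure maps $f_i: S_i \to T$, and compute the colimit in the fiber over $T$.

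For (2), the equivalence $\SSet \simeq \ts\Set$ of \cite[Corollary 4.9]{mertens2024discrete} presents a simplicial set $X$ as a strongly unital colax monoidal functor $\fint^{op} \to \Set\Quiv_{X_0}$. I would define $\tilde{F}$ by composing this identification with the levelwise application of the strong monoidal free functor $F: \Set \to \mathcal{V}$; strong monoidality of $F$ preserves the colax monoidal data and strong unitality. The right adjoint $\tilde{U}$ can be produced either by Freyd's adjoint functor theorem (using (1) plus the cocontinuity of $\tilde{F}$, which itself follows from the hom-wise nature of $F$ and the pointwise description of colimits from (1)) or explicitly by applying $U = \mathcal{V}(I,-)$ hom-wise and reinterpreting via $\ts\Set \simeq \SSet$. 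When $\mathcal{V} = \Set$, both $F$ and $U$ are the identity, so the adjunction collapses to the cited equivalence.

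For (3), under the cartesian and (DISJ) hypotheses the comultiplications $\mu_{k,l}: X_{k+l} \to X_k \otimes_S X_l$ become strict Segal maps in $\mathcal{V}\Quiv_S$, and (DISJ) ensures that a simplicial $\mathcal{V}$-object over a fixed set of $0$-simplices decomposes uniquely into hom-quivers by source and target. I would send a templicial object $(X,S)$ to the precategory with object set $S$ and simplicial $\mathcal{V}$-object $n \mapsto \coprod_{a,b \in S}X_n(a,b)$, and conversely recover a templicial object from a precategory by the (DISJ)-provided quiver decomposition. The main obstacle I expect is verifying that strong unitality together with the full colax comultiplication data corresponds exactly to the unit and strict Segal conditions for precategories; this matching is precisely what (DISJ) is designed to supply, and the verification should then proceed term by term.
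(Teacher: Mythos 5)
There is a genuine gap in your argument for item~1. You claim the fiber over a fixed set $S$ is cocomplete because colax monoidal colimits are computed pointwise and ``strong unitality is stable under such colimits.'' The second half of this claim is false: for strongly unital $X, Y: \fint^{op} \to \mathcal{V}\Quiv_S$ the pointwise coproduct has $(X \amalg Y)_0 \simeq I_S \amalg I_S$, whose counit to $I_S$ is the fold map and not an isomorphism, so pointwise colimits in the fiber do not stay strongly unital, and the fiber is not shown cocomplete by your reasoning. Relatedly, the levelwise pushforward $f_! X$ along a non-injective $f: S \to T$ is itself not strongly unital, since $f_!(I_S)(a,a) \simeq \coprod_{s \in f^{-1}(a)} I$; the alleged cocartesian lifts therefore do not land in $\ts\mathcal{V}$, and the opfibration recipe cannot be applied verbatim. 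What would actually rescue the computation is the observation that the pointwise colimit $\colim_i (f_i)_! X_i$ over the colimit base $T = \colim_i S_i$ turns out to be strongly unital after all --- ultimately because each set $\colim_i f_i^{-1}(a)$ is a singleton, since the $f_i$ present $T$ as a colimit --- but this compensating fact is nontrivial and is exactly what your argument elides.

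For item~2, the explicit recipe ``apply $U = \mathcal{V}(I,-)$ hom-wise'' does not produce a simplicial set from a templicial object: $U$ is lax monoidal rather than colax, so there is no canonical morphism $U(X_k \otimes_S X_l) \to U(X_k) \times_S U(X_l)$ and the comultiplications cannot be transported through $U$. The construction actually used in the paper (see the proof of Corollary~\ref{corollary: n-simplex of underlying D-nerve}) is $\tilde{U} \simeq (-)^{temp} \circ \mathcal{U} \circ (-)^{nec}$: one first passes to the necklace category, where lax monoidality of $U$ is exactly the structure needed to act on the hom-functors $X_\bullet(a,b) \in \mathcal{V}^{\nec^{op}}$, and then applies $(-)^{temp}$. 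Your adjoint-functor-theorem route for $\tilde{U}$ is likewise not yet supported, since cocontinuity of $\tilde{F}$ depends on the colimit description in $\ts\mathcal{V}$ that item~1 has not established. Note finally that the present paper cites this Proposition from \cite{lowen2024enriched} and \cite{mertens2024discrete} rather than proving it, so the definitive arguments should be checked against those references.
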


\begin{Def}[Definition 2.11, \cite{lowen2023frobenius}]\label{definition: Frobenius temp. obj.}
A \emph{Frobenius structure} on a templicial object $(X,S)$ is a collection of quiver morphisms
$$
Z = \left(Z^{p,q}: X_{p}\otimes_{S} X_{q}\rightarrow X_{p+q}\right)_{p,q\geq 0}
$$
such that $Z^{p,q}$ is natural in $[p],[q]\in \fint$, the maps $Z^{p,q}$ are associative with unit $\epsilon^{-1}$:
\begin{equation}\label{equation: Frobenius unitality}
Z^{0,n}(\epsilon^{-1}\otimes \id_{X_{n}}) = Z^{n,0}(\id_{X_{n}}\otimes \epsilon^{-1}) = \id_{X_{n}}
\end{equation}
and the following \emph{Frobenius identities} are satisfied for all $k,l,p,q\geq 0$ with $k + l = p + q$:
\begin{equation}\label{equation: Frobenius identities}
\mu_{k,l}Z^{p,q} =
\begin{cases}
(Z^{p,k-p}\otimes \id_{X_{l}})(\id_{X_{p}}\otimes \mu_{k-p,l}) & \text{if }p\leq k\\
(\id_{X_{k}}\otimes Z^{p-k,q})(\mu_{k,p-k}\otimes \id_{X_{q}}) & \text{if }p\geq k
\end{cases}
\end{equation}
Note that in particular, $\mu_{p,q}Z^{p,q} = \id_{X_{p}\otimes_{S} X_{q}}$. A templicial object equipped with a Frobenius structure is called a \emph{Frobenius templicial object}. A \emph{Frobenius templicial morphism} is a templicial morphism which is compatible with the Frobenius structures. We denote the category of Frobenius templicial objects and Frobenius templicial morphisms by
$$
\Fs\mathcal{V}
$$
\end{Def}

\subsection{Necklaces and necklace categories}\label{subsection: Necklaces and necklace categories}

Let $\SSet_{*,*} = \SSet_{\partial\Delta^{1}/}$ denote the category of bipointed simplicial sets. We denote a bipointed simplicial set $K$ with distinguished vertices $a$ and $b$ by $K_{a,b}$. We always equip the standard simplices $\Delta^{n}$ for $n\geq 0$ with distinguished points $0$ and $n$. Given bipointed simplicial sets $K_{a,b}$ and $L_{c,d}$, we denote their \emph{wedge product} by $K\vee L$. It is the simplicial set obtained by glueing the vertices $b$ and $c$, and we equip it with the distinguished vertices $a$ and $d$.

\begin{Def}[Definition 2.3, Chapter III, \cite{baues1980geometry} and \S 3, \cite{dugger2011rigidification}]\label{definition: necklace}
A \emph{necklace} $T$ is
$$
T = \Delta^{n_{1}}\vee ...\vee \Delta^{n_{k}}\in \SSet_{*,*}
$$
for some $k\geq 0$ and $n_{1},...,n_{k} > 0$ (if $k = 0$, then $T = \Delta^{0}$). We refer to the standard simplices $\Delta^{n_{1}},...,\Delta^{n_{k}}$ as the \emph{beads} of $T$. The distinguished vertices in every bead are called the \emph{joints} of $T$. We let $\nec$ denote the full subcategory of $\SSet_{*,*}$ spanned by all necklaces. Note that $(\nec,\vee,\Delta^{0})$ is a monoidal category.
\end{Def}

It was shown in \cite[Proposition 3.4]{lowen2024enriched} that the category $\nec$ is equivalent to the category of pairs $(T,p)$ with $p\geq 0$ and $\{0 < p\}\subseteq T\subseteq [p]$, where a morphism $(T,p)\rightarrow (U,q)$ is given by a morphism $f: [p]\rightarrow [q]$ in $\fint$ such that $U\subseteq f(T)$. We will use these two presentations of $\nec$ interchangeably.

We further recall some terminology from \cite{borges2024necklaces} and \cite{lowen2024enriched}. A necklace map $f: (T,p)\rightarrow (U,q)$ is called \emph{active} if $U = f(T)$ and \emph{inert} if $f = \id_{[p]}$. It is easy to see that the subcategories of active and inert necklace maps are monoidal and form an orthogonal factorization system on $\nec$ in the sense of \cite{bousfield1976constructions}. The active maps are generated as a monoidal subcategory by the maps $\delta_{j}: \Delta^{n-1}\hookrightarrow \Delta^{n}$ and $\sigma_{i}: \Delta^{n+1}\twoheadrightarrow \Delta^{n}$ for $0 < j < n$ and $\leq i\leq n$. Similarly, the inert maps are generated as a monoidal subcategory by the maps $\nu_{k,l}: \Delta^{k}\vee \Delta^{l}\hookrightarrow \Delta^{k+l}$ for $k,l > 0$. Hence, $\nec$ is generated as a monoidal category by $\delta_{j}$, $\sigma_{i}$ and $\nu_{k,l}$.

A necklace map $f: (T,p)\rightarrow (U,q)$ is called \emph{surjective} or \emph{injective} if the underlying morphism $[p]\rightarrow [q]$ in $\fint$ is so. Further, we call $f$ \emph{spine collapsing} if it is the wedge product of identities and the map $\Delta^{1}\twoheadrightarrow \Delta^{0}$. Finally, for any necklace $(T,p)$ and $j\in [p]\setminus T$, we denote by
$$
\delta_{j}: (\delta^{-1}_{j}(T),p-1)\hookrightarrow (T,p)\qquad \text{and}\quad \nu_{j,p-j}: (T\cup \{j\},p)\hookrightarrow (T,p)
$$
the active injective necklace map given by $\delta_{j}$ in $\fint$ and the unique inert necklace map.

\begin{Def}[Definition 3.8, \cite{lowen2024enriched}]\label{definition: necklace category}
Consider the functor category $\mathcal{V}^{\nec^{op}}$ with the monoidal structure $(\otimes_{Day},\underline{I})$ of Day convolution \cite{day1970closed}. A \emph{necklace category} is a category enriched in $\mathcal{V}^{\nec^{op}}$. We denote the category of small necklace categories and $\mathcal{V}^{\nec^{op}}$-enriched functors between them by
$$
\mathcal{V}\Cat_{\nec}
$$
\end{Def}

Given a templicial object $(X,S)$ with vertices $a,b\in S$, we obtain a functor $X_{\bullet}(a,b): \nec^{op}\rightarrow \mathcal{V}$ where, for all necklaces $T = \Delta^{n_{1}}\vee \dots \Delta^{n_{k}}$, we have
$$
X_{T}(a,b) = (X_{n_{1}}\otimes_{S}\dots \otimes_{S} X_{n_{k}})(a,b)\simeq \coprod_{c_{1},\dots, c_{k-1}\in S}X_{n_{1}}(a,c_{1})\otimes\dots \otimes X_{n_{k}}(c_{k-1},b)
$$
More specifically, $X_{\bullet}(a,b)$ sends $\delta_{j}: \Delta^{n-1}\hookrightarrow \Delta^{n}$, $\sigma_{i}: \Delta^{n+1}\twoheadrightarrow \Delta^{n}$ and $\nu_{k,l}: \Delta^{k}\vee \Delta^{l}\hookrightarrow \Delta^{k+l}$ to $d_{j}$, $s_{i}$ and $\mu_{k,l}$ respectively. With compositions induced by the canonical morphisms $X_{T}(a,b)\otimes X_{U}(b,c)\rightarrow X_{T\vee U}(a,c)$, we obtain a necklace category $X^{nec}$ whose object set is $S$ and hom-objects are $X_{\bullet}(a,b)$. This construction extends to a functor $(-)^{nec}: \ts\mathcal{V}\rightarrow \mathcal{V}\Cat_{\nec}$ (see \cite[Construction 3.9]{lowen2024enriched}).

The functor $(-)^{nec}$ was independently constructed by Minichiello, Rivera and Zeinalian in \cite{minichiello2023categorical} for $\mathcal{V} = \Set$. In this case, we'll denote $\mathcal{V}\Cat_{\nec}$ by $\Cat_{\nec}$.

\begin{Thm}[Theorem 3.12, \cite{lowen2024enriched}]\label{theorem: nec-temp adjunction}
The functor $(-)^{nec}$ is fully faithful and has a right-adjoint
\begin{equation}\label{diagram: nec-temp adjunction}
(-)^{nec}: \ts\mathcal{V}\leftrightarrows \mathcal{V}\Cat_{\nec}: (-)^{temp}
\end{equation}
\end{Thm}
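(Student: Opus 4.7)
The plan is to construct the right adjoint $(-)^{temp}$ explicitly, verify the adjunction by a direct hom-set comparison, and deduce full faithfulness by checking that the unit is an isomorphism.

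For the construction of $(-)^{temp}$: given a necklace category $\mathcal{C}$ with object set $S$, I would build a templicial object $\mathcal{C}^{temp}$ with vertex set $S$. At each level $n$ and pair $a,b \in S$, I take $(\mathcal{C}^{temp})_n(a,b)$ to be built from $\mathcal{C}(a,b)(\Delta^n)$, adjusting at $n = 0$ by restricting to the subobject picked out by the unit map $\underline{I} \to \mathcal{C}(a,a)$ when $a = b$ (and $0$ otherwise) so that the counit axiom $(\mathcal{C}^{temp})_0 \simeq I_S$ holds. The simplicial face and degeneracy morphisms come from the action of $\delta_j, \sigma_i \in \nec$ on the hom-presheaves $\mathcal{C}(a,b): \nec^{op} \to \mathcal{V}$. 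For the comultiplications $\mu_{k,l}$, I combine the action on the inert map $\nu_{k,l}: \Delta^k \vee \Delta^l \hookrightarrow \Delta^{k+l}$ with the composition of $\mathcal{C}$ in $\mathcal{V}^{\nec^{op}}$: the Day convolution structure provides canonical summand inclusions $\mathcal{C}(a,c)(\Delta^k) \otimes \mathcal{C}(c,b)(\Delta^l) \to (\mathcal{C}(a,c) \otimes_{Day} \mathcal{C}(c,b))(\Delta^k \vee \Delta^l)$, and combining these with composition yields the desired $\mu_{k,l}$ into $\coprod_c (\mathcal{C}^{temp})_k(a,c) \otimes (\mathcal{C}^{temp})_l(c,b)$.

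To establish the adjunction, I verify the bijection $\mathcal{V}\Cat_{\nec}(X^{nec}, \mathcal{C}) \simeq \ts\mathcal{V}(X, \mathcal{C}^{temp})$ by direct unravelling. Both sides are parametrized by an object map $f: S \to \Ob(\mathcal{C})$. Fixing $f$, a necklace-enriched functor amounts to natural transformations $X^{nec}(a,b) \to \mathcal{C}(f(a), f(b))$ of presheaves on $\nec$ respecting composition. Since $\nec$ is generated as a monoidal category by the active maps $\delta_j, \sigma_i$ together with the inert maps $\nu_{k,l}$, such a natural transformation is determined by its components on standard simplices $\Delta^n$, that is by morphisms $\alpha_n: X_n(a,b) \to (\mathcal{C}^{temp})_n(f(a), f(b))$, subject to the naturality conditions in these generators. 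Naturality in $\delta_j, \sigma_i$ is precisely the simplicial naturality of $\alpha$; naturality in $\nu_{k,l}$ is exactly the compatibility with comultiplications required of a templicial morphism; and compatibility with composition translates to the monoidality of $\alpha$.

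Full faithfulness of $(-)^{nec}$ then follows by checking that the unit $\eta_X: X \to (X^{nec})^{temp}$ of the adjunction is an isomorphism for every templicial object $X$. At level $n$, $\eta_X$ identifies $X_n(a,b)$ with $X^{nec}(a,b)(\Delta^n) = X_n(a,b)$, and the counit condition $\epsilon: X_0 \xrightarrow{\simeq} I_S$ on $X$ ensures the subobject selection at $n = 0$ recovers all of $X_0$. Compatibility with face, degeneracy, and comultiplication morphisms is automatic from the construction. The main obstacle I anticipate is the coherent verification of the comultiplications in $\mathcal{C}^{temp}$: the coassociativity of $\mu_{k,l}$, their compatibility with the inner face and degeneracy morphisms, and the role of the subobject at $n = 0$ must be carefully extracted from the Day convolution composition of $\mathcal{C}$, which requires meticulous bookkeeping of the interplay between active and inert morphisms in $\nec$ and the Segal-type decompositions encoded by the composition law.
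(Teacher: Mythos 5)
Your definition of $\mathcal{C}^{temp}_n(a,b)$ is where the proposal breaks down. You take $\mathcal{C}^{temp}_n(a,b)$ to be (essentially) $\mathcal{C}(a,b)(\Delta^n)$, adjusting only at $n=0$, and then claim to produce the comultiplications $\mu_{k,l}$ from the action of $\nu_{k,l}$ together with the Day-convolution composition of $\mathcal{C}$. But the two maps you invoke both go \emph{into} $\mathcal{C}(a,b)(\Delta^k\vee\Delta^l)$: the restriction along $\nu_{k,l}$ gives
$$\mathcal{C}(a,b)(\nu_{k,l}): \mathcal{C}(a,b)(\Delta^{k+l}) \longrightarrow \mathcal{C}(a,b)(\Delta^k\vee\Delta^l),$$
while the Day-convolution summand inclusion followed by composition gives
$$\coprod_c \mathcal{C}(a,c)(\Delta^k)\otimes \mathcal{C}(c,b)(\Delta^l) \longrightarrow \mathcal{C}(a,b)(\Delta^k\vee\Delta^l).$$
These cannot be composed to yield the map $\mathcal{C}(a,b)(\Delta^{k+l}) \to \coprod_c \mathcal{C}(a,c)(\Delta^k)\otimes \mathcal{C}(c,b)(\Delta^l)$ required of $\mu_{k,l}$. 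For a general necklace category, the latter map is an arbitrary morphism, not an isomorphism (it is an isomorphism precisely when $\mathcal{C}$ is of the form $X^{nec}$ — that is the Segal condition characterizing the image of $(-)^{nec}$), so there is simply no candidate for $\mu_{k,l}$ under your definition, and your $\mathcal{C}^{temp}$ is not a templicial object at all.

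The correct construction — this is Construction 3.11 of the cited reference, alluded to in the text — proceeds by induction on $n$ and defines $\mathcal{C}^{temp}_n(a,b)$ as a \emph{limit}. The point is that the comultiplications must be \emph{forced} to exist by building them into the definition as the legs of a limit cone, rather than extracted from data $\mathcal{C}$ already possesses. You can see this in the paper itself: the unnumbered example following Construction \ref{construction: nerve generated by necklicial diagram} exhibits $N^D_{\mathcal{V}}(\mathcal{C})_2(A,B)$ as a pullback of $[D(\Delta^2),\mathcal{C}(A,B)] \to [D(\Delta^1\vee\Delta^1),\mathcal{C}(A,B)] \leftarrow \coprod_C [D(\Delta^1),\mathcal{C}(A,C)]\otimes [D(\Delta^1),\mathcal{C}(C,B)]$, with $\mu_{1,1}$ defined as the top projection of that pullback (not as the value of the presheaf at $\Delta^2$). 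Likewise, Example \ref{example: temp construction in Set-case} (the case $\mathcal{V}=\Set$) describes an $n$-simplex of $\mathcal{C}^{temp}$ as a tuple of objects $(A_i)_{i=0}^n$ together with $\alpha_{i,j}\in\mathcal{C}_{\{0<j-i\}}(A_i,A_j)$ satisfying compatibility under $\nu$ and $m$ — a set that is visibly larger than $\mathcal{C}(a,b)(\Delta^n)$ and that carries the intermediate objects $A_1,\dots,A_{n-1}$ needed to land in the coproduct over $c\in S$. Your adjunction and full-faithfulness arguments both rely on the mistaken identification $\mathcal{C}^{temp}_n(a,b) = \mathcal{C}(a,b)(\Delta^n)$ (for $(X^{nec})^{temp}$ they happen to agree because $X^{nec}$ is Segal, which collapses the limit, but that equality is what must be \emph{proved}), so both arguments need to be redone against the actual inductive limit definition.
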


One can describe the functor $(-)^{temp}$ by induction on the dimension of the simplices, see \cite[Construction 3.11]{lowen2024enriched}. But in case $\mathcal{V} = \Set$, it can be described more easily.

\begin{Ex}\label{example: temp construction in Set-case}
For a necklace category $\mathcal{C}\in \Cat_{\nec}$, an $n$-simplex of $\mathcal{C}^{temp}\in \SSet$ is a collection
$$
\left((A_{i})_{i=0}^{n}, (\alpha_{i,j})_{0\leq i < j\leq n}\right)
$$
with $A_{i}\in \Ob(\mathcal{C})$ and $\alpha_{i,j}\in \mathcal{C}_{\{0 < j-i\}}(A_{i},A_{j})$, such that for all $0 < k < j$, we have $\mathcal{C}(\nu_{k-i,j-k})(\alpha_{i,j}) = m_{\mathcal{C}}(\alpha_{i,k}, \alpha_{k,j})$.
\end{Ex}

\section{Nerves of enriched categories}\label{section: Nerves of enriched categories}

In this section we focus on nerves of the form $\mathcal{W}\Cat\rightarrow \SSet$. That is, we restrict to the case where $\mathcal{V} = \Set$ with the cartesian monoidal structure. Note that in this case, $\mathcal{W}$ is simply assumed to be a cocomplete monoidal category such that $-\otimes_{\mathcal{W}}-$ preserves colimits in each variable. Indeed, $\mathcal{W}$ is automatically tensored over $\Set$ via $S\cdot W = \coprod_{x\in S}W$ for all $S\in \Set$ and $W\in \mathcal{W}$, and the canonical morphism \eqref{diagram: interchange} is always an isomorphism.

\subsection{Necklicial nerve functors}\label{subsection: Necklicial nerve functors}

We introduce \emph{necklicial} nerve functors as those right-adjoints $\mathcal{W}\Cat\rightarrow \SSet$ arising from a strong monoidal functor $D: \nec\rightarrow \mathcal{W}$. Certainly not all possible right-adjoints $\mathcal{W}\Cat\rightarrow \SSet$ are necklicial nerves, but many examples of interest are, as we will see in Section \ref{section: Examples}. Moreover, restricting to necklicial nerves will allow us to obtain the results of Section \ref{section: Enriched nerves of enriched categories}, where we use the diagram $D$ to lift the induced nerve to templicial objects, describe its left-adjoint more explicitly and detect when the nerve is a quasi-category.

\begin{Con}\label{construction: cosimplicial object generated by necklicial diagram}

We construct a functor
$$
\Phi: \StrMon(\nec,\mathcal{W})\rightarrow \Fun(\simp,\mathcal{W}\Cat)
$$
from the category of strong monoidal functors $\nec\rightarrow \mathcal{W}$ and monoidal natural transformations between them, to the category of functors $\simp\rightarrow \mathcal{W}\Cat$ and natural transformations between them.

Given a strong monoidal functor $D: \nec\rightarrow \mathcal{W}$, we define $\Phi(D): \simp\rightarrow \mathcal{W}\Cat$ as follows. For every integer $n\geq 0$, $\Phi(D)^{n}$ is the $\mathcal{W}$-category with object set $[n]$ and for all $i,j\in [n]$:
$$
\Phi(D)^{n}(i,j) =
\begin{cases}
D(\Delta^{j-i}) & \text{if }i\leq j\\
0 & \text{if }i > j
\end{cases}
$$
Given $i\leq k\leq j$ in $[n]$, the composition of $\Phi(D)^{n}$ is defined by the strong monoidal structure of $D$:
$$
\Phi(D)^{n}(i,k)\otimes \Phi(D)^{n}(k,j)\simeq D(\Delta^{k-i}\vee \Delta^{j-k})\rightarrow D(\Delta^{j-i}) = \Phi(D)^{n}(i,j)
$$
For $i\in [n]$, the identity in $\Phi(D)^{n}(i,i)$ is given by the unit $I\simeq D(\Delta^{0})$.

Further, if $f: [m]\rightarrow [n]$ is a morphism in $\simp$, we define a $\mathcal{W}$-functor $\Phi(D)(f): \Phi(D)^{m}\rightarrow \Phi(D)^{n}$ which is given on objects by the map $f$. For all $i\leq j$ in $[m]$, $f$ induces a morphism in $\fint$:
$$
f_{i,j}: [j-i]\rightarrow [f(j) - f(i)]: k\mapsto f(k + i) - f(i)
$$
which we identify with a necklace map $f_{i,j}: \Delta^{j-i}\rightarrow \Delta^{f(j)-f(i)}$. Then
$$
\Phi(D)(f)_{i,j}: \Phi(D)^{m}(i,j) = D(\Delta^{j-i})\xrightarrow{D(f_{i,j})} D(\Delta^{f(j)-f(i)}) = \Phi(D)^{n}(f(i),f(j))
$$

Finally, given a monoidal natural transformation $\alpha: D\rightarrow D'$ in $\StrMon(\nec,\mathcal{W})$ we define a natural transformation $\Phi(\alpha): \Phi(D)\rightarrow \Phi(D')$ as follows. For every integer $n\geq 0$, $\Phi(\alpha)^{n}$ is the $\mathcal{W}$-enriched functor which given by the identity on objects and for all $i,j\in [n]$:
$$
\Phi(\alpha)^{n}_{i,j} =
\begin{cases}
\alpha_{\Delta^{j-i}} & \text{if }i\leq j\\
\id_{0} & \text{if }i > j
\end{cases}
$$

It immediately follows from the definitions that this produces a well-defined functor $\Phi: \StrMon(\nec,\mathcal{W})\rightarrow \Fun(\simp,\mathcal{W}\Cat)$.
\end{Con}

\begin{Def}\label{definition: necklicial nerve}
Let $\mathbb{D}: \simp\rightarrow \mathcal{W}\Cat$ be a diagram and $N^{\mathbb{D}}: \mathcal{W}\Cat\rightarrow \SSet$ its associated right-adjoint functor under equivalence \eqref{equation: classical equivalence}. We call $N^{\mathbb{D}}$ a \emph{necklicial nerve} if $\mathbb{D}\simeq \Phi(D)$ for some strong monoidal functor $D: \nec\rightarrow \mathcal{W}$. We denote by
$$
\mathrm{Nerve}_{\nec}(\mathcal{W}\Cat)
$$
the full subcategory of $\Fun_{RAdj}(\mathcal{W}\Cat,\SSet)$ spanned by all necklicial nerves.
\end{Def}

The remainder of this subsection is devoted to characterising necklicial nerves.

\begin{Def}
Let us call a morphism $\delta: [m]\rightarrow [n]$ of $\simp$ \emph{inert} if it is given by $\delta(i) = \delta(0) + i$ for all $i\in [m]$. It is easy to see that a morphism in $\simp$ is inert if and only if it is a composition of outer coface maps $\delta_{0}, \delta_{n}: [n-1]\rightarrow [n]$.
\end{Def}

\begin{Prop}\label{proposition: embedding of necklace diagrams into necklicial functors}
The functor $\Phi: \StrMon(\nec,\mathcal{W})\rightarrow \Fun(\simp,\mathcal{W}\Cat)$ of Construction \ref{construction: cosimplicial object generated by necklicial diagram} is fully faithful. Hence, we have  an equivalence of categories
$$
\mathrm{Nerve}_{\nec}(\mathcal{W}\Cat)\simeq \StrMon(\nec,\mathcal{W})^{op}
$$
\end{Prop}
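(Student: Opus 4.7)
The equivalence stated in the second half follows immediately from the fully faithfulness of $\Phi$: under the equivalence \eqref{equation: classical equivalence}, necklicial nerves correspond precisely to the essential image of $\Phi$, so $\Phi$ being fully faithful yields the identification $\mathrm{Nerve}_{\nec}(\mathcal{W}\Cat)\simeq \StrMon(\nec,\mathcal{W})^{op}$. Hence the real content is the fully faithfulness of $\Phi$, and I plan to prove this directly. Faithfulness is the easy half: if $\alpha:D\to D'$ is a monoidal natural transformation, then $\Phi(\alpha)^{n}_{0,n} = \alpha_{\Delta^{n}}$ by construction, so $\Phi(\alpha)$ determines $\alpha$ on all standard simplices, and monoidality then determines $\alpha$ on arbitrary necklaces $T = \Delta^{n_{1}}\vee\cdots\vee\Delta^{n_{k}}$.

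For fullness, let $\beta: \Phi(D)\to \Phi(D')$ be a natural transformation. First I would show each $\mathcal{W}$-functor $\beta_{n}$ acts as the identity on objects: $\Phi(D)^{0}$ is terminal so $\beta_{0} = \id$, and for the unique inert map $\delta^{i}:[0]\to[n]$ picking out $i\in[n]$, naturality of $\beta$ forces $\beta_{n}(i) = i$. Next, set
$$
\alpha_{\Delta^{n}} \;:=\; (\beta_{n})_{0,n}: D(\Delta^{n})\longrightarrow D'(\Delta^{n})
$$
and extend to an arbitrary necklace $T = \Delta^{n_{1}}\vee\cdots\vee\Delta^{n_{k}}$ by transporting $\alpha_{\Delta^{n_{1}}}\otimes\cdots\otimes\alpha_{\Delta^{n_{k}}}$ across the strong monoidal coherence isomorphisms of $D$ and $D'$. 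This makes $\alpha$ monoidal by design. The final task is to verify that $\alpha$ is natural on all of $\nec$ and that $\Phi(\alpha) = \beta$.

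Since $\nec$ is generated as a monoidal category by the maps $\delta_{j}$, $\sigma_{i}$ ($0<j<n$, $0\leq i\leq n$) and $\nu_{k,l}$ ($k,l > 0$), and $\alpha$ was already defined to be strict with respect to the wedge product on objects, it suffices to check naturality on these generators. For $\delta_{j}$ and $\sigma_{i}$, the inner coface/codegeneracy of $\simp$ induces via $\Phi(D)$ the necklace map $\delta_{j}$ (resp.\ $\sigma_{i}$) on the hom-object $\Phi(D)^{n}(0,n) = D(\Delta^{n})$, so naturality of $\beta$ evaluated on this hom yields exactly the naturality square for $\alpha$. The naturality square for $\nu_{k,l}$ is where the genuine work lies, and will be the main obstacle: it is not an instance of naturality of $\beta$ but instead follows from $\mathcal{W}$-functoriality of $\beta_{k+l}$ together with the fact that the composition $\Phi(D)^{k+l}(0,k)\otimes \Phi(D)^{k+l}(k,k+l)\to \Phi(D)^{k+l}(0,k+l)$ is defined as $D(\nu_{k,l})$ post-composed with the strong monoidal coherence (and similarly for $D'$). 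Chasing this diagram, preservation of composition by the $\mathcal{W}$-functor $\beta_{k+l}$ yields precisely the commutativity of $\alpha_{\Delta^{k+l}}\circ D(\nu_{k,l}) = D'(\nu_{k,l})\circ \alpha_{\Delta^{k}\vee\Delta^{l}}$. Finally, $\Phi(\alpha) = \beta$ holds because both sides agree on each $\Phi(D)^{n}(0,n)$ by construction and on objects, and $\mathcal{W}$-functoriality plus the already-verified naturality forces them to coincide on every hom-object $\Phi(D)^{n}(i,j)$ via the active map $[j-i]\to[n]$, $k\mapsto k+i$.
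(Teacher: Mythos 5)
Your proof is correct and follows essentially the same strategy as the paper's: show each $\beta^n$ is the identity on objects, reduce $\beta^n_{i,j}$ to $\beta^{j-i}_{0,j-i}$ via the maps $[j-i]\to[n]$, $k\mapsto k+i$ (which the paper calls \emph{inert}, not active --- and note this reduction is already needed in the $\nu_{k,l}$ step to identify $\beta^{k+l}_{0,k}$ with $\alpha_{\Delta^k}$, so it should appear before it rather than only at the end), define $\alpha_{\Delta^n} := \beta^n_{0,n}$ and extend monoidally, then verify naturality on the monoidal generators $\delta_j$, $\sigma_i$, $\nu_{k,l}$. One small inaccuracy: $\Phi(D)^0$ is the \emph{unit} $\mathcal{W}$-category, not in general terminal in $\mathcal{W}\Cat$ (a terminal $\mathcal{W}$-category would need hom-object terminal in $\mathcal{W}$, not $I$); what actually forces $\beta^0 = \id$ is that a $\mathcal{W}$-endofunctor of the unit $\mathcal{W}$-category must fix the unique object and preserve the identity $\id_I: I\to I$, so its action on the hom-object is $\id_I$.
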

\begin{proof}
Let $D,D': \nec\rightarrow \mathcal{W}$ be strong monoidal functors and $\beta: \Phi(D)\rightarrow \Phi(D')$ a natural transformation. Note that since $\Phi(D)^{0} = \Phi(D')^{0}$ has a single object with hom-object given by $I$, the naturality of $\beta$ implies that $\beta^{n}$ is given by the identity on objects for all $n\geq 0$. Further, since for any inert map $\delta: [m]\rightarrow [n]$ in $\simp$, the induced morphism $\Phi(D)(\delta)_{0,m}$ is the identity, we have that $\beta^{n}_{i,j} = \beta^{j-i}_{0,j-i}$ for all $i\leq j$ in $[n]$. For a necklace $T = \{0 = t_{0} < t_{1} <  ... < t_{k} = p\}$, we have an isomorphism
$$
\mu: \Phi(D)^{p}(0,t_{1})\otimes ...\otimes \Phi(D)^{p}(t_{k-1},p)\xrightarrow{\sim} D(T)
$$
by the strong monoidality of $D$, and a similar isomorphism $\mu'$ for $D'$. Then define $\alpha_{T}: D(T)\rightarrow D'(T)$ as $\alpha_{T} = \mu'(\beta^{p}_{0,t_{1}} \otimes ...\otimes \beta^{p}_{t_{k-1},p})\mu^{-1}$. Then $\alpha$ is compatible with active necklace maps by the naturality of $\mu$, $\mu'$ and $\beta$. Further $\alpha$ is compatible with inert necklace maps by the naturality and associativity of $\mu$ and $\mu'$, and by the functoriality of $\beta^{p}$. Thus $\alpha$ is a natural transformation between functors $\nec\rightarrow \mathcal{W}$. Moreover, $\alpha$ is monoidal by the associativity of $\mu$ and $\mu'$. Finally, by definition we have $\Phi(\alpha)^{p}_{i,j} = \alpha_{\{0 < j-i\}} = \beta^{j-i}_{0,j-i} = \beta^{p}_{i,j}$ for all $i\leq j$ in $[p]$. So $\Phi(\alpha) = \beta$ and $\alpha$ is clearly unique with this property.

The equivalence then follows from \eqref{equation: classical equivalence}.
\end{proof}

\begin{Prop}\label{proposition: necklicial nerve charac.}
The essential image of the functor $\Phi$ of Construction \ref{construction: cosimplicial object generated by necklicial diagram} consists of all diagrams $\mathbb{D}: \simp\rightarrow \mathcal{W}\Cat$ satisfying the following properties.
\begin{enumerate}[1.]
\item The diagram of functors
\[\begin{tikzcd}
	\simp && {\mathcal{W}\Cat} \\
	& \Set
	\arrow["{\mathbb{D}}", from=1-1, to=1-3]
	\arrow["\Ob", from=1-3, to=2-2]
	\arrow["\Ob"', from=1-1, to=2-2]
\end{tikzcd}\]
commutes up to natural isomorphism.
\item For any integers $0\leq i < j\leq n$, $\mathbb{D}^{n}(j,i)$ is an initial object of $\mathcal{W}$.
\item The unit morphism $I\rightarrow \mathbb{D}^{0}(0,0)$ of $\mathbb{D}^{0}$ is an isomorphism.
\item For any inert morphism $\delta: [m]\rightarrow [n]$ in $\simp$, the induced morphism in $\mathcal{W}$
$$
\mathbb{D}(\delta)_{0,m}: \mathbb{D}^{m}(0,m)\rightarrow \mathbb{D}^{n}(\delta(0),\delta(m))
$$
is an isomorphism.
\end{enumerate}
Hence, for a diagram $\mathbb{D}: \simp\rightarrow \mathcal{W}\Cat$, the associated right-adjoint functor $N^{\mathbb{D}}$ under \eqref{equation: classical equivalence} is a necklicial nerve if and only if $\mathbb{D}$ satisfies properties $1$-$4$.
\end{Prop}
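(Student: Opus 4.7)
The plan has two implications. The forward direction, showing every $\Phi(D)$ satisfies properties 1--4, reads off Construction \ref{construction: cosimplicial object generated by necklicial diagram}: $\Ob(\Phi(D)^n) = [n]$ functorially, $\Phi(D)^n(j,i) = 0$ for $i > j$, the unit of $\Phi(D)^0$ is the strong monoidal isomorphism $I \simeq D(\Delta^0)$, and for an inert $\delta: [m]\to[n]$ in $\simp$ the induced necklace map $\delta_{0,m}: \Delta^m \to \Delta^m$ is the identity, so $\Phi(D)(\delta)_{0,m} = \id$.

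For the converse, given $\mathbb{D}$ satisfying properties 1--4, I would build a strong monoidal $D: \nec \to \mathcal{W}$ together with a natural isomorphism $\mathbb{D} \simeq \Phi(D)$. Set $D(\Delta^n) := \mathbb{D}^n(0,n)$, with $D(\Delta^0) \simeq I$ supplied by property~3, and extend to wedges by tensoring,
$$
D(\Delta^{n_1} \vee \cdots \vee \Delta^{n_k}) := \mathbb{D}^{n_1}(0,n_1) \otimes \cdots \otimes \mathbb{D}^{n_k}(0,n_k),
$$
so that the monoidal constraint of $D$ is the identity by construction. Since $\nec$ is generated as a monoidal category by the active maps $\delta_j, \sigma_i$ on single beads together with the inert maps $\nu_{k,l}$, it then suffices to specify $D$ on these generators. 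An active single-bead map $f: \Delta^m \to \Delta^n$ is a morphism $f: [m]\to[n]$ in $\fint \subset \simp$, and I set $D(f) := \mathbb{D}(f)_{0,m}$. For $\nu_{k,l}$ I use the composite
$$
\mathbb{D}^k(0,k) \otimes \mathbb{D}^l(0,l) \xrightarrow{\sim} \mathbb{D}^{k+l}(0,k) \otimes \mathbb{D}^{k+l}(k,k+l) \xrightarrow{m} \mathbb{D}^{k+l}(0,k+l),
$$
where the first isomorphism is the tensor of the two property~4 isomorphisms associated to the inert inclusions $[k] \hookrightarrow [k+l]$ and $[l] \hookrightarrow [k+l]$, and $m$ denotes composition in $\mathbb{D}^{k+l}$.

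The core of the proof is to verify that this assignment descends to a well-defined functor on $\nec$. Using the active-inert orthogonal factorization system, this splits into three verifications: (a) the relations among compositions of $\delta_j$ and $\sigma_i$ on single beads, which follow from functoriality of $\mathbb{D}$ applied entrywise; (b) associativity and coherence of the $D(\nu_{k,l})$, which follow from associativity of composition in $\mathbb{D}^{k+l+p}$ combined with naturality of the property~4 isomorphisms; and (c) commutativity of the squares arising when an active wedge $f_1 \vee f_2$ is composed with $\nu_{k,l}$, which unpacks to the $\mathcal{W}$-functoriality of $\mathbb{D}(f_1 + f_2)$ preserving composition, again combined with property~4. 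I expect step (c) to be the most diagram-heavy verification, but ultimately routine.

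Finally, the natural isomorphism $\eta: \Phi(D) \xrightarrow{\sim} \mathbb{D}$ is identity on objects (property~1), is the property~4 isomorphism $\mathbb{D}^{j-i}(0,j-i) \xrightarrow{\sim} \mathbb{D}^n(i,j)$ on hom-objects for $i \leq j$ via the inert map $[j-i] \hookrightarrow [n], k \mapsto k+i$, and is the unique isomorphism between initial objects for $i > j$ (property~2). Compatibility of $\eta^n$ with composition and naturality in $[n] \in \simp$ reduce to the same $\mathcal{W}$-functoriality arguments used in (c). The final equivalence statement of the proposition is then immediate from Proposition \ref{proposition: embedding of necklace diagrams into necklicial functors}.
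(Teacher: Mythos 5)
Your proposal is correct and follows essentially the same strategy as the paper: read off properties 1--4 for $\Phi(D)$, and conversely build $D$ on single beads as $\mathbb{D}^n(0,n)$, extend to wedges by tensoring, use the simplicial structure of $\mathbb{D}$ for active maps and the composition of $\mathbb{D}^{k+l}$ for $\nu_{k,l}$. The one organizational difference is that the paper first replaces $\mathbb{D}$ by an isomorphic $\tilde{\mathbb{D}}$ for which properties 1--4 hold \emph{strictly} (object set literally $[n]$, negative hom-objects literally the same initial object, unit and inert morphisms literally identities), which makes the functoriality checks cosmetic and yields $\Phi(D) = \tilde{\mathbb{D}}$ on the nose; you instead work directly with the non-strict $\mathbb{D}$, carrying the property-4 isomorphisms through the verification and producing $\Phi(D) \simeq \mathbb{D}$. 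The paper also phrases the verification via the active-inert factorization system (every necklace map is active followed by inert, so functoriality reduces to functoriality on each class plus naturality of the composition law $m$), rather than via the monoidal generator-and-relations presentation; your step (c) is precisely the paper's ``naturality of $m$'' observation. Both routes close; the strictification shortcut in the paper just spares the bookkeeping you flag in steps (b) and (c) and in checking that $\eta$ is natural.
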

\begin{proof}
Clearly, for any strong monoidal $D: \nec\rightarrow \mathcal{W}$, $\Phi(D)$ satisfies $1$-$4$ and these properties are invariant under isomorphism in $\Fun(\simp,\mathcal{W}\Cat)$. Conversely, it is easy to see that any functor $\mathbb{D}: \simp\rightarrow \mathcal{W}\Cat$ satisfying properties $1$-$4$ is isomorphic to a functor $\tilde{\mathbb{D}}$ for which these properties hold strictly. That is, $\Ob\circ \tilde{\mathbb{D}}$ is precisely the forgetful functor $\simp\rightarrow \Set$, all $\tilde{\mathbb{D}}^{n}(j,i)$ with $i < j$ are equal to the same initial object $0$, $\tilde{\mathbb{D}}^{0}(0,0) = I$ and for any inert map $\delta: [m]\rightarrow [n]$ in $\simp$, the induced map $\tilde{\mathbb{D}}(\delta)_{0,m}$ is the identity. Thus we assume that properties $1$-$4$ hold strictly for $\mathbb{D}$. 
Given a necklace $T = \{0 = t_{0} < t_{1} < \dots < t_{k} = p\}$, define
$$
D(T) = \mathbb{D}^{p}(0,t_{1})\otimes \dots\otimes \mathbb{D}^{p}(t_{k-1},p)
$$
In particular, that is $D(\{0\}) = I$. Let $f: (T,p)\rightarrow (U,q)$ be a necklace map
\begin{itemize}
\item If $f$ is active, let $\mathbb{D}(f): \mathbb{D}^{p}\rightarrow \mathbb{D}^{q}$ denote the functor induced by the underlying morphism $[p]\rightarrow [q]$ in $\fint\subseteq \simp$. Then define $D(f): D(T)\rightarrow D(U)$ as
$$
D(f) = \mathbb{D}(f)_{0,t_{1}}\otimes \dots\otimes \mathbb{D}(f)_{t_{k-1},p}
$$
where we used that $D(f(T)) = D(U)$, which follows from property $3$ and the fact that $f(T) = U$ as subsets.
\item If $f$ is inert, define $D(f): D(T)\rightarrow D(U)$ as
$$
D(f) = m_{T_{1}}\otimes \dots\otimes m_{T_{l}}
$$
where $m_{T'}: \mathbb{D}^{p'}(0,t'_{1})\otimes \dots\otimes \mathbb{D}^{p'}(t'_{k'-1},t'_{p'})\rightarrow \mathbb{D}^{p'}(0,p')$ denotes the composition in $\mathbb{D}^{p'}$ for a given necklace $(T',p')$. Writing $U = \{0 = u_{0} < u_{1} < \dots < u_{l} = p\}\subseteq T$, the $(T_{i},u_{i}-u_{i-1})$ are the unique necklaces such that $T = T_{1}\vee \dots\vee T_{l}$.
\end{itemize}
Then this defines a functor $D: \nec\rightarrow \mathcal{W}$. Indeed, $D$ is functorial on active necklace maps by the functoriality of $\mathbb{D}$, $D$ is functorial on inert maps by the associativity of $m$, and then $D$ is functorial on all necklace maps by the naturality of $m$. Finally note that $\mathbb{D}(T\vee U) = \mathbb{D}(T)\otimes \mathbb{D}(U)$ by property $4$ for any two necklaces $T$ and $U$. Thus we have a strong monoidal structure on $D$. It then follows straightforwardly from the definitions, and properties $1$ and $2$, that $\Phi(D) = \mathbb{D}$.
\end{proof}

\subsection{Necklaces versus cubes}\label{subsection: Necklaces versus cubes}

We denote by $\square$ the category of \emph{cubes with connections}. Its objects are all categories $[1]^{n} = \left\lbrace(\epsilon_{1},\dots,\epsilon_{n})\mid \epsilon_{i}\in \{0,1\}\right\rbrace$ with $n\geq 0$ an integer. Then $\square$ is the subcategory of $\Cat$ generated by the following morphisms
\begin{align*}
\delta^{\epsilon}_{i}&: [1]^{n-1}\rightarrow [1]^{n}: (\epsilon_{1},\dots,\epsilon_{n-1})\mapsto (\epsilon_{1},\dots,\epsilon_{i-1},\epsilon,\epsilon_{i},\dots,\epsilon_{n}) &&\text{for }1\leq i\leq n, \epsilon\in \{0,1\}\\
\sigma_{i}&: [1]^{n+1}\rightarrow [1]^{n}: (\epsilon_{1},\dots,\epsilon_{n+1})\mapsto (\epsilon_{1},\dots,\epsilon_{i-1},\epsilon_{i+1},\dots,\epsilon_{n+1}) &&\text{for }1\leq i\leq n+1\\
\gamma_{i}&: [1]^{n+1}\rightarrow [1]^{n}: (\epsilon_{1},\dots,\epsilon_{n+1})\mapsto (\epsilon_{1},\dots,\max(\epsilon_{i},\epsilon_{i+1}),\dots,\epsilon_{n}) &&\text{for }1\leq i\leq n-1
\end{align*}
These satisfy certain idenities. For more details we refer to the literature (see \cite{brown1981algebra} for example). We denote by $\CSet = \Set^{\square^{op}}$ the category of cubical sets. For any $n\geq 0$, we denote $\square^{n} = \square(-,[1]^{n})$ for the standard $n$-cube.

The category $\square$ is also monoidal with monoidal product induced by the cartesian product on $\Cat$:
$$
[1]^{m}\otimes [1]^{n} = [1]^{m+n}
$$
for all $m,n\geq 0$. However, this monoidal structure is not symmetric. Equipped with the Day convolution \cite{day1970closed}, the category $(\CSet,\otimes_{Day},\square^{0})$ is cocomplete biclosed (non-symmetric) monoidal, and we have $\square^{m}\otimes \square^{n}\simeq \square^{m+n}$ for all $m,n\geq 0$.

Let $\Cat_{\square}$ denote the category of small \emph{cubical categories}, i.e. categories enriched in $\CSet$. Then Le Grignou defines a nerve functor $\Cat_{\square}\rightarrow \SSet$ as follows.

\begin{Def}[Definitions 27 and 28 and Proposition 21, \cite{legrignou2020cubical}]\label{definition: cubical nerve}
We define a diagram
$$
W_{c}: \simp\rightarrow \Cat_{\square}.
$$
For any $n\geq 0$, the cubical category $W^{n}_{c}$ has object set $[n] = \{0,\dots,n\}$ and its hom-objects are given by, for all  $i,j\in [n]$:
$$
W^{n}_{c}(i,j) =
\begin{cases}
\square^{j-i-1} & \text{if }i < j\\
\square^{0} & \text{if }i = j\\
\emptyset & \text{if }i > j
\end{cases}
$$
The identities are given by the unique vertex of $W^{n}_{c}(i,i)$ for all $i\in [n]$, and the composition is given by the following map, for all $i < j < k$ in $[n]$:
$$
W^{n}_{c}(i,j)\otimes W^{n}_{c}(j,k)\simeq \square^{k-i-2}\xrightarrow{(\delta^{1}_{j-i})^{*}} \square^{k-i-1} = W^{n}_{c}(i,k)
$$

Given a morphism $f: [m]\rightarrow [n]$ in $\simp$, the induced cubical functor $W^{m}_{c}\rightarrow W^{n}_{c}$ is given on objects by $f$, and the map of cubical sets $W_{c}(f)_{i,j}: W^{m}_{c}(i,j)\rightarrow W^{n}_{c}(f(i),f(j))$ for $i < j$ in $[m]$ is defined for coface and codegeneracy maps seperately:
$$
W_{c}(\delta_{k})_{i,j} =
\begin{cases}
\delta^{0}_{k-i} & \text{if }i < k\leq j\\
\id_{\square^{j-i-1}} & \text{otherwise}
\end{cases}
\quad \text{and}\quad
W_{c}(\sigma_{k})_{i,j} =
\begin{cases}
\sigma_{1} & \text{if }k = i\\
\gamma_{k-i} & \text{if }i < k < j-1\\
\sigma_{j-i-1} & \text{if } k = j-1\\
\id_{\square^{j-i-1}} & \text{otherwise}
\end{cases}
$$
We call the nerve associated to $W^{n}_{c}$ under \eqref{equation: classical equivalence} the \emph{cubical nerve} $N^{\cub}: \Cat_{\square}\rightarrow \SSet$.
\end{Def}

It is  easy to see that $W^{n}_{c}$ satisfies properties $1$-$4$ of Proposition \ref{proposition: necklicial nerve charac.} and so the cubical nerve $N^{cub}$ is necklicial. To identify the associated diagram $\nec\rightarrow \CSet$, we consider a comparison map between necklaces and cubes which was first considered by Rivera and Zeinalian.

\begin{Def}[\S 3.6, \cite{dugger2011rigidification} and \S 4, \cite{rivera2018cubical}]\label{definition: dimension of a necklace}
\begin{enumerate}[1.]
\item We define a functor
\begin{equation}\label{diagram: partition functor}
\nec\rightarrow \Cat: T\mapsto \mathcal{P}_{T} = \left\lbrace U\subseteq [p]\mid T\subseteq U\right\rbrace
\end{equation}
where we consider $\mathcal{P}_{T}$ as a poset ordered by inclusion. Note that we may identify $\mathcal{P}_{T}$ with $(\nec_{inert})^{op}/T$ where $\nec_{inert}$ is the subcategory of $\nec$ containing all inert necklace maps. For any necklace map $f: (T,p)\rightarrow (U,q)$, the induced functor is defined as
$$
f^{*}: \mathcal{P}_{T}\rightarrow \mathcal{P}_{U}: V\mapsto f(V)
$$

\item Given a necklace $(T,p)$, we write $T^{c} = [p]\setminus T$ and define the \emph{dimension} of $T$ as
$$
\dim(T) = \vert T^{c}\vert
$$
It is the number of vertices of $T$ minus the number of joints of $T$. Let us write $T^{c} = \{i_{1} < \dots < i_{n}\}$ where $n = \dim(T)$. We may identify $[1]^{n}$ with the power set $2^{T^{c}}$ of $T^{c}$ and thus the map $\mathcal{P}_{T}\rightarrow 2^{T^{c}}: U\mapsto U\setminus T$ induces an isomorphism
\begin{equation}\label{diagram: equiv. descriptions of cubes}
\mathcal{P}_{T}\xrightarrow{\sim} [1]^{n}: U\mapsto (\epsilon_{1},\dots,\epsilon_{n})\qquad \text{with }\epsilon_{j} =
\begin{cases}
0 & \text{if }i_{j}\not\in U\\
1 & \text{if }i_{j}\in U
\end{cases}
\end{equation}
Then \eqref{diagram: partition functor} factors through the inclusion $\square\subseteq \Cat$ as a strong monoidal functor
\begin{equation}\label{diagram: dimension functor}
\dim: \nec\rightarrow \square: T\mapsto [1]^{\dim(T)}.
\end{equation}
which is given on monoidal generating morphisms by
$$
\dim(\delta_{k}) = \delta^{0}_{k},\,\, \dim(\nu_{k,p-k}) = \delta^{1}_{k},\,\, \dim(\sigma_{k}) =
\begin{cases}
\sigma_{1} & \text{if }k = 0\\
\gamma_{k} & \text{if }0 < k < n\\
\sigma_{n} & \text{if }k = n
\end{cases},\,\,
\dim(\sigma_{0}) = \id_{[0]}
$$
where $\delta_{k}: \Delta^{p-1}\rightarrow \Delta^{p}$, $\nu_{k,p-k}: \Delta^{k}\vee \Delta^{p-k}\rightarrow \Delta^{p}$, $\sigma_{k}: \Delta^{p+1}\rightarrow \Delta^{p}$ with $p > 0$, and $\sigma_{0}: \Delta^{1}\rightarrow \Delta^{0}$.
\end{enumerate}
\end{Def}

The spine collapsing maps are precisely the active surjective ones which preserve the dimension. In this sense, necklaces contain slightly more information than cubes.

\begin{Lem}\label{lemma: spine collapsing equiv.}
Let $\sigma: U\rightarrow U'$ be an active surjective necklace map. The following are equivalent:
\begin{enumerate}
\item $\sigma$ is spine collapsing,
\item $\sigma$ induces a bijection $\sigma\vert_{U^{c}}: U^{c}\xrightarrow{\sim} (U')^{c}$,
\item $\dim(U) = \dim(U')$.
\end{enumerate}
\end{Lem}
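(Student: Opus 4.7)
The plan is to argue around the cycle $(1)\Rightarrow (2)\Rightarrow (3)\Rightarrow (1)$. Since $\dim(T) = |T^{c}|$ by definition, the equivalence $(2)\Leftrightarrow (3)$ is essentially cardinality bookkeeping, whereas the structural content of the lemma is contained in the equivalence $(1)\Leftrightarrow (2)$.

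For $(2)\Leftrightarrow (3)$, direction $(2)\Rightarrow (3)$ is immediate, since a bijection $U^{c}\to (U')^{c}$ forces $|U^{c}| = |(U')^{c}|$. For $(3)\Rightarrow (2)$, I would first observe that $\sigma$ being active means $\sigma(U) = U'$, so $\sigma^{-1}(y)\cap U = \emptyset$ for every $y\in (U')^{c}$. Combined with surjectivity of $\sigma:[p]\to [q]$, decomposing $|U^{c}|$ fiberwise gives
$$
|U^{c}| \;=\; \sum_{y\in U'}|\sigma^{-1}(y)\cap U^{c}| \;+\; \sum_{y\in (U')^{c}}|\sigma^{-1}(y)| \;\geq\; |(U')^{c}|,
$$
and the equality case forces both every fiber over $(U')^{c}$ to be a singleton in $U^{c}$ and $\sigma^{-1}(y)\subseteq U$ for every $y\in U'$. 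These two conditions together are precisely what it means for $\sigma|_{U^{c}}$ to be a well-defined bijection onto $(U')^{c}$.

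For $(1)\Rightarrow (2)$, I would write $\sigma$ explicitly as a wedge of factors, each of which is either an identity $\id_{\Delta^{n_{i}}}$ or the collapse $\sigma_{0}: \Delta^{1}\twoheadrightarrow \Delta^{0}$. The identity factors restrict to identities between the interior $U^{c}$-vertices and the interior $(U')^{c}$-vertices of the corresponding beads, while the collapse factors and their $\Delta^{0}$-images contain no interior vertices at all. Wedging these gives the required bijection. For $(2)\Rightarrow (1)$, the bijection in $(2)$ forces every non-singleton fiber of $\sigma$ to lie inside $U$: an element of $U^{c}$ appearing in a non-singleton fiber would either break injectivity of $\sigma|_{U^{c}}$ or land in $U'$ rather than $(U')^{c}$. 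Since fibers of a surjective $\fint$-map are intervals of consecutive integers, such a fiber is a maximal run of consecutive elements of $U$, i.e.\ a maximal block of consecutive $\Delta^{1}$-beads of $(U,p)$. Reading $\sigma$ bead by bead then identifies it with a wedge of identities (on the beads outside such runs) and of collapses $\sigma_{0}$ (on the $\Delta^{1}$-beads inside them).

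The main obstacle I anticipate is in this last step: when a run consists of $r\geq 2$ consecutive $\Delta^{1}$-beads all collapsing to one point, one must check that the prescribed map is genuinely captured by the definition of spine collapsing, i.e.\ as a wedge of $r$ copies of $\sigma_{0}$ rather than as some fundamentally different surjection. This is legitimate precisely because $\Delta^{0}\vee \Delta^{0} = \Delta^{0}$ in $\nec$, so wedging several copies of $\sigma_{0}$ still fits the definition. Making this absorption argument explicit, and verifying that it reproduces $\sigma$ exactly, is the one place where the bookkeeping needs some care; the remainder of the argument is essentially routine.
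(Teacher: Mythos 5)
Your proof is correct, and it takes a genuinely different route from the paper's. The paper's proof is a two-line reduction: it observes that all three conditions are invariant under wedge products, reduces to the case of a single surjection $\Delta^n \twoheadrightarrow \Delta^m$, and checks directly that all three conditions hold precisely when $\sigma$ is $\id$ or $\sigma_0$. Your argument instead works globally on the whole necklace, using the fiber-counting inequality $|U^c| = \sum_{y \in U'} |\sigma^{-1}(y)\cap U^c| + \sum_{y \in (U')^c} |\sigma^{-1}(y)| \geq |(U')^c|$ (whose strictness characterizes failure of $(2)$) for $(3)\Rightarrow(2)$, and a fiber-structure argument for $(2)\Rightarrow(1)$. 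This buys a self-contained elementary proof that never has to verify the wedge-invariance claim; in fact, your counting inequality is exactly what is needed to justify the paper's implicit assertion that $\dim(U_i) \geq \dim(U'_i)$ underlying wedge-invariance of $(3)$. One small remark: the obstacle you flag at the end (absorbing $r \geq 2$ consecutive collapses via $\Delta^0 \vee \Delta^0 = \Delta^0$) is real but, as you correctly note, harmless. The step that actually deserves the explicit sentence is the one you state only briefly, namely that $\sigma$ restricts to the identity on any bead $[u_{i-1},u_i]$ of length $\geq 2$; this follows because any interior vertex of such a bead lies in $U^c$, and you have already established that a fiber meeting $U^c$ must be a singleton, so $\sigma$ is injective on the bead and hence (being order-preserving and surjective onto $[\sigma(u_{i-1}),\sigma(u_i)]$) the identity. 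Also, "maximal run" is not quite the right word for the non-singleton fibers — two disjoint collapsed $\Delta^1$-blocks can sit inside a longer run of $\Delta^1$-beads — but this does not affect the argument.
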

\begin{proof}
All three statements are invariant under taking wedge products and thus we may assume that $\sigma$ is a surjective necklace map $\Delta^{n}\rightarrow \Delta^{m}$ with $n\geq m\geq 0$. Then note that all three statements are true if $\sigma$ is the identity or $\Delta^{1}\twoheadrightarrow \Delta^{0}$, and false otherwise.
\end{proof}

\begin{Prop}\label{proposition: cubical nerve is necklicial}
The cubical nerve functor $N^{\cub}: \Cat_{\square}\rightarrow \SSet$ of \cite{legrignou2020cubical} is necklicial with associated diagram given by the composite
$$
\nec\xrightarrow{\dim} \square\xrightarrow{\yo} \CSet
$$
where $\yo$ is the Yoneda embedding.
\end{Prop}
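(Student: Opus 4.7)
My plan is to invoke Proposition \ref{proposition: embedding of necklace diagrams into necklicial functors}, which tells us that $\Phi$ is fully faithful. Under the equivalence there, to prove $N^{\cub}$ is necklicial with associated diagram $\yo\circ\dim$, it suffices to exhibit a natural isomorphism $W_{c}\simeq\Phi(\yo\circ\dim)$ in $\Fun(\simp,\Cat_{\square})$. I would do this by comparing both diagrams level-wise.

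First, on hom-objects: under the identification of a necklace with a pair $(T,p)$, the bead $\Delta^{k}$ corresponds to $(\{0,k\},k)$, whose complement has cardinality $k-1$ for $k\geq 1$ and $0$ for $k=0$. Hence
\[
\Phi(\yo\circ\dim)^{n}(i,j) = \yo([1]^{\dim(\Delta^{j-i})}) = \square^{j-i-1}
\]
for $i<j$, while $\Phi(\yo\circ\dim)^{n}(i,i) = \square^{0}$ and $\Phi(\yo\circ\dim)^{n}(i,j)=\emptyset$ for $i>j$. This matches $W_{c}^{n}(i,j)$ exactly. For the composition, the strong monoidal structure of $\yo\circ\dim$ applied at $i\leq k\leq j$ is generated by $\nu_{k-i,j-k}$, which by Definition \ref{definition: dimension of a necklace}.2 is sent to $\delta^{1}_{k-i}\colon [1]^{j-i-2}\to [1]^{j-i-1}$, and then under $\yo$ to the corresponding coface map of cubical sets. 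This matches the composition law of $W_{c}^{n}$ (with variables renamed).

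For functoriality in $\simp$, given $f:[m]\to [n]$, the map $f_{i,j}:\Delta^{j-i}\to\Delta^{f(j)-f(i)}$ from Construction \ref{construction: cosimplicial object generated by necklicial diagram} must be computed on the generating morphisms of $\simp$. A direct inspection gives, for an inner coface $f=\delta_{k}$, that $f_{i,j}=\id$ when $k\leq i$ or $k>j$ and $f_{i,j}=\delta_{k-i}$ when $i<k\leq j$; for a codegeneracy $f=\sigma_{k}$, that $f_{i,j}=\id$ when $k<i$ or $k\geq j$ and $f_{i,j}=\sigma_{k-i}$ when $i\leq k<j$. Applying the formulas for $\dim$ from Definition \ref{definition: dimension of a necklace}.2 and then Yoneda, these produce precisely the formulas in Definition \ref{definition: cubical nerve} for $W_{c}(\delta_{k})_{i,j}$ and $W_{c}(\sigma_{k})_{i,j}$, including the three sub-cases of $\sigma_{k}$ (top degeneracy, connection, bottom degeneracy) corresponding to the positions $k=i$, $i<k<j-1$, and $k=j-1$. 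Outer cofaces $\delta_{0},\delta_{n}$ of $\simp$ induce only index shifts with identity hom-maps, trivially matching both sides.

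The main nuisance, and the only part I expect to require care, is the bookkeeping of boundary cases: when $j-i=1$ so the hom-objects degenerate to $\square^{0}$, and when $k$ lies at a boundary so the single codegeneracy $\sigma_{0}\colon\Delta^{1}\to\Delta^{0}$ (rather than $\gamma$ or $\sigma_{p}$) appears. In each of these, both sides reduce to the unique map between $\square^{0}$'s or to an identity, so the comparison still holds. Once the check on generators is complete, naturality on all of $\simp$ follows because the generators suffice, yielding the desired isomorphism $W_{c}\simeq\Phi(\yo\circ\dim)$ and hence the claim.
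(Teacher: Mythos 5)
Your proposal is correct and takes essentially the same approach as the paper: compare $W_{c}$ and $\Phi(\yo\circ\dim)$ level-wise on objects, hom-objects, composition (via the inert maps $\nu$ and the formula $\dim(\nu_{k,p-k})=\delta^{1}_{k}$), and then check functoriality on the coface and codegeneracy generators, matching them against the formulas defining $W_{c}(\delta_k)$ and $W_{c}(\sigma_k)$. The only minor difference is your up-front appeal to the full faithfulness of $\Phi$ (Proposition \ref{proposition: embedding of necklace diagrams into necklicial functors}), which is harmless but not actually needed: Definition \ref{definition: necklicial nerve} only asks for some strong monoidal $D$ with $W_c\simeq\Phi(D)$, so exhibiting the isomorphism already suffices; and your slightly more explicit treatment of the degenerate boundary cases ($j-i=1$, $\sigma_0$) is a useful elaboration that the paper's proof handles implicitly via the conditions $k=i<j-1$ and $i<k=j-1$.
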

\begin{proof}
Set $D = \yo\circ \dim$. By the definition of $N^{\cub}$ it suffices to show that $\Phi(D)$ of Construction \ref{construction: cosimplicial object generated by necklicial diagram} coincides with $W_{c}$. Note that for all $n\geq 0$, both $\Phi(D)^{n}$ and $W^{n}_{c}$ have $[n]$ as their set of objects. Then for all $i < j$ in $[n]$ we have
$$
\Phi(D)^{n}(i,j) = \square^{\dim\Delta^{j-i}} = \square^{j-i-1} = W^{n}_{c}(i,j)
$$
while $\Phi(D)^{n}(j,i) = 0 = W^{n}_{c}(j,i)$ and $\Phi(D)^{n}(i,i) = * = W^{n}_{c}(i,i)$. Moreover, the composition of $\Phi(D)^{n}$ is induced by the inert map $\nu_{j-i,k-j}: \Delta^{j-i}\vee \Delta^{k-j}\hookrightarrow \Delta^{k-i}$ for $i < j < k$ in $[n]$, which is mapped to $\delta^{1}_{j-i}: \square^{k-i-2}\rightarrow \square^{k-i-1}$ by $\dim$. It follows that $W^{n}_{c}$ and $\Phi(D)^{n}$ are isomorphic as categories.

It remains to verify that these isomorphims are natural in $n$. Consider the coface map $\delta_{k}: [n-1]\rightarrow [n]$ for $0\leq k\leq n$ and $i < j$ in $[n-1]$. It follows from Construction \ref{construction: cosimplicial object generated by necklicial diagram} that $\Phi(D)(\delta_{k})_{i,j}: \Phi(D)^{n}(i,j)\rightarrow \Phi(D)^{n+1}(\delta_{k}(i),\delta_{k}(j))$ is given by
$$
\Phi(D)(\delta_{k})_{i,j} =
\begin{cases}
\yo\dim(\delta_{k-i}) & \text{if }i < k\leq j\\
\yo\dim(\id_{\Delta^{j-i}}) & \text{otherwise}
\end{cases}
=
\begin{cases}
\delta^{0}_{k-i} & \text{if }i < k\leq j\\
\id_{\square^{j-i-1}} & \text{otherwise}
\end{cases}
$$
which coincides with $W_{c}(\delta_{k})_{i,j}: W^{n}_{c}(i,j)\rightarrow W^{n+1}_{c}(\delta_{k}(i),\delta_{k}(j))$. Similarly, consider the codegeneracy map $\sigma_{k}: [n+1]\rightarrow [n]$ for $0\leq k\leq n$ and $i < j$ in $[n+1]$. Then
$$
\Phi(D)(\sigma_{k})_{i,j} =
\begin{cases}
\yo\dim(\sigma_{k-i}) & \text{if }i\leq k < j\\
\yo\dim(\id_{\Delta^{j-i}}) & \text{otherwise}
\end{cases}
=
\begin{cases}
\sigma_{1} & \text{if }k = i < j-1\\
\gamma_{k-i} & \text{if }i < k < j-1\\
\sigma_{j-i-1} & \text{if } i < k = j-1\\
\id_{\square^{j-i-1}} & \text{otherwise}
\end{cases}
$$
which coincides with $W_{c}(\sigma_{k})_{i,j}$.
\end{proof}

In \cite{legrignou2020cubical}, Le Grignou provides a general procedure for constructing right-adjoint functors $\mathcal{W}\Cat\rightarrow \SSet$ from a given strong monoidal functor $\square\rightarrow \mathcal{W}$. In fact, it is shown in \cite{cisinski2006prefaisceaux} that such strong monoidal functors are equivalent to so-called ``monoidal segments'' in $\mathcal{W}$, but we will not go further into them here.

\begin{Con}\label{construction: cubical nerve procedure}
Let $H: \square\rightarrow \mathcal{W}$ be a given monoidal functor. By left Kan extension, $H$ induces a monoidal adjunction $L^{H}: \CSet\leftrightarrows \mathcal{W}: R^{H}$, i.e. the left-adjoint $L^{H}$ is strong monoidal as well, whereby the right-adjoint $R^{H}$ is lax monoidal. Applying this adjunction to hom-objects, we obtain an adjunction
$$
L^{H}: \Cat_{\square}\leftrightarrows \mathcal{W}\Cat: R^{H}
$$
Then consider the composite
$$
N^{H}: \mathcal{W}\Cat\xrightarrow {R^{H}} \Cat_{\square}\xrightarrow{N^{\cub}} \SSet
$$
\end{Con}

\begin{Cor}\label{corollary: cubical vs necklicial procedure}
Let $H: \square\rightarrow \mathcal{W}$ be a strong monoidal functor. Then the nerve $N^{H}$ of Construction \ref{construction: cubical nerve procedure} is necklicial with associated diagram given by the composite
$$
\nec\xrightarrow{\dim} \square\xrightarrow{H} \mathcal{W}
$$
\end{Cor}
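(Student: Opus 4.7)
The plan is to identify $N^{H}$ with the necklicial nerve associated to $H \circ \dim$, by combining Proposition \ref{proposition: cubical nerve is necklicial} with a general compatibility between $\Phi$ and post-composition by strong monoidal functors. Since $H: \square \rightarrow \mathcal{W}$ is strong monoidal, so is its left Kan extension $L^{H}: \CSet \rightarrow \mathcal{W}$. Hence $L^{H}$ lifts to a functor $L^{H}_{*}: \Cat_{\square} \rightarrow \mathcal{W}\Cat$ by applying $L^{H}$ to hom-objects, and this lift is left-adjoint to $R^{H}$. Unwinding the definitions of $N^{H}$ and $N^{\cub}$, we obtain natural isomorphisms
$$
N^{H}(\mathcal{C})_{n} \;=\; \Cat_{\square}(W^{n}_{c}, R^{H}(\mathcal{C})) \;\simeq\; \mathcal{W}\Cat(L^{H}_{*}(W^{n}_{c}), \mathcal{C}).
$$

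The key observation is that for \emph{any} strong monoidal functor $L: \mathcal{W}_{1} \rightarrow \mathcal{W}_{2}$ and strong monoidal diagram $D: \nec \rightarrow \mathcal{W}_{1}$, there is a natural isomorphism $L_{*} \circ \Phi(D) \simeq \Phi(L \circ D)$ in $\Fun(\simp, \mathcal{W}_{2}\Cat)$. Indeed, by Construction \ref{construction: cosimplicial object generated by necklicial diagram}, both sides have object set $[n]$ at each level, hom-objects $L(D(\Delta^{j-i}))$ for $i \leq j$ (and an initial object for $i > j$), and compositions defined via the strong monoidal structures on $D$ and $L \circ D$, which agree because $L$ preserves monoidal products strongly. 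The functoriality in $\simp$ is likewise given on both sides by applying $L$ to $D(f_{i,j})$ for each morphism $f: [m] \rightarrow [n]$ and each $i \leq j$ in $[m]$.

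Applying this observation with $L = L^{H}$ and $D = \yo \circ \dim$, and invoking Proposition \ref{proposition: cubical nerve is necklicial} in the form $W_{c} \simeq \Phi(\yo \circ \dim)$, we obtain
$$
L^{H}_{*}(W^{n}_{c}) \;\simeq\; \Phi(L^{H} \circ \yo \circ \dim)^{n} \;\simeq\; \Phi(H \circ \dim)^{n},
$$
where the last isomorphism uses that $L^{H} \circ \yo \simeq H$, which is standard for the left Kan extension along the fully faithful Yoneda embedding. Chaining these isomorphisms yields $N^{H}(\mathcal{C})_{n} \simeq \mathcal{W}\Cat(\Phi(H \circ \dim)^{n}, \mathcal{C}) = N^{\Phi(H \circ \dim)}(\mathcal{C})_{n}$, naturally in $n$ and $\mathcal{C}$. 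By Definition \ref{definition: necklicial nerve}, this is precisely the statement that $N^{H}$ is necklicial with associated diagram $H \circ \dim$. The only nontrivial verification is the naturality in $\simp$ of the isomorphism $L_{*} \circ \Phi(D) \simeq \Phi(L \circ D)$, which amounts to tracing through the definition of $\Phi$ on coface and codegeneracy maps; this is routine but the one step where I would be most careful.
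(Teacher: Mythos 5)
Your proof is correct and follows essentially the same route as the paper's: both reduce to the chain of isomorphisms $L^{H}_{*}(W^{n}_{c})\simeq L^{H}_{*}(\Phi(\yo\circ\dim)^{n})\simeq\Phi(L^{H}\circ\yo\circ\dim)^{n}\simeq\Phi(H\circ\dim)^{n}$, using Proposition \ref{proposition: cubical nerve is necklicial} and $L^{H}\circ\yo\simeq H$; you merely unwind the adjunction $L^{H}_{*}\dashv R^{H}$ more explicitly where the paper invokes equivalence \eqref{equation: classical equivalence}. One small caveat: your ``key observation'' as stated for an arbitrary strong monoidal $L$ needs $L$ to preserve the initial object (so that $L(0)$ remains initial, matching $\Phi(L\circ D)^{n}(j,i)$ for $i<j$); this holds here because $L^{H}$ is a left adjoint, so the application is fine.
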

\begin{proof}
Let $L^{cub}$ denote the left-adjoint of $N^{cub}$. Then it suffices to note that we have an isomorphism of $\mathcal{W}$-categories which is natural in $n\geq 0$:
$$
L^{H}\left(L^{cub}(\Delta^{n})\right)\simeq L^{H}(W_{c}^{n})\simeq L^{H}(\Phi(\yo\circ \dim)^{n})\simeq \Phi(L^{H}\circ \yo\circ \dim)^{n}\simeq \Phi(H\circ \dim)^{n}.
$$
\end{proof}

\section{Enriched nerves of enriched categories}\label{section: Enriched nerves of enriched categories}

For the remainder of the text we return to the general case where $\mathcal{W}$ is a $\mathcal{V}$-enriched monoidal category as described in \S\ref{subsection: Notations and conventions}. In other words, we consider nerves of the form $\mathcal{W}\Cat\rightarrow \ts\mathcal{V}$, generated by a strong monoidal diagram $D: \nec\rightarrow \mathcal{W}$. This section contains the main results of the paper, one for each subsection.

\subsection{A general procedure}\label{subsection: A general procedure}

In this subsection, we describe a general procedure for producing enriched nerve functors $N^{D}_{\mathcal{V}}: \mathcal{W}\Cat\rightarrow \ts\mathcal{V}$, landing in templicial objects (see \S\ref{subsection: Templicial objects}), from a given strong monoidal functor $D: \nec\rightarrow \mathcal{W}$. In fact, we will generalize to when $D$ is merely \emph{colax} monoidal (see Construction \ref{construction: nerve generated by necklicial diagram}) as this will come into play in some examples in Section \ref{section: Examples}. In this case, the resulting functor $N^{D}_{\mathcal{V}}$ is not guaranteed have a left-adjoint however.

This procedure further allows to lift any necklicial nerve to an enriched version landing in $\ts\mathcal{V}$ (Theorem \ref{theorem: underlying D-nerve is cat. nerve assoc. to D}). We end the subsection with a description of the simplices of a necklicial nerve in terms of $D$ (Corollary \ref{corollary: n-simplex of underlying D-nerve}).

\begin{Def}\label{definition: adjunction induced by strong monoidal necklace diagram}
Given a functor $D: \nec\rightarrow \mathcal{W}$, we define an adjunction
\begin{equation}\label{diagram: adjunction induced by strong monoidal necklace diagram}
\begin{tikzcd}
	{\mathcal{V}^{\nec^{op}}} & {\mathcal{W}}
	\arrow[""{name=0, anchor=center, inner sep=0}, "{\mathfrak{l}^{D}_{\mathcal{V}}}", shift left=2, from=1-1, to=1-2]
	\arrow[""{name=1, anchor=center, inner sep=0}, "{\mathfrak{n}^{D}_{\mathcal{V}}}", shift left=2, from=1-2, to=1-1]
	\arrow["\dashv"{anchor=center, rotate=-90}, draw=none, from=0, to=1]
\end{tikzcd}
\end{equation}
between the category $\mathcal{V}^{\nec^{op}}$ of functors $\nec^{op}\rightarrow \mathcal{V}$ and $\mathcal{W}$, as follows. Define:
\begin{itemize}
\item $\mathfrak{l}^{D}_{\mathcal{V}}$ by the following weighted colimit in $\mathcal{W}$, which can be realized as a coequalizer, for any functor $X: \nec^{op}\rightarrow \mathcal{V}$:
$$
\mathfrak{l}^{D}_{\mathcal{V}}(X) = {\colim_{T\in \nec}}^{X_{T}}D(T)\in \mathcal{W}\simeq \mathrm{coeq}\left(\coprod_{\substack{T\rightarrow U\\ \text{in }\nec}}X_{U}\cdot D(T)\underset{\beta}{\overset{\alpha}{\rightrightarrows}} \coprod_{T\in \nec}X_{T}\cdot D(T)\right)
$$
where $\alpha$ and $\beta$ apply $X$ and $D$ to a map $T\rightarrow U$ respectively, and
\item $\mathfrak{n}^{D}_{\mathcal{V}}$ by the $\mathcal{V}$-enrichment of $\mathcal{W}$, for any $T\in \nec$ and $W\in \mathcal{W}$:
$$
\mathfrak{n}^{D}_{\mathcal{V}}(W) = [D(-),W]\in \mathcal{V}^{\nec^{op}}.
$$
\end{itemize}
It is clear that these are well-defined functors and they are adjoint by definition of the weighted colimit (see \cite[Definition 7.4.1]{riehl2014categorical} for example).

If $\mathcal{V} = \Set$, then we also write $\mathfrak{l}^{D} = \mathfrak{l}^{D}_{\Set}$ and $\mathfrak{n}^{D} = \mathfrak{n}^{D}_{\Set}$. Note that $\mathfrak{l}^{D}\simeq \mathfrak{l}^{D}_{\mathcal{V}}\circ F$ and $\mathfrak{n}^{D}\simeq U\circ \mathfrak{n}^{D}_{\mathcal{V}}$, where we used the free-forgetful adjunction $F: \Set^{\nec^{op}}\leftrightarrows \mathcal{V}^{\nec^{op}}: U$.
\end{Def}

The main reason for requiring the isomorphism \eqref{diagram: interchange} is the following lemma.

\begin{Lem}\label{lemma: weighted colimit colax monoidal}
Let $D: \nec\rightarrow \mathcal{W}$ be a colax (respectively strong) monoidal functor. Then $\mathfrak{l}^{D}_{\mathcal{V}}: \mathcal{V}^{\nec^{op}}\rightarrow \mathcal{W}$ is colax (respectively strong) monoidal with respect to the Day convolution on $\mathcal{V}^{\nec^{op}}$.
\end{Lem}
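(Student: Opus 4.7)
The plan is to construct the colax monoidal structure on $\mathfrak{l}^D_\mathcal{V}$ by a direct coend manipulation and then check coherence. First I would rewrite the defining weighted colimit as a coend,
$$
\mathfrak{l}^D_\mathcal{V}(X) \simeq \int^{T \in \nec} X_T \cdot D(T),
$$
and use the coend presentation of Day convolution,
$$
(X \otimes_{Day} Y)(T) \simeq \int^{U,V\in\nec} F\nec(T, U \vee V) \otimes (X_U \otimes Y_V).
$$
Combining these, Fubini for coends and the enriched co-Yoneda lemma together yield a natural isomorphism
$$
\mathfrak{l}^D_\mathcal{V}(X \otimes_{Day} Y) \simeq \int^{U,V} (X_U \otimes Y_V) \cdot D(U \vee V).
$$

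Next, I would apply the colax comultiplication $D(U \vee V) \to D(U) \otimes_\mathcal{W} D(V)$ of $D$ under the coend, use the interchange isomorphism \eqref{diagram: interchange} to rewrite $(X_U \otimes Y_V) \cdot (D(U) \otimes_\mathcal{W} D(V)) \simeq (X_U \cdot D(U)) \otimes_\mathcal{W} (Y_V \cdot D(V))$, and finally invoke the assumption that $\otimes_\mathcal{W}$ preserves colimits in each variable to split the double coend:
$$
\left( \int^{U} X_U \cdot D(U) \right) \otimes_\mathcal{W} \left( \int^{V} Y_V \cdot D(V) \right) = \mathfrak{l}^D_\mathcal{V}(X) \otimes_\mathcal{W} \mathfrak{l}^D_\mathcal{V}(Y).
$$
The composite of these three natural transformations is the candidate colax comultiplication. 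For the counit, since $\Delta^0$ is the monoidal unit of $\nec$ we have $\underline{I} \simeq F(\yo(\Delta^0))$, so the enriched co-Yoneda lemma gives $\mathfrak{l}^D_\mathcal{V}(\underline{I}) \simeq D(\Delta^0)$, which I compose with the counit $D(\Delta^0) \to I_\mathcal{W}$ coming from the colax structure of $D$.

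To conclude, I would verify coassociativity and counitality. Both reduce, by naturality of the Fubini and co-Yoneda identifications and by the cocontinuity of $\otimes_\mathcal{W}$, to the corresponding coherence axioms already satisfied by $D$ (together with coherence of the $\mathcal{V}$-action via \eqref{diagram: interchange}). For the strong monoidal case, one observes that every step above — the co-Yoneda identifications, the interchange isomorphism, and the structure maps of $D$ — is an isomorphism whenever $D$ is strong monoidal, so the constructed comultiplication and counit are automatically isomorphisms as well.

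The main obstacle I anticipate is the diagrammatic bookkeeping for coassociativity, since one must push three successive applications of co-Yoneda through the interchange isomorphisms and the cocontinuity of $\otimes_\mathcal{W}$; however, this is a formal check rather than a conceptual difficulty, once the coend calculus above is set up.
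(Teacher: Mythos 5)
Your proposal is correct and follows essentially the same route as the paper: the paper writes $X\otimes_{Day}Y\simeq\Lan_{\vee}\bigl(X(-)\otimes Y(-)\bigr)$ and works with weighted colimits, whereas you phrase the same identifications via coends and co-Yoneda, but the key steps — reducing $\mathfrak{l}^{D}_{\mathcal{V}}(X\otimes_{Day}Y)$ to a (co)limit over pairs $(U,V)$, applying the colax structure map $D(U\vee V)\to D(U)\otimes_{\mathcal{W}}D(V)$, using the interchange isomorphism \eqref{diagram: interchange} together with cocontinuity of $\otimes_{\mathcal{W}}$, and identifying $\mathfrak{l}^{D}_{\mathcal{V}}(\underline{I})\simeq D(\Delta^{0})$ — coincide with the paper's. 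The strong monoidal case is handled the same way in both.
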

\begin{proof}
Let $X,Y\in \mathcal{V}^{\nec^{op}}$. Since $X\otimes_{Day} Y\simeq \Lan_{\vee}(X(-)\otimes Y(-))$, the left Kan extension of $\nec^{op}\times \nec^{op}\rightarrow \mathcal{V}: (T,U)\mapsto X_{T}\otimes Y_{U}$ along $\vee: \nec^{op}\times \nec^{op}\rightarrow \nec^{op}$, we have:
$$
\mathfrak{l}^{D}_{\mathcal{V}}(X\otimes_{Day} Y) = {\colim_{T\in \nec}}^{(X\otimes_{Day} Y)_{T}}D(T)\simeq {\colim_{U,V\in \nec}}^{X_{U}\otimes Y_{V}}D(U\vee V)
$$
Now by the colax monoidal structure of $D$, we have an induced morphism
$$
\mathfrak{l}^{D}_{\mathcal{V}}(X\otimes_{Day} Y)\rightarrow {\colim_{U,V\in \nec}}^{X_{U}\otimes Y_{V}}D(U)\otimes_{\mathcal{W}} D(V)\rightarrow \mathfrak{l}^{D}_{\mathcal{V}}(X)\otimes_{\mathcal{W}} \mathfrak{l}^{D}_{\mathcal{V}}(Y)
$$
It follows from \eqref{diagram: interchange} that the second morphism is always an isomorphism. If $D$ is moreover strong monoidal, then the first morphism is an isomorphism as well. Further, the monoidal unit of $\mathcal{V}^{\nec^{op}}$ is $\underline{I} = F(\nec(-,\{0\}))$, the constant functor on $I$. Now, the colax monoidal structure on $D$ induces a morphism
$$
\mathfrak{l}^{D}_{\mathcal{V}}(\underline{I}) = {\colim_{T\in \nec}}^{I}D(T)\simeq D(\Delta^{0})\rightarrow I
$$
which is again an isomorphism if $D$ is strong monoidal. The coassociativity and counitality immediately follow from those of $D$, so that these morphisms indeed equip $\mathfrak{l}^{D}_{\mathcal{V}}$ with a colax monoidal structure, which is strong monoidal when $D$ is.
\end{proof}

\begin{Con}\label{construction: nerve generated by necklicial diagram}
We construct a functor
$$
\Colax(\nec,\mathcal{W})^{op}\rightarrow \Fun(\mathcal{W}\Cat,S_{\otimes}\mathcal{V})
$$
from the opposite of the category of colax monoidal functors $\nec\rightarrow \mathcal{W}$ and monoidal natural transformations between them.

Given a colax monoidal functor $D: \nec\rightarrow \mathcal{W}$, consider the adjunction $\mathfrak{l}^{D}_{\mathcal{V}}\dashv \mathfrak{n}^{D}_{\mathcal{V}}$. By Lemma \ref{lemma: weighted colimit colax monoidal}, the left-adjoint $\mathfrak{l}^{D}_{\mathcal{V}}$ is colax monoidal and thus $\mathfrak{n}^{D}_{\mathcal{V}}$ is lax monoidal. Its lax structure is induced by (for $U,V\in \nec$ and $W,W'\in \mathcal{W}$):
$$
\mathfrak{n}^{D}_{\mathcal{V}}(W)_{U}\otimes \mathfrak{n}^{D}_{\mathcal{V}}(W')_{V} = [D(U),W]\otimes [D(V),W']\rightarrow [D(U\vee T), W\otimes_{\mathcal{W}} W'] = \mathfrak{n}^{D}_{\mathcal{V}}(W\otimes_{\mathcal{W}} W')_{U\vee V}
$$
where we used the fact that $\otimes_{\mathcal{W}}$ is a $\mathcal{V}$-functor and the colax monoidal structure of $D$.

Therefore, applying $\mathfrak{n}^{D}_{\mathcal{V}}$ to hom-objects induces a functor
$$
\mathfrak{n}^{D}_{\mathcal{V}}: \mathcal{W}\Cat\rightarrow \mathcal{V}\Cat_{\nec}
$$
Finally, we obtain a functor as the composite 
$$
N^{D}_{\mathcal{V}}: \mathcal{W}\Cat\xrightarrow{\mathfrak{n}^{D}_{\mathcal{V}}} \mathcal{V}\Cat_{\nec}\xrightarrow{(-)^{temp}} \ts\mathcal{V}
$$
where $(-)^{temp}$ is the right-adjoint of \eqref{diagram: nec-temp adjunction}. It is clear that the construction $D\mapsto N^{D}_{\mathcal{V}}$ is functorial in $D$.
\end{Con}

\begin{Def}
Let $D: \nec\rightarrow \mathcal{W}$ be a colax monoidal functor. We call the induced functor $N^{D}_{\mathcal{V}}$ of Construction \ref{construction: nerve generated by necklicial diagram} the \emph{nerve generated by $D$} or \emph{$D$-nerve}.

If $\mathcal{V} = \Set$, we will also write $N^{D} = N^{D}_{\Set}$.
\end{Def}

\begin{Ex}
Let $\mathcal{C}$ be a small $\mathcal{W}$-category and $D: \nec\rightarrow \mathcal{W}$ a colax monoidal functor. We describe the $D$-nerve $N^{D}_{\mathcal{V}}(\mathcal{C})$ in low dimensions, using the inductive description of the functor $(-)^{temp}$ (see \cite[Construction 3.11]{lowen2024enriched}).
\begin{itemize}
\item The vertices of $N^{D}_{\mathcal{V}}(\mathcal{C})$ are given by the objects of $\mathcal{C}$.
\item Take objects $A,B\in \Ob(\mathcal{C})$. Then
$$
N^{D}_{\mathcal{V}}(\mathcal{C})_{1}(A,B) = [D(\Delta^{1}),\mathcal{C}(A,B)]\in \mathcal{V}
$$
\item Take objects $A,B\in \Ob(\mathcal{C})$. Then $N^{D}_{\mathcal{V}}(\mathcal{C})_{2}(A,B)$ is given by the following pullback in $\mathcal{V}$:
\[\begin{tikzcd}
	{N^{D}_{\mathcal{V}}(\mathcal{C})_{2}(A,B)} & {\coprod_{C\in \Ob(\mathcal{C})}[D(\Delta^{1}),\mathcal{C}(A,C)]\otimes [D(\Delta^{1}),\mathcal{C}(C,B)]} \\
	{[D(\Delta^{2}),\mathcal{C}(A,B)]} & {[D(\Delta^{1}\vee \Delta^{1}),\mathcal{C}(A,B)]}
	\arrow[from=1-2, to=2-2]
	\arrow["\mu_{1,1}", from=1-1, to=1-2]
	\arrow["p_{2}"', from=1-1, to=2-1]
	\arrow["{[D(\nu_{1,1}),\mathcal{C}(A,B)]}"', from=2-1, to=2-2]
\end{tikzcd}\]
where the right vertical map is induced by $D(\Delta^{1}\vee \Delta^{1})\rightarrow D(\Delta^{1})\otimes D(\Delta^{1})$ and the composition in $\mathcal{C}$.

Then the induced morphism $\mu_{1,1}$ is the comultiplication of $N^{D}_{\mathcal{V}}(\mathcal{C})$ and the face morphism $d_{1}: N^{D}_{\mathcal{V}}(\mathcal{C})_{2}\rightarrow N^{D}_{\mathcal{V}}(\mathcal{C})_{1}$ is given by $[D(\delta_{1}),\mathcal{C}(A,B)]\circ p_{2}$.
\end{itemize}
\end{Ex}

\begin{Prop}\label{proposition: nerve gen. by strong mon. diagram has left-adj.}
Let $D: \nec\rightarrow \mathcal{W}$ be a strong monoidal functor. Then the $D$-nerve $N^{D}_{\mathcal{V}}$ has a left-adjoint $L^{D}_{\mathcal{V}}: \ts\mathcal{V} \rightarrow \mathcal{W}\Cat$.
\end{Prop}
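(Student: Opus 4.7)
The plan is to construct $L^{D}_{\mathcal{V}}$ as the composite of two left adjoints, each arising from an already-established adjunction. Recall from Construction \ref{construction: nerve generated by necklicial diagram} that $N^{D}_{\mathcal{V}} = (-)^{temp} \circ \mathfrak{n}^{D}_{\mathcal{V}}$. The second factor already has a left adjoint $(-)^{nec}$ by Theorem \ref{theorem: nec-temp adjunction}, so it suffices to exhibit a left adjoint to the functor $\mathfrak{n}^{D}_{\mathcal{V}}: \mathcal{W}\Cat \rightarrow \mathcal{V}\Cat_{\nec}$ induced on enriched categories.

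The idea is then to invoke the standard change-of-base construction. By Lemma \ref{lemma: weighted colimit colax monoidal}, the strong monoidality hypothesis on $D$ upgrades $\mathfrak{l}^{D}_{\mathcal{V}}: \mathcal{V}^{\nec^{op}} \rightarrow \mathcal{W}$ from colax to strong monoidal with respect to Day convolution (and consequently its right adjoint $\mathfrak{n}^{D}_{\mathcal{V}}$ is lax monoidal, as was already exploited in the construction). Since a strong monoidal functor preserves algebraic structure of the form ``unit plus associative multiplication'', I would define the candidate left adjoint $(\mathfrak{l}^{D}_{\mathcal{V}})_{*}: \mathcal{V}\Cat_{\nec} \rightarrow \mathcal{W}\Cat$ by applying $\mathfrak{l}^{D}_{\mathcal{V}}$ to hom-objects, keeping the object set fixed, with composition transported via
$$
\mathfrak{l}^{D}_{\mathcal{V}}(\mathcal{C}(A,B)) \otimes_{\mathcal{W}} \mathfrak{l}^{D}_{\mathcal{V}}(\mathcal{C}(B,C)) \xrightarrow{\sim} \mathfrak{l}^{D}_{\mathcal{V}}(\mathcal{C}(A,B) \otimes_{Day} \mathcal{C}(B,C)) \rightarrow \mathfrak{l}^{D}_{\mathcal{V}}(\mathcal{C}(A,C)),
$$
and with unit coming from the strong unital structure $I_{\mathcal{W}} \simeq \mathfrak{l}^{D}_{\mathcal{V}}(\underline{I})$.

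To establish the adjunction $(\mathfrak{l}^{D}_{\mathcal{V}})_{*} \dashv \mathfrak{n}^{D}_{\mathcal{V}}$, I would unpack the hom-sets: a $\mathcal{W}$-functor $(\mathfrak{l}^{D}_{\mathcal{V}})_{*}(\mathcal{C}) \rightarrow \mathcal{D}$ consists of a map of object sets $f: \Ob(\mathcal{C}) \rightarrow \Ob(\mathcal{D})$ together with morphisms $\mathfrak{l}^{D}_{\mathcal{V}}(\mathcal{C}(A,B)) \rightarrow \mathcal{D}(f(A), f(B))$ in $\mathcal{W}$ satisfying the usual unit and associativity axioms. Under the pointwise adjunction $\mathfrak{l}^{D}_{\mathcal{V}} \dashv \mathfrak{n}^{D}_{\mathcal{V}}$, these correspond bijectively to morphisms $\mathcal{C}(A,B) \rightarrow \mathfrak{n}^{D}_{\mathcal{V}}(\mathcal{D}(f(A), f(B)))$ in $\mathcal{V}^{\nec^{op}}$, and the compatibility with composition on the two sides translates into one another precisely because the counit of $\mathfrak{l}^{D}_{\mathcal{V}} \dashv \mathfrak{n}^{D}_{\mathcal{V}}$ is a monoidal transformation (a standard consequence of the left adjoint being strong monoidal). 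Composing then gives $L^{D}_{\mathcal{V}} = (\mathfrak{l}^{D}_{\mathcal{V}})_{*} \circ (-)^{nec}$, as required.

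The main obstacle is essentially bookkeeping: one must carefully verify that $\mathcal{V}\Cat_{\nec}$-functoriality translates under change of base to $\mathcal{W}$-functoriality, and that the bijection on hom-sets is natural with respect to the variation of the object sets (which is where the lax/strong monoidal distinction is felt). However, this is an instance of the well-known fact that a monoidal adjunction between suitable monoidal categories lifts to a $2$-adjunction between the corresponding $2$-categories of enriched categories, so no genuinely new computation is needed beyond what Lemma \ref{lemma: weighted colimit colax monoidal} already provides.
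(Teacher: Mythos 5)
Your proof is correct and takes essentially the same route as the paper: decompose $N^{D}_{\mathcal{V}} = (-)^{temp}\circ \mathfrak{n}^{D}_{\mathcal{V}}$, use Lemma \ref{lemma: weighted colimit colax monoidal} to upgrade $\mathfrak{l}^{D}_{\mathcal{V}}$ to strong monoidal so that $\mathfrak{l}^{D}_{\mathcal{V}}\dashv \mathfrak{n}^{D}_{\mathcal{V}}$ is a monoidal adjunction, then invoke change-of-base to get an adjunction on enriched categories and compose with $(-)^{nec}\dashv (-)^{temp}$. The paper simply cites the change-of-base fact without unpacking the hom-set correspondence, which you elaborate; otherwise the argument is identical.
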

\begin{proof}
Since $D$ is strong monoidal, it follows that $\mathfrak{l}^{D}_{\mathcal{V}}: \mathcal{V}^{\nec^{op}}\rightarrow \mathcal{W}$ is strong monoidal as well by Lemma \ref{lemma: weighted colimit colax monoidal}, i.e. \eqref{diagram: adjunction induced by strong monoidal necklace diagram} is a monoidal adjunction. Thus applying $\mathfrak{l}^{D}_{\mathcal{V}}$ on hom-objects defines a functor $\mathfrak{l}^{D}_{\mathcal{V}}: \mathcal{V}\Cat_{\nec}\rightarrow \mathcal{W}\Cat$ which is left-adjoint to $\mathfrak{n}^{D}_{\mathcal{V}}: \mathcal{W}\Cat\rightarrow \mathcal{V}\Cat_{\nec}$. Hence, $N^{D}_{\mathcal{V}}$ has a left-adjoint given by the composite
$$
L^{D}_{\mathcal{V}}: \ts\mathcal{V}\xrightarrow{(-)^{nec}} \mathcal{V}\Cat_{\nec}\xrightarrow{\mathfrak{l}^{D}_{\mathcal{V}}} \mathcal{W}\Cat
$$
\end{proof}

\begin{Rem}
Note that by definition of the $D$-nerve, the left-adjoint $L^{D}_{\mathcal{V}}$ of $N^{D}_{\mathcal{V}}$ is given as follows. For a templicial object $(X,S)$, its associated $\mathcal{W}$-category $L^{D}_{\mathcal{V}}(X)$ has object set $S$ and for all $a,b\in S$, the hom-object is given by the weighted colimit $$
L^{D}_{\mathcal{V}}(X)(a,b)\simeq {\colim_{T\in \nec}}^{X_{T}(a,b)}D(T)
$$
The composition law of $L^{D}_{\mathcal{V}}(D)$ is induced by the isomorphisms $D(T)\otimes D(U)\simeq D(T\vee U)$ for $T,U\in \nec$.
\end{Rem}

Let us compare the $D$-nerve to the necklicial nerves of Section \ref{section: Nerves of enriched categories}.

\begin{Thm}\label{theorem: underlying D-nerve is cat. nerve assoc. to D}
Let $D: \nec\rightarrow \mathcal{W}$ be strong monoidal functor. Then
$$
\tilde{U}\circ N^{D}_{\mathcal{V}}\simeq N^{\Phi(D)}
$$
where $N^{\Phi(D)}: \mathcal{W}\Cat\rightarrow \SSet$ is the right-adjoint associated to $\Phi(D)$ under \eqref{equation: classical equivalence}, and this isomorphism is natural in $D$. In particular, we may identify $N^{\Phi(D)}$ with $N^{D}$.
\end{Thm}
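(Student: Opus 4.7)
The plan is to reduce the problem to identifying left adjoints on representables. Since $D$ is strong monoidal, Proposition \ref{proposition: nerve gen. by strong mon. diagram has left-adj.} furnishes a left adjoint $L^{D}_{\mathcal{V}}$ of $N^{D}_{\mathcal{V}}$, so $\tilde{U}\circ N^{D}_{\mathcal{V}}$ has left adjoint $L^{D}_{\mathcal{V}}\circ \tilde{F}$, while by construction $N^{\Phi(D)}$ has left adjoint $L^{\Phi(D)}=\Lan_{\yo}\Phi(D)$. By uniqueness of adjoints, it then suffices to produce a natural isomorphism $L^{D}_{\mathcal{V}}\circ \tilde{F}\simeq L^{\Phi(D)}$ of cocontinuous functors $\SSet\to \mathcal{W}\Cat$. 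Since every simplicial set is canonically the colimit of its simplices and both sides preserve colimits, the universal property of the left Kan extension reduces the problem to establishing a natural isomorphism $L^{D}_{\mathcal{V}}(\tilde{F}(\Delta^{n}))\simeq \Phi(D)^{n}$ for $[n]\in \simp$.

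I would then compute the hom-objects. Both $\mathcal{W}$-categories have $[n]$ as their set of objects. For $a,b\in [n]$ and any necklace $T$, the levelwise definition of $\tilde{F}$ together with the fact that $F:\Set\to \mathcal{V}$ is strong monoidal and coproduct-preserving yields
\[
\tilde{F}(\Delta^{n})_{T}(a,b)\;\simeq\; F\bigl(\SSet_{*,*}(T,\Delta^{n}_{a,b})\bigr).
\]
When $a\leq b$, pullback along the unique bipointed inclusion $\Delta^{b-a}\hookrightarrow \Delta^{n}_{a,b}$ gives a natural bijection $\SSet_{*,*}(T,\Delta^{n}_{a,b})\simeq \nec(T,\Delta^{b-a})$, while for $a>b$ this set is empty. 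Converting the $\mathcal{V}$-weighting to a $\Set$-weighting through $F\dashv U$ and applying the co-Yoneda lemma then gives
\[
L^{D}_{\mathcal{V}}(\tilde{F}(\Delta^{n}))(a,b)\;=\; {\colim_{T\in \nec}}^{\tilde{F}(\Delta^{n})_{T}(a,b)}D(T)\;\simeq\; {\colim_{T\in \nec}}^{\nec(T,\Delta^{b-a})}D(T)\;\simeq\; D(\Delta^{b-a})
\]
for $a\leq b$, and an initial object of $\mathcal{W}$ when $a>b$, in agreement with the hom-objects of $\Phi(D)^{n}$.

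It remains to check the compositions and the simplicial functoriality. For a triple $a\leq b\leq c$, the composition in $L^{D}_{\mathcal{V}}(\tilde{F}(\Delta^{n}))$ is induced, via $\mathfrak{l}^{D}_{\mathcal{V}}$, by the composition in the necklace category $\tilde{F}(\Delta^{n})^{nec}$; under the identifications above, the representing element of $\tilde{F}(\Delta^{n})_{\Delta^{b-a}\vee \Delta^{c-b}}(a,c)$ corresponds to the inert necklace map $\nu_{b-a,c-b}:\Delta^{b-a}\vee \Delta^{c-b}\hookrightarrow \Delta^{c-a}$, so feeding this through $D$ with its strong monoidal coherence recovers exactly the composition of $\Phi(D)^{n}$ from Construction \ref{construction: cosimplicial object generated by necklicial diagram}. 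Naturality in $[n]\in \simp$ and in $D\in \StrMon(\nec,\mathcal{W})$ follows by analogous bookkeeping. The second claim is then immediate from the first, since for $\mathcal{V}=\Set$ the adjunction $\tilde{F}\dashv \tilde{U}$ is an equivalence by Proposition \ref{proposition: temp. obj. results}.2, whence $N^{D}\simeq \tilde{U}\circ N^{D}_{\mathcal{V}}\simeq N^{\Phi(D)}$. The main obstacle I anticipate is keeping the co-Yoneda reduction honestly compatible with the necklace-category composition law and with simplicial variation, since this requires tracking precisely which necklace map represents a given composite in $\tilde{F}(\Delta^{n})^{nec}$.
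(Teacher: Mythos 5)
Your proof is correct and takes essentially the same approach as the paper: reduce via $L^{D}_{\mathcal{V}}\dashv N^{D}_{\mathcal{V}}$ and Kan extension to exhibiting a natural isomorphism $L^{D}_{\mathcal{V}}\tilde{F}(\Delta^{n})\simeq \Phi(D)^{n}$, then compute hom-objects using the identification $\Delta^{n}_{\bullet}(i,j)\simeq \Delta^{j-i}_{\bullet}(0,j-i)$ in $\Set^{\nec^{op}}$ (a representable) and the co-Yoneda lemma, and finally check compatibility with compositions and simplicial structure. You merely spell out a couple of steps the paper leaves implicit (the bipointed-map interpretation of $\Delta^n_T(a,b)$ and the explicit co-Yoneda reduction).
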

\begin{proof}
It suffices to construct an isomorphism $L^{D}_{\mathcal{V}}\tilde{F}(\Delta^{n})\simeq \Phi(D)^{n}$ which is natural in $n\geq 0$. First note that both $\mathcal{W}$-categories have $[n]$ as their set of objects. Further, for all $i\leq j$ in $[n]$, we have
$$
L^{D}_{\mathcal{V}}\tilde{F}(\Delta^{n})(i,j) = {\colim_{T\in \nec}}^{F(\Delta^{n}_{T}(i,j))}D(T)\simeq D(\Delta^{j-i}) = \Phi(D)^{n}(i,j)
$$
where the isomorphism is induced by the fact that $\Delta^{n}_{\bullet}(i,j)\simeq \Delta^{j-i}_{\bullet}(0,j-i)$ in $\Set^{\nec^{op}}$. It is clear that these isomorphisms are compatible with the compostition and identities of both $\mathcal{W}$-categories. The naturality in $D$ immediately follows from the definitions. Finally if $\mathcal{V} = \Set$, then $\tilde{U}$ is an equivalence and thus we may identify $N^{\Phi(D)}$ with $N^{D}$.
\end{proof}

\begin{Cor}\label{corollary: n-simplex of underlying D-nerve}
Let $D: \nec\rightarrow \mathcal{W}$ be a colax monoidal functor, $\mathcal{C}$ a small $\mathcal{W}$-category and $n\geq 0$ an integer. An $n$-simplex of $N^{D}(\mathcal{C})$ is equivalent to a pair
$$
\left((A_{i})_{i=0}^{n},(\alpha_{i,j})_{0\leq i < j\leq n}\right)
$$
with $A_{i}\in \Ob(\mathcal{C})$ and $\alpha_{i,j}: D(\Delta^{j-i})\rightarrow \mathcal{C}(A_{i},A_{j})$ in $\mathcal{W}$ such that for all $i < k < j$ in $[n]$ the following diagram commutes in $\mathcal{W}$:
\[\begin{tikzcd}
	{D(\Delta^{k-i}\vee \Delta^{j-k})} & {D(\Delta^{j-k})\otimes D(\Delta^{j-k})} & {\mathcal{C}(A_{i},A_{k})\otimes \mathcal{C}(A_{k},A_{j})} \\
	{D(\Delta^{j-i})} & & {\mathcal{C}(A_{i},A_{j})}
	\arrow[from=1-1, to=1-2]
	\arrow["{m_{\mathcal{C}}}", from=1-3, to=2-3]
	\arrow["{\alpha_{i,k}\otimes \alpha_{k,j}}", from=1-2, to=1-3]
	\arrow["{D(\nu_{k-i,j-k})}"', from=1-1, to=2-1]
	\arrow["{\alpha_{i,j}}"', from=2-1, to=2-3]
\end{tikzcd}\]
\end{Cor}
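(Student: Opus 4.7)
The plan is to simply unfold the definition of $N^{D}(\mathcal{C}) = N^{D}_{\Set}(\mathcal{C})$ and specialize the general description of the functor $(-)^{temp}$ on necklace categories in the case $\mathcal{V} = \Set$. By Construction \ref{construction: nerve generated by necklicial diagram}, we have $N^{D}(\mathcal{C}) = \mathfrak{n}^{D}(\mathcal{C})^{temp}$, where $\mathfrak{n}^{D}(\mathcal{C})$ is the necklace category (in $\Cat_{\nec}$) whose object set is $\Ob(\mathcal{C})$ and whose hom-functor $\nec^{op} \to \Set$ is
$$
\mathfrak{n}^{D}(\mathcal{C})(A,B) = \mathcal{W}(D(-),\mathcal{C}(A,B)).
$$
Hence the first step is to invoke Example \ref{example: temp construction in Set-case}, which already gives the desired shape of data: pairs $\bigl((A_{i})_{i=0}^{n},(\alpha_{i,j})_{0\leq i<j\leq n}\bigr)$ with $A_{i}\in \Ob(\mathfrak{n}^{D}(\mathcal{C})) = \Ob(\mathcal{C})$ and $\alpha_{i,j}\in \mathfrak{n}^{D}(\mathcal{C})_{\{0<j-i\}}(A_{i},A_{j})$, subject to $\mathfrak{n}^{D}(\mathcal{C})(\nu_{k-i,j-k})(\alpha_{i,j}) = m_{\mathfrak{n}^{D}(\mathcal{C})}(\alpha_{i,k},\alpha_{k,j})$ for all $i<k<j$.

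Next I would translate each piece. For the hom-sets, evaluating at the bead necklace $\{0<j-i\}\simeq \Delta^{j-i}$ gives $\mathfrak{n}^{D}(\mathcal{C})_{\{0<j-i\}}(A_{i},A_{j}) = \mathcal{W}(D(\Delta^{j-i}),\mathcal{C}(A_{i},A_{j}))$, so each $\alpha_{i,j}$ is indeed a morphism $D(\Delta^{j-i})\to \mathcal{C}(A_{i},A_{j})$ in $\mathcal{W}$. For the left-hand side of the compatibility, the functoriality of $\mathfrak{n}^{D}(\mathcal{C})(A_{i},A_{j})$ on the inert map $\nu_{k-i,j-k}$ is by definition precomposition with $D(\nu_{k-i,j-k}): D(\Delta^{k-i}\vee \Delta^{j-k})\rightarrow D(\Delta^{j-i})$, which produces the lower-left path of the square.

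The main step, which is where the colax monoidal structure of $D$ enters, is to identify the right-hand side $m_{\mathfrak{n}^{D}(\mathcal{C})}(\alpha_{i,k},\alpha_{k,j})$. By Construction \ref{construction: nerve generated by necklicial diagram}, the composition of the necklace category $\mathfrak{n}^{D}(\mathcal{C})$ is obtained from the lax monoidal structure on $\mathfrak{n}^{D}\colon \mathcal{W}\to \Set^{\nec^{op}}$ followed by postcomposition with $m_{\mathcal{C}}\colon \mathcal{C}(A_{i},A_{k})\otimes_{\mathcal{W}}\mathcal{C}(A_{k},A_{j})\to \mathcal{C}(A_{i},A_{j})$. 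Since that lax monoidal structure was constructed by precomposition with the colax coherence $D(\Delta^{k-i}\vee \Delta^{j-k})\to D(\Delta^{k-i})\otimes_{\mathcal{W}} D(\Delta^{j-k})$, the element $m_{\mathfrak{n}^{D}(\mathcal{C})}(\alpha_{i,k},\alpha_{k,j})$ is exactly the upper-right path of the displayed diagram. So the abstract compatibility condition is equivalent to the commutativity of the square as stated.

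The only real subtlety to watch out for is unpacking how the Day-convolution composition in $\Set^{\nec^{op}}$ acts on a pair of elements indexed by the separate beads $\Delta^{k-i}$ and $\Delta^{j-k}$ to yield an element indexed by $\Delta^{k-i}\vee \Delta^{j-k}$; but this follows directly from the defining formula $X\otimes_{Day}Y\simeq \Lan_{\vee}(X\times Y)$ and the universal property of the wedge. Given this, the claimed bijection is immediate and functoriality in the face/degeneracy operators is not needed, since the statement only describes $n$-simplices as a set.
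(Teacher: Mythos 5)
Your proof is correct and follows essentially the same route as the paper: unwind $N^{D} = (-)^{temp}\circ \mathfrak{n}^{D}$ into the description of $n$-simplices from Example \ref{example: temp construction in Set-case}, then translate the abstract compatibility condition $\mathcal{C}(\nu_{k-i,j-k})(\alpha_{i,j}) = m_{\mathcal{C}}(\alpha_{i,k},\alpha_{k,j})$ into the displayed square using the definition of the composition in $\mathfrak{n}^{D}(\mathcal{C})$ (Day convolution plus the lax structure on $\mathfrak{n}^{D}$ coming from the colax structure on $D$). The paper phrases the first step as $N^{D}\simeq \tilde{U}\circ N^{D}_{\mathcal{V}}\simeq (-)^{temp}\circ \mathcal{U}\circ \mathfrak{n}^{D}_{\mathcal{V}}$ and then cites Example \ref{example: temp construction in Set-case}, leaving the translation implicit; you arrive at the same chain of identifications working directly over $\Set$, and your explicit unpacking of the Day-convolution composition and of the lax structure is the content that the paper silently offloads onto the reader — so if anything, your version is slightly more complete.
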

\begin{proof}
From Theorem \ref{theorem: underlying D-nerve is cat. nerve assoc. to D} and \cite[Proposition 3.14]{lowen2024enriched}, we have $N^{D}\simeq \tilde{U}\circ N^{D}_{\mathcal{V}}\simeq (-)^{temp}\circ \mathcal{U}\circ \mathfrak{n}^{D}_{\mathcal{V}}$ where $\mathcal{U}: \mathcal{V}\Cat_{\nec}\rightarrow \Cat_{\nec}$ is the forgetful functor induced by $U: \mathcal{V}\rightarrow \Set$. So this follows from Example \ref{example: temp construction in Set-case}.
\end{proof}

\subsection{Explicitation of the left-adjoint}\label{subsection: Explicitation of the left-adjoint}

For this subsection, we fix a strong monoidal diagram $D: \nec\rightarrow \mathcal{W}$. Then we have an induced left-adjoint $L^{D}_{\mathcal{V}}: \ts\mathcal{V}\rightarrow \mathcal{W}\Cat$ (Proposition \ref{proposition: nerve gen. by strong mon. diagram has left-adj.}). In Theorem \ref{theorem: explicitation of left-adjoint for free temp. obj.} we provide sufficient conditions on $D$ so that $L^{D}_{\mathcal{V}}$ can be described more explicitly. In particular, this also applies to the left-adjoints of some classical necklicial nerves such as the differential graded nerve (see \S\ref{subsection: The differential graded nerve}).

Given a simplicial set $K$ with vertices $a$ and $b$, Dugger and Spivak give an explicit description of the $n$-simplices of the mapping spaces $\mathfrak{C}[K](a,b)$ where $\mathfrak{C}$ is the left-adjoint of the homotopy coherent nerve $N^{hc}$ \cite[Corollary 4.8]{dugger2011rigidification}. We will return to this example in \S\ref{subsection: Homotopy coherent nerves}, where $N^{hc}$ is shown to be generated by a certain diagram $hc: \nec\rightarrow \SSet$ \eqref{diagram: hc}. In this subsection, we thus extend their result in two ways: we replace $hc$ by more general diagrams $D$, allowing different types of nerves; and we replace simplicial sets by templicial objects in $\mathcal{V}$.

\begin{Def}
Let $\nec_{-}$ and $\nec_{+}$ denote the the wide subcategories of $\nec$ consisting of all active surjective necklace maps and all injective necklace maps respectively. We denote the inclusion $\nec_{-}\hookrightarrow \nec$ by $\iota$.
\end{Def}

\begin{Rem}\label{remark: act. surj. - inj. factorization system}
It is easy to see that the subcategories $(\nec_{-},\nec_{+})$ form an (orthogonal) factorization system on $\nec$ in the sense of \cite{bousfield1976constructions}.
\end{Rem}

\begin{Lem}\label{lemma: left Kan ext. of surj. necklace maps}
Let $D': \nec_{-}\rightarrow \mathcal{V}$ be a functor. Then for any $T$, we have an isomorphism
$$
(\Lan_{\iota}D')(T)\simeq \coprod_{\substack{U\hookrightarrow T\\ \text{in }\nec_{+}}}D'(U)
$$
\end{Lem}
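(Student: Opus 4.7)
The plan is to use the pointwise formula for the left Kan extension, namely
$$
(\Lan_{\iota}D')(T) \simeq \colim_{(U,f)\in \iota\downarrow T} D'(U),
$$
and to decompose the comma category $\iota\downarrow T$ using the orthogonal factorization system $(\nec_{-},\nec_{+})$ of Remark \ref{remark: act. surj. - inj. factorization system} into connected components indexed by the injective necklace maps into $T$.

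First, I would unpack $\iota\downarrow T$: its objects are pairs $(U,f)$ with $U\in \nec_{-}$ and $f: U\to T$ a map in $\nec$, and its morphisms $(U_{1},f_{1})\to (U_{2},f_{2})$ are active surjective maps $g: U_{1}\to U_{2}$ with $f_{2}g = f_{1}$. By the factorization system, every $f: U\to T$ factors uniquely as $f = j\circ s$ with $s: U\twoheadrightarrow V$ active surjective and $j: V\hookrightarrow T$ injective. For each injection $j: V\hookrightarrow T$ in $\nec_{+}$, let $\mathcal{C}_{j}\subseteq \iota\downarrow T$ denote the full subcategory spanned by those $(U,f)$ whose injective component equals $j$.

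Next, I would show that $\iota\downarrow T = \coprod_{j}\mathcal{C}_{j}$, where the coproduct runs over all injections $j: V\hookrightarrow T$ in $\nec_{+}$. Given a morphism $g: (U_{1},f_{1})\to (U_{2},f_{2})$ in $\iota\downarrow T$ with factorizations $f_{i} = j_{i}\circ s_{i}$, we have $j_{1}s_{1} = f_{1} = f_{2}g = j_{2}(s_{2}g)$; but $s_{2}g$ is a composite of active surjective maps and hence belongs to $\nec_{-}$, so by uniqueness of factorization $j_{1} = j_{2}$ and $s_{1} = s_{2}g$. Then I would verify that each $\mathcal{C}_{j}$ has a terminal object, namely $(V,j)$ (whose factorization is $\id_{V}\circ j$): for any $(U,f)\in \mathcal{C}_{j}$ the active surjection $s$ from the factorization $f = j\circ s$ gives a morphism $(U,f)\to (V,j)$, and any other such morphism is an active surjection $g$ with $j g = f$, hence equal to $s$ by uniqueness of factorization.

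Combining these observations, the colimit splits as
$$
\colim_{\iota\downarrow T}D'\circ \mathrm{pr} \simeq \coprod_{j: V\hookrightarrow T}\colim_{\mathcal{C}_{j}}D'\circ \mathrm{pr} \simeq \coprod_{U\hookrightarrow T\text{ in }\nec_{+}}D'(U),
$$
yielding the claimed formula. The step that warrants the most care is verifying the decomposition into connected components, which rests on the orthogonality of $(\nec_{-},\nec_{+})$; everything else is formal once the factorization system is in hand.
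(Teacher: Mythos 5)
Your proof is correct and takes essentially the same route as the paper's: the paper's one-line claim that the discrete subcategory of injective maps $U\hookrightarrow T$ is reflective in $\iota\downarrow T$ (hence its inclusion is final) is precisely your decomposition into connected components each with a terminal object, where the reflector sends $(U,f)$ to the injective part $j$ of the factorization $f = j\circ s$. You have simply unpacked the paper's ``it is easy to see'' into an explicit verification using the orthogonal factorization system.
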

\begin{proof}
From Remark \ref{remark: act. surj. - inj. factorization system} it is easy to see that the discrete subcategory of $(\iota\downarrow T)$ spanned by all injective necklace maps $U\hookrightarrow T$ is reflective. In other words, its inclusion into $(\iota\downarrow T)$ is a right-adjoint and thus a final functor. Consequently,
$$
(\Lan_{\iota}D')(T)\simeq \colim_{(U\rightarrow T) \in(\iota\downarrow T)}D'(U)\simeq \coprod_{\substack{U\hookrightarrow T\\ \text{in }\nec_{+}}}D'(U)
$$
\end{proof}

Since the monoidal category $\mathcal{V}$ is assumed to be closed, it is canonically tensored over itself by its monoidal product $\otimes$. We call a $\mathcal{V}$-enriched functor $\pi: \mathcal{W}\rightarrow \mathcal{V}$ \emph{tensor preserving} if the canonical morphism $V\otimes \pi(W)\rightarrow \pi(V\cdot W)$ is an isomorphism for all $V\in \mathcal{V}$ and $W\in \mathcal{W}$.

\begin{Prop}\label{proposition: explicitation of left-adjoint to active surjective necklace maps}
Let $\pi: \mathcal{W}\rightarrow \mathcal{V}$ a be colimit and tensor preserving $\mathcal{V}$-functor. Suppose that $\pi D\simeq \Lan_{\iota}D'$ for some functor $D': \nec_{-}\rightarrow \mathcal{V}$. Then for any templicial object $(X,S)$ with $a,b\in S$,
$$
\pi(L^{D}_{\mathcal{V}}(X)(a,b))\simeq {\colim_{T\in \nec_{-}}}^{X_{T}(a,b)}D'(T)
$$
\end{Prop}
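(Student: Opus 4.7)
The plan is to start from the explicit description of $L^{D}_{\mathcal{V}}(X)(a,b)$ recorded in the remark following Proposition \ref{proposition: nerve gen. by strong mon. diagram has left-adj.}, namely the weighted colimit
$$
L^{D}_{\mathcal{V}}(X)(a,b)\simeq {\colim_{T\in \nec}}^{X_{T}(a,b)}D(T),
$$
and apply $\pi$. By Definition \ref{definition: adjunction induced by strong monoidal necklace diagram} this weighted colimit is built as a coequalizer of coproducts of copowers $X_{T}(a,b)\cdot D(T)$, so the assumption that $\pi$ preserves colimits combined with the tensor-preservation isomorphism $V\otimes \pi(W)\simeq \pi(V\cdot W)$ imply that $\pi$ passes through it, yielding
$$
\pi(L^{D}_{\mathcal{V}}(X)(a,b))\simeq {\colim_{T\in \nec}}^{X_{T}(a,b)}\pi D(T).
$$
Substituting $\pi D\simeq \Lan_{\iota}D'$ then reduces the proposition to the general claim that for any weight $W:\nec^{op}\to \mathcal{V}$,
$$
{\colim_{T\in \nec}}^{W(T)}\Lan_{\iota}D'(T)\simeq {\colim_{T\in \nec_{-}}}^{W(T)}D'(T),
$$
applied to $W = X_{\bullet}(a,b)$.

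To prove this identity I would expand both sides as coends in $\mathcal{V}$. Using the pointwise left Kan extension formula $\Lan_{\iota}D'(T)\simeq \int^{U\in \nec_{-}}\nec(\iota U,T)\cdot D'(U)$ underlying Lemma \ref{lemma: left Kan ext. of surj. necklace maps}, a Fubini interchange of coends (valid because $\otimes$ is cocontinuous in each variable, as $\mathcal{V}$ is closed) produces
$$
\int^{T\in \nec}W(T)\otimes \Lan_{\iota}D'(T)\simeq \int^{U\in \nec_{-}}\left(\int^{T\in \nec}\nec(\iota U,T)\cdot W(T)\right)\otimes D'(U).
$$
The inner $T$-coend collapses via the co-Yoneda lemma applied to the contravariant functor $W$, giving $W(\iota U)$, and the outer coend is then exactly ${\colim_{T\in \nec_{-}}}^{W(T)}D'(T)$. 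Finally I would observe that these isomorphisms are natural in the data and so produce the stated isomorphism in $\mathcal{V}$.

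The computation is essentially formal; the only real point to check is that the hypotheses on $\pi$ genuinely deliver preservation of weighted colimits and not merely of conical ones. This is exactly where tensor-preservation does the work: it is what allows one to rewrite $\pi(X_{T}(a,b)\cdot D(T))$ as $X_{T}(a,b)\otimes \pi D(T)$ inside $\mathcal{V}$, so that the image of the weighted colimit in $\mathcal{W}$ is recognised as the weighted colimit of $\pi D$ in $\mathcal{V}$. Once that identification is in hand, the remainder of the argument is a standard coend manipulation.
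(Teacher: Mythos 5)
Your argument is correct and follows the same route as the paper: apply $\pi$ to the weighted-colimit presentation of $L^{D}_{\mathcal{V}}(X)(a,b)$, use colimit- and tensor-preservation to pull $\pi$ through the weighted colimit, substitute $\pi D\simeq\Lan_{\iota}D'$, and then contract the resulting colimit over $\nec$ to one over $\nec_{-}$. The paper states this last contraction without comment as part of a one-line chain of isomorphisms; you simply supply the standard coend/Fubini/co-Yoneda justification for that step.
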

\begin{proof}
This is immediate since
$$
\pi(L^{D}_{\mathcal{V}}(X)(a,b))\simeq {\colim_{T\in \nec}}^{X_{T}(a,b)}\pi D(T)\simeq {\colim_{T\in \nec}}^{X_{T}(a,b)}(\Lan_{\iota}D')(T)\simeq {\colim_{T\in \nec_{-}}}^{X_{T}(a,b)}D'(T)
$$
%\begin{align*}
%&{\colim_{T\in \nec}}^{X_{T}}(\Lan_{\iota}D')(T)\simeq \int^{T\in \nec}X_{T}\cdot {\colim_{U\in \nec_{-}}}^{\nec(U,T)}D'(U))\\
%&\simeq \int^{T\in \nec}X_{T}\cdot \left(\int^{U\in \nec_{-}}\nec(U,T)\cdot D'(U)\right)\\
%&\simeq \int^{T\in \nec}\int^{U\in \nec_{-}}(X_{T}\otimes F(\nec(U,T)))\cdot D'(U)\\
%&\simeq \int^{U\in \nec_{-}}\left(\int^{T\in \nec}(X_{T}\otimes F(\nec(U,T)))\right)\cdot D'(U)\\
%&\simeq \int^{U\in \nec_{-}}X_{U}\cdot D'(U)\simeq {\colim_{T\in \nec_{-}}}^{X_{T}}D'(T)
%\end{align*}
%where we used the coYoneda lemma in the second to last isomorphism.
\end{proof}

Before stating our main theorem in this subsection, we recall a what it means for a templicial object to have non-degenerate simplices.

\begin{Def}[Definition 4.14, \cite{lowen2024enriched}]\label{definition: non-deg. simp.}
Let $(X,S)$ be a templicial object and denote the quiver $X^{deg}_{n} = \colim_{\substack{\sigma: [n]\twoheadrightarrow [k]\text{ surj.}\\ 0\leq k < n}}X_{k}$ for any integer $n\geq 0$. We say that $X$ \emph{has non-degenerate simplices} if for every $n\geq 0$, there exists some $X^{nd}_{n}\in \mathcal{V}\Quiv_{S}$ such that the canonical quiver morphism $X^{deg}_{n}\rightarrow X_{n}$ is isomorphic to the coprojection
$$
X^{deg}_{n}\rightarrow X^{deg}_{n}\amalg X^{nd}_{n}
$$
\end{Def}

It was shown in \cite[Lemma 2.19]{lowen2024enriched} that for any templicial object that has non-degenerate simplicies, we have isomorphism for every $n\geq 0$:
$$
X_{n}\simeq \coprod_{\substack{\sigma: [n]\twoheadrightarrow [k]\\ \text{surjective}}}X^{nd}_{k}
$$

\begin{Thm}\label{theorem: explicitation of left-adjoint for free temp. obj.}
Let $\pi: \mathcal{W}\rightarrow \mathcal{V}$ be a colimit and tensor preserving $\mathcal{V}$-functor. Suppose that $\pi D\simeq \Lan_{\iota}D'$ for some functor $D': \nec_{-}\rightarrow \mathcal{V}$. Then for any templicial object $(X,S)$ that has non-degenerate simplices and $a,b\in S$, 
$$
\pi(L^{D}_{\mathcal{V}}(X)(a,b))\simeq \coprod_{T\in \nec}X^{nd}_{T}(a,b)\cdot D'(T)
$$
where $X^{nd}_{T} = X^{nd}_{t_{1}}\otimes_{S} ...\otimes_{S} X^{nd}_{p-t_{k-1}}$ for every necklace $T = \{0 < t_{1} < ... < t_{k-1} < p\}$.
\end{Thm}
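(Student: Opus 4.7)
The plan is to leverage Proposition \ref{proposition: explicitation of left-adjoint to active surjective necklace maps} together with the non-degeneracy hypothesis on $X$ to rewrite the resulting weighted colimit via the co-Yoneda lemma.

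First, Proposition \ref{proposition: explicitation of left-adjoint to active surjective necklace maps} gives the starting isomorphism
$$
\pi(L^{D}_{\mathcal{V}}(X)(a,b)) \simeq \colim_{T\in \nec_{-}}^{X_{T}(a,b)} D'(T),
$$
so the task reduces to identifying this $\nec_{-}$-indexed weighted colimit with $\coprod_{T\in\nec} X^{nd}_{T}(a,b)\otimes D'(T)$. The essential ingredient is a natural decomposition
$$
X_{T}(a,b) \simeq \coprod_{\sigma: T\twoheadrightarrow U \text{ in } \nec_{-}} X^{nd}_{U}(a,b).
$$
To establish this, I would write $T = \Delta^{n_{1}}\vee \cdots \vee \Delta^{n_{k}}$ so that $X_{T} = X_{n_{1}}\otimes_{S} \cdots \otimes_{S} X_{n_{k}}$, apply the decomposition $X_{n}\simeq \coprod_{[n]\twoheadrightarrow [m]} X^{nd}_{m}$ from \cite[Lemma 2.19]{lowen2024enriched} in each factor, and distribute $\otimes_{S}$ across the resulting coproducts. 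The outcome is a sum indexed by tuples $(\sigma_{i}: [n_{i}]\twoheadrightarrow [m_{i}])$ of simplex surjections. Such tuples correspond bijectively to active surjective necklace maps $\sigma = \sigma_{1}\vee \cdots \vee \sigma_{k}: T\twoheadrightarrow U$: beads $\Delta^{m_{i}}$ with $m_{i}=0$ collapse to joints, and the identity $X^{nd}_{0}\simeq I_{S}$ (immediate from Definition \ref{definition: non-deg. simp.}, since $X^{deg}_{0}$ is initial) ensures that the corresponding tensor factor is the monoidal unit, reconstructing exactly $X^{nd}_{U}$ as defined in the theorem statement.

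With this decomposition in hand, the remainder is formal category theory. Viewing $X_{\bullet}(a,b): \nec_{-}^{op}\to \mathcal{V}$ as $\coprod_{U} \nec_{-}(-,U)\cdot X^{nd}_{U}(a,b)$, I substitute into the weighted colimit, commute the external coproduct past the coend (valid since all relevant colimits distribute in $\mathcal{V}$), and apply the co-Yoneda lemma
$$
\colim_{T\in \nec_{-}}^{\nec_{-}(T,U)} D'(T) \simeq D'(U)
$$
to each summand. This directly produces $\coprod_{U\in \nec} X^{nd}_{U}(a,b)\otimes D'(U)$, matching the theorem.

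The main obstacle is ensuring the naturality of the decomposition of $X_{T}$ in $T\in \nec_{-}^{op}$: given an active surjection $f: T'\twoheadrightarrow T$, the induced morphism $X_{T}\to X_{T'}$ must send the summand indexed by $\sigma: T\twoheadrightarrow U$ identically onto the summand indexed by $\sigma\circ f: T'\twoheadrightarrow U$. This follows from the analogous naturality of $X_{n}\simeq \coprod X^{nd}_{m}$ under simplex surjections, combined with compatibility of the bead-to-joint collapsing bijection with composition in $\nec_{-}$, but requires a careful combinatorial check when multiple beads collapse simultaneously.
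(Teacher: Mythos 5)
Your proposal matches the paper's proof essentially step for step: it invokes Proposition \ref{proposition: explicitation of left-adjoint to active surjective necklace maps}, decomposes $X_T$ into $\coprod_{T\twoheadrightarrow U \text{ in }\nec_{-}} X^{nd}_U$ by applying $X_n\simeq\coprod_{[n]\twoheadrightarrow[m]}X^{nd}_m$ beadwise (using the unique decomposition $f=f_1+\dots+f_k$ of an active surjection), pushes the coproduct out of the coend, and finishes with co-Yoneda. The additional observations you make — that $X^{nd}_0\simeq I_S$ is what reconciles collapsed beads with $X^{nd}_U$, and that the decomposition must be natural in $T\in\nec_-^{op}$ for the coend manipulation to be legitimate — are both correct and both left implicit in the paper, so they enrich rather than change the argument.
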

\begin{proof}
Let $T = \{0 < t_{1} < ... < t_{k-1} < p\}$ be a necklace. Then it follows that
$$
X_{T}\simeq \coprod_{\substack{f_{i}: [t_{i}-t_{i-1}]\twoheadrightarrow [n_{i}]\\ i\in \{1,...,k\}}}X^{nd}_{n_{1}}\otimes_{S} ...\otimes_{S} X^{nd}_{n_{k}}\simeq \coprod_{\substack{T\twoheadrightarrow U\\ \text{in }\nec_{-}}}X^{nd}_{U}
$$
where we used that any active surjective necklace map $T\twoheadrightarrow U$ is determined by its underlying map $f: [p]\rightarrow [q]$ in $\fint$, which in turn can be uniquely decomposed as $f = f_{1} + ... + f_{k}$ with $f_{i}: [t_{i}-t_{i-1}]\twoheadrightarrow [n_{i}]$ surjective. Hence, by Proposition \ref{proposition: explicitation of left-adjoint to active surjective necklace maps},
\begin{align*}
\pi(L^{D}_{\mathcal{V}}(X)(a,b)) & \simeq {\colim_{T\in \nec_{-}}}^{X_{T}(a,b)}D'(T)\simeq \int^{T\in \nec_{-}}X_{T}(a,b)\cdot D'(T)\\
&\simeq \int^{T\in \nec_{-}}\coprod_{\substack{T\twoheadrightarrow U\\ \text{in }\nec_{-}}}X^{nd}_{U}(a,b)\cdot D'(T)\\
&\simeq  \coprod_{U\in \nec_{-}}X^{nd}_{U}(a,b)\cdot \int^{T\in \nec_{-}}F(\nec_{-}(T,U))\cdot D'(T)
\end{align*}
By the coYoneda lemma, this is further isomorphic to $\coprod_{U\in \nec_{-}}X^{nd}_{U}(a,b)\cdot D'(U)$.
\end{proof}

\begin{Rem}
In case $\mathcal{V} = \Set$, then $X^{nd}_{T}(a,b)$ for a given necklace $T$ can also be described as the set of all \emph{totally non-degenerate maps} $T\rightarrow X_{a,b}$ in $\SSet_{*,*}$, in the sense of \cite{dugger2011rigidification}. That is, a map $T\rightarrow X_{a,b}$ is totally non-degenerate if it maps every bead of $T$ to a non-degenerate simplex of $X$.
\end{Rem}

\begin{Cor}\label{corollary: explicitation of the left-adjoint for simp. sets}
Let $\pi: \mathcal{W}\rightarrow \mathcal{V}$ be a colimit and tensor preserving $\mathcal{V}$-functor. Suppose there exists $D': \nec_{-}\rightarrow \mathcal{V}$ such that $\pi D\simeq \Lan_{\iota}D'$. Then for any simplicial set $K$ with $a,b\in K_{0}$,
$$
\pi(L^{D}(K)(a,b))\simeq \coprod_{T\in \nec}F(K^{nd}_{T}(a,b))\otimes D'(T)
$$
\end{Cor}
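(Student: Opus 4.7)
The plan is to deduce this from Theorem \ref{theorem: explicitation of left-adjoint for free temp. obj.} by specializing to the templicial object $X = \tilde{F}(K)$ associated to a simplicial set $K$. The first step is to note that since $\tilde{U} \circ N^{D}_{\mathcal{V}} \simeq N^{D}$ by Theorem \ref{theorem: underlying D-nerve is cat. nerve assoc. to D}, taking left adjoints yields a natural isomorphism $L^{D} \simeq L^{D}_{\mathcal{V}}\circ \tilde{F}$. Thus for any simplicial set $K$ with $a,b\in K_{0}$, we have $\pi(L^{D}(K)(a,b)) \simeq \pi(L^{D}_{\mathcal{V}}(\tilde{F}(K))(a,b))$, and it suffices to compute the right-hand side via Theorem \ref{theorem: explicitation of left-adjoint for free temp. obj.}.

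The main technical point is to verify that $\tilde{F}(K)$ has non-degenerate simplices in the sense of Definition \ref{definition: non-deg. simp.}, and to identify them. Since $\tilde{F}$ is defined by applying the colimit-preserving functor $F: \Set \rightarrow \mathcal{V}$ levelwise, and since $K_{n} \simeq K^{deg}_{n}\amalg K^{nd}_{n}$ by the Eilenberg–Zilber lemma for simplicial sets, applying $F$ pointwise produces the analogous coproduct decomposition $\tilde{F}(K)_{n}\simeq \tilde{F}(K)^{deg}_{n}\amalg F(K^{nd}_{n})$ in $\mathcal{V}\Quiv_{K_{0}}$. Hence $\tilde{F}(K)^{nd}_{n} \simeq F(K^{nd}_{n})$, and consequently for any necklace $T = \{0 < t_{1} < \dots < t_{k-1} < p\}$,
\[
\tilde{F}(K)^{nd}_{T}(a,b) \simeq \bigl(F(K^{nd}_{t_{1}})\otimes_{K_{0}} \dots \otimes_{K_{0}} F(K^{nd}_{p-t_{k-1}})\bigr)(a,b) \simeq F(K^{nd}_{T}(a,b)),
\]
using that $F$ is strong monoidal and preserves coproducts to commute past the quiver tensor product.

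With this in hand, applying Theorem \ref{theorem: explicitation of left-adjoint for free temp. obj.} to $X = \tilde{F}(K)$ produces
\[
\pi(L^{D}_{\mathcal{V}}(\tilde{F}(K))(a,b)) \simeq \coprod_{T\in \nec}F(K^{nd}_{T}(a,b))\cdot D'(T).
\]
Finally, since $\mathcal{V}$ is canonically tensored over itself via its own monoidal product, the tensoring $F(S)\cdot V$ coincides with $F(S)\otimes V$: indeed, $F(S)\cdot V \simeq (\coprod_{x\in S} I)\otimes V \simeq F(S)\otimes V$ by cocontinuity of $-\otimes V$ and the fact that $F(S) \simeq \coprod_{x\in S} I$. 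Rewriting the coproduct accordingly yields the stated isomorphism. I don't anticipate any serious obstacles here; the only subtlety is the non-degeneracy claim for $\tilde{F}(K)$, which reduces cleanly to the classical Eilenberg–Zilber decomposition plus cocontinuity of $F$.
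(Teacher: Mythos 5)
Your proof is correct and follows essentially the same route as the paper: reduce to Theorem \ref{theorem: explicitation of left-adjoint for free temp. obj.} applied to $\tilde{F}(K)$, using $L^{D}\simeq L^{D}_{\mathcal{V}}\circ\tilde{F}$ (from Theorem \ref{theorem: underlying D-nerve is cat. nerve assoc. to D}) and the fact that $\tilde{F}(K)$ has non-degenerate simplices. The only difference is that the paper cites an external example for the non-degeneracy of $\tilde{F}(K)$, whereas you derive it directly from the Eilenberg--Zilber decomposition of $K_n$ and cocontinuity and strong monoidality of $F$, which is a valid way to fill in that detail.
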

\begin{proof}
Apply Theorem \ref{theorem: explicitation of left-adjoint for free temp. obj.} to the templicial object $\tilde{F}(K)$ since $L^{D}_{\mathcal{V}}\circ \tilde{F}\simeq L^{D}$ (Theorem \ref{theorem: underlying D-nerve is cat. nerve assoc. to D}) and $\tilde{F}(K)$ always has non-degenerate simplices by \cite[Example 2.17]{lowen2024enriched}.
\end{proof}

\subsection{Quasi-categories in $\mathcal{V}$}\label{subsection: Quasi-categories in V}

In \cite{lowen2024enriched} we introduced quasi-categories in $\mathcal{V}$ as an enriched generalization of the classical quasi-categories by Joyal \cite{joyal2002quasi}. They are templicial objects satisfying an analogue of the weak Kan condition, as we will recall shortly. In this subsection, we provide conditions in terms of the generating diagram $D: \nec\rightarrow \mathcal{W}$ such that the $D$-nerve of a $\mathcal{W}$-category is a quasi-category in $\mathcal{V}$. In particular, this can is applicable for classical quasi-categories as well.

\begin{Def}[Definition 5.4, \cite{lowen2024enriched}]
We say a functor $Y: \nec^{op}\rightarrow \mathcal{V}$ \emph{lift inner horns} if for all integers $0 < j < n$, the following lifting problem in $\mathcal{V}^{\nec^{op}}$ has a solution in $\mathcal{V}^{\nec^{op}}$:
\[\begin{tikzcd}
	{\tilde{F}(\Lambda^{n}_{j})_{\bullet}(0,n)} & {X_{\bullet}(A,B)} \\
	{\tilde{F}\left(\Delta^{n}\right)_{\bullet}(0,n)}
	\arrow[from=1-1, to=2-1]
	\arrow[from=1-1, to=1-2]
	\arrow[dashed, from=2-1, to=1-2]
\end{tikzcd}\]

A $(X,S)$ templicial object in $\mathcal{V}$ is called a \emph{quasi-category in $\mathcal{V}$} if $X_{\bullet}(a,b)$ lifts inner horns for all $a,b\in S$.
\end{Def}

It is shown in \cite[Proposition 5.8 and Corollary 5.13]{lowen2024enriched} that this definition recovers the classical notion of a quasi-category when $\mathcal{V} = \Set$ and that the underlying simplicial set $\tilde{U}(X)$ of a quasi-category $X$ in $\mathcal{V}$ is a classical quasi-category. Moreover, for any necklace category $\mathcal{C}\in \mathcal{V}\Cat_{\nec}$ such that $\mathcal{C}(A,B)$ lifts inner horns for all $A,B\in \Ob(\mathcal{C})$, we have that $\mathcal{C}^{temp}\in \ts\mathcal{V}$ is a quasi-category in $\mathcal{V}$ (see \cite[Proposition 5.10]{lowen2024enriched}).

Recall the adjunction $\mathfrak{l}^{D}\dashv \mathfrak{n}^{D}$ of Definition \ref{definition: adjunction induced by strong monoidal necklace diagram}.

\begin{Lem}\label{lemma: weighted colimit for simp. sets}
Let $K$ be a simplicial set with $a,b\in K_{0}$, and $D: \nec\rightarrow \mathcal{W}$ a colax monoidal diagram. Then we have a canonical bijection
$$
\mathfrak{l}^{D}(K_{\bullet}(a,b))\simeq \colim_{\substack{T\rightarrow K_{a,b}\text{ in }\SSet_{*,*}\\ T\text{ in }\nec}}D(T)
$$
\end{Lem}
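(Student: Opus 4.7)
The plan is to identify $K_{\bullet}(a,b)$ with the restriction to $\nec^{op}$ of the representable presheaf $\SSet_{*,*}(-,K_{a,b})$, and then invoke the general fact that a set-weighted colimit coincides with the ordinary colimit indexed by the category of elements of the weight.

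First, I would spell out the content of $K_{\bullet}(a,b)$. For the templicial object $\tilde{F}(K)$ and a necklace $T=\Delta^{n_{1}}\vee\cdots\vee\Delta^{n_{k}}$, the recipe recalled after Theorem~\ref{theorem: nec-temp adjunction} gives
\[
K_{T}(a,b)\;=\;\coprod_{c_{1},\dots,c_{k-1}\in K_{0}}K_{n_{1}}(a,c_{1})\times\cdots\times K_{n_{k}}(c_{k-1},b),
\]
and this set biject naturally with $\SSet_{*,*}(T,K_{a,b})$: a $k$-tuple of simplices glued at intermediate vertices is exactly a bipointed map out of the wedge of standard simplices. The key point is that the bijection is natural in $T\in\nec$, which one checks on the monoidal generators: naturality along inert maps $\nu_{k,l}$ is concatenation of strings, while naturality along active $\delta_{j},\sigma_{i}$ is the action of simplicial face/degeneracy maps on each bead. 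This identifies $K_{\bullet}(a,b)$ with the presheaf $T\mapsto \SSet_{*,*}(T,K_{a,b})$ on $\nec$.

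Next, I would recall the general identity, valid for any presheaf $X\colon\nec^{op}\to\Set$ and any functor $D\colon\nec\to\mathcal{W}$, that the weighted colimit defining $\mathfrak{l}^{D}(X)$ equals the conical colimit of $D$ indexed by the category of elements of $X$:
\[
\mathfrak{l}^{D}(X)\;=\;{\colim_{T\in\nec}}^{X_{T}}D(T)\;\simeq\;\colim_{(T,x)\in\int X}D(T).
\]
This follows by rewriting the defining coend $\int^{T}X_{T}\cdot D(T)=\int^{T}\coprod_{x\in X_{T}}D(T)$ and observing that the category of elements $\int X$ is the very reindexing category making this double coproduct into a single colimit over pairs $(T,x)$; equivalently, the projection $\int X\to \nec$ is initial among categories whose objects are necklaces with a chosen element of $X$.

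Specialising to $X=K_{\bullet}(a,b)\simeq\SSet_{*,*}(-,K_{a,b})|_{\nec^{op}}$, the category of elements is precisely the comma category whose objects are bipointed maps $T\to K_{a,b}$ with $T\in\nec$ and whose morphisms are necklace maps over $K_{a,b}$. Combining the two reductions gives the desired canonical bijection
\[
\mathfrak{l}^{D}(K_{\bullet}(a,b))\;\simeq\;\colim_{\substack{T\to K_{a,b}\ \text{in }\SSet_{*,*}\\ T\in\nec}}D(T).
\]

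The only real obstacle is the naturality statement in Step~1, since it must be verified separately against active and inert necklace maps; however, once $K_{\bullet}(a,b)$ is written out as coproducts over intermediate vertices, both cases reduce to the standard behaviour of simplicial operators and concatenation of strings and are essentially formal. The second and third steps are direct applications of well-known presheaf-theoretic identities and require no further calculation.
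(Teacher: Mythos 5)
Your proof is correct and follows essentially the same route as the paper: both begin by identifying $K_{T}(a,b)$ with $\SSet_{*,*}(T,K_{a,b})$ via Yoneda applied bead-by-bead, and then reduce the weighted colimit to a conical colimit over the comma category of necklaces over $K_{a,b}$. The paper phrases the second step as $\mathfrak{l}^{D}\simeq \Lan_{\yo}D$ rather than via the category of elements, but that is the same standard identity stated in different language.
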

\begin{proof}
Applying Yoneda's lemma to every bead, we find a bijection $\SSet_{*,*}(T,K_{a,b})\simeq K_{T}(a,b)$ for all $T\in \nec$. Hence, the right-hand side is isomorphic to $(\Lan_{\yo}D)(K_{\bullet}(a,b))$ with $\yo: \nec \hookrightarrow \Set^{\nec^{op}}$ the Yoneda embedding. Now note that $\Lan_{\yo}D\simeq \mathfrak{l}^{D}$.
\end{proof}

\begin{Thm}\label{theorem: D-nerve is quasi-cat.}
Let $D: \nec\rightarrow \mathcal{W}$ be a colax monoidal functor and $\mathcal{C}$ be a small $\mathcal{W}$-category. Assume that for all $A,B\in \Ob(\mathcal{C})$ and $0 < j < n$ the following lifting problem in $\mathcal{W}$ has a solution:
\[\begin{tikzcd}
	{\underset{\substack{T\rightarrow (\Lambda^{n}_{j})_{0,n} \text{in }\SSet_{*,*}\\ T\in \nec}}{\colim}D(T)} & {\mathcal{C}(A,B)} \\
	{D\left(\Delta^{n}\right)}
	\arrow[from=1-1, to=2-1]
	\arrow[from=1-1, to=1-2]
	\arrow[dashed, from=2-1, to=1-2]
\end{tikzcd}\]
Then $N^{D}_{\mathcal{V}}(\mathcal{C})$ is a quasi-category in $\mathcal{V}$. In particular, the simplicial set $N^{D}(\mathcal{C})$ is an ordinary quasi-category.
\end{Thm}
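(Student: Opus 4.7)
The plan is to reduce the statement, via the adjunction $\mathfrak{l}^{D}_{\mathcal{V}}\dashv \mathfrak{n}^{D}_{\mathcal{V}}$ of Definition \ref{definition: adjunction induced by strong monoidal necklace diagram}, to checking the hypothesis of \cite[Proposition 5.10]{lowen2024enriched}, which asserts that if the hom-objects of a necklace category $\mathcal{D}$ lift inner horns in the sense of Definition 5.4, then $\mathcal{D}^{temp}$ is a quasi-category in $\mathcal{V}$. Since by construction $N^{D}_{\mathcal{V}}(\mathcal{C})=\mathfrak{n}^{D}_{\mathcal{V}}(\mathcal{C})^{temp}$ with hom-objects $[D(-),\mathcal{C}(A,B)]\in \mathcal{V}^{\nec^{op}}$, the problem reduces to showing that each $[D(-),\mathcal{C}(A,B)]$ lifts inner horns.

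The first key step is to transpose the inner horn lifting problem across the adjunction: solving a lift of $\tilde{F}(\Lambda^{n}_{j})_{\bullet}(0,n)\to [D(-),\mathcal{C}(A,B)]$ along $\tilde{F}(\Lambda^{n}_{j})_{\bullet}(0,n)\to \tilde{F}(\Delta^{n})_{\bullet}(0,n)$ in $\mathcal{V}^{\nec^{op}}$ is equivalent, by the adjunction, to solving a lift of a morphism $\mathfrak{l}^{D}_{\mathcal{V}}(\tilde{F}(\Lambda^{n}_{j})_{\bullet}(0,n))\to \mathcal{C}(A,B)$ along $\mathfrak{l}^{D}_{\mathcal{V}}(\tilde{F}(\Lambda^{n}_{j})_{\bullet}(0,n))\to \mathfrak{l}^{D}_{\mathcal{V}}(\tilde{F}(\Delta^{n})_{\bullet}(0,n))$ in $\mathcal{W}$.

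The second step is to identify the two objects on the left with those appearing in the hypothesis. Using that $\tilde{F}(K)_{\bullet}(a,b)=F(K_{\bullet}(a,b))$ together with the natural isomorphism $\mathfrak{l}^{D}_{\mathcal{V}}\circ F\simeq \mathfrak{l}^{D}$ recorded in Definition \ref{definition: adjunction induced by strong monoidal necklace diagram}, Lemma \ref{lemma: weighted colimit for simp. sets} yields
$$
\mathfrak{l}^{D}_{\mathcal{V}}(\tilde{F}(K)_{\bullet}(a,b))\simeq \colim_{\substack{T\rightarrow K_{a,b}\text{ in }\SSet_{*,*}\\ T\in \nec}}D(T)
$$
for any bipointed simplicial set $K_{a,b}$. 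Applied to $K=\Delta^{n}$ with $a=0$, $b=n$, the indexing comma category admits the identity $\Delta^{n}\to \Delta^{n}$ as a terminal object (since $\Delta^{n}$ is itself a necklace), so the colimit collapses to $D(\Delta^{n})$. Applied to $K=\Lambda^{n}_{j}$, we recover exactly the colimit appearing in the hypothesis. Thus the transposed lifting problem is precisely the one postulated to have a solution, giving us that $\mathfrak{n}^{D}_{\mathcal{V}}(\mathcal{C})(A,B)$ lifts inner horns and therefore that $N^{D}_{\mathcal{V}}(\mathcal{C})$ is a quasi-category in $\mathcal{V}$.

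For the final sentence, Theorem \ref{theorem: underlying D-nerve is cat. nerve assoc. to D} gives $N^{\Phi(D)}\simeq \tilde{U}\circ N^{D}_{\mathcal{V}}$, and by \cite[Corollary 5.13]{lowen2024enriched} the underlying simplicial set of a quasi-category in $\mathcal{V}$ is an ordinary quasi-category, so the classical conclusion follows. The only delicate step is the identification in the second paragraph; once the bookkeeping between $\tilde{F}$, $F$, and the two $\mathfrak{l}^{D}$-functors is unwound, everything follows formally.
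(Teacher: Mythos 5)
Your proof is correct and follows essentially the same route as the paper: transpose the inner-horn lifting problem across the adjunction $\mathfrak{l}^{D}_{\mathcal{V}}\dashv \mathfrak{n}^{D}_{\mathcal{V}}$, identify the resulting colimits via Lemma \ref{lemma: weighted colimit for simp. sets} and $\mathfrak{l}^{D}\simeq \mathfrak{l}^{D}_{\mathcal{V}}\circ F$, invoke \cite[Proposition 5.10]{lowen2024enriched}, and finish with Theorem \ref{theorem: underlying D-nerve is cat. nerve assoc. to D} and \cite[Corollary 5.13]{lowen2024enriched}. You spell out one point the paper leaves implicit, namely that $\id_{\Delta^{n}}$ is terminal in the comma category (using that $\nec\subseteq\SSet_{*,*}$ is full), so the colimit over maps $T\to\Delta^{n}$ collapses to $D(\Delta^{n})$; this is a helpful clarification but not a departure in method.
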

\begin{proof}
Since $\mathfrak{l}^{D}\simeq \mathfrak{l}^{D}_{\mathcal{V}}\circ F$, we have by Lemma \ref{lemma: weighted colimit for simp. sets} and the adjunction \eqref{diagram: adjunction induced by strong monoidal necklace diagram} that the above lifting problem has a solution for all $0 < j < n$ if and only if $\mathfrak{n}^{D}_{\mathcal{V}}(\mathcal{C})_{\bullet}(A,B)$ lifts inner horns in $\mathcal{V}^{\nec^{op}}$. Hence by \cite[Proposition 5.10]{lowen2024enriched}, this implies that $N^{D}_{\mathcal{V}}(\mathcal{C}) = \mathfrak{n}^{D}_{\mathcal{V}}(\mathcal{C})^{temp}$ is a quasi-category in $\mathcal{V}$. Then also $N^{D}(\mathcal{C})$ is a quasi-category by Theorem \ref{theorem: underlying D-nerve is cat. nerve assoc. to D} and \cite[Corollary 5.13]{lowen2024enriched}.
\end{proof}

\begin{Ex}
From \cite[Proposition 5.1]{lowen2024enriched} we have:
\begin{equation}\label{equation: horn as colimit}
\left(\Lambda^{n}_{j}\right)_{\bullet}(0,n)\simeq \bigcup_{\substack{i=1\\ i\neq j}}^{n-1}\delta_{i}(\Delta^{n-1}_{\bullet}(0,n))\cup \bigcup_{k=1}^{n-1}(\Delta^{k}\vee \Delta^{n-k})_{\bullet}(0,n)
\end{equation}
as subfunctors of $\Delta^{n}_{\bullet}(0,n)$ in $\Set^{\nec^{op}}$. This allows us to write out the colimit appearing Theorem \ref{theorem: D-nerve is quasi-cat.} more concretely. For example, in low dimensions we have the following.
\begin{itemize}
\item For $n = 2$ and $j = 1$,
$$
\mathfrak{l}^{D}\left((\Lambda^{2}_{1})_{\bullet}(0,2)\right)\simeq D(\Delta^{1}\vee \Delta^{1})
$$
\item For $n = 3$ and $j = 1$,
$$
\mathfrak{l}^{D}\left((\Lambda^{3}_{1})_{\bullet}(0,3)\right)\simeq D(\Delta^{2}\vee \Delta^{1})\amalg_{D(\Delta^{1}\vee \Delta^{1}\vee \Delta^{1})} D(\Delta^{1}\vee \Delta^{2})\amalg_{D(\Delta^{1}\vee \Delta^{1})} D(\Delta^{2})
$$
\end{itemize}
\end{Ex}

\subsection{Frobenius structures}\label{subsection: Frobenius structures}

In \cite{lowen2023frobenius}, we introduced Frobenius structures on templicial objects (see \S\ref{subsection: Templicial objects}). In this subsection, we provide a sufficient condition on the diagram $D: \nec\rightarrow \mathcal{W}$ such that the induced nerve has a Frobenius structure. Recall that the comultiplication maps of a templicial object can be paremetrised by inert necklace maps $\Delta^{k}\vee \Delta^{l}\hookrightarrow \Delta^{k+l}$ via the functor $(-)^{nec}$ of \eqref{diagram: nec-temp adjunction}. Similarly, the multiplication maps of a Frobenius structure can be parametrised by necklace maps in the oppposite direction $\Delta^{k+l}\rightarrow \Delta^{k}\vee \Delta^{l}$. We call such maps \emph{coinert} and the first step is to extend the category $\nec$ to also include these coinert maps.

Next, we show that adjunction \eqref{diagram: nec-temp adjunction} extends to one between $\Fs\mathcal{V}$ and categories enriched in $\mathcal{V}^{\overline{\nec}^{op}}$ (Theorem \ref{theorem: Frob. temp. nec. adjunction}), and from this show that the $D$-nerve carries a Frobenius structure whenever $D$ extends to $\overline{\nec}$. We end the subsection by showing that any nerve arising from a strong monoidal diagram $\square\rightarrow \mathcal{W}$ (see \S\ref{subsection: Necklaces versus cubes}) carries a Frobenius structure.

\begin{Def}\label{definition: extended necklace category}
We define a monoidal category $\overline{\nec}$ as follows:
\begin{itemize}
\item The objects of $\overline{\nec}$ are the same as those of $\nec$.
\item Given two necklaces $(T,p)$ and $(U,q)$, a morphism $(T,p)\rightarrow (U,q)$ in $\overline{\nec}$ is a pair $(f,U')$ with $f: [p]\rightarrow [q]$ in $\fint$ and $f(T)\cup U\subseteq U'\subseteq [q]$.
\end{itemize}
The composition of two morphisms $(f,U'): T\rightarrow U$ and $(g,V'): U\rightarrow V$ is given by the pair $(gf, V'\cup g(U'))$ and the identity on a necklace $T$ is given by the pair $(\id_{[p]},T)$.

The category $\overline{\nec}$ has a monoidal structure given on morphisms by
$$
(f,U')\vee (g,V') = (f\vee g, U'\vee T')
$$
with monoidal unit given by the necklace $(\{0\},0)$.
\end{Def}

\begin{Rem}
Note that we can identify $\nec$ with the non-full monoidal subcategory of $\overline{\nec}$ that consists of all morphisms $(f,U'): T\rightarrow U$ with $U' = f(T)$.

Moreover, letting $\nec_{inert}$ denote the subcategory of $\nec$ consisting of all inert necklace maps, we can also consider $(\nec_{inert})^{op}$ as a non-full monoidal subcategory of $\overline{\nec}$ as follows. An inert map $f: (T,q)\hookrightarrow (U,q)$ in $\nec$ can be identified with the pair $f^{co} = (\id_{[q]},T): U\rightarrow T$ in $\overline{\nec}$ (this is well-defined as $U\subseteq T$). We call such a morphism a \emph{coinert map}.
\end{Rem}

\begin{Rem}\label{remark: unique decomp. of extended necklace map}
Every morphism $f: T\rightarrow U$ in $\overline{\nec}$ can be uniquely decomposed as
$$
T\xrightarrow{f_{1}} T_{1}\xrightarrow{f^{co}_{2}} T_{2}\xrightarrow{f_{3}} U
$$
with $f_{1}$ an active necklace map, $f^{co}_{2}$ a coinert map and $f_{3}$ an inert necklace map.

Note that any coinert map is the composition of wedges $\vee$ of maps $\nu^{co}_{k,l}: \Delta^{k+l}\rightarrow \Delta^{k}\vee \Delta^{l}$ for $k,l > 0$.
\end{Rem}

As for $\nec$, we can consider the category $\mathcal{V}^{\overline{\nec}^{op}}$ of functors $\overline{\nec}^{op}\rightarrow \mathcal{V}$ equipped with the Day convolution (see \cite{day1970closed}). Then consider the category of small categories enriched in $\mathcal{V}^{\overline{\nec}^{op}}$:
$$
\mathcal{V}\Cat_{\overline{\nec}} = \mathcal{V}^{\overline{\nec}^{op}}\Cat
$$
The inclusion functor $i: \nec\hookrightarrow \overline{\nec}$ is by definition strong monoidal. Thus left Kan extension along $i$ provides a monoidal adjunction:
\[\begin{tikzcd}
	\mathcal{V}^{\nec^{op}} & {\mathcal{V}^{\overline{\nec}^{op}}}
	\arrow[""{name=0, anchor=center, inner sep=0}, "{\Lan_{i}}", shift left=2, from=1-1, to=1-2]
	\arrow[""{name=1, anchor=center, inner sep=0}, "{res_{i}}", shift left=2, from=1-2, to=1-1]
	\arrow["\dashv"{anchor=center, rotate=-90}, draw=none, from=0, to=1]
\end{tikzcd}\] 
which in turn induces an adjunction on enriched categories:
\[\begin{tikzcd}
	\mathcal{V}\Cat_{\nec} & {\mathcal{V}\Cat_{\overline{\nec}}}
	\arrow[""{name=0, anchor=center, inner sep=0}, "{\Lan_{i}}", shift left=2, from=1-1, to=1-2]
	\arrow[""{name=1, anchor=center, inner sep=0}, "{res_{i}}", shift left=2, from=1-2, to=1-1]
	\arrow["\dashv"{anchor=center, rotate=-90}, draw=none, from=0, to=1]
\end{tikzcd}\]

\begin{Con}
We construct a functor
$$
(-)^{temp}: \mathcal{V}\Cat_{\overline{\nec}}\rightarrow \Fs\mathcal{V}
$$
which lifts $(-)^{temp}$ of \eqref{diagram: nec-temp adjunction} along $\Fs\mathcal{V}\rightarrow \ts\mathcal{V}$ and $res_{i}: \mathcal{V}\Cat_{\overline{\nec}}\rightarrow \mathcal{V}\Cat_{\nec}$.

Let $\mathcal{C}$ be a $\mathcal{V}^{\overline{\nec}^{op}}$-enriched category. We construct a Frobenius structure on $res_{i}(\mathcal{C})^{temp}$. Denote the composition of $\mathcal{C}$ by $m$ and the comultiplication of $res_{i}(\mathcal{C})^{temp}$ by $\mu$. We construct a Frobenius structure:
$$
\left(Z^{k,l}: \mathcal{C}^{temp}_{k}\otimes_{S} \mathcal{C}^{temp}_{l}\rightarrow \mathcal{C}^{temp}_{k+l}\right)_{k,l\geq 0}
$$
by induction on the pairs $(k,l)$. If $k = 0$, we set $Z^{0,l}$ to be the left unit isomorphism $Z^{0,l}: I_{S}\otimes_{S} \mathcal{C}^{temp}_{l}\xrightarrow{\sim} \mathcal{C}^{temp}_{l}$. Similarly, if $l = 0$, we set $Z^{k,0}$ to be the right unit isomorphism. This forces that condition \eqref{equation: Frobenius unitality} of Definition \ref{definition: Frobenius temp. obj.} holds. Assume further that $k,l > 0$ and set $n = k + l$. For all $p,q > 0$ with $p + q = n$, define a morphism $\xi_{p,q}: \mathcal{C}^{temp}_{k}\otimes_{S} \mathcal{C}^{temp}_{l}\rightarrow \mathcal{C}^{temp}_{p}\otimes_{S} \mathcal{C}^{temp}_{q}$ by
\begin{equation}
\xi_{p,q} =
\begin{cases}
(Z^{k,l-q}\otimes \id_{\mathcal{C}^{temp}_{q}})(\id_{\mathcal{C}^{temp}_{k}}\otimes \mu_{p-k,q}) & \text{if }k < p\\
\id_{\mathcal{C}^{temp}_{k}\otimes_{S} \mathcal{C}^{temp}_{l}} & \text{if }k = p\\
(\id_{\mathcal{C}^{temp}_{k}}\otimes Z^{k-p,l})(\mu_{p,q-l}\otimes \id_{\mathcal{C}^{temp}_{l}}) & \text{if }k > p
\end{cases}
\end{equation}
If $k < p$, we have a commutative diagram in $\overline{\nec}$:
\[\begin{tikzcd}
	{\Delta^{k}\vee \Delta^{p-k}\vee \Delta^{n-p}} & {\Delta^{k}\vee \Delta^{n-k}} \\
	{\Delta^{p}\vee \Delta^{n-p}} & {\Delta^{n}}
	\arrow["{\nu^{co}_{k,n-k}}"', from=2-2, to=1-2]
	\arrow["{\id_{\Delta^{k}}\vee \nu_{p-k,q}}"', from=1-1, to=1-2]
	\arrow["{\nu^{co}_{k,p-k}\vee\id_{\Delta^{q}}}", from=2-1, to=1-1]
	\arrow["{\nu_{p,q}}"', from=2-1, to=2-2]
\end{tikzcd}\]
Moreover, we have that $\nu_{k,p-k}\circ \nu^{co}_{k,p-k} = \id_{\Delta^{p}}$.

For any integer $r > 0$, let $p_{r}: \mathcal{C}^{temp}_{r}\rightarrow \mathcal{C}_{\{0 < r\}}$ denote the canonical quiver morphism. It now follows from the definition of $(-)^{temp}$ (see \cite[Construction 3.11]{lowen2024enriched}) and the induction hypothesis that
\begin{align*}
&\mathcal{C}(\nu_{p,q})\mathcal{C}(\nu^{co}_{k,n-k})m(p_{k}\otimes p_{l})\\
&= \mathcal{C}(\nu^{co}_{k,p-k} \vee \id_{\Delta^{q}})\mathcal{C}(\id_{\Delta^{k}} \vee \nu_{p-k,n-p})m(p_{k}\otimes p_{l})\\
&= \mathcal{C}(\nu^{co}_{k,p-k} \vee  \id_{\Delta^{q}})m(\id_{\mathcal{C}_{\{0 < k\}}}\otimes \mathcal{C}(\nu_{p-k,n-p}))(p_{k}\otimes p_{l})\\
&= \mathcal{C}(\nu^{co}_{k,p-k} \vee  \id_{\Delta^{q}})m(p_{k}\otimes m(p_{p-k}\otimes p_{q})\mu_{p-k,q})\\
&= \mathcal{C}(\nu^{co}_{k,p-k} \vee  \id_{\Delta^{q}})m(m(p_{k}\otimes p_{p-k})\mu_{k,p-k}\otimes p_{q})\xi_{p,q}\\
&= \mathcal{C}(\nu^{co}_{k,p-k} \vee  \id_{\Delta^{q}})m(\mathcal{C}(\nu_{k,p-k})\otimes \id_{\mathcal{C}_{\{0 < q\}}})(p_{p}\otimes p_{q})\xi_{p,q}\\
&= \mathcal{C}(\nu^{co}_{k,p-k} \vee  \id_{\Delta^{q}})\mathcal{C}(\nu_{k,p-k} \vee \id_{\mathcal{C}_{\{0 < q\}}})m(p_{p}\otimes p_{q})\xi_{p,q}\\
&= m(p_{p}\otimes p_{q})\xi_{p,q}
\end{align*}
Similarly, $\mathcal{C}(\nu_{p,q})\mathcal{C}(\nu^{co}_{k,n-k})m(p_{k}\otimes p_{l}) = m(p_{p}\otimes p_{q})\xi_{p,q}$ also holds when $k > p$ or $k = p$. Hence, by the construction of $\mathcal{C}^{temp}_{n}$, there is a unique morphism
$$
Z^{k,l}: \mathcal{C}^{temp}_{k}\otimes \mathcal{C}^{temp}_{l}\rightarrow \mathcal{C}^{temp}_{n}
$$
such that $p_{n}Z^{k,l} = \mathcal{C}(\nu^{co}_{k,n-k})m(p_{k}\otimes p_{l})$ and $\mu_{p,q}Z^{k,l} = \xi_{p,q}$ for all $p,q > 0$ with $p + q = n$. In particular, the Frobenius identities \eqref{equation: Frobenius identities} are satisfied.

A similar argument from induction shows that the morphisms $Z^{k,l}$ are natural in $k,l\geq 0$ and satisfy associativity. Hence we obtain a Frobenius templicial object $\mathcal{C}^{temp}$.

Finally, a similar argument shows that for any functor $H: \mathcal{C}\rightarrow \mathcal{D}$ of $\mathcal{V}^{\overline{\nec}^{op}}$-categories, the templicial morphism $res_{i}(H)^{temp}: res_{i}(\mathcal{C})^{temp}\rightarrow res_{i}(\mathcal{D})^{temp}$ respects the Frobenius structures and thus lifts to a morphism in $\Fs\mathcal{V}$.
\end{Con}

\begin{Con}
We construct a functor
$$
(-)^{nec}: \Fs\mathcal{V}\rightarrow \mathcal{V}^{\overline{\nec}^{op}}\Cat
$$
which lifts $(-)^{nec}$ of \eqref{diagram: nec-temp adjunction} along $\Fs\mathcal{V}\rightarrow \ts\mathcal{V}$ and $res_{i}: \mathcal{V}\Cat_{\overline{\nec}}\rightarrow \mathcal{V}\Cat_{\nec}$.

Let $X$ be a Frobenius templicial object. Let $S$ denote the set of vertices of $X$ and $Z$ denote the Frobenius structure. In view of Remark \ref{remark: unique decomp. of extended necklace map}, it suffices to define a quiver morphism $X^{nec}_{T}\rightarrow X^{nec}_{U}$ for any coinert map $(\id_{[p]},T): U\rightarrow T$ in $\overline{\nec}$. Let $(T_{i},u_{i}-u_{i-1})$ be unique such that $T = T_{1}\vee \dots \vee T_{l}$, where $U = \{0 = u_{0} < u_{1} < \dots < u_{l} = p\}\subseteq T$. Then define
$$
X^{nec}(\id_{[p]},U): X_{T}\xrightarrow{Z^{T_{1}}\otimes_{S} ...\otimes_{S} Z^{T_{l}}} X_{U}
$$
Then it follows from \cite[Proposition 2.17]{lowen2023frobenius} that we have a well-defined functor $X^{nec}: \overline{\nec}^{op}\rightarrow \mathcal{V}\Quiv_{S}$. Moreover, with compositions determined by the quiver isomorphisms $m_{T,U}: X^{nec}_{T}\otimes_{S} X^{nec}_{U}\xrightarrow{\sim} X^{nec}_{T\vee U}$, it is clear that we obtain a $\mathcal{V}^{\overline{\nec}^{op}}$-enriched category $X^{nec}$ with object set $S$.

Moreover, given a Frobenius templicial morphism $\alpha: X\rightarrow Y$, it immediately follows from the definitions that $\alpha^{nec}$ lifts to a $\mathcal{V}^{\overline{\nec}^{op}}$-enriched functor $X^{nec}\rightarrow Y^{nec}$.
\end{Con}

\begin{Thm}\label{theorem: Frob. temp. nec. adjunction}
The adjunction $(-)^{nec}\dashv (-)^{temp}$ \eqref{diagram: nec-temp adjunction} lifts to an adjunction
$$
(-)^{nec}: \Fs\mathcal{V}\leftrightarrows \mathcal{V}\Cat_{\overline{\nec}}: (-)^{temp}
$$
along the forgetful functors $\Fs\mathcal{V}\rightarrow \ts\mathcal{V}$ and $res_{i}: \mathcal{V}\Cat_{\overline{\nec}}\rightarrow \mathcal{V}\Cat_{\nec}$.
\end{Thm}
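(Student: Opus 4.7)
The strategy is to reduce to the underlying adjunction of Theorem \ref{theorem: nec-temp adjunction}. The forgetful functors $\Fs\mathcal{V}\rightarrow \ts\mathcal{V}$ and $res_{i}:\mathcal{V}\Cat_{\overline{\nec}}\rightarrow \mathcal{V}\Cat_{\nec}$ are both faithful, and by construction the preceding two constructions of $(-)^{nec}:\Fs\mathcal{V}\rightarrow \mathcal{V}\Cat_{\overline{\nec}}$ and $(-)^{temp}:\mathcal{V}\Cat_{\overline{\nec}}\rightarrow \Fs\mathcal{V}$ commute with these forgetful functors. It therefore suffices to show that the unit $\eta_{X}: X\rightarrow res_{i}(X^{nec})^{temp}$ and counit $\epsilon_{\mathcal{C}}: res_{i}((\mathcal{C}^{temp})^{nec})\rightarrow res_{i}(\mathcal{C})$ from Theorem \ref{theorem: nec-temp adjunction} refine, when $X\in \Fs\mathcal{V}$ and $\mathcal{C}\in \mathcal{V}\Cat_{\overline{\nec}}$, to a Frobenius templicial morphism and to an $\overline{\nec}$-enriched functor respectively; the triangle identities then transfer automatically from the underlying adjunction by faithfulness.

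For the counit, being $\overline{\nec}$-enriched reduces by Remark \ref{remark: unique decomp. of extended necklace map} to compatibility with the generating coinert maps $\nu^{co}_{k,l}$. On these, $(\mathcal{C}^{temp})^{nec}$ acts by the Frobenius multiplication $Z^{k,l}_{\mathcal{C}^{temp}}$, characterised by $p_{k+l}\circ Z^{k,l}_{\mathcal{C}^{temp}} = \mathcal{C}(\nu^{co}_{k,l})\circ m\circ (p_{k}\otimes p_{l})$. Since the underlying counit $\epsilon_{\mathcal{C}}$ is described on a single bead $\Delta^{n}$ by the projection $p_{n}$ and on the wedge $\Delta^{k}\vee \Delta^{l}$ by $m\circ (p_{k}\otimes p_{l})$, this identity is precisely the required naturality square for $\epsilon_{\mathcal{C}}$ at $\nu^{co}_{k,l}$, finishing the counit case.

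For the unit, we must verify $\eta_{k+l}\circ Z^{k,l}_{X}= Z^{k,l}_{new}\circ (\eta_{k}\otimes \eta_{l})$, where $Z_{new}$ denotes the Frobenius structure on $(X^{nec})^{temp}$. Two morphisms into $(X^{nec})^{temp}_{k+l}$ are equal iff their composites with $p_{k+l}$ and with every $\mu_{p,q}$ (for $p+q=k+l$) agree. The $p_{k+l}$-component matches because $p_{n}\eta_{n}=\id$ under the identification $X^{nec}_{\Delta^{n}}\simeq X_{n}$, and because $Z_{new}$ is built so that $p_{k+l}\circ Z^{k,l}_{new}=X^{nec}(\nu^{co}_{k,l})\circ m\circ (p_{k}\otimes p_{l})=Z^{k,l}_{X}\circ m\circ (p_{k}\otimes p_{l})$. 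The $\mu_{p,q}$-components split into the three cases defining $\xi_{p,q}$ (namely $p<k$, $p=k$, $p>k$), and each is checked by induction on $k+l$: one uses that $\eta$ is a colax monoidal templicial morphism (so it intertwines the $\mu$'s) together with the Frobenius identities \eqref{equation: Frobenius identities} for $X$, which exactly produce the respective case of $\xi_{p,q}$. The main obstacle is this case-by-case bookkeeping, but it is entirely analogous to the verification already carried out in the construction of $(-)^{temp}:\mathcal{V}\Cat_{\overline{\nec}}\rightarrow \Fs\mathcal{V}$, and so presents no new difficulty.
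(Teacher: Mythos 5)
Your proposal is correct and follows exactly the same strategy as the paper: reduce via faithfulness of the forgetful functors to checking that the unit and counit of \eqref{diagram: nec-temp adjunction} respect the extra structure, then read that off from the two preceding constructions. The paper's own proof compresses the verification into a single sentence (``This follows from the constructions above''), whereas you unpack the counit case (compatibility with coinert maps via $p_{n}Z^{k,l}=\mathcal{C}(\nu^{co}_{k,n-k})m(p_k\otimes p_l)$) and sketch the unit case by an induction on $k+l$ paralleling the construction of $Z$ on $\mathcal{C}^{temp}$; both details are consistent with the paper's setup.
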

\begin{proof}
Note that both the forgetful functors $\Fs\mathcal{V}\rightarrow \ts\mathcal{V}$ and $res_{i}: \mathcal{V}\Cat_{\overline{\nec}}\rightarrow \mathcal{V}\Cat_{\nec}$ are clearly faithful. Thus to verify the adjunction, it suffices to show that both the unit and counit of the adjunction \eqref{diagram: nec-temp adjunction} are morphisms in $\Fs\mathcal{V}$ and $\mathcal{V}\Cat_{\overline{\nec}}$ respectively. This follows from the constructions above.
\end{proof}

\begin{Prop}\label{proposition: nec functor for Frob. temp. obj. fully faithful}
The functor $(-)^{nec}: \Fs\mathcal{V}\rightarrow \mathcal{V}\Cat_{\overline{\nec}}$ of Theorem \ref{theorem: Frob. temp. nec. adjunction} is fully faithful.
\end{Prop}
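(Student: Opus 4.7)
The plan is to verify that the unit $\eta_{X}: X \to (X^{nec})^{temp}$ of the adjunction in Theorem \ref{theorem: Frob. temp. nec. adjunction} is an isomorphism in $\Fs\mathcal{V}$ for every Frobenius templicial object $X$; this is equivalent to $(-)^{nec}$ being fully faithful.

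First, I would observe that by construction in Theorem \ref{theorem: Frob. temp. nec. adjunction} the adjunction $(-)^{nec}\dashv (-)^{temp}$ on $\Fs\mathcal{V}\leftrightarrows \mathcal{V}\Cat_{\overline{\nec}}$ sits over the adjunction of Theorem \ref{theorem: nec-temp adjunction} via the faithful forgetful functors $\Fs\mathcal{V}\to \ts\mathcal{V}$ and $res_{i}: \mathcal{V}\Cat_{\overline{\nec}}\to \mathcal{V}\Cat_{\nec}$. Concretely, the underlying templicial morphism of $\eta_{X}$ agrees with the unit $X\to res_{i}(X^{nec})^{temp}$ of the adjunction \eqref{diagram: nec-temp adjunction} applied to the underlying templicial object of $X$. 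Since $(-)^{nec}: \ts\mathcal{V}\to \mathcal{V}\Cat_{\nec}$ is fully faithful by Theorem \ref{theorem: nec-temp adjunction}, this underlying unit is an isomorphism in $\ts\mathcal{V}$.

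Next, I would show that the forgetful functor $\Fs\mathcal{V}\to \ts\mathcal{V}$ reflects isomorphisms. Suppose $\alpha: X\to Y$ is a Frobenius templicial morphism whose underlying templicial morphism is an isomorphism. Its inverse $\alpha^{-1}$ is automatically a templicial morphism, and the Frobenius compatibility $\alpha\circ Z^{p,q}_{X} = Z^{p,q}_{Y}\circ (\alpha\otimes_{S}\alpha)$ gives, after composing with $\alpha^{-1}$ on the left and $(\alpha^{-1}\otimes_{S}\alpha^{-1})$ on the right, the identity $Z^{p,q}_{X}\circ (\alpha^{-1}\otimes_{S}\alpha^{-1}) = \alpha^{-1}\circ Z^{p,q}_{Y}$. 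Hence $\alpha^{-1}$ is also Frobenius, so $\alpha$ is an isomorphism in $\Fs\mathcal{V}$.

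Combining these two observations, $\eta_{X}$ is an isomorphism in $\Fs\mathcal{V}$ for every $X$, which is the standard characterisation of $(-)^{nec}$ being fully faithful. There is no real obstacle here; the only point of substance is the reflection-of-isomorphisms statement, which is an immediate consequence of the definition of a Frobenius templicial morphism.
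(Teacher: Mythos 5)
Your proof is correct, but it takes a genuinely different route from the paper's. The paper argues directly at the level of morphisms: given $H: X^{nec}\to Y^{nec}$ in $\mathcal{V}\Cat_{\overline{\nec}}$, it applies $res_{i}$ to land in $\mathcal{V}\Cat_{\nec}$, uses the fully faithfulness of the unenriched $(-)^{nec}$ (Theorem \ref{theorem: nec-temp adjunction}) to produce the unique templicial morphism $\alpha$ with $\alpha^{nec} = res_{i}(H)$, and then checks that $\alpha$ is compatible with the Frobenius structures because $H$ is compatible with the coinert maps $\nu^{co}_{k,l}$. You instead verify the standard criterion that the left adjoint $(-)^{nec}$ is fully faithful iff the unit is an isomorphism, and you deduce that the unit in $\Fs\mathcal{V}$ is an isomorphism from two facts: the lifted unit forgets to the unit of the base adjunction (which is an isomorphism by Theorem \ref{theorem: nec-temp adjunction}), and the forgetful functor $\Fs\mathcal{V}\to\ts\mathcal{V}$ reflects isomorphisms. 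The concrete content that the paper puts into ``compatibility with coinert maps'' is, in your version, packaged into the reflection-of-isomorphisms lemma. Your route is slightly more formal and avoids unwinding the definition of $(-)^{nec}$ on coinert maps, while the paper's route makes visible \emph{why} the Frobenius data comes along for free (it is exactly the $\overline{\nec}$-action on coinert morphisms); both use the same essential input, namely Theorem \ref{theorem: nec-temp adjunction} together with the fact that the Frobenius-level adjunction lives strictly over the templicial one.
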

\begin{proof}
Let $X$ and $Y$ be Frobenius templicial objects and $H: X^{nec}\rightarrow Y^{nec}$ a morphism in $\mathcal{V}\Cat_{\overline{\nec}}$. As $(-)^{nec}: \ts\mathcal{V}\rightarrow \mathcal{V}\Cat_{\nec}$ is fully faithful (Theorem \ref{theorem: nec-temp adjunction}), there exists a unique templicial morphism $\alpha: X\rightarrow Y$ such that $\alpha^{nec} = res_{i}(H)$. Thus it suffices to show that $\alpha$ preserves the Frobenius structures of $X$ and $Y$. But this immediately follows from the compatibility of $H$ with the coinert maps $\nu^{co}_{k,l}: \Delta^{k+l}\rightarrow \Delta^{k}\vee \Delta^{l}$ for all $k,l > 0$.
\end{proof}

\begin{Cor}\label{corollary: D-nerve has Frob. structure}
Let $D: \nec\rightarrow \mathcal{W}$ be a colax monoidal functor. If $D$ extends to a colax monoidal functor $\overline{\nec}\rightarrow \mathcal{W}$, then $N^{D}_{\mathcal{V}}$ factors through the forgetful functor $\Fs\mathcal{V}\rightarrow \ts\mathcal{V}$. In particular, $N^{D}_{\mathcal{V}}(\mathcal{C})$ has a Frobenius structure for every $\mathcal{W}$-category $\mathcal{C}$.
\end{Cor}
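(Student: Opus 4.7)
The plan is to assemble this from the pieces already developed, by essentially redoing the construction of $\mathfrak{n}^{D}_{\mathcal{V}}$ over the extended necklace category $\overline{\nec}$ and then invoking Theorem \ref{theorem: Frob. temp. nec. adjunction}. Write $\overline{D}: \overline{\nec}\rightarrow \mathcal{W}$ for the given colax monoidal extension of $D$, so $\overline{D}\circ i\simeq D$ as colax monoidal functors, where $i: \nec\hookrightarrow \overline{\nec}$ denotes the inclusion.

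First, I would repeat Definition \ref{definition: adjunction induced by strong monoidal necklace diagram} verbatim with $\overline{\nec}$ in place of $\nec$ to obtain an adjunction $\mathfrak{l}^{\overline{D}}_{\mathcal{V}}: \mathcal{V}^{\overline{\nec}^{op}}\leftrightarrows \mathcal{W}: \mathfrak{n}^{\overline{D}}_{\mathcal{V}}$, where $\mathfrak{n}^{\overline{D}}_{\mathcal{V}}(W)=[\overline{D}(-),W]$. The proof of Lemma \ref{lemma: weighted colimit colax monoidal} applies unchanged, since it only uses the universal property of Day convolution along $\vee$ and the assumption \eqref{diagram: interchange}, neither of which sees any special feature of $\nec$. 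Hence $\mathfrak{l}^{\overline{D}}_{\mathcal{V}}$ is colax monoidal for the Day convolution on $\mathcal{V}^{\overline{\nec}^{op}}$, and therefore $\mathfrak{n}^{\overline{D}}_{\mathcal{V}}$ is lax monoidal.

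Next, applying $\mathfrak{n}^{\overline{D}}_{\mathcal{V}}$ to hom-objects (exactly as in Construction \ref{construction: nerve generated by necklicial diagram}) produces a functor $\mathfrak{n}^{\overline{D}}_{\mathcal{V}}: \mathcal{W}\Cat\rightarrow \mathcal{V}\Cat_{\overline{\nec}}$ lifting $\mathfrak{n}^{D}_{\mathcal{V}}: \mathcal{W}\Cat\rightarrow \mathcal{V}\Cat_{\nec}$ along $res_{i}$. Indeed, since $\overline{D}\circ i\simeq D$, restricting $[\overline{D}(-),W]$ along $i^{op}$ recovers $[D(-),W]=\mathfrak{n}^{D}_{\mathcal{V}}(W)$, and this restriction is compatible with the lax monoidal structures, so the identification extends to hom-objects of $\mathcal{W}$-categories. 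Composing with the lifted functor $(-)^{temp}: \mathcal{V}\Cat_{\overline{\nec}}\rightarrow \Fs\mathcal{V}$ provided by Theorem \ref{theorem: Frob. temp. nec. adjunction} yields a functor
\[
\widetilde{N}^{D}_{\mathcal{V}}:\mathcal{W}\Cat\xrightarrow{\mathfrak{n}^{\overline{D}}_{\mathcal{V}}} \mathcal{V}\Cat_{\overline{\nec}}\xrightarrow{(-)^{temp}}\Fs\mathcal{V}.
\]

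Finally, to check that $\widetilde{N}^{D}_{\mathcal{V}}$ really lifts $N^{D}_{\mathcal{V}}$, I compose with the forgetful functor $\Fs\mathcal{V}\rightarrow \ts\mathcal{V}$. By the statement of Theorem \ref{theorem: Frob. temp. nec. adjunction}, this forgetful composite equals $(-)^{temp}\circ res_{i}$ applied to $\mathfrak{n}^{\overline{D}}_{\mathcal{V}}(\mathcal{C})$, which by the previous paragraph is naturally isomorphic to $(-)^{temp}\circ \mathfrak{n}^{D}_{\mathcal{V}}(\mathcal{C})=N^{D}_{\mathcal{V}}(\mathcal{C})$. Thus $N^{D}_{\mathcal{V}}$ factors through $\Fs\mathcal{V}\rightarrow \ts\mathcal{V}$, and in particular each $N^{D}_{\mathcal{V}}(\mathcal{C})$ inherits a canonical Frobenius structure. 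The only step that requires any real care is checking the compatibility of the lax monoidal structure of $\mathfrak{n}^{\overline{D}}_{\mathcal{V}}$ with restriction along $i$, but since the Day convolution is defined by left Kan extension and $i$ is a strong monoidal inclusion, this reduces to a one-line naturality check; no serious obstacle is expected.
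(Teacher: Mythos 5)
Your proof follows essentially the same route as the paper: extend $\mathfrak{n}^{D}_{\mathcal{V}}$ to a lax monoidal $\mathfrak{n}^{\overline{D}}_{\mathcal{V}}: \mathcal{W}\rightarrow \mathcal{V}^{\overline{\nec}^{op}}$, observe that $res_{i}\circ \mathfrak{n}^{\overline{D}}_{\mathcal{V}} = \mathfrak{n}^{D}_{\mathcal{V}}$, and then apply Theorem \ref{theorem: Frob. temp. nec. adjunction}. Your version just spells out more explicitly that Lemma \ref{lemma: weighted colimit colax monoidal} transfers to $\overline{\nec}$ and that the identifications are compatible with the lax monoidal structure, which the paper treats as immediate.
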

\begin{proof}
Let $\overline{D}: \overline{\nec}\rightarrow \mathcal{W}$ be a colax monoidal functor extending $D$. Similarly to \S\ref{subsection: A general procedure}, we have an induced lax monoidal right-adjoint $\mathfrak{n}^{\overline{D}}_{\mathcal{V}}: \mathcal{W}\rightarrow \mathcal{V}^{\overline{\nec}^{op}}$ given by $\mathfrak{n}^{D}_{\mathcal{V}}(W)_{T} = [\overline{D}(T),W]$ for all $T\in \nec$ and $W\in \mathcal{W}$. Then clearly $res_{i}\circ \mathfrak{n}^{\overline{D}}_{\mathcal{V}} = \mathfrak{n}^{D}_{\mathcal{V}}$. Hence, it follows from Theorem \ref{theorem: Frob. temp. nec. adjunction} that $N^{D}_{\mathcal{V}} = (-)^{temp}\circ \mathfrak{n}^{D}_{\mathcal{V}}$ factors through the forgetful functor $\Fs\mathcal{V}\rightarrow \ts\mathcal{V}$.
\end{proof}

\begin{Cor}\label{corollary: cubical nerve has Frob. structure}
Let $D: \nec\rightarrow \mathcal{W}$ be a colax monoidal functor. If $D$ factors through $\dim: \nec\rightarrow \square$, then $N^{D}_{\mathcal{V}}$ factors through the forgetful functor $\Fs\mathcal{V}\rightarrow \ts\mathcal{V}$. In particular, for any small $\mathcal{W}$-category $\mathcal{C}$, $N^{D}_{\mathcal{V}}(\mathcal{C})$ has a Frobenius structure.
\end{Cor}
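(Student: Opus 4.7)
The plan is to reduce to Corollary \ref{corollary: D-nerve has Frob. structure} by producing a colax monoidal extension $\overline{D}:\overline{\nec}\to\mathcal{W}$ of $D$. Writing the factorisation as $D = H\circ\dim$ with $H:\square\to\mathcal{W}$ colax monoidal, it suffices to construct a strong monoidal extension $\overline{\dim}:\overline{\nec}\to\square$ of $\dim$, and then take $\overline{D} := H\circ\overline{\dim}$.

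First I would define $\overline{\dim}$ on a morphism $(f,U'):(T,p)\to(U,q)$ of $\overline{\nec}$ by identifying $\dim(T) = [1]^{\dim T}$ with the poset $\mathcal{P}_{T} = \{V : T\subseteq V\subseteq [p]\}$ from Definition \ref{definition: dimension of a necklace}, and setting
\[
\overline{\dim}(f,U'):\mathcal{P}_{T}\longrightarrow\mathcal{P}_{U},\qquad V\longmapsto f(V)\cup U'.
\]
This lands in $\mathcal{P}_{U}$ because $V\supseteq T$ forces $f(V)\cup U'\supseteq f(T)\cup U' = U'\supseteq U$, and functoriality follows by a direct comparison with the composition rule $(g,V')\circ(f,U') = (gf,V'\cup g(U'))$ in $\overline{\nec}$. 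On the subcategory $\nec\subseteq\overline{\nec}$, where $U' = f(T)$, the definition reduces to $\dim$. On the coinert generator $\nu^{co}_{k,l}:\Delta^{k+l}\to\Delta^{k}\vee\Delta^{l}$, which is the pair $(\id_{[k+l]},\{0,k,k+l\})$, the formula yields the cubical codegeneracy $\sigma_{k}:[1]^{k+l-1}\to[1]^{k+l-2}$, retracting $\dim(\nu_{k,l}) = \delta^{1}_{k}$ as expected from the identity $\nu^{co}_{k,l}\circ\nu_{k,l} = \id_{\Delta^{k}\vee\Delta^{l}}$ in $\overline{\nec}$.

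The main technical step is checking that these poset morphisms genuinely lie in the subcategory $\square\subseteq\Cat$. I would handle this via the unique active/coinert/inert factorisation of Remark \ref{remark: unique decomp. of extended necklace map}: on the active and inert pieces $\overline{\dim}$ agrees with $\dim$, already known to land in $\square$; on the coinert pieces it is a monoidal product of codegeneracies $\sigma_{k}$ and so lies in $\square$ as well. Monoidality of $\overline{\dim}$ reduces to the natural isomorphism $\mathcal{P}_{T\vee U}\simeq\mathcal{P}_{T}\times\mathcal{P}_{U}$, the strong monoidality of $\dim$, and the coordinate-wise behaviour of the rule $V\mapsto f(V)\cup U'$ across a wedge. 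The main obstacle is this piece-by-piece bookkeeping; once completed, $\overline{D} = H\circ\overline{\dim}$ is a colax monoidal extension of $D$ to $\overline{\nec}$, and Corollary \ref{corollary: D-nerve has Frob. structure} delivers the desired factorisation of $N^{D}_{\mathcal{V}}$ through $\Fs\mathcal{V}\to\ts\mathcal{V}$.
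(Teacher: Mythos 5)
Your proof is correct and follows essentially the same route as the paper: reduce to Corollary~\ref{corollary: D-nerve has Frob. structure} by extending $\dim$ to a strong monoidal functor $\overline{\nec}\to\square$ via $V\mapsto f(V)\cup U'$ on the posets $\mathcal{P}_{T}$, then verify membership in $\square$ by checking the coinert generators $\nu^{co}_{k,l}$ go to codegeneracies $\sigma_{k}$ using the unique factorization of Remark~\ref{remark: unique decomp. of extended necklace map}. (As a small aside, your sign-off that $\nu^{co}_{k,l}\circ\nu_{k,l}=\id_{\Delta^{k}\vee\Delta^{l}}$ is the correct orientation of the retraction; the identity as stated mid-construction in \S\ref{subsection: Frobenius structures} has it reversed.)
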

\begin{proof}
By Corollary \ref{corollary: D-nerve has Frob. structure}, it suffices to show that $\dim: \nec\rightarrow \square$ extends to $\overline{\nec}$. We use the presentation of Definition \ref{definition: dimension of a necklace}.1. Given a morphism $(f,U'): T\rightarrow U$ in $\overline{\nec}$, set
$$
\dim(f,U'): \mathcal{P}_{T}\rightarrow \mathcal{P}_{U}: T'\mapsto f(T')\cup U'
$$
which clearly extends $\dim$ and is still strong monoidal. It remains to show that $\dim(f,U')$ belongs to $\square$. By Remark \ref{remark: unique decomp. of extended necklace map}, it suffices to check this for $\nu^{co}_{k,n-k}: \Delta^{n}\rightarrow \Delta^{k}\vee \Delta^{n-k}$ with $0 < k < n$. But under \eqref{diagram: equiv. descriptions of cubes}, we have $\dim(\nu^{co}_{k,n-k}) = \sigma_{k}: [1]^{n-1}\rightarrow [1]^{n-2}$ .
\end{proof}

\subsection{Comparison maps}\label{subsection: Comparison maps}

In this subsection, we construct comparisons between nerves where we let the enriching category $\mathcal{W}$ (Proposition \ref{proposition: basic comparison of nerves}), the generating diagram $D: \nec\rightarrow \mathcal{W}$, and the $\mathcal{W}$-category $\mathcal{C}$ (Theorem \ref{theorem: pull-back product of nerves contractible}) vary.

We define the templicial analogue of a trivial Kan fibration in Definition \ref{definition: tivial fibration} and show that the forgetful functor $\tilde{U}: \ts\mathcal{V}\rightarrow \SSet$ preserves such trivial fibrations in Corollary \ref{corollary: underlying simp. set functor preserves trivial fibrations}. In Theorem \ref{theorem: pull-back product of nerves contractible}, we provide conditions in terms of the generating diagrams $D$ such that the induced comparison templicial morphism is a trivial fibration.

\begin{Prop}\label{proposition: basic comparison of nerves}
Let $\mathcal{W}'$ be another $\mathcal{V}$-enriched monoidal category as in \S\ref{subsection: Notations and conventions}.2, and let $L: \mathcal{W}\leftrightarrows \mathcal{W}': R$ be a $\mathcal{V}$-enriched adjunction such that $L$ is colax monoidal. For any colax monoidal diagram $D: \nec\rightarrow \mathcal{W}$, we have a natural isomorphism:
$$
N^{D}_{\mathcal{V}}\circ (-)^{R}\simeq N^{LD}_{\mathcal{V}}
$$
where $(-)^{R}: \mathcal{W}'\Cat\rightarrow \mathcal{W}\Cat$ denotes the functor applying the $R$ to hom-objects.
\end{Prop}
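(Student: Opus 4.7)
The plan is to leverage the adjunction $L \dashv R$ to identify the two adjunctions induced by $D$ and $LD$, and then transfer this comparison to the nerves via the functor $(-)^{temp}$.

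First I would establish a comparison at the level of the functors of Definition~\ref{definition: adjunction induced by strong monoidal necklace diagram}. Since $L$ is a $\mathcal{V}$-enriched left adjoint, it preserves $\mathcal{V}$-enriched colimits, hence in particular the weighted colimits defining $\mathfrak{l}^{D}_{\mathcal{V}}$. For any $X \in \mathcal{V}^{\nec^{op}}$, there is therefore a canonical isomorphism
$$
\mathfrak{l}^{LD}_{\mathcal{V}}(X) = {\colim_{T\in \nec}}^{X_{T}}LD(T) \simeq L\left({\colim_{T\in \nec}}^{X_{T}}D(T)\right) = L\,\mathfrak{l}^{D}_{\mathcal{V}}(X),
$$
natural in $X$. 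Since $L$ is colax monoidal and $\mathfrak{l}^{D}_{\mathcal{V}}$ is colax monoidal by Lemma~\ref{lemma: weighted colimit colax monoidal}, the composite $L \circ \mathfrak{l}^{D}_{\mathcal{V}}$ inherits a colax monoidal structure. A straightforward check using the explicit description of the colax structure of $\mathfrak{l}^{D}_{\mathcal{V}}$ from the proof of Lemma~\ref{lemma: weighted colimit colax monoidal} shows that this colax structure agrees with the canonical one on $\mathfrak{l}^{LD}_{\mathcal{V}}$, i.e.\ the above isomorphism is monoidal.

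Passing to right adjoints (which exist on both sides by Definition~\ref{definition: adjunction induced by strong monoidal necklace diagram}), uniqueness of adjoints yields an isomorphism of lax monoidal functors
$$
\mathfrak{n}^{LD}_{\mathcal{V}} \simeq \mathfrak{n}^{D}_{\mathcal{V}} \circ R,
$$
which is also transparent from the $\mathcal{V}$-enriched adjunction $L \dashv R$: for $T \in \nec$ and $W' \in \mathcal{W}'$,
$$
\mathfrak{n}^{LD}_{\mathcal{V}}(W')_{T} = [LD(T), W'] \simeq [D(T), R(W')] = \mathfrak{n}^{D}_{\mathcal{V}}(R(W'))_{T}.
$$
Because $R$ is lax monoidal as the right adjoint of a colax monoidal functor (by mates), the composite $\mathfrak{n}^{D}_{\mathcal{V}} \circ R$ is lax monoidal, and the above isomorphism respects the lax structures.

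Next I would transport this to necklace categories. Applying a lax monoidal functor to hom-objects is functorial for isomorphisms of lax monoidal functors, so for every $\mathcal{C}' \in \mathcal{W}'\Cat$ the isomorphism above induces an isomorphism of necklace categories
$$
\mathfrak{n}^{LD}_{\mathcal{V}}(\mathcal{C}') \simeq \mathfrak{n}^{D}_{\mathcal{V}}\bigl((-)^{R}(\mathcal{C}')\bigr)
$$
in $\mathcal{V}\Cat_{\nec}$, natural in $\mathcal{C}'$. Finally, applying the functor $(-)^{temp}$ of Theorem~\ref{theorem: nec-temp adjunction} yields the desired natural isomorphism
$$
N^{LD}_{\mathcal{V}}(\mathcal{C}') = \mathfrak{n}^{LD}_{\mathcal{V}}(\mathcal{C}')^{temp} \simeq \mathfrak{n}^{D}_{\mathcal{V}}\bigl((-)^{R}(\mathcal{C}')\bigr)^{temp} = N^{D}_{\mathcal{V}}\bigl((-)^{R}(\mathcal{C}')\bigr).
$$

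The main obstacle I anticipate is the bookkeeping in the first paragraph: verifying that the colax monoidal structure on $L \circ \mathfrak{l}^{D}_{\mathcal{V}}$ (combining the colax structures of $L$ and of $D$) agrees with the one directly built from the colax monoidal structure of $LD$. Once this identification is made, the rest is formal from the uniqueness of right adjoints and the functoriality of $(-)^{temp}$.
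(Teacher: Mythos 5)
Your proof is correct and takes essentially the same route as the paper's: the core identification $\mathfrak{n}^{D}_{\mathcal{V}}\circ R\simeq \mathfrak{n}^{LD}_{\mathcal{V}}$ via the $\mathcal{V}$-enriched adjunction, $[LD(T),W']\simeq [D(T),R(W')]$, is precisely what the paper uses, after which one applies hom-object functoriality and $(-)^{temp}$. The detour through $\mathfrak{l}^{LD}_{\mathcal{V}}\simeq L\circ\mathfrak{l}^{D}_{\mathcal{V}}$ and uniqueness of adjoints in your first paragraph is unnecessary and is what creates the monoidality bookkeeping concern you flag at the end; the direct argument on the right-adjoint side, which you also supply, sidesteps it because the compatibility with lax structures follows immediately from the $\mathcal{V}$-naturality of the adjunction and of $\otimes_{\mathcal{W}'}$.
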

\begin{proof}
By Construction \ref{construction: nerve generated by necklicial diagram}, it suffices to show $\mathfrak{n}^{D}_{\mathcal{V}}\circ R\simeq \mathfrak{n}^{LD}_{\mathcal{V}}$.  Given $W\in \mathcal{W}'$, we have
$$
\mathfrak{n}^{D}_{\mathcal{V}}(R(W)) = [D(-),R(W)]\simeq [LD(-),W]' = \mathfrak{n}^{LD}_{\mathcal{V}}(W)
$$
where $[-,-]': \mathcal{W}'\times \mathcal{W}'\rightarrow \mathcal{V}$ denotes the $\mathcal{V}$-enrichment of $\mathcal{W}'$.
\end{proof}

Before we can discuss further comparison maps and explain how they lift trivial Kan fibrations of simplicial sets, we require some lemmas.

Consider the functor $\partial\Delta_{\bullet}(0,n)\in \Set^{\nec^{op}}$ for $n > 0$, where $\partial\Delta^{n}$ denotes the boundary of the standard simplex. Similar to \cite[Proposition 5.1]{lowen2024enriched}, we have
\begin{equation}\label{equation: cells as union of necklaces}
\partial\Delta_{\bullet}(0,n) = \bigcup_{i=1}^{n-1}\delta_{i}(\Delta^{n-1})_{\bullet}(0,n)\cup \bigcup_{k=1}^{n-1}(\Delta^{k}\vee \Delta^{n-k})_{\bullet}(0,n)
\end{equation}
as subfunctors of $\Delta^{n}_{\bullet}(0,n)$.

\begin{Lem}\label{lemma: horn subclass of cell}
The closure of $\left\lbrace\partial\Delta^{n}_{\bullet}(0,n)\hookrightarrow \Delta^{n}_{\bullet}(0,n)\,\middle\vert\, n > 0\right\rbrace$ under pushouts and compositions contains all inner horn inclusions $(\Lambda^{n}_{j})_{\bullet}(0,n)\hookrightarrow \Delta^{n}_{\bullet}(0,n)$ for $0 < j < n$.
\end{Lem}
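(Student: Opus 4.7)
The plan is to factor the inner horn inclusion as
$$(\Lambda^{n}_{j})_{\bullet}(0,n) \hookrightarrow \partial\Delta^{n}_{\bullet}(0,n) \hookrightarrow \Delta^{n}_{\bullet}(0,n),$$
where the second morphism is already in the generating class. It then suffices to exhibit the first morphism as a pushout of a generator.

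For this, I would start from the decomposition of bipointed simplicial sets
$$\partial\Delta^{n} = \Lambda^{n}_{j} \cup \delta_{j}(\Delta^{n-1}), \qquad \Lambda^{n}_{j} \cap \delta_{j}(\Delta^{n-1}) = \delta_{j}(\partial\Delta^{n-1}),$$
which realizes $\partial\Delta^{n}$ as a pushout in $\SSet_{*,*}$ of $\Lambda^{n}_{j} \hookleftarrow \delta_{j}(\partial\Delta^{n-1}) \hookrightarrow \delta_{j}(\Delta^{n-1})$. I then claim this pushout is preserved by $(-)_{\bullet}(0,n) : \SSet_{*,*} \to \Set^{\nec^{op}}$, so that the analogous square in $\Set^{\nec^{op}}$ is again a pushout. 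Granting this, the left vertical arrow of the pushed-out square is isomorphic to the generator $\partial\Delta^{n-1}_{\bullet}(0,n-1) \hookrightarrow \Delta^{n-1}_{\bullet}(0,n-1)$ via the bipointed isomorphism $\delta_{j} : \Delta^{n-1} \xrightarrow{\sim} \delta_{j}(\Delta^{n-1})$ (sending $0 \mapsto 0$ and $n-1 \mapsto n$, which is well-defined since $0 < j < n$), completing the proof.

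The main obstacle is verifying the preservation of the pushout. Since $(-)_{\bullet}(0,n)$ is a contravariant hom-functor, it need not preserve colimits, so this must be checked pointwise: for each necklace $T$, every bipointed map $T \to \partial\Delta^{n}$ must factor through $\Lambda^{n}_{j}$ or through $\delta_{j}(\Delta^{n-1})$. The argument is that if the map does not factor through $\Lambda^{n}_{j}$, then some bead of $T$ has image-simplex with vertex set exactly $[n] \setminus \{j\}$, which is the only non-degenerate simplex of $\partial\Delta^{n}$ not contained in any $\delta_{i}(\Delta^{n-1})$ for $i \neq j$. The first and last vertices of that bead must then map to $0$ and $n$ respectively; propagating this constraint along the wedge structure forces all preceding beads to collapse to $\{0\}$ and all following beads to collapse to $\{n\}$. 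In particular, the image of $T$ avoids the vertex $j$ and so lies in $\delta_{j}(\Delta^{n-1})$, as required.
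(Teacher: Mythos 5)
Your proof is correct and follows essentially the same strategy as the paper: exhibit $(\Lambda^n_j)_\bullet(0,n)\hookrightarrow\partial\Delta^n_\bullet(0,n)$ as a pushout of the generator $\partial\Delta^{n-1}_\bullet(0,n-1)\hookrightarrow\Delta^{n-1}_\bullet(0,n-1)$ along $\delta_j$, then compose with the generator $\partial\Delta^n_\bullet(0,n)\hookrightarrow\Delta^n_\bullet(0,n)$. The only difference is in how the pushout in $\Set^{\nec^{op}}$ is obtained: the paper cites the established decomposition \eqref{equation: cells as union of necklaces} (together with \eqref{equation: horn as colimit}) directly, whereas you re-derive the pointwise surjectivity by a self-contained argument that every bipointed necklace map $T\to\partial\Delta^n$ either lands in $\Lambda^n_j$ or, if some bead has image $[n]\setminus\{j\}$, propagates the collapsing constraint so that $T$ lands in $\delta_j(\Delta^{n-1})$ — which is correct and amounts to reproving the relevant case of the paper's decomposition.
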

\begin{proof}
Note that $\delta_{j}(\partial\Delta^{n-1})_{\bullet}(0,n) = (\Lambda^{n}_{j})_{\bullet}(0,n)\cap (\delta_{j}(\Delta^{n-1}))_{\bullet}(0,n)$ and thus from \eqref{equation: cells as union of necklaces} we have a pushout:
\[\begin{tikzcd}
	{\partial\Delta^{n-1}_{\bullet}(0,n)} & {(\Lambda^{n}_{j})_{\bullet}(0,n)} \\
	{\Delta^{n-1}_{\bullet}(0,n)} & {\partial\Delta^{n}_{\bullet}(0,n)}
	\arrow[from=1-1, to=2-1]
	\arrow["{\delta_{j}}", from=1-1, to=1-2]
	\arrow[from=1-2, to=2-2]
	\arrow["{\delta_{j}}"', from=2-1, to=2-2]
\end{tikzcd}\]
It now suffices to note that the morphism $(\Lambda^{n}_{j})_{\bullet}(0,n)\hookrightarrow \Delta^{n}_{\bullet}(0,n)$ is the composition of the right vertical morphism with $\partial\Delta^{n}_{\bullet}(0,n)\hookrightarrow \Delta^{n}_{\bullet}(0,n)$.
\end{proof}

The following lemma is a strict improvement of \cite[Lemma 5.9]{lowen2024enriched}.

\begin{Lem}\label{lemma: necklace temp. adjunction counit trivial fibration}
Let $\mathcal{C}$ be a necklace category with objects $A$ and $B$. Consider the canonical morphism $\epsilon: \mathcal{C}^{temp}_{\bullet}(A,B)\rightarrow \mathcal{C}_{\bullet}(A,B)$ induced by the counit of the adjunction of \eqref{diagram: nec-temp adjunction}. Given integers $0 < j < n$, any lifting problem in $\mathcal{V}^{\nec^{op}}$:
\[\begin{tikzcd}
	{\tilde{F}(\partial\Delta^{n})_{\bullet}(0,n)} & {\mathcal{C}^{temp}_{\bullet}(A,B)} \\
	{\tilde{F}(\Delta^{n})_{\bullet}(0,n)} & {\mathcal{C}_{\bullet}(A,B)}
	\arrow[from=1-1, to=2-1]
	\arrow[from=1-1, to=1-2]
	\arrow[from=2-1, to=2-2]
	\arrow["\epsilon", from=1-2, to=2-2]
	\arrow[dashed, from=2-1, to=1-2]
\end{tikzcd}\]
has a unique solution.
\end{Lem}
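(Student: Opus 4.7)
The plan is to recognize the unique lifting property as the universal property of the inductive construction of $\mathcal{C}^{temp}_{n}(A,B)$ in \cite[Construction 3.11]{lowen2024enriched}. The key observation is that $\Delta^{n}_{\bullet}(0,n) = \nec(-, \Delta^{n})$ is the representable presheaf on the single-bead necklace $\Delta^{n} = (\{0,n\}, n) \in \nec$, since the condition $\{0,n\} \subseteq f(T)$ on necklace maps out of $T$ is automatic. Combined with the pointwise free-forgetful adjunction $\tilde{F}: \Set^{\nec^{op}}\leftrightarrows \mathcal{V}^{\nec^{op}}: \tilde{U}$ and the enriched Yoneda lemma, a map $\tilde{F}(\Delta^{n})_{\bullet}(0,n)\to X$ in $\mathcal{V}^{\nec^{op}}$ corresponds to a morphism $I\to X_{\Delta^{n}}$ in $\mathcal{V}$. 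Hence the bottom edge of the square corresponds to an element $\beta: I\to \mathcal{C}_{\{0<n\}}(A,B)$, and a lift is a morphism $I \to \mathcal{C}^{temp}_{n}(A,B)$.

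Next, I would use the decomposition \eqref{equation: cells as union of necklaces} to express $\partial\Delta^{n}_{\bullet}(0,n)$ as the union of subfunctors $\delta_{i}(\Delta^{n-1})_{\bullet}(0,n)$ for $0 < i < n$, each representable by $\Delta^{n-1}$, and $(\Delta^{k}\vee \Delta^{n-k})_{\bullet}(0,n)$ for $0 < k < n$, each representable by the two-bead necklace $\Delta^{k}\vee \Delta^{n-k}$. Applying $\tilde{F}$ and Yoneda on each piece, a map $\tilde{F}(\partial\Delta^{n})_{\bullet}(0,n)\to \mathcal{C}^{temp}_{\bullet}(A,B)$ is then the data of a compatible family of inner-face entries in $\mathcal{C}^{temp}_{n-1}$ and comultiplication entries in the relevant $\mathcal{C}^{temp}_{k}\otimes \mathcal{C}^{temp}_{n-k}$; compatibility with $\beta$ via $\epsilon$ forces these pieces to match $\mathcal{C}(\delta_{i})(\beta)$ and $\mathcal{C}(\nu_{k,n-k})(\beta)$ respectively.

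The lifting problem is then equivalent to the statement that a certain square in $\mathcal{V}$, relating $\mathcal{C}^{temp}_{n}(A,B)$ and $\mathcal{C}_{\{0<n\}}(A,B)$ to the matching objects assembled from the inner faces and comultiplications at level $n$, is a pullback. Both existence and uniqueness of the lift then follow from the universal property of this pullback, once we identify it with the limit by which $\mathcal{C}^{temp}_{n}(A,B)$ is defined in \cite[Construction 3.11]{lowen2024enriched}. The main obstacle I anticipate is a careful bookkeeping step: rigorously matching the compatibility conditions produced by \eqref{equation: cells as union of necklaces} with those used in the inductive construction. When $\mathcal{V} = \Set$, Example \ref{example: temp construction in Set-case} makes this transparent---the unique lift is the collection $((A_{i}), (\alpha_{i,j}))$ with $\alpha_{0,n} = \beta$ and the remaining entries read off from the matching data---and the general case reduces to the same bookkeeping performed at the level of limits in $\mathcal{V}$.
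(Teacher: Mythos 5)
Your proposal follows essentially the same route as the paper: use the free--forgetful adjunction together with the decomposition \eqref{equation: cells as union of necklaces} to reduce the lifting data to elements indexed by inner faces and wedge decompositions, and then invoke the inductive construction of $\mathcal{C}^{temp}_{n}(A,B)$ from \cite[Construction 3.11]{lowen2024enriched} to produce the unique lift.

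There is one place where your sketch glosses over a genuine subtlety that the paper handles explicitly, and it is exactly the bookkeeping you flag as the "main obstacle." The matching data coming from $\partial\Delta^{n}_{\bullet}(0,n)$ consists of \emph{two} families: the wedge decompositions, giving elements $x_{k}\in U\bigl((\mathcal{C}^{temp}_{k}\otimes_{S}\mathcal{C}^{temp}_{n-k})(A,B)\bigr)$ for $0<k<n$, and the inner faces, giving elements $y_{i}\in U\bigl(\mathcal{C}^{temp}_{n-1}(A,B)\bigr)$ for $0<i<n$. The defining limit of $\mathcal{C}^{temp}_{n}(A,B)$ in \cite[Construction 3.11]{lowen2024enriched}, however, only constrains the comultiplications $\mu_{k,n-k}$ together with the projection $p_{n}$ to $\mathcal{C}_{\{0<n\}}(A,B)$; it says nothing a priori about the inner faces $d_{i}$. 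So the square you describe, whose matching object is "assembled from the inner faces and comultiplications," is \emph{not} literally the defining pullback, and you cannot simply "identify" the two. What the paper does is: first produce the unique $z$ satisfying $\mu_{k,n-k}(z)=x_{k}$ and $p_{n}(z)=z'$ via the defining universal property, and then \emph{verify} separately that $d_{i}(z)=y_{i}$ holds, again by appeal to the universal property of $\mathcal{C}^{temp}_{n-1}$ applied to both $d_{i}(z)$ and $y_{i}$ and showing they agree on all comultiplications and on $p_{n-1}$. This verification uses the compatibility relations that the $x_{k}$, $y_{i}$, $z'$ are required to satisfy (coming from the overlaps in \eqref{equation: cells as union of necklaces}). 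Spelling that out is the substantive content of the proof, and your proposal correctly anticipates the ingredients but does not actually close the loop.
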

\begin{proof}
Let us denote the composition of $\mathcal{C}$ by $m$, and the inner face, degeneracy and comultiplication morphisms of $\mathcal{C}^{temp}$ by $d_{j}$, $s_{i}$ and $\mu_{k,l}$ respectively. Now by \eqref{equation: cells as union of necklaces}, the top horizontal morphism in the lifting problem above corresponds to some collections of elements $(x_{k})_{k=1}^{n-1}$ and $(y_{i})_{i=1}^{n-1}$ with $x_{k}\in U((\mathcal{C}^{temp}_{k}\otimes \mathcal{C}^{temp}_{n-k})(a,b))$ and $y_{i}\in U(\mathcal{C}^{temp}_{n-1}(a,b))$, satisfying:
$$
d_{j-1}(y_{i}) = d_{i}(y_{j}),\quad \text{and}\quad (\id_{X_{k}}\otimes \mu_{l-k,n-l})(x_{k}) = (\mu_{k,l-k}\otimes \id_{X_{n-l}})(x_{l})
$$
for all $0 < i < j < n$ and $0 < k < l < n$, as well as
$$
\mu_{k,n-k-1}(y_{j}) =
\begin{cases}
(d_{j}\otimes \id_{X_{n-k-1}})(x_{k+1}) & \text{if }j\leq k\\
(\id_{X_{k}}\otimes d_{j-k})(x_{k}) & \text{if }j > k
\end{cases}
$$
for all $0< j < n$ and $0 < k < n-1$. Moreover, the bottom horizontal morphism corresponds to an element $z'\in U(\mathcal{C}_{\{0 < n\}}(a,b))$ and the commutativity of the diagram comes down to the condition that $\mathcal{C}(\nu_{k,n-k})(z') = m(p_{k}\otimes p_{n-k})(x_{k})$ and $\mathcal{C}(\delta_{i})(z') = p_{n-1}(y_{i})$ for all $0 < k,i < n$.

Then by definition of the functor $(-)^{temp}$ (see \cite[Construction 3.11]{lowen2024enriched}), there exists a unique element $z\in U(\mathcal{C}^{temp}_{n}(a,b))$ such that $\mu_{k,n-k}(z) = x_{k}$ for all $0 < k < n$, and $p_{n}(z) = z'$. Moreover, we have that $d_{i}(z) = y_{i}$ for all $0 < i < n$. Indeed, again by the definition of $(-)^{temp}$, it suffices to note that for all $0 < k,j < n$:
\begin{align*}
&\mu_{k,n-1-k}(d_{j}(z)) =
\begin{cases}
(d_{i}\otimes \id_{\mathcal{C}^{temp}_{n-k-1}})(\mu_{k+1,n-k}(z)) & \text{if }j\leq k\\
(\id_{\mathcal{C}^{temp}_{k}}\otimes d_{j-k})(\mu_{k,n-k}(z)) & \text{if }j > k
\end{cases}
= \mu_{k,n-1-k}(y_{i})\\
&p_{n-1}(d_{j}(z)) = \mathcal{C}(\delta_{j})p_{n}(z) = \mathcal{C}(\delta_{j})(z') = p_{n-1}(y_{j})
\end{align*}

Hence, the element $z$ determines a morphism $\tilde{F}(\Delta^{n})_{\bullet}(0,n)\rightarrow \mathcal{C}^{temp}_{\bullet}(a,b)$ which is a lift of the above diagram.
\end{proof}

\begin{Def}\label{definition: tivial fibration}
We say that a morphism in $\mathcal{V}^{\nec^{op}}$ \emph{lifts cells} if it has the right lifting property with respect to all boundary inclusions $\partial\Delta^{n}_{\bullet}(0,n)\hookrightarrow \Delta^{n}_{\bullet}(0,n)$ for $n > 0$.

A templicial morphism $(\alpha,f): (X,S)\rightarrow (Y,T)$ is a \emph{trivial fibration} if
\begin{enumerate}[(a)]
\item the map $f: S\rightarrow T$ is surjective,
\item for all $a,b\in S$, the induced morphism $X_{\bullet}(a,b)\rightarrow Y_{\bullet}(f(a),f(b))$ in $\mathcal{V}^{\nec^{op}}$ lifts cells.
\end{enumerate}
\end{Def}

\begin{Prop}
A simplicial map in $\SSet$ is a trivial Kan fibration if and only if it is a trivial fibration (in the sense of Definition \ref{definition: tivial fibration}).
\end{Prop}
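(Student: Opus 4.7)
The plan is to use the equivalence $\tilde{F}\dashv\tilde{U}$ between $\SSet$ and $\ts\Set$ (Proposition \ref{proposition: temp. obj. results}.2) to move freely between simplicial and templicial language, and then to translate each lifting problem $\partial\Delta^n\hookrightarrow \Delta^n$ in $\SSet$ into the corresponding cell lifting problem in $\Set^{\nec^{op}}$, organized by pairs of endpoint vertices. The case $n=0$ is immediate, since $\partial\Delta^0=\emptyset$, so the right lifting property against $\emptyset\hookrightarrow \Delta^0$ is precisely surjectivity on vertices, matching condition (a) of Definition \ref{definition: tivial fibration}. Thus it remains to show that for $n\geq 1$, a simplicial map $\phi: K\to L$ has the right lifting property against $\partial\Delta^n\hookrightarrow\Delta^n$ if and only if, for every pair $a,b\in K_0$, the induced morphism $\tilde{F}(K)_\bullet(a,b)\to \tilde{F}(L)_\bullet(\phi(a),\phi(b))$ in $\Set^{\nec^{op}}$ lifts cells in the sense of Definition \ref{definition: tivial fibration}.

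The key ingredient is the identification of natural transformations in $\Set^{\nec^{op}}$ with bipointed simplicial maps. First, $\Delta^n_\bullet(0,n)$ is the representable $\nec(-,\Delta^n)$: a necklace map $(T,p)\to \Delta^n$ is the same data as an order-preserving $[p]\to[n]$ sending the endpoints to $0$ and $n$, which is in turn the same as a bipointed simplicial map $T\to\Delta^n_{0,n}$. By Yoneda, natural transformations $\Delta^n_\bullet(0,n)\to X_\bullet(a,b)$ biject with $n$-simplices of $X$ from $a$ to $b$. Second, using the decomposition \eqref{equation: cells as union of necklaces}, the subfunctor $\partial\Delta^n_\bullet(0,n)$ is a colimit of representables $\yo_\nec(\delta_i(\Delta^{n-1}))$ for $0<i<n$ and $\yo_\nec(\Delta^k\vee\Delta^{n-k})$ for $0<k<n$. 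These pieces precisely cover all bipointed necklaces landing in $\partial\Delta^n$ (the two ``extremal'' faces $\delta_0,\delta_n$ appearing through the wedges $\Delta^1\vee\Delta^{n-1}$ and $\Delta^{n-1}\vee\Delta^1$), so natural transformations $\partial\Delta^n_\bullet(0,n)\to X_\bullet(a,b)$ correspond bijectively to bipointed simplicial maps $\partial\Delta^n_{0,n}\to X_{a,b}$, naturally in everything in sight.

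With these identifications in hand, the equivalence of lifting problems follows. Given a lifting problem $\tau\colon\partial\Delta^n\to K$, $\sigma\colon\Delta^n\to L$ against $\phi$, set $a:=\tau(0)$, $b:=\tau(n)\in K_0$; then automatically $\sigma(0)=\phi(a)$ and $\sigma(n)=\phi(b)$, and $(\tau,\sigma)$ refines to a bipointed lifting problem of $\partial\Delta^n_{0,n}\hookrightarrow \Delta^n_{0,n}$ against $K_{a,b}\to L_{\phi(a),\phi(b)}$, which in turn translates into a lifting problem in $\Set^{\nec^{op}}$ for $\partial\Delta^n_\bullet(0,n)\hookrightarrow \Delta^n_\bullet(0,n)$ against $\tilde{F}(K)_\bullet(a,b)\to \tilde{F}(L)_\bullet(\phi(a),\phi(b))$. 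Conversely, any such $\Set^{\nec^{op}}$-lifting problem arises in this way, and a lift on either side corresponds to a lift on the other (in both cases it amounts to an $n$-simplex of $K$ from $a$ to $b$ mapping to $\sigma$ and extending $\tau$).

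The main work — and the only real obstacle — is the careful verification in the second paragraph that the cell decomposition \eqref{equation: cells as union of necklaces} exhausts all bipointed necklace maps into $\partial\Delta^n_{0,n}$, i.e.\ that the two faces $\delta_0(\Delta^{n-1})$ and $\delta_n(\Delta^{n-1})$, which are not themselves bipointed from $0$ to $n$, are nevertheless reached through the wedge pieces. This is a case analysis on whether a bipointed necklace map $T\to\partial\Delta^n_{0,n}$ has a single bead (then it lies in some $\delta_i(\Delta^{n-1})$ with $0<i<n$) or several (then it factors through an appropriate wedge $\Delta^k\vee\Delta^{n-k}$).
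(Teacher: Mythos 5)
Your overall structure agrees with the paper's: reduce to showing that morphisms $\partial\Delta^n_\bullet(0,n)\to K_\bullet(a,b)$ in $\Set^{\nec^{op}}$ correspond to bipointed maps $\partial\Delta^n_{0,n}\to K_{a,b}$ (with the $n=0$ case handled separately, as you do), and the Yoneda identification of $\Delta^n_\bullet(0,n)$ with the representable $\nec(-,\Delta^n)$ is correct.

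The gap is at the end of your second paragraph. The cell decomposition \eqref{equation: cells as union of necklaces} computes $\partial\Delta^n_\bullet(0,n)$ as a union of representable subfunctors of $\yo(\Delta^n)$, so a map $\partial\Delta^n_\bullet(0,n)\to K_\bullet(a,b)$ amounts to a compatible family of elements $(y_i)_{i=1}^{n-1}$ (one for each inner face $\delta_i(\Delta^{n-1})$) together with $(x_k)_{k=1}^{n-1}$ (one for each wedge $\Delta^k\vee\Delta^{n-k}$). By contrast, a bipointed map $\partial\Delta^n_{0,n}\to K_{a,b}$ amounts to a compatible family of faces $(y_i)_{i=0}^{n}$. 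These two presentations differ precisely in the wedge data versus the two outer faces, and reconciling them is the actual content of the proposition: the paper establishes an explicit bijection $\mathcal{S}_1\cong\mathcal{S}_2$, $(x_k)_k\mapsto(\pi_1(x_{n-1}),\pi_2(x_1))$, for exactly this purpose. The step you single out as ``the main work'' --- re-deriving \eqref{equation: cells as union of necklaces} by a bead count --- is already stated (and cited) in the paper and is not the bottleneck. Your asserted correspondence is in effect the statement that $\nec$ is dense at $\partial\Delta^n_{0,n}$, i.e.\ that $\partial\Delta^n_{0,n}$ is the canonical colimit of the necklaces mapping into it; this does not follow merely from identifying the presheaf $\partial\Delta^n_\bullet(0,n)$, since the restricted Yoneda functor $\SSet_{*,*}\to\Set^{\nec^{op}}$ is a right adjoint and need not be fully faithful. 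You should carry out the $\mathcal{S}_1\cong\mathcal{S}_2$ bijection (or an equivalent density argument) rather than gesturing at it.
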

\begin{proof}
Let $K$ be a simplicial set, considered as a templicial set, and let $n > 0$ be an integer. Then the assignment $\mathcal{S}_{1}\rightarrow \mathcal{S}_{2}: (x_{k})_{k}\mapsto (\pi_{1}(x_{n-1}),\pi_{2}(x_{1}))$ between the sets
\begin{align*}
\mathcal{S}_{1} &= \left\lbrace (x_{k}\in X_{k}\times_{X_{0}} X_{n-k})_{k=1}^{n-1}\,\middle\vert\, \forall 0 < k < l < n: (\id\times \mu_{l-k,n-l})(x_{k}) = (\mu_{k,l-k}\times \id)(x_{l})\right\rbrace,\\
\mathcal{S}_{2} &= \left\lbrace (y_{n},y_{0})\in X_{n-1}\times X_{n-1}\,\middle\vert\, d_{n-1}(y_{0}) = d_{0}(y_{n})\right\rbrace
\end{align*}
defines a bijection. Then it follows by \eqref{equation: cells as union of necklaces} that for any $a,b\in K_{0}$ a map $\partial\Delta^{n}_{\bullet}(0,n)\rightarrow K_{\bullet}(a,b)$ in $\Set^{\nec^{op}}$ is equivalent to a map $\partial\Delta^{n}_{0,n}\rightarrow K_{a,b}$ in $\SSet_{*,*}$. By Yoneda's lemma, a map $\Delta^{n}_{\bullet}(0,n)\rightarrow K_{\bullet}(a,b)$ is also equivalent to an element of $K_{n}(a,b)$ and thus to a map $\Delta^{n}_{0,n}\rightarrow K_{a,b}$ in $\SSet_{*,*}$.

From this it follows that a simplicial map has the right lifting property with respect to all cell inclusions $\partial\Delta^{n}\rightarrow \Delta^{n}$ in $\SSet$ for $n > 0$ if and only if it satisfies $(b)$ of Definition \ref{definition: tivial fibration}. Further it has the right lifting property with respect to $\emptyset\rightarrow \Delta^{0}$ if and only if it satisfies $(a)$ of Definition \ref{definition: tivial fibration}.
\end{proof}

\begin{Prop}\label{proposition: temp preserves triv. fibs.}
Let $H: \mathcal{C}\rightarrow \mathcal{D}$ be a necklace functor in $\mathcal{V}\Cat_{\nec}$. Suppose $H$ is surjective on objects and for all $A,B\in \Ob(\mathcal{C})$, the morphism
$$
H_{A,B}: \mathcal{C}(A,B)\rightarrow \mathcal{D}(f(A),f(B))
$$
in $\mathcal{V}^{\nec^{op}}$ lifts cells. Then $H^{temp}: \mathcal{C}^{temp}\rightarrow \mathcal{D}^{temp}$ in $\ts\mathcal{V}$ is a trivial fibration.
\end{Prop}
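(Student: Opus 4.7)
The proof will proceed by verifying both conditions $(a)$ and $(b)$ of Definition \ref{definition: tivial fibration} for $H^{temp}$. Condition $(a)$ is immediate: the vertices of $\mathcal{C}^{temp}$ and $\mathcal{D}^{temp}$ are precisely the objects of $\mathcal{C}$ and $\mathcal{D}$, and $H^{temp}$ acts as $H$ on these, which is surjective by hypothesis.

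The heart of the proof is condition $(b)$, and the plan is to play off the given cell-lifting property of $H_{A,B}$ against the (unique) cell-lifting property of the counits $\epsilon_{\mathcal{C}}$ and $\epsilon_{\mathcal{D}}$ provided by Lemma \ref{lemma: necklace temp. adjunction counit trivial fibration}. Given a lifting problem in $\mathcal{V}^{\nec^{op}}$
\[\begin{tikzcd}
	{\tilde{F}(\partial\Delta^{n})_{\bullet}(0,n)} & {\mathcal{C}^{temp}_{\bullet}(A,B)} \\
	{\tilde{F}(\Delta^{n})_{\bullet}(0,n)} & {\mathcal{D}^{temp}_{\bullet}(H(A),H(B))}
	\arrow["\phi", from=1-1, to=1-2]
	\arrow[from=1-1, to=2-1]
	\arrow["{H^{temp}}", from=1-2, to=2-2]
	\arrow["\psi"', from=2-1, to=2-2]
\end{tikzcd}\]
I first postcompose with the counit $\epsilon_{\mathcal{D}}: \mathcal{D}^{temp}_{\bullet}(H(A),H(B))\rightarrow \mathcal{D}_{\bullet}(H(A),H(B))$ to obtain a lifting problem for $H_{A,B}$ in $\mathcal{V}^{\nec^{op}}$, using naturality of the counit to see that $H\circ \epsilon_{\mathcal{C}}\circ \phi = \epsilon_{\mathcal{D}}\circ \psi$ restricted to the boundary. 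By the hypothesis that $H_{A,B}$ lifts cells, this yields a map $\theta: \tilde{F}(\Delta^{n})_{\bullet}(0,n)\rightarrow \mathcal{C}_{\bullet}(A,B)$ with $H\theta = \epsilon_{\mathcal{D}}\psi$ and $\theta\vert_{\partial} = \epsilon_{\mathcal{C}}\phi$.

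Next, I apply Lemma \ref{lemma: necklace temp. adjunction counit trivial fibration} to $\mathcal{C}$ with the pair $(\phi,\theta)$ to produce a unique lift $\tilde{\theta}: \tilde{F}(\Delta^{n})_{\bullet}(0,n)\rightarrow \mathcal{C}^{temp}_{\bullet}(A,B)$ satisfying $\tilde{\theta}\vert_{\partial} = \phi$ and $\epsilon_{\mathcal{C}}\tilde{\theta} = \theta$. This $\tilde{\theta}$ is the candidate lift of the original problem.

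The final and most delicate step is to verify that $H^{temp}\tilde{\theta} = \psi$; this is where uniqueness in Lemma \ref{lemma: necklace temp. adjunction counit trivial fibration} does the work. Both $H^{temp}\tilde{\theta}$ and $\psi$ extend the boundary map $\psi\vert_{\partial}$ (using the original square together with $\tilde{\theta}\vert_{\partial} = \phi$), and both map to $\epsilon_{\mathcal{D}}\psi$ after postcomposing with $\epsilon_{\mathcal{D}}$ (for $H^{temp}\tilde{\theta}$ this uses naturality of $\epsilon$ and $H\theta = \epsilon_{\mathcal{D}}\psi$). Thus they solve the same lifting problem against $\epsilon_{\mathcal{D}}$, so uniqueness in Lemma \ref{lemma: necklace temp. adjunction counit trivial fibration} forces them to agree. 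The main obstacle I anticipate is keeping the two lifting properties (the assumed one for $H$ and the counit one from Lemma \ref{lemma: necklace temp. adjunction counit trivial fibration}) straight and exploiting the uniqueness in the latter at the right moment; once this bookkeeping is done, the argument is essentially formal.
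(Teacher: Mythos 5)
Your proof is correct and makes explicit the paper's one-sentence claim that the result is ``an immediate consequence'' of Lemma \ref{lemma: necklace temp. adjunction counit trivial fibration}: you correctly play the assumed cell-lifting of $H_{A,B}$ off against the \emph{unique} cell-lifting of both counits, and then invoke uniqueness for $\epsilon_{\mathcal{D}}$ to identify $H^{temp}\tilde{\theta}$ with $\psi$. This is exactly the bookkeeping the paper leaves to the reader.
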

\begin{proof}
This is an immediate consequence of Lemma \ref{lemma: necklace temp. adjunction counit trivial fibration}.
\end{proof}

\begin{Cor}\label{corollary: underlying simp. set functor preserves trivial fibrations}
If a templicial morphism $\alpha$ is a trivial fibration, then the simplicial map $\tilde{U}(\alpha)$ is a trivial Kan fibration.
\end{Cor}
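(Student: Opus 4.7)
The plan is to verify the right lifting property of $\tilde{U}(\alpha)\colon \tilde{U}(X)\to \tilde{U}(Y)$ against every boundary inclusion $\partial\Delta^{n}\hookrightarrow \Delta^{n}$ for $n\geq 0$, splitting into the two cases that already appear in the defining conditions of a trivial fibration. For $n=0$ the lifting problem is simply that of lifting a vertex of $\tilde{U}(Y)$, i.e.\ an element of $T$, through $\tilde{U}(\alpha)_{0}=f\colon S\to T$; this is solved directly by the surjectivity of $f$ granted by condition~(a), and needs no further argument.

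For the case $n>0$, I would proceed as follows. Given a lifting square with top map $u\colon \partial\Delta^{n}\to \tilde{U}(X)$ and bottom map $v\colon \Delta^{n}\to \tilde{U}(Y)$, first record the images $a=u(0)$ and $b=u(n)$ in $S$; by commutativity of the square, $v$ sends $0,n$ to $f(a),f(b)\in T$, so $u$ and $v$ are canonically bipointed maps in $\SSet_{*,*}$. Next, apply the bead-by-bead bipointed correspondence already used in the proof of the preceding proposition to transport them into maps in $\Set^{\nec^{op}}$ with codomains $\tilde{U}(X)_{\bullet}(a,b)=U\circ X_{\bullet}(a,b)$ and $\tilde{U}(Y)_{\bullet}(f(a),f(b))=U\circ Y_{\bullet}(f(a),f(b))$ respectively. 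Applying the pointwise free-forgetful adjunction $F\dashv U$ between $\Set^{\nec^{op}}$ and $\mathcal{V}^{\nec^{op}}$ then re-expresses the lifting square as one in $\mathcal{V}^{\nec^{op}}$ against the inclusion $\partial\Delta^{n}_{\bullet}(0,n)\hookrightarrow \Delta^{n}_{\bullet}(0,n)$ with codomain map $X_{\bullet}(a,b)\to Y_{\bullet}(f(a),f(b))$. Condition~(b) delivers a diagonal filler for this square, which is transported back through the same correspondences to yield the desired lift $\Delta^{n}\to \tilde{U}(X)$.

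The only real point to be careful about, and hence the main obstacle, is checking that each of the two translations---the bipointed correspondence and the pointwise free-forgetful adjunction---respects lifting squares in a strict sense, so that a filler of the transported square produces a filler of the original. This, however, is precisely the bookkeeping already carried out in the preceding proposition and in the adjunction $F\dashv U$ applied pointwise, so no new ideas are needed: the argument is essentially formal, with condition~(a) handling the zero-dimensional cells and condition~(b), translated through these two adjunctions, handling all higher-dimensional ones.
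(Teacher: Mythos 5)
Your argument for the case $n>0$ rests on the identification $\tilde{U}(X)_{\bullet}(a,b) = U\circ X_{\bullet}(a,b)$, and this is precisely where it breaks down. The forgetful functor $\tilde{U}:\ts\mathcal{V}\to\SSet$ is \emph{not} given by applying $U:\mathcal{V}\to\Set$ levelwise: since $U$ is only lax monoidal and not strong monoidal, $U\circ X:\fint^{op}\to\Set\Quiv_S$ does not inherit the required colax monoidal structure. Instead, as the paper records from \cite[Proposition 3.14]{lowen2024enriched}, one has $\tilde{U}\simeq(-)^{temp}\circ\mathcal{U}\circ(-)^{nec}$, so that $\tilde{U}(X)_n(a,b)$ is (by Example \ref{example: temp construction in Set-case}) the set of \emph{compatible families} $(\alpha_{i,j})_{0\le i<j\le n}$ with $\alpha_{i,j}\in U(X_{j-i}(A_i,A_j))$, and not the single set $U(X_n(a,b))$. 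There is a canonical map $\tilde{U}(X)_{\bullet}(a,b)\to U(X_{\bullet}(a,b))$ coming from the counit of $(-)^{nec}\dashv(-)^{temp}$, but it is not an isomorphism in general, because $U$ does not commute with the coproducts and tensor products appearing in $X_T(a,b)$ for compound necklaces $T$.

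Consequently, condition (b) of Definition \ref{definition: tivial fibration}, which is a lifting property of the $\mathcal{V}^{\nec^{op}}$-morphism $X_{\bullet}(a,b)\to Y_{\bullet}(f(a),f(b))$, cannot be transported directly to a lifting property of $\tilde{U}(X)_{\bullet}(a,b)\to\tilde{U}(Y)_{\bullet}(f(a),f(b))$ simply by applying the pointwise adjunction $F\dashv U$. Bridging this gap is exactly what Lemma \ref{lemma: necklace temp. adjunction counit trivial fibration} is for: it establishes that the counit $\mathcal{C}^{temp}_{\bullet}(A,B)\to\mathcal{C}_{\bullet}(A,B)$ lifts cells \emph{uniquely}, and Proposition \ref{proposition: temp preserves triv. fibs.} then uses this uniqueness to transfer cell lifting from a necklace functor $H$ to $H^{temp}$. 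The paper's proof applies Proposition \ref{proposition: temp preserves triv. fibs.} to $\mathcal{U}(\alpha^{nec})$, which satisfies its hypotheses by an application of $F\dashv U$ at the level of $\Set^{\nec^{op}}\to\mathcal{V}^{\nec^{op}}$ (where that adjunction is legitimate, since there are no tensor products to interchange with $U$). Your $n=0$ case and your general strategy of reducing to cell inclusions in $\Set^{\nec^{op}}$ and then to $\mathcal{V}^{\nec^{op}}$ are sound; what is missing is the passage through $(-)^{temp}$ and the unique lifting of Lemma \ref{lemma: necklace temp. adjunction counit trivial fibration}, which cannot be bypassed by a purely pointwise adjunction argument.
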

\begin{proof}
From \cite[Proposition 3.14]{lowen2024enriched}, we have that $\tilde{U}\simeq (-)^{temp}\circ \mathcal{U}\circ (-)^{nec}$, where $\mathcal{U}: \mathcal{V}\Cat_{\nec}\rightarrow \Cat_{\nec}$ is the forgetful functor. Thus it suffices to check that $\mathcal{U}(\alpha^{nec})$ satisfies the condition of Proposition \ref{proposition: temp preserves triv. fibs.}, which is true by definition.
\end{proof}

Recall the functor
$$
\Colax(\nec,\mathcal{W})^{op}\rightarrow \Fun(\mathcal{W}\Cat,\ts\mathcal{V}): D\mapsto N^{D}_{\mathcal{V}}
$$
of Construction \ref{construction: nerve generated by necklicial diagram}. It provides for any $\mathcal{W}$-category $\mathcal{C}$ and any monoidal natural transformation $D'\rightarrow D$, a comparison morphism $N^{D}_{\mathcal{V}}(\mathcal{C})\rightarrow N^{D'}_{\mathcal{V}}(\mathcal{C})$ in $\ts\mathcal{V}$.

\begin{Thm}\label{theorem: pull-back product of nerves contractible}
Let $\alpha: D'\rightarrow D$ be a monoidal natural transformation between colax monoidal functors $D,D': \nec\rightarrow \mathcal{W}$ and let $H: \mathcal{C}\rightarrow \mathcal{D}$ be a $\mathcal{W}$-enriched functor between small $\mathcal{W}$-categories. Assume that $H$ is surjective on objects and that the following lifting problem in $\mathcal{W}$ has a solution for all $n > 0$ and $A,B\in \Ob(\mathcal{C})$:
\[\begin{tikzcd}
	{\mathfrak{l}^{D}(\partial\Delta^{n}_{\bullet}(0,n))\amalg_{\mathfrak{l}^{D'}(\partial\Delta^{n}_{\bullet}(0,n))}D'(\Delta^{n})} & {\mathcal{C}(A,B)} \\
	{D(\Delta^{n})} & {\mathcal{D}(H(A),H(B))}
	\arrow[from=1-1, to=2-1]
	\arrow[from=1-2, to=2-2]
	\arrow[from=1-1, to=1-2]
	\arrow[from=2-1, to=2-2]
	\arrow[dashed, from=2-1, to=1-2]
\end{tikzcd}\]
Then the induced templicial morphism $N^{D'}_{\mathcal{\mathcal{V}}}(\mathcal{C})\rightarrow N^{D}_{\mathcal{V}}(\mathcal{C})\times_{N^{D}_{\mathcal{V}}(\mathcal{D})} N^{D'}_{\mathcal{V}}(\mathcal{D})$ is a trivial fibration. In particular, $N^{D'}(\mathcal{C})\rightarrow N^{D}(\mathcal{C})\times_{N^{D}(\mathcal{D})} N^{D'}(\mathcal{D})$ is a trivial Kan fibration.
\end{Thm}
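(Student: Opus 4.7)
The plan is to reduce the assertion to the criterion of Proposition \ref{proposition: temp preserves triv. fibs.} applied on the necklace-enriched side. First I would observe that the right adjoint $(-)^{temp}\colon \mathcal{V}\Cat_{\nec}\to \ts\mathcal{V}$ preserves limits, so the pullback $N^{D}_{\mathcal{V}}(\mathcal{C})\times_{N^{D}_{\mathcal{V}}(\mathcal{D})}N^{D'}_{\mathcal{V}}(\mathcal{D})$ may be identified with $\bigl(\mathfrak{n}^{D}_{\mathcal{V}}(\mathcal{C})\times_{\mathfrak{n}^{D}_{\mathcal{V}}(\mathcal{D})}\mathfrak{n}^{D'}_{\mathcal{V}}(\mathcal{D})\bigr)^{temp}$, and the comparison templicial morphism with the image under $(-)^{temp}$ of the comparison necklace functor
$$
\Psi\colon \mathfrak{n}^{D'}_{\mathcal{V}}(\mathcal{C})\longrightarrow \mathfrak{n}^{D}_{\mathcal{V}}(\mathcal{C})\times_{\mathfrak{n}^{D}_{\mathcal{V}}(\mathcal{D})}\mathfrak{n}^{D'}_{\mathcal{V}}(\mathcal{D}).
$$
On objects $\Psi$ is the identity on $\Ob(\mathcal{C})$ (since all four necklace categories have the same object set $\Ob(\mathcal{C})$, the pullback being taken over $\Ob(\mathcal{D})$ via $H$); hence $\Psi$ is surjective on objects. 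By Proposition \ref{proposition: temp preserves triv. fibs.}, it suffices to show that for each $A,B\in \Ob(\mathcal{C})$ the induced morphism $\Psi_{A,B}$ in $\mathcal{V}^{\nec^{op}}$ lifts cells.

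The heart of the argument is to translate this cell-lifting condition into the hypothesis. By the adjunction $\mathfrak{l}^{D}_{\mathcal{V}}\dashv \mathfrak{n}^{D}_{\mathcal{V}}$ together with the isomorphism $\mathfrak{l}^{D}_{\mathcal{V}}\circ F\simeq \mathfrak{l}^{D}$ of Definition \ref{definition: adjunction induced by strong monoidal necklace diagram} and Yoneda ($\mathfrak{l}^{D}(\Delta^{n}_{\bullet}(0,n))\simeq D(\Delta^{n})$), a lifting problem of $\Psi_{A,B}$ against the boundary inclusion $\tilde{F}(\partial\Delta^{n})_{\bullet}(0,n)\hookrightarrow \tilde{F}(\Delta^{n})_{\bullet}(0,n)$ corresponds, by the universal properties of the pullback and the pushout, to the lifting problem
\[\begin{tikzcd}
	{\mathfrak{l}^{D}(\partial\Delta^{n}_{\bullet}(0,n))\amalg_{\mathfrak{l}^{D'}(\partial\Delta^{n}_{\bullet}(0,n))}D'(\Delta^{n})} & {\mathcal{C}(A,B)} \\
	{D(\Delta^{n})} & {\mathcal{D}(H(A),H(B))}
	\arrow[from=1-1, to=2-1]
	\arrow[from=1-2, to=2-2, "H_{A,B}"]
	\arrow[from=1-1, to=1-2]
	\arrow[from=2-1, to=2-2]
	\arrow[dashed, from=2-1, to=1-2]
\end{tikzcd}\]
in $\mathcal{W}$, which is solvable by hypothesis. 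Thus $\Psi_{A,B}$ lifts cells, and Proposition \ref{proposition: temp preserves triv. fibs.} applies to conclude that $\Psi^{temp}$ is a trivial fibration.

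The main obstacle I anticipate is carefully verifying the adjunction-pullback translation in the previous paragraph: one must check that a square from a cell inclusion into $\Psi_{A,B}$ unpacks, under $\mathfrak{l}^{D'}_{\mathcal{V}}\dashv \mathfrak{n}^{D'}_{\mathcal{V}}$ (applied for the domain of the cell, where the codomain is $\mathfrak{n}^{D'}_{\mathcal{V}}(\mathcal{C})_{A,B}$) and the two instances $\mathfrak{l}^{D}\dashv \mathfrak{n}^{D}$ and $\mathfrak{l}^{D'}\dashv \mathfrak{n}^{D'}$ at the cell boundary $\partial\Delta^{n}$, into exactly the three pieces of data glued along the pushout $\mathfrak{l}^{D}(\partial\Delta^{n}_{\bullet}(0,n))\amalg_{\mathfrak{l}^{D'}(\partial\Delta^{n}_{\bullet}(0,n))}D'(\Delta^{n})$; this is a standard pushout-product/adjoint-lifting manipulation but relies on the naturality of $\alpha$ and on recognising $\mathfrak{l}^{D'}_{\mathcal{V}}(\tilde{F}(\Delta^{n})_{\bullet}(0,n))\simeq D'(\Delta^{n})$. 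Finally, the ``in particular'' statement is immediate from Corollary \ref{corollary: underlying simp. set functor preserves trivial fibrations}, because $\tilde{U}$ commutes with pullbacks (again as a right adjoint, by Proposition \ref{proposition: temp. obj. results}.2) and sends trivial fibrations of templicial objects to trivial Kan fibrations of simplicial sets.
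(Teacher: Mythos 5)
Your proposal is correct and takes essentially the same route as the paper: translate the cell-lifting condition for the comparison necklace functor into the hypothesized lifting problem via the adjunction $\mathfrak{l}^{D}_{\mathcal{V}}\dashv\mathfrak{n}^{D}_{\mathcal{V}}$, invoke Proposition~\ref{proposition: temp preserves triv. fibs.} together with the fact that $(-)^{temp}$ preserves limits, and finish with Corollary~\ref{corollary: underlying simp. set functor preserves trivial fibrations}. (One minor slip: the four necklace categories do not all have object set $\Ob(\mathcal{C})$ --- the two built from $\mathcal{D}$ have object set $\Ob(\mathcal{D})$ --- but the pullback over $\Ob(\mathcal{D})$ still has object set $\Ob(\mathcal{C})$, so your conclusion that $\Psi$ is the identity on objects stands.)
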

\begin{proof}
By the adjunction \eqref{diagram: adjunction induced by strong monoidal necklace diagram}, the above lifting problem is equivalent to
\[\begin{tikzcd}
	{F(\partial\Delta^{n}_{\bullet}(0,n))} & {\mathfrak{n}^{D'}_{\mathcal{V}}(\mathcal{C})(A,B)} \\
	{F(\Delta^{n}_{\bullet}(0,n))} & {\mathfrak{n}^{D}_{\mathcal{V}}(\mathcal{C})(A,B)\times_{\mathfrak{n}^{D}_{\mathcal{V}}(\mathcal{D})(H(A),H(B))} \mathfrak{n}^{D'}_{\mathcal{V}}(\mathcal{D})(H(A),H(B))}
	\arrow[from=1-1, to=2-1]
	\arrow[from=1-1, to=1-2]
	\arrow[from=1-2, to=2-2]
	\arrow[from=2-1, to=2-2]
	\arrow[dashed, from=2-1, to=1-2]
\end{tikzcd}\]
Thus the necklace functor $\mathfrak{n}^{D'}_{\mathcal{V}}(\mathcal{C})\rightarrow \mathfrak{n}^{D}_{\mathcal{V}}(\mathcal{C})\times_{\mathfrak{n}^{D}_{\mathcal{V}}(\mathcal{D})}\mathfrak{n}^{D'}_{\mathcal{V}}(\mathcal{D})$ satisfies the conditions of  Proposition \ref{proposition: temp preserves triv. fibs.}. So the templicial morphism $N^{D'}_{\mathcal{\mathcal{V}}}(\mathcal{C})\rightarrow N^{D}_{\mathcal{V}}(\mathcal{C})\times_{N^{D}_{\mathcal{V}}(\mathcal{D})} N^{D'}_{\mathcal{V}}(\mathcal{D})$ is a trivial fibration as $(-)^{temp}$ preserves limits. The final statement follows from Corollary \ref{corollary: underlying simp. set functor preserves trivial fibrations}.
\end{proof}

\begin{Cor}\label{corollary: trivial fib. between nerves}
Let $\alpha: D'\rightarrow D$ be a monoidal natural transformation between colax monoidal functors $D,D': \nec\rightarrow \mathcal{W}$ and let $H: \mathcal{C}\rightarrow \mathcal{D}$ be a $\mathcal{W}$-enriched functor between small $\mathcal{W}$-categories.
\begin{enumerate}[1.]
\item Assume that $H$ is surjective on objects and the following lifting problem in $\mathcal{W}$ has a solution for all $n > 0$ and $A,B\in \Ob(\mathcal{C})$:
\[\begin{tikzcd}
	{\mathfrak{l}^{D}(\partial\Delta^{n}_{\bullet}(0,n))\amalg_{\mathfrak{l}^{D'}(\partial\Delta^{n}_{\bullet}(0,n))}D'(\Delta^{n})} & {\mathcal{C}(A,B)} \\
	{D(\Delta^{n})} & {\mathcal{D}(H(A),H(B))}
	\arrow[from=1-1, to=2-1]
	\arrow[from=1-2, to=2-2]
	\arrow[from=1-1, to=1-2]
	\arrow[from=2-1, to=2-2]
	\arrow[dashed, from=2-1, to=1-2]
\end{tikzcd}\]
Then the induced templicial morphism $N^{D}_{\mathcal{V}}(H): N^{D}_{\mathcal{V}}(\mathcal{C})\rightarrow N^{D}_{\mathcal{V}}(\mathcal{D})$ is a trivial fibration. In particular, $N^{D}(H): N^{D}(\mathcal{C})\rightarrow N^{D}(\mathcal{D})$ is a trivial Kan fibration.
\item Assume that $\mathcal{W}$ has a terminal object and that the following lifting problem in $\mathcal{W}$ has a solution for all $n > 0$ and $A,B\in \Ob(\mathcal{C})$:
\[\begin{tikzcd}
	{\mathfrak{l}^{D}(\partial\Delta^{n}_{\bullet}(0,n))\amalg_{\mathfrak{l}^{D'}(\partial\Delta^{n}_{\bullet}(0,n))}D'(\Delta^{n})} & {\mathcal{C}(A,B)} \\
	{D(\Delta^{n})}
	\arrow[from=1-1, to=2-1]
	\arrow[from=1-1, to=1-2]
	\arrow[dashed, from=2-1, to=1-2]
\end{tikzcd}\]
Then the induced templicial morphism $N^{D}_{\mathcal{V}}(\mathcal{C})\rightarrow N^{D'}_{\mathcal{V}}(\mathcal{C})$ is a trivial fibration. In particular, $N^{D}(\mathcal{C})\rightarrow N^{D'}(\mathcal{C})$ is a trivial Kan fibration.
\end{enumerate}
\end{Cor}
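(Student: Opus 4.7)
The plan is to obtain both parts as specialisations of Theorem \ref{theorem: pull-back product of nerves contractible}, with the ``in particular'' statements following from Corollary \ref{corollary: underlying simp. set functor preserves trivial fibrations}.

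For Part 2, I would take $\mathcal{D}$ to be the terminal $\mathcal{W}$-category (one object with hom-object the terminal $*\in \mathcal{W}$), which exists by hypothesis. For any $W\in \mathcal{W}$, the universal property makes $[W,*]$ terminal in $\mathcal{V}$, so both $\mathfrak{n}^{D}_{\mathcal{V}}(*)$ and $\mathfrak{n}^{D'}_{\mathcal{V}}(*)$ are terminal in $\mathcal{V}^{\nec^{op}}$; since $(-)^{temp}$ is a right adjoint, both $N^{D}_{\mathcal{V}}(\mathcal{D})$ and $N^{D'}_{\mathcal{V}}(\mathcal{D})$ are terminal in $\ts\mathcal{V}$. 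Consequently the pullback $N^{D}_{\mathcal{V}}(\mathcal{C})\times_{N^{D}_{\mathcal{V}}(\mathcal{D})} N^{D'}_{\mathcal{V}}(\mathcal{D})$ appearing in the Theorem's conclusion collapses canonically to $N^{D}_{\mathcal{V}}(\mathcal{C})$, so the induced map becomes the natural comparison between the two nerves of $\mathcal{C}$. At the same time, since $\mathcal{D}(H(A),H(B)) = *$ is terminal in $\mathcal{W}$, the bottom edge of the Theorem's lifting square commutes automatically and its hypothesis collapses precisely to the one of Part 2. Applying Theorem \ref{theorem: pull-back product of nerves contractible} then delivers the desired trivial fibration.

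For Part 1, the assumed lifting condition is literally that of Theorem \ref{theorem: pull-back product of nerves contractible}, so the Theorem produces the pullback-product trivial fibration $N^{D'}_{\mathcal{V}}(\mathcal{C}) \to P$, where $P = N^{D}_{\mathcal{V}}(\mathcal{C}) \times_{N^{D}_{\mathcal{V}}(\mathcal{D})} N^{D'}_{\mathcal{V}}(\mathcal{D})$. To extract the stated conclusion about $N^{D}_{\mathcal{V}}(H)$, I would repeat the adjoint argument used in the proof of the Theorem: by Proposition \ref{proposition: temp preserves triv. fibs.} it is enough to check that the necklace functor $\mathfrak{n}^{D}_{\mathcal{V}}(H)$ is surjective on objects (immediate from $H$) and lifts cells on each hom-functor. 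The adjunction $\mathfrak{l}^{D} \dashv \mathfrak{n}^{D}_{\mathcal{V}}$ translates the latter into a lifting problem in $\mathcal{W}$ with top-left $\mathfrak{l}^{D}(\partial\Delta^{n}_{\bullet}(0,n))$ and bottom-left $D(\Delta^{n})$. The canonical factorisation $\mathfrak{l}^{D}(\partial\Delta^{n}_{\bullet}(0,n)) \hookrightarrow P \to D(\Delta^{n})$ then lets me embed any such smaller lifting problem into one of the form supplied by the Part 1 hypothesis, whose solution restricts to the required lift.

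The main obstacle I anticipate is the promotion step in Part 1, namely, canonically producing the missing component $D'(\Delta^{n}) \to \mathcal{C}(A,B)$ needed to extend the given top arrow from $\mathfrak{l}^{D}(\partial\Delta^{n}_{\bullet}(0,n))$ to all of $P$ in a manner that remains compatible with the fixed bottom arrow $D(\Delta^{n}) \to \mathcal{D}(H(A),H(B))$. I expect to produce it by combining $\alpha_{\Delta^{n}}$ with the given bottom arrow and by an inductive use of lifts already obtained in lower dimensions, with naturality in the necklace variable checked using the colax monoidal structure of $D'$. Once that promotion is in place, Proposition \ref{proposition: temp preserves triv. fibs.} yields that $N^{D}_{\mathcal{V}}(H)$ is a trivial fibration, and Corollary \ref{corollary: underlying simp. set functor preserves trivial fibrations} upgrades both conclusions to the statements about trivial Kan fibrations of the underlying simplicial sets.
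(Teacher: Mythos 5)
Your handling of Part~2 is correct and matches the paper: choosing $\mathcal{D}=1$, the bottom-right of the lifting square becomes terminal (so the bottom row of the square is uniquely determined), and $N^{D}_{\mathcal{V}}(1)$ and $N^{D'}_{\mathcal{V}}(1)$ are terminal templicial objects, whence the pullback in the Theorem's conclusion collapses to $N^{D}_{\mathcal{V}}(\mathcal{C})$ and the Theorem delivers precisely the stated comparison trivial fibration.

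For Part~1, however, you have missed the intended specialization and gone down a route that cannot be completed. The paper's proof applies Theorem~\ref{theorem: pull-back product of nerves contractible} after replacing $D'$ by the \emph{constant functor on the initial object of $\mathcal{W}$}. With this choice one has $\mathfrak{l}^{D'}(\partial\Delta^{n}_{\bullet}(0,n))=0$ and $D'(\Delta^{n})=0$, so the pushout in the top-left corner collapses to $\mathfrak{l}^{D}(\partial\Delta^{n}_{\bullet}(0,n))$. The hypothesis then reads: every lifting problem against $\mathfrak{l}^{D}(\partial\Delta^{n}_{\bullet}(0,n))\to D(\Delta^{n})$ with target $H_{A,B}$ has a solution, which by the adjunction $\mathfrak{l}^{D}_{\mathcal{V}}\dashv\mathfrak{n}^{D}_{\mathcal{V}}$ is precisely the statement that $\mathfrak{n}^{D}_{\mathcal{V}}(H)$ lifts cells on each hom-functor; Proposition~\ref{proposition: temp preserves triv. fibs.} then immediately gives that $N^{D}_{\mathcal{V}}(H)$ is a trivial fibration. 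There is no promotion step: the choice $D'=0$ removes the extra summand entirely.

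The ``promotion step'' you propose, on the other hand, is a genuine obstruction, not a technical nuisance that an inductive argument can patch. Extending a given map $\phi:\mathfrak{l}^{D}(\partial\Delta^{n}_{\bullet}(0,n))\to\mathcal{C}(A,B)$ to the pushout requires producing a map $D'(\Delta^{n})\to\mathcal{C}(A,B)$ agreeing with $\phi\circ\alpha$ on $\mathfrak{l}^{D'}(\partial\Delta^{n}_{\bullet}(0,n))$ and lying over $\psi\circ\alpha_{\Delta^{n}}$; that is itself a lifting problem for $D'$, about which nothing is assumed. To see that the implication you would need fails in general, take $D'=D$ and $\alpha=\mathrm{id}$: then the pushout is all of $D(\Delta^{n})$ and the assumed lifting condition becomes vacuous, yet of course $N^{D}_{\mathcal{V}}(H)$ is not a trivial fibration for an arbitrary surjective-on-objects $H$. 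So your route does not terminate, and the Corollary has to be read with the specialization $D'=0$ (as the paper's proof makes explicit), at which point the argument is a one-liner re-running the Theorem's own reduction to Proposition~\ref{proposition: temp preserves triv. fibs.}.
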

\begin{proof}
This follows from Theorem \ref{theorem: pull-back product of nerves contractible} by choosing $D' = 0$ (the constant functor on the initial object in $\mathcal{W}$) and $\mathcal{D} = 1$ (the terminal object in $\mathcal{W}\Cat$) respectively.
\end{proof}

\section{Examples}\label{section: Examples}

In this section we discuss several examples of nerves from the literature, as well as two examples of interest for the study of general templicial objects. In each subsection, we first identify the generating diagram $D: \nec\rightarrow \mathcal{W}$ and then apply the results from Section \ref{section: Enriched nerves of enriched categories} whenever possible.

It should be noted that many of the results in this section have already appeared in the literature, and we do not claim originality for them. Often we will still reprove them however, to show how they follow by using the generating diagram $D$. Our main novel results are Corollaries \ref{corollary: results for the dg-nerve}.2 and \ref{corollary: results for the cubical nerve}.2, which give explicit descriptions of the left-adjoints of the differential graded and cubical nerves.

\subsection{The nerve}\label{subsection: The nerve}

Let us start with an easy example. Recall the classical nerve functor $N: \Cat\rightarrow \SSet$. In \cite[Definition 2.11]{lowen2024enriched} an enriched variant $N_{\mathcal{V}}: \mathcal{V}\Cat\rightarrow \ts\mathcal{V}$ was constructed, called the \emph{templicial nerve}. Let us now discuss how it fits into the general procedure of \S\ref{subsection: A general procedure}. Consider the constant functor
\begin{equation}
\const_{I}: \nec\rightarrow \mathcal{V}: T\mapsto I
\end{equation}
which is clearly strong monoidal.

\begin{Prop}\label{proposition: templicial nerve}
The nerve $\mathcal{V}\Cat\rightarrow \ts\mathcal{V}$ generated by $\const_{I}: \nec\rightarrow \mathcal{V}$ is naturally isomorphic to the templicial nerve $N_{\mathcal{V}}$ of \cite{lowen2024enriched}.
\end{Prop}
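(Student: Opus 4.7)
The plan is to verify $N^{\const_I}_{\mathcal{V}}(\mathcal{C})\simeq N_{\mathcal{V}}(\mathcal{C})$ naturally in $\mathcal{C}$ by factoring $N^{\const_I}_{\mathcal{V}}$ through the adjunction $(-)^{nec}\dashv (-)^{temp}$ of Theorem \ref{theorem: nec-temp adjunction} and comparing universal properties. First I would compute the intermediate necklace category $\mathfrak{n}^{\const_I}_{\mathcal{V}}(\mathcal{C})\in \mathcal{V}\Cat_{\nec}$: by the canonical isomorphism $[I,W]\simeq W$ in $\mathcal{V}$, it has the same object set as $\mathcal{C}$, each hom-object $\mathfrak{n}^{\const_I}_{\mathcal{V}}(\mathcal{C})(A,B)$ naturally identifies with the constant presheaf $\Delta_{\mathcal{C}(A,B)}\colon \nec^{op}\to \mathcal{V}$, and composition is induced by $m_{\mathcal{C}}$ via the lax monoidal structure of $\mathfrak{n}^{\const_I}_{\mathcal{V}}$.

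Then, for an arbitrary templicial object $(X,S)\in \ts\mathcal{V}$, I would unwind the hom-set bijection
$$
\ts\mathcal{V}\bigl((X,S),\,\mathfrak{n}^{\const_I}_{\mathcal{V}}(\mathcal{C})^{temp}\bigr)\simeq \mathcal{V}\Cat_{\nec}\bigl(X^{nec},\,\mathfrak{n}^{\const_I}_{\mathcal{V}}(\mathcal{C})\bigr).
$$
A $\mathcal{V}^{\nec^{op}}$-enriched functor $H\colon X^{nec}\to \mathfrak{n}^{\const_I}_{\mathcal{V}}(\mathcal{C})$ consists of a map $f\colon S\to \Ob(\mathcal{C})$ together with natural transformations $H_{a,b}\colon X_{\bullet}(a,b)\to \Delta_{\mathcal{C}(f(a),f(b))}$ compatible with composition and units. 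Since the codomain is constant, naturality of $H_{a,b}$ against the inert maps $\nu_{k,l}\colon \Delta^{k}\vee \Delta^{l}\hookrightarrow \Delta^{k+l}$, combined with composition-preservation (which invokes the lax monoidal structure of $\mathfrak{n}^{\const_I}_{\mathcal{V}}$), forces each component $H_{a,b,T}$, for $T=\Delta^{n_1}\vee \cdots\vee \Delta^{n_k}$, to be the iterated composition via $m_{\mathcal{C}}$ of its bead-level components $H_{\Delta^{n_i}}$. Naturality against the active face maps $\delta_j$ and degeneracies $\sigma_i$ then reduces all remaining data to a single quiver morphism $H_1\colon X_1\to f^{*}\mathcal{C}$, subject only to the level-$2$ triangle identity $H_1\circ d_1 = m_{\mathcal{C}}\circ (H_1\otimes H_1)\circ \mu_{1,1}$ on $X_2$ and the unit compatibility on $X_0\simeq I_S$.

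This is precisely the data of a templicial morphism $X\to N_{\mathcal{V}}(\mathcal{C})$: since $N_{\mathcal{V}}(\mathcal{C})_n\simeq \mathcal{C}^{\otimes_S n}$ with comultiplications the canonical splitting isomorphisms, monoidal naturality forces the level-$n$ component to be the $n$-fold tensor product of the level-$1$ component, and the only nontrivial face compatibility is exactly the triangle identity at level $2$. By the Yoneda lemma, this natural bijection yields the desired isomorphism $\mathfrak{n}^{\const_I}_{\mathcal{V}}(\mathcal{C})^{temp}\simeq N_{\mathcal{V}}(\mathcal{C})$ in $\ts\mathcal{V}$, natural in $\mathcal{C}$. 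The main obstacle is the combinatorial collapse in the middle paragraph: verifying that the relations imposed on the components of $H$ by all maps in $\nec$ reduce to the single level-$2$ triangle identity. This should follow by induction on $\vert T\vert$ using monoidal generation of $\nec$ by $\delta_j$, $\sigma_i$, and $\nu_{k,l}$, together with the simplicial identities and coassociativity of the comultiplications of $X$.
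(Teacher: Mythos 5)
Your proof is essentially correct, but it takes a substantially longer route than the paper's. Your opening paragraph — observing that the canonical isomorphism $[I,W]\simeq W$ identifies $\mathfrak{n}^{\const_I}_{\mathcal{V}}$ with the functor $\const\colon \mathcal{V}\Cat\rightarrow\mathcal{V}\Cat_{\nec}$ sending $\mathcal{C}$ to the necklace category with constant hom-presheaves — is exactly the observation the paper makes. The difference is what happens next: the paper then simply cites \cite[Proposition 3.16]{lowen2024enriched}, which already establishes $N_{\mathcal{V}}\simeq(-)^{temp}\circ\const$, and is done. You instead set out to reprove that cited proposition from scratch by unwinding the adjunction $(-)^{nec}\dashv(-)^{temp}$ and verifying, via Yoneda, that both hom-functors reduce to the same level-$1$ data subject to a single triangle identity and unit condition. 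The content of that reduction (what you flag as ``the main obstacle,'' the collapse of all naturality constraints over $\nec$ down to the level-$2$ identity using the monoidal generation of $\nec$ by $\delta_j$, $\sigma_i$, $\nu_{k,l}$ together with coassociativity and the simplicial identities) is precisely what the cited Proposition 3.16 already packages. Your approach has the merit of being self-contained, but you leave that combinatorial collapse as an induction sketch rather than carrying it out; if you want a genuinely independent proof you would need to complete that verification. If the result from \cite{lowen2024enriched} is available, the paper's one-line argument is strictly cleaner.
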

\begin{proof}
From \cite[Proposition 3.16]{lowen2024enriched}, we have a natural isomorphism $N_{\mathcal{V}}\simeq (-)^{temp}\circ \const$, where $\const: \mathcal{V}\Cat\rightarrow \mathcal{V}\Cat_{\nec}$ is determined on hom-objects by the functor $\mathcal{V}\rightarrow \mathcal{V}^{\nec^{op}}$ sending every object $V$ of $\mathcal{V}$ to the constant functor on $V$. It is clear from the definitions that in fact $\mathfrak{n}^{\const_{I}}_{\mathcal{V}}\simeq \const$ and thus the result follows.
\end{proof}

The following results were already shown in \cite{lowen2024enriched} and \cite{lowen2023frobenius}, but they are now also simple consequences of the results of Section \ref{section: Enriched nerves of enriched categories}.

\begin{Cor}
The following statements are true.
\begin{enumerate}[1.]
\item There is a natural isomorphism $\tilde{U}\circ N_{\mathcal{V}}\simeq N\circ \mathcal{U}$ where $\mathcal{U}: \mathcal{V}\Cat\rightarrow \Cat$ is the forgetful functor. In particular, if $\mathcal{V} = \Set$, then $N_{\mathcal{V}}$ coincides with $N$.
\item $N_{\mathcal{V}}$ has a left-adjoint $h_{\mathcal{V}}: \ts\mathcal{V}\rightarrow \mathcal{V}\Cat$.
\item Let $\mathcal{C}$ be small $\mathcal{V}$-category. Then $N_{\mathcal{V}}(\mathcal{C})$ is a quasi-category in $\mathcal{V}$.
\item Let $\mathcal{C}$ be small $\mathcal{V}$-category. Then $N_{\mathcal{V}}(\mathcal{C})$ has a Frobenius structure.
\end{enumerate}
\end{Cor}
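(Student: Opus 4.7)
The plan is to deduce each of the four statements by applying a corresponding result from Section \ref{section: Enriched nerves of enriched categories} to the strong monoidal diagram $D = \const_{I}: \nec \to \mathcal{V}$, using Proposition \ref{proposition: templicial nerve} to identify $N_{\mathcal{V}}$ with $N^{\const_{I}}_{\mathcal{V}}$.

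For (1), I would first unpack $\Phi(\const_{I})^{n}$ from Construction \ref{construction: cosimplicial object generated by necklicial diagram}: it is the $\mathcal{V}$-category with object set $[n]$, hom-object $I$ when $i\leq j$ and an initial object otherwise, and with compositions given by the canonical isomorphisms $I \otimes I \simeq I$. A $\mathcal{V}$-functor $\Phi(\const_{I})^{n} \to \mathcal{C}$ is then precisely a function $[n]\to \Ob(\mathcal{C})$ together with a compatible family of morphisms $I \to \mathcal{C}(A_{i},A_{j})$, i.e. an ordinary functor $[n] \to \mathcal{U}(\mathcal{C})$. This yields a natural bijection $\mathcal{V}\Cat(\Phi(\const_{I})^{n},\mathcal{C}) \simeq \Cat([n],\mathcal{U}(\mathcal{C})) = N(\mathcal{U}(\mathcal{C}))_{n}$, and Theorem \ref{theorem: underlying D-nerve is cat. nerve assoc. to D} delivers $\tilde{U} \circ N_{\mathcal{V}} \simeq N \circ \mathcal{U}$. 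When $\mathcal{V} = \Set$, $\mathcal{U}$ is the identity and $\tilde{U}$ is an equivalence by Proposition \ref{proposition: temp. obj. results}.2, so we recover $N_{\mathcal{V}} \simeq N$.

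Statement (2) is immediate from Proposition \ref{proposition: nerve gen. by strong mon. diagram has left-adj.} since $\const_{I}$ is strong monoidal. For (3), I would invoke Theorem \ref{theorem: D-nerve is quasi-cat.}. By Lemma \ref{lemma: weighted colimit for simp. sets} the colimit appearing in the lifting condition equals $\mathfrak{l}^{\const_{I}}((\Lambda^{n}_{j})_{\bullet}(0,n))$, and the canonical map to $\const_{I}(\Delta^{n}) = I$ is induced by the horn inclusion. Since every $D(T)$ is $I$ and every transition morphism is the identity, this colimit reduces to $I$ provided the indexing category is connected; this follows from \eqref{equation: horn as colimit}, which (for $0 < j < n$) presents $(\Lambda^{n}_{j})_{\bullet}(0,n)$ as a connected union of representable subfunctors of $\Delta^{n}_{\bullet}(0,n)$ that can be linked through their pairwise intersections. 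Consequently the vertical map in the lifting square is an isomorphism and any top horizontal map admits a trivial extension.

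For (4), the constant functor $\const_{I}$ extends canonically to a colax monoidal functor $\overline{\nec} \to \mathcal{V}$: it remains constant on $I$, sending every coinert map to the identity, with colax structure morphisms given by the canonical iso $I \simeq I \otimes I$. Corollary \ref{corollary: D-nerve has Frob. structure} then applies. The main obstacle in this argument is verifying, for (3), that the indexing category of necklaces over the horn is connected and nonempty; this is essentially routine given the explicit decomposition \eqref{equation: horn as colimit}, but warrants careful bookkeeping.
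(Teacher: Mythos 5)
Your proof is correct and follows essentially the same approach as the paper: part (1) is the paper's argument via $\Phi(\const_I)^n\simeq\mathcal{F}([n])$ unwound into the universal property, part (2) cites the same proposition, part (3) reasons exactly as the paper that the weighted colimit is a connected colimit of copies of $I$ via \eqref{equation: horn as colimit}, and part (4) observes the trivial extension of $\const_I$ to $\overline{\nec}$ before invoking Corollary \ref{corollary: D-nerve has Frob. structure}. The only difference is that you flag the connectedness of the indexing category in (3) as needing "careful bookkeeping," whereas the paper asserts it outright; this is a legitimate caution but neither proof actually spells it out.
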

\begin{proof}
\begin{enumerate}[1.]
\item In view of Theorem \ref{theorem: underlying D-nerve is cat. nerve assoc. to D}, it suffices to show that the unenriched nerve $\mathcal{V}\Cat\rightarrow \SSet$ produced from $\const_{I}$ by Construction \ref{construction: cosimplicial object generated by necklicial diagram} coincides with $N\circ \mathcal{U}$. To this end, note that we have isomorphisms $\Phi(\const_{I})^{n}\simeq \mathcal{F}([n])$ which are natural in $n\geq 0$, where $\mathcal{F}: \Cat\rightarrow \mathcal{V}\Cat$ applies $F$ to hom-sets. Then we have for all small $\mathcal{V}$-categories $\mathcal{C}$:
$$
N(\mathcal{U}(\mathcal{C}))_{n}\simeq \Cat([n],\mathcal{U}(\mathcal{C}))\simeq \mathcal{V}\Cat(\mathcal{F}([n]),\mathcal{C})\simeq \mathcal{V}\Cat(\Phi(\const_{I})^{n},\mathcal{C})
$$
since $\mathcal{F}$ is left-adjoint to $\mathcal{U}$.
\item This follows from Proposition \ref{proposition: nerve gen. by strong mon. diagram has left-adj.} since $\const_{I}$ is strong monoidal.
\item By \eqref{equation: horn as colimit}, $\mathfrak{l}^{\const_{I}}((\Lambda^{n}_{j})_{\bullet}(0,n))$ is a connected colimit of copies of $I$ and thus itself isomorphic to $I$. The lifting diagram in Theorem \ref{theorem: D-nerve is quasi-cat.} for $D = \const_{I}$ thus has a trivial solution, whereby $N^{D}(\mathcal{C})$ is a quasi-category in $\mathcal{V}$ for any $\mathcal{V}$-category $\mathcal{C}$.
\item This immediately follows from Corollary \ref{corollary: D-nerve has Frob. structure}.
\end{enumerate}
\end{proof}

\begin{Rem}
Note that there is no functor $D': \nec_{-}\rightarrow \mathcal{V}$ such that $\const_{I}\simeq \Lan_{\iota}D'$ and thus the results of \S\ref{subsection: Explicitation of the left-adjoint} do not apply.
\end{Rem}

\subsection{The Duskin nerve}\label{subsection: The Duskin nerve}

Let us denote the category of small $2$-categories, that is strictly $\Cat$-enriched categories, by $\Cat_{2} = \Cat\Cat$. Consider the Duskin nerve of \cite[\S 6]{duskin2001simplicial}:
$$
N^{\Dusk}: \Cat_{2}\rightarrow \SSet
$$
In fact, this nerve is defined for bicategories, but our approach is limited to strictly enriched categories and thus we necessarily have to restrict to $2$-categories. We show how the Duskin nerve fits into the general procedure of \S\ref{subsection: A general procedure}.

Define a strong monoidal functor $\Dusk$ as the functor \eqref{diagram: partition functor}:
\begin{equation}
\Dusk: \nec\xrightarrow{\dim} \square\subseteq \Cat: T\mapsto \mathcal{P}_{T}
\end{equation}

\begin{Prop}
The nerve functor $\Cat_{2}\rightarrow \SSet$ generated by $\Dusk: \nec\rightarrow \Cat$ is naturally isomorphic to the Duskin nerve of \cite{duskin2001simplicial}. 
\end{Prop}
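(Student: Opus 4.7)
The plan is to invoke Theorem \ref{theorem: underlying D-nerve is cat. nerve assoc. to D}, which reduces the statement to exhibiting a natural isomorphism between the cosimplicial $2$-category $\Phi(\Dusk)$ of Construction \ref{construction: cosimplicial object generated by necklicial diagram} and the cosimplicial $2$-category representing the Duskin nerve of $2$-categories from \cite{duskin2001simplicial}. Even more directly, I would compare $n$-simplices of $N^{\Dusk}(\mathcal{C})$ with Duskin $n$-simplices using Corollary \ref{corollary: n-simplex of underlying D-nerve}, and then verify compatibility with cofaces and codegeneracies.

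First I would unpack $\Phi(\Dusk)^n$: its object set is $[n]$, and for $i \leq j$ its hom-category is $\Dusk(\Delta^{j-i}) = \mathcal{P}_{\{0,j-i\}}$, the poset of subsets $U \subseteq [j-i]$ with $\{0,j-i\} \subseteq U$, ordered by inclusion. The composition functor is the inclusion
$$
\mathcal{P}_{\{0,k-i\}} \times \mathcal{P}_{\{0,j-k\}} \cong \mathcal{P}_{\{0,k-i,j-i\}} \hookrightarrow \mathcal{P}_{\{0,j-i\}},
$$
induced by the strong monoidal structure of $\Dusk$ and the inert map $\nu_{k-i,j-k}$. By Corollary \ref{corollary: n-simplex of underlying D-nerve}, an $n$-simplex of $N^{\Dusk}(\mathcal{C})$ is then the data of objects $(A_i)_{i=0}^{n}$ and $2$-functors $\alpha_{ij} : \mathcal{P}_{\{0,j-i\}} \to \mathcal{C}(A_i, A_j)$ for $i < j$, satisfying $\alpha_{ij}(U \cup (V + (k-i))) = \alpha_{ik}(U) \cdot \alpha_{kj}(V)$ for all decompositions $i < k < j$.

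Since $\mathcal{P}_{\{0,j-i\}}$ is freely generated, as a poset, by the minimal element $\{0,j-i\}$ and the compositions of elements recorded above, such data is equivalent to: $1$-morphisms $f_{ij} := \alpha_{ij}(\{0,j-i\}) : A_i \to A_j$ for each $i < j$, together with structural $2$-cells $f_{ij} \Rightarrow f_{ik} \cdot f_{kj}$ coming from the relation $\{0,j-i\} \subseteq \{0,k-i,j-i\}$. Functoriality of each $\alpha_{ij}$ on the poset $\mathcal{P}_{\{0,j-i\}}$ encodes associativity and pentagon-type coherence for these $2$-cells, while normality corresponds to the fact that $\mathcal{P}_{\{0\}}$ is terminal. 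This matches the data of a normal (co)lax $2$-functor $[n] \to \mathcal{C}$, i.e.\ a Duskin $n$-simplex of $\mathcal{C}$.

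Finally, to obtain a natural isomorphism of simplicial sets, I would verify that the cosimplicial structure induced by $\Phi$ on inner/outer cofaces and codegeneracies corresponds to the simplicial structure of the Duskin nerve, which amounts to computing the images of $\delta_j$, $\sigma_i$ and $\nu_{k,l}$ under the functor \eqref{diagram: partition functor}. The main obstacle I expect is bookkeeping: one must reconcile the composition convention of \S\ref{subsection: Notations and conventions}.3 with Duskin's, and confirm that the $2$-cell direction coming from the inclusion order on $\mathcal{P}_{\{0,j-i\}}$ agrees with the lax/colax convention chosen in \cite{duskin2001simplicial}, so that the identification is strictly natural in $[n] \in \simp$.
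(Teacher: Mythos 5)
Your approach is sound but genuinely different from the paper's. You propose to compare $n$-simplices of $N^{\Dusk}(\mathcal{C})$ directly against Duskin's original definition of normal (co)lax functors, unwinding the data of a functor $\alpha_{ij}: \mathcal{P}_{\{0,j-i\}}\to \mathcal{C}(A_i,A_j)$ via Corollary \ref{corollary: n-simplex of underlying D-nerve}. The paper instead sidesteps this unwinding by citing \cite[Tag 00BF]{kerodon}, where the left-adjoint $\mathrm{Path}_{(2)}:\SSet\to 2\Cat$ of the Duskin nerve (restricted to strict $2$-categories) is given explicitly, with $\mathrm{Path}_{(2)}(\Delta^n)(i,j)$ described as the poset of chains $\{i = i_0 < \dots < i_k = j\}\subseteq[n]$ under union; the paper then identifies $\mathrm{Path}_{(2)}(\Delta^n)\simeq\Phi(\Dusk)^n$ (both hom-posets are isomorphic to $\mathcal{P}_{\Delta^{j-i}}$) and invokes Theorem \ref{theorem: underlying D-nerve is cat. nerve assoc. to D}, as you also intended. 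The paper's route is shorter because all the bookkeeping you anticipate (reconciling lax/oplax conventions, checking coherence and normalization, verifying naturality in $[n]$) has already been absorbed into the Kerodon proof that $\mathrm{Path}_{(2)}$ is the left-adjoint; your route is more self-contained but leaves those verifications as a sketch. Two small cautions if you carry it out: the phrase ``$\mathcal{P}_{\{0,j-i\}}$ is freely generated as a poset'' is loose --- what actually happens is that the compatibility condition of Corollary \ref{corollary: n-simplex of underlying D-nerve}, not any freeness of the poset, forces $\alpha_{ij}$ on larger subsets to be determined by the edges and the bottom element; and the convention issue you flag is real and is addressed in the paper in Remark \ref{remark: convention for partitions} (inclusion versus reverse inclusion on $\mathcal{P}_T$, with the paper choosing inclusion to agree with Duskin and Dugger--Spivak).
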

\begin{proof}
It was shown in \cite[Tag 00BF]{kerodon} that the restriction of the Duskin nerve to strict $2$-categories has a left-adjoint $\mathrm{Path}_{(2)}: \SSet\rightarrow 2\Cat$. For an integer $n\geq 0$, the $2$-category $\mathrm{Path}_{(2)}(\Delta^{n})$ has $[n]$ as its object set and for $i,j\in [n]$ its hom-object is the following poset ordered by inclusion:
$$
\mathrm{Path}_{(2)}(\Delta^{n})(i,j) = \left\lbrace\{i = i_{0} < \dots < i_{k} = j\}\subseteq [n]\,\middle\vert\, k\geq 0\right\rbrace
$$ 
with composition given by the union of subsets of $[n]$. Recall the functor $\Phi$ of Construction \ref{construction: cosimplicial object generated by necklicial diagram}. Clearly, $\mathrm{Path}_{(2)}(\Delta^{n})(i,j)\simeq \mathcal{P}_{\Delta^{j-i}} = \Dusk(\Delta^{j-i})$ and we have an isomorphism $\mathrm{Path}_{(2)}(\Delta^{n})\simeq \Phi(\Dusk)^{n}$ which is natural in $n$. The result follows from Theorem \ref{theorem: underlying D-nerve is cat. nerve assoc. to D}.
\end{proof}

\begin{Rem}\label{remark: convention for partitions}
In fact, Lurie defines the poset $\mathrm{Path}_{(2)}(\Delta^{n})(i,j)$ by \emph{reverse} inclusion. This is merely a convention, but it does play a role when comparing to other nerves. Here we have chosen to use the ordinary inclusion relation on $\mathrm{Path}_{(2)}(\Delta^{n})(i,j)$ and thus on $\mathcal{P}_{T}$, to bring it in accordance with the conventions of \cite{duskin2001simplicial}, \cite{dugger2011rigidification} and \cite{rivera2018cubical}.
\end{Rem}

\begin{Lem}\label{lemma: Duskin nerve quasi-cat.}
Let $\mathcal{G}$ be a groupoid. Then for all $0 < j < n$, the following lifting problem has a solution. Moreover, the solution is unique for $n\geq 3$.
\[\begin{tikzcd}
	{\mathfrak{l}^{\Dusk}((\Lambda^{n}_{j})_{\bullet}(0,n))} & {\mathcal{G}} \\
	{\Dusk(\Delta^{n})}
	\arrow[from=1-1, to=1-2]
	\arrow[from=1-1, to=2-1]
	\arrow[dashed, from=2-1, to=1-2]
\end{tikzcd}\]
\end{Lem}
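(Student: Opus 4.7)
The plan is to prove both claims by a direct combinatorial analysis of the colimit $C := \mathfrak{l}^{\Dusk}((\Lambda^n_j)_\bullet(0,n))$ and its canonical functor $\iota: C \to \Dusk(\Delta^n) = \mathcal{P}_{\{0,n\}} \cong [1]^{n-1}$. Using the decomposition of $(\Lambda^n_j)_\bullet(0,n)$ from \eqref{equation: horn as colimit} together with the strong monoidality $\Dusk(\Delta^{n_1} \vee \dots \vee \Delta^{n_k}) \cong \mathcal{P}_{\{0,n_1,n_1+n_2,\dots\}}$, one expresses $C$ as an iterated pushout of cubes $\mathcal{P}_T = [1]^{\dim T}$ indexed by the necklaces appearing in the horn, following the pattern of the example after Theorem~\ref{theorem: D-nerve is quasi-cat.}.

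The first substantive step would be to identify the image of $\iota$ in $[1]^{n-1}$. Every vertex $V \in \mathcal{P}_{\{0,n\}}$ is reached: choose a spine necklace $\Delta^1 \vee \cdots \vee \Delta^1$ traversing the elements of $V$ in order. This is a bipointed map into $\Lambda^n_j$ because every $1$-simplex of $\Delta^n$ lies in $\Lambda^n_j$ (as each edge is contained in some face $\delta_i$ with $i\neq j$), and its contribution to $\iota$ is the vertex $V$. By contrast, not every arrow of $[1]^{n-1}$ is realized by a single necklace in the horn — the problematic arrows are those $V \subseteq V'$ for which no necklace $T\to \Lambda^n_j$ simultaneously contributes both endpoints, which (one expects, after case analysis) occur precisely when completing the pair would require the missing face $\delta_j(\Delta^{n-1})$ or the top cell $\Delta^n$ itself.

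For the existence of a lift, given a functor $F: C \to \mathcal{G}$, define $\tilde{F}:[1]^{n-1}\to \mathcal{G}$ on a missing arrow $V \to V'$ by choosing an auxiliary vertex $W$ such that both $V\to W$ and $W\to V'$ lie in the image of $\iota$ (for instance, obtained via a spine refinement), and then setting $\tilde{F}(V\to V')$ to be the composite $F(W\to V')\circ F(V\to W)$ appropriately pre- and post-composed with inverses so as to yield a morphism $\tilde{F}(V)\to \tilde{F}(V')$. Well-definedness and functoriality reduce to commutativity of the $2$-faces of the cube, which is forced by the groupoid identities on the $F$-values already present. For $n=2$, the cube $[1]^{n-1}=[1]$ is one-dimensional and $C$ reduces to a single point mapping to the top vertex of $[1]$, so $\tilde{F}$ involves a free choice of object $\tilde{F}(0)$ and arrow $\tilde{F}(0)\to \tilde{F}(1)$, giving existence but not uniqueness.

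For uniqueness when $n \geq 3$, the cube has dimension at least $2$, so every missing arrow $V\to V'$ fits into a commutative square all of whose remaining three arrows lie in the image of $\iota$. The square equation determines $\tilde{F}(V\to V')$ uniquely modulo invertibility of the other arrows, which holds because $\mathcal{G}$ is a groupoid. The main obstacle I foresee is making the image description of $\iota$ fully precise and checking coherence of the reconstruction across different choices of auxiliary paths; this requires a careful case analysis based on the Hamming weight of $V$ in $[1]^{n-1}$ and on the position of $j$, but once done the verification is a commutative-diagram chase.
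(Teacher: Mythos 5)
Your overall strategy coincides with the paper's: identify $\mathfrak{l}^{\Dusk}((\Lambda^n_j)_\bullet(0,n))$ with a subcategory $H_{j,n}$ of the cube $[1]^{n-1}$ that misses exactly the face $\delta^0_j$, then argue case by case on $n$ using invertibility in $\mathcal{G}$. However, there are substantive gaps in the execution, some of which you acknowledge at the end.

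The main issue is that your description of $H_{j,n}$ does not account for closure under composition. An arrow $V\to V'$ of $[1]^{n-1}$ lies in a face $\neq\delta^0_j$ iff $V'_i = 0$ for some $i\neq j$, or $V_i = 1$ for some $i$; so the arrows lying in no face are exactly $(0,\dots,0)\to(1,\dots,1)$ and $(0,\dots,0)\to V'$ with $V'_j = 0$ and all other coordinates $1$. But $H_{j,n}$ is the subcategory \emph{generated} by the faces, and as soon as $n\geq 4$ those arrows are obtained as composites through intermediate vertices already present. Thus for $n\geq 4$ the functor $\iota: H_{j,n}\to[1]^{n-1}$ is full, and your "missing arrow" analysis is vacuous there. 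Because of this your uniqueness argument only carries content for $n=3$ (where $(0,0)\to(e_{i})$, $i\neq j$, is genuinely not hit and is determined by inverting one side of a square), while for $n\geq 4$ uniqueness follows trivially from fullness — but that fullness is precisely what you haven't shown.

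Conversely, the subtlety that \emph{does} remain for $n=4$ is that $\iota$ is full but not faithful: the square of the missing face $\delta^0_j$ is not a relation in $H_{j,4}$, so one must verify that a functor $F: H_{j,4}\to\mathcal{G}$ sends that square to a commuting square. This is where the groupoid hypothesis is actually used for $n=4$. Your remark that "well-definedness and functoriality reduce to commutativity of the 2-faces, which is forced by the groupoid identities" gestures at this but does not isolate which 2-face is at stake or why the others are free. For $n\geq 5$, $\iota$ is in fact an isomorphism (every commuting square of $[1]^{n-1}$ already lives in some face $\neq\delta^0_j$, since at least two coordinates are fixed), so the lifting is trivial — a further case distinction your sketch doesn't make. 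The paper's proof is essentially your plan carried out cleanly, with explicit cases $n=2,3,4,>4$ and the colimit $H_{j,n}$ treated correctly as a generated subcategory rather than a union of faces.
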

\begin{proof}
By \eqref{diagram: equiv. descriptions of cubes}, we may identify $\Dusk(\Delta^{n})$ with the cube $[1]^{n-1}$ and by \eqref{equation: horn as colimit}, $H_{j,n} = \mathfrak{l}^{\Dusk}((\Lambda^{n}_{j})_{\bullet}(0,n))$ is the subcategory of $[1]^{n-1}$ which is precisely missing the face $\delta^{0}_{j}$. We distinguish some cases:
\begin{itemize}
\item If $n = 2$, then the lifting problem is equivalent to finding, for any $A\in \mathcal{G}$, a morphism in $\mathcal{G}$ with target $A$. For this we can always choose the identity on $A$.
\item If $n = 3$, then the lifting problem is equivalent to finding, for any
$$
A\xrightarrow{f_{1}} B_{1}\xrightarrow{g_{1}} C\qquad \text{and}\qquad B_{2}\xrightarrow{g_{2}} C
$$
in $\mathcal{G}$, a morphism $f_{2}$ in $\mathcal{G}$ such that $g_{2}\circ f_{2} = g_{1}\circ f_{1}$. Since $\mathcal{G}$ is a groupoid, $f_{2} = g_{2}^{-1}\circ g_{1}\circ f_{1}$ is the unique solution.
\item If $n = 4$, then $H_{j,4}$ contains all morphisms of the cube $[1]^{3}$, so any extension will automatically be unique. For the extension to exist, it suffices to show that the map $H_{j,4}\rightarrow \mathcal{G}$ sends the square of the missing face $\delta^{0}_{j}$ to a commuting square in $\mathcal{G}$. This easily follows because the other morphisms in the image of $H_{j,4}\rightarrow \mathcal{G}$ are invertible, since $\mathcal{G}$ is a groupoid.
\item If $n > 4$, then $H_{j,n}$ contains all morphisms of the cube $[1]^{n-1}$ and any commuting diagram in $[1]^{n-1}$ also commutes in $H_{j,n}$. Hence the the inclusion $H_{j,n}\hookrightarrow [1]^{n}$ is an isomorphism and thus the lifting problem has a unique solution.
\end{itemize}
\end{proof}

\begin{Rem}
Alternatively, to prove Lemma \ref{lemma: Duskin nerve quasi-cat.}, one can apply the nerve $N: \Cat\rightarrow \SSet$ to the lifting problem and note that the map $N(H_{j,n})\hookrightarrow N([1]^{n-1})$ is a monomorphism and a weak homotopy equivalence since both the target and source are weakly contractible. Then as $\mathcal{G}$ is a groupoid, $N(\mathcal{G})$ is a Kan complex and thus the lift exists in the Quillen model structure on $\SSet$. Since $N$ is fully faithful, this also provides a lift in $\Cat$.
\end{Rem}

In the following corollary, statement 1 was already shown in \cite[Tag 00JL]{kerodon} for directed graphs of posets, but is now extended to arbitrary simplicial sets. Statement 2 was originally shown in \cite[Theorem 8.6]{duskin2001simplicial}, but it is now also simple consequences of the Theorem \ref{theorem: D-nerve is quasi-cat.}.

\begin{Cor}
The following statements are true.
\begin{enumerate}[1.]
\item The Duskin nerve $N^{\Dusk}: \Cat_{2}$ has a left-adjoint $L^{Dusk}: \SSet\rightarrow \Cat_{2}$. Moreover, for any simplicial set $X$, the underlying category of $L^{\Dusk}(X)$ has hom-sets given by, for all $a,b\in X_{0}$:
$$
\Ob(L^{\Dusk}(X)(a,b))\simeq \coprod_{\substack{T\in \nec\\ \dim(T) = 0}}X^{nd}_{T}(a,b)\simeq \coprod_{p\geq 0}(X^{nd}_{1})^{\times_{X_{0}} p}(a,b)
$$
\item Let $\mathcal{C}$ be a small $2$-category such that all $2$-morphisms are invertible. Then $N^{\Dusk}(\mathcal{C})$ is a quasi-category.
\item Let $\mathcal{C}$ be a small $2$-category. Then $N^{\Dusk}(\mathcal{C})$ has a Frobenius structure.
\end{enumerate}
\end{Cor}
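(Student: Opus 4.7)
My plan is to dispatch the three parts of the corollary using, respectively, Corollary \ref{corollary: explicitation of the left-adjoint for simp. sets}, Theorem \ref{theorem: D-nerve is quasi-cat.}, and Corollary \ref{corollary: cubical nerve has Frob. structure}, each time feeding in the diagram $\Dusk: \nec \to \Cat$.

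For part (1), existence of the left adjoint $L^{\Dusk}$ is immediate from Proposition \ref{proposition: nerve gen. by strong mon. diagram has left-adj.} because $\Dusk$ is strong monoidal. For the explicit description I would apply Corollary \ref{corollary: explicitation of the left-adjoint for simp. sets} with $\pi = \Ob: \Cat \to \Set$, which is a colimit- and tensor-preserving $\Set$-functor (being left adjoint to the discrete-category functor, and sending $S \cdot W = \coprod_S W$ to $S \times \Ob(W)$). The crucial input is a functor $D': \nec_{-} \to \Set$ with $\Ob \circ \Dusk \simeq \Lan_\iota D'$; I would take $D'(U) = \{*\}$ when $\dim(U) = 0$ and $D'(U) = \emptyset$ otherwise, noting that an active surjective map from $(T,p)$ with $T = [p]$ automatically has $f(T) = [q]$, so $D'$ is well defined on $\nec_{-}$. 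Using Lemma \ref{lemma: left Kan ext. of surj. necklace maps}, the desired identification reduces to a natural bijection between $\mathcal{P}_T$ and the set of injective necklace maps into $T$ coming from a spine: any subset $V \subseteq [p]$ with $T \subseteq V$ is precisely the image of a unique injective $f: [q] \to [p]$ in $\fint$ with $f([q]) = V \supseteq T$, and this $f$ upgrades uniquely to a necklace map $(\{0,\dots,q\}, q) \hookrightarrow (T, p)$. Corollary \ref{corollary: explicitation of the left-adjoint for simp. sets} then yields the first isomorphism. The second equality follows because necklaces of dimension $0$ are exactly the $p$-fold wedges $\Delta^{1} \vee \dots \vee \Delta^{1}$ for varying $p \geq 0$, so that $X^{nd}_T(a,b) \simeq (X^{nd}_1)^{\times_{X_0} p}(a,b)$.

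For part (2), the hypothesis that every $2$-morphism of $\mathcal{C}$ is invertible is precisely the statement that each hom-category $\mathcal{C}(A, B)$ is a groupoid. Via Lemma \ref{lemma: weighted colimit for simp. sets}, the lifting problem appearing in Theorem \ref{theorem: D-nerve is quasi-cat.} with $D = \Dusk$ is exactly the one treated by Lemma \ref{lemma: Duskin nerve quasi-cat.} applied with $\mathcal{G} = \mathcal{C}(A,B)$, so the lift exists for every $0 < j < n$ and $N^{\Dusk}(\mathcal{C})$ is a quasi-category. For part (3), $\Dusk$ is by definition the composite of $\dim: \nec \to \square$ with the inclusion $\square \hookrightarrow \Cat$, hence factors through $\dim$, and Corollary \ref{corollary: cubical nerve has Frob. structure} directly endows $N^{\Dusk}(\mathcal{C})$ with a Frobenius structure.

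The main obstacle will be the naturality of the identification $\Ob \circ \Dusk \simeq \Lan_\iota D'$ in part (1). The object-level bijection $\mathcal{P}_T \simeq \coprod_{U \hookrightarrow T,\, \dim U = 0} \{*\}$ is essentially tautological, but one must verify that the action of a necklace map $f: T \to T'$ --- which on the left simply sends $V \in \mathcal{P}_T$ to $f(V) \in \mathcal{P}_{T'}$ --- corresponds on the right to applying the (active-surjective, injective) factorization of Remark \ref{remark: act. surj. - inj. factorization system} to the composite of the injective map encoding $V$ with $f$. This is a routine but slightly fiddly combinatorial check, and is the only genuinely non-formal step in the entire proof.
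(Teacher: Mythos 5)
Your proof matches the paper's on all three counts: you use the same $D'$ supported on spines, invoke Lemma \ref{lemma: left Kan ext. of surj. necklace maps} to compute $\Lan_\iota D'$, and reduce parts 2 and 3 to Lemma \ref{lemma: Duskin nerve quasi-cat.} with Theorem \ref{theorem: D-nerve is quasi-cat.} and to Corollary \ref{corollary: cubical nerve has Frob. structure} respectively, just as the paper does. One small slip in the justification of part 1: $\Ob: \Cat \to \Set$ is not \emph{left} adjoint to the discrete-category functor (it is its \emph{right} adjoint); rather $\Ob$ is left adjoint to the codiscrete-category functor, which is what actually gives colimit preservation — the conclusion is unaffected, but the cited adjunction is the wrong one.
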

\begin{proof}
\begin{enumerate}[1.]
\item By Theorem \ref{theorem: explicitation of left-adjoint for free temp. obj.}, it suffices to show that $\Ob\circ \Dusk\simeq \Lan_{\iota}D'$ where $\iota: \nec_{-}\hookrightarrow \nec$ is the inclusion, and $D'(T) = \{*\}$ when $\dim(T) = 0$ and $D'(T) = \emptyset$ otherwise. Fix a necklace $(T,p)$. Note that by Lemma \ref{lemma: left Kan ext. of surj. necklace maps}, we can identify $(\Lan_{\iota}D')(T)$ with the set of all injective maps $g: [q]\hookrightarrow [p]$ in $\fint$ such that $T\subseteq g([q])$.

Let us define a bijection, where we denote $U\in \mathcal{P}_{T}$ by $\{0 = u_{0} < u_{1} < \dots < u_{l} = p\}$:
$$
\varphi_{T}: \Ob(\mathcal{P}_{T})\rightarrow (\Lan_{\iota}D')(T): U\mapsto \left([l]\hookrightarrow [p]: j\mapsto u_{j}\right)
$$
Its inverse sends an element $g: [q]\hookrightarrow [p]$ of $(\Lan_{\iota}D')(T)$ to the subset $g([q])\in \mathcal{P}_{T}$. It is an easy verification to see that this bijection is also natural in $T\in \nec$. 
\item This immediately follows from Lemma \ref{lemma: Duskin nerve quasi-cat.} and Theorem \ref{theorem: D-nerve is quasi-cat.}.
\item This immediately follows from Corollary \ref{corollary: cubical nerve has Frob. structure}.
\end{enumerate}
\end{proof}

\begin{Rem}
Note that we cannot apply Theorem \ref{theorem: explicitation of left-adjoint for free temp. obj.} to describe the $2$-morphisms of $L^{\Dusk}(X)$, that is $\Mor(L^{\Dusk}(X)(a,b))$, since $\Mor: \Cat\rightarrow \Set$ doesn't preserve colimits.
\end{Rem}

\begin{Cor}\label{corollary: comparison of Duskin and ordinary nerve}
Let $\mathrm{disc}: \Set\hookrightarrow \Cat$ be the inclusion of discrete categories. Then there is a natural isomorphism $N^{Dusk}\circ (-)^{\mathrm{disc}}\simeq N$.
\end{Cor}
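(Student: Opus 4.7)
The plan is to apply Proposition \ref{proposition: basic comparison of nerves} to the canonical adjunction
$$
\pi_{0}: \Cat\leftrightarrows \Set: \mathrm{disc}
$$
where $\pi_{0}$ sends a category to its set of connected components. Note that $\pi_{0}$ preserves finite products (two pairs $(a,b),(a',b')$ in $\mathcal{C}\times \mathcal{D}$ are connected if and only if $a,a'$ are connected in $\mathcal{C}$ and $b,b'$ in $\mathcal{D}$) and sends the terminal category to the singleton, so it is strong (and in particular colax) monoidal with respect to the cartesian monoidal structures. The proposition then yields a natural isomorphism
$$
N^{\Dusk}\circ (-)^{\mathrm{disc}}\simeq N^{\pi_{0}\circ \Dusk}_{\Set}.
$$

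The next step is to show that the diagram $\pi_{0}\circ \Dusk: \nec\rightarrow \Set$ is isomorphic to the constant functor $\const_{*}$ on the monoidal unit of $\Set$. For any necklace $(T,p)$, the poset $\mathcal{P}_{T} = \{U\subseteq [p]\mid T\subseteq U\}$ is non-empty, since it contains $T$, and directed, since for any $U,U'\in \mathcal{P}_{T}$ the union $U\cup U'$ again contains $T$ and lies above both $U$ and $U'$. Hence $\pi_{0}(\mathcal{P}_{T})$ is a singleton, and naturality of this identification in $T$ is automatic since each induced functor $f^{*}: \mathcal{P}_{T}\rightarrow \mathcal{P}_{U}$ necessarily lands in the unique connected component of its target.

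Finally, applying Proposition \ref{proposition: templicial nerve} with $\mathcal{V} = \Set$, the nerve $N^{\const_{*}}_{\Set}$ is naturally isomorphic to the templicial nerve $N_{\Set}$, which coincides with the classical nerve $N: \Cat\rightarrow \SSet$ by the first statement of the corollary immediately preceding. Stringing the isomorphisms together yields $N^{\Dusk}\circ (-)^{\mathrm{disc}}\simeq N$. There is no serious obstacle in the argument; the only nontrivial verification is the strong monoidality of $\pi_{0}$, which is classical, together with the directedness of $\mathcal{P}_{T}$, which is essentially immediate from the fact that $\mathcal{P}_{T}$ is closed under binary unions.
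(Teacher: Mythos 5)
Your proof is correct and follows essentially the same route as the paper's: identify the adjunction $\pi_{0}\dashv \mathrm{disc}$, invoke Proposition \ref{proposition: basic comparison of nerves}, and observe that each $\mathcal{P}_{T}$ is connected so that $\pi_{0}\circ \Dusk\simeq \const_{*}$, whose generated nerve is the classical one. You simply spell out the verifications (strong monoidality of $\pi_{0}$, directedness of $\mathcal{P}_{T}$, and the final identification via Proposition \ref{proposition: templicial nerve}) that the paper leaves implicit.
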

\begin{proof}
Note that $\mathrm{disc}$ is right-adjoint to the functor taking connected components $\pi_{0}: \Cat\rightarrow \Set$. By Proposition \ref{proposition: basic comparison of nerves}, it suffices to note that for every $T\in \nec$, $Dusk(T) = \mathcal{P}_{T}$ is connected and thus $\pi_{0}\circ Dusk\simeq \const_{*}$.
\end{proof}

\subsection{Homotopy coherent nerves}\label{subsection: Homotopy coherent nerves}

Several different homotopy coherent nerve functors exist in the literature. The original was constructed by Cordier \cite[p.111]{cordier1982sur}:
\begin{equation}\label{diagram: categorification-homotopy coherent nerve adjunction}
\mathfrak{C}: \SSet\leftrightarrows \Cat_{\Delta}: N^{\hc}
\end{equation}
where $\Cat_{\Delta}$ denotes the category of small simplicial categories. More generally, we'll denote by $\mathcal{V}\Cat_{\Delta}$ the category of small categories enriched in $(S\mathcal{V},\otimes,F(\Delta^{0}))$, i.e. simplicial objects in $\mathcal{V}$ with the pointwise tensor product.

We introduced an enriched version in \cite{lowen2024enriched} which specializes to the classical one. Moreover, Moser, Rasekh and Rovelli constructed an enriched version in \cite{moser2024homotopy} to compare different models of $(\infty,n)$-categories. Below we show how all of these fit in the general procedure of \S\ref{subsection: A general procedure}.
 
Define a strong monoidal functor $\hc$ by the composite
\begin{equation}\label{diagram: hc}
\hc: \nec\xrightarrow{\dim} \square\subseteq \Cat\xrightarrow{N} \SSet\xrightarrow{F} S\mathcal{V}
\end{equation}
where $N$ is the ordinary nerve and $F$ is the strong monoidal functor induced by post-composition with $F: \Set\rightarrow \mathcal{V}$.

\subsubsection*{\textbf{The homotopy coherent nerve}}

In \cite[Definition 4.9]{lowen2024enriched}, we constructed the templicial homotopy coherent nerve $N^{hc}_{\mathcal{V}}: \mathcal{V}\Cat_{\Delta}\rightarrow \ts\mathcal{V}$ as an enriched variant of $N^{hc}$.

\begin{Prop}
The nerve functor $\mathcal{V}\Cat_{\Delta}\rightarrow \ts\mathcal{V}$ generated by $\hc: \nec\rightarrow S\mathcal{V}$ is naturally isomorphic to the templicial homotopy coherent nerve $N^{\hc}_{\mathcal{V}}$ of \cite{lowen2024enriched}.
\end{Prop}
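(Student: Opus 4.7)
The strategy mirrors Proposition \ref{proposition: templicial nerve}. By inspection of \cite[Definition 4.9]{lowen2024enriched}, the templicial homotopy coherent nerve factors as $N^{\hc}_{\mathcal{V}} \simeq (-)^{temp} \circ (-)^{hc}$ for a functor $(-)^{hc}: \mathcal{V}\Cat_{\Delta} \to \mathcal{V}\Cat_{\nec}$ determined hom-wise by a functor $S\mathcal{V} \to \mathcal{V}^{\nec^{op}}$ that sends $W \in S\mathcal{V}$ to the presheaf $T \mapsto W^{N(\mathcal{P}_T)}$, the cotensoring of $W$ by the nerve of the poset $\mathcal{P}_T$ from Definition \ref{definition: dimension of a necklace}. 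It therefore suffices to produce a natural isomorphism $\mathfrak{n}^{\hc}_{\mathcal{V}} \simeq (-)^{hc}$, and then apply $(-)^{temp}$ to conclude.

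The first step is to unpack $\mathfrak{n}^{\hc}_{\mathcal{V}}$ explicitly. By Construction \ref{construction: nerve generated by necklicial diagram} and the definition $\hc = F \circ N \circ \dim$, we have for every $W \in S\mathcal{V}$ and $T \in \nec$:
$$
\mathfrak{n}^{\hc}_{\mathcal{V}}(W)_T = [\hc(T), W] = [F(N(\mathcal{P}_T)), W] \simeq W^{N(\mathcal{P}_T)},
$$
where the last isomorphism uses the free-forgetful adjunction $F \dashv U$ together with the fact that the $\mathcal{V}$-enriched hom-object of $S\mathcal{V}$ is computed as a cotensor over $\SSet$ when the first argument is the free simplicial object on a simplicial set. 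This is exactly the formula for $W^{hc}_T$ from \cite{lowen2024enriched}, so on underlying objects the two presheaves coincide.

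The second step is to check naturality in $T$ and compatibility with composition. Naturality on active and inert necklace maps follows from functoriality of $\dim: \nec \to \square$, $N: \Cat \to \SSet$ and $F: \SSet \to S\mathcal{V}$, together with the fact that active maps are sent by $\dim$ to face and degeneracy maps of cubes and inert maps to the subset-union inclusions $\mathcal{P}_{T} \vee \mathcal{P}_{U} \hookrightarrow \mathcal{P}_{T \vee U}$, $(V, V') \mapsto V \cup V'$. For composition in the necklace-enriched category, one uses the adjunction \eqref{diagram: adjunction induced by strong monoidal necklace diagram}: the lax monoidal structure on $\mathfrak{n}^{\hc}_{\mathcal{V}}$ (given in Construction \ref{construction: nerve generated by necklicial diagram}) is precisely dual to the strong monoidal structure of $\hc$ coming from $F N \dim$, which agrees with the monoidal structure used to define composition in $(-)^{hc}$.

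The main obstacle is purely bookkeeping: reconciling the convention for the orientation of $\mathcal{P}_T$ (see Remark \ref{remark: convention for partitions}) and the cotensor formula with the explicit formulas in \cite{lowen2024enriched}. Once this is done, the natural isomorphism $\mathfrak{n}^{\hc}_{\mathcal{V}} \simeq (-)^{hc}$ of functors $\mathcal{V}\Cat_{\Delta} \to \mathcal{V}\Cat_{\nec}$ follows, and post-composing with $(-)^{temp}$ yields the required isomorphism of nerves. As a reality check, restricting to $\mathcal{V} = \Set$ and applying $\tilde{U}$, Theorem \ref{theorem: underlying D-nerve is cat. nerve assoc. to D} together with the identification $\Phi(\hc)^n \simeq \mathfrak{C}[\Delta^n]$ (Cordier's simplicial thickening) recovers Cordier's classical $N^{\hc}$, consistent with the fact that $N^{\hc}_\mathcal{V}$ specializes to it.
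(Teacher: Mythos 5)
Your proof is correct and takes essentially the same approach as the paper, whose own proof is simply the one-line observation that the claim follows immediately from the definition of $N^{\hc}_{\mathcal{V}}$ in \cite{lowen2024enriched}. You have spelled out what ``immediate from the definition'' means, namely that the hom-level functor $T \mapsto \mathfrak{n}^{\hc}_{\mathcal{V}}(W)_T = [\hc(T), W] = [F(N(\mathcal{P}_T)),W]$ coincides with the one used to define $N^{\hc}_{\mathcal{V}}$ (with $W^{N(\mathcal{P}_T)}$ understood as the $\mathcal{V}$-object given by the $N(\mathcal{P}_T)$-weighted limit, rather than the $\SSet$-cotensor landing in $S\mathcal{V}$); the remaining naturality and monoidality checks are automatic once this identification is made, and the closing ``reality check'' via $\Phi(\hc)^n\simeq\mathfrak{C}[\Delta^n]$ is a harmless extra.
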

\begin{proof}
This is immediate from the definition of $N^{\hc}_{\mathcal{V}}$.
\end{proof}

Given a necklace $(T,p)$ and an integer $n\geq 0$, Dugger and Spivak \cite[\S 4]{dugger2011rigidification} define a \emph{flag of length $n$} on $T$ as a sequence of subsets $\vec{T} = (T\subseteq T_{0}\subseteq T_{1}\subseteq \dots \subseteq T_{n}\subseteq [p])$. The flag $\vec{T}$ is called \emph{flanked} if $T_{0} = T$ and $T_{n} = [p]$.

\begin{Cor}
\begin{enumerate}[1.]
\item There is a natural isomorphism $\tilde{U}\circ N^{\hc}_{\mathcal{V}}\simeq N^{\hc}\circ \mathcal{U}$ where $\mathcal{U}: \mathcal{V}\Cat_{\Delta}\rightarrow \Cat_{\Delta}$ is the forgetful functor. In particular, if $\mathcal{V} = \Set$, then $N^{\hc}_{\mathcal{V}}$ coincides with the classical homotopy coherent nerve $N^{\hc}$.
\item $N^{\hc}_{\mathcal{V}}$ has a left-adjoint $\mathfrak{C}_{\mathcal{V}}: \ts\mathcal{V}\rightarrow \mathcal{V}\Cat_{\Delta}$. Moreover, for any templicial object $(X,S)$ with non-degenerate simplices, and $a,b\in S$ and $n\geq 0$, we have an isomorphism
$$
\mathfrak{C}_{\mathcal{V}}[X]_{n}(a,b)\simeq \coprod_{\substack{T\in \nec\\ \vec{T}\text{ flanked flag on }T\\ \text{of length }n}}\!\!\!\!\!\!\!\!X^{nd}_{T}(a,b)
$$

\item Let $\mathcal{C}$ be a small $S\mathcal{V}$-category such that for all $A,B\in \Ob(\mathcal{C})$, the underlying simplicial set of $\mathcal{C}(A,B)$ is a Kan complex. Then $N^{\hc}_{\mathcal{V}}(\mathcal{C})$ is a quasi-category in $\mathcal{V}$.
\item Let $\mathcal{C}$ be a small $S\mathcal{V}$-enriched category. Then $N^{\hc}_{\mathcal{V}}(\mathcal{C})$ has a Frobenius structure.
\end{enumerate}
\end{Cor}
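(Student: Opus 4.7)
The plan is to treat the four parts in turn, each as a direct application of the general theory developed in Section \ref{section: Enriched nerves of enriched categories}; the only substantial combinatorics lies in part (2).

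For part (1), I would invoke Theorem \ref{theorem: underlying D-nerve is cat. nerve assoc. to D}, which identifies $\tilde{U} \circ N^{\hc}_{\mathcal{V}}$ with $N^{\Phi(\hc)}$. It then suffices to produce a natural isomorphism between $\Phi(\hc): \simp \to \mathcal{V}\Cat_{\Delta}$ and $\mathcal{F} \circ \mathfrak{C}[\Delta^{\bullet}]$, where $\mathcal{F}: \Cat_{\Delta} \to \mathcal{V}\Cat_{\Delta}$ applies $F$ levelwise on hom-simplicial sets. Both have $[n]$ as their object set, and the classical description of Dugger--Spivak $\mathfrak{C}[\Delta^n](i,j) \simeq N(\mathcal{P}_{\Delta^{j-i}}) = N([1]^{j-i-1})$ matches $\Phi(\hc)^n(i,j) = \hc(\Delta^{j-i}) = FN([1]^{j-i-1})$, while the composition on both sides is induced by the inert maps $\nu_{k,l}$. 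The specialization to $\mathcal{V} = \Set$ follows from Proposition \ref{proposition: temp. obj. results}, since $\tilde{U}$ is an equivalence there.

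For part (2), strong monoidality of $\hc$ produces a left adjoint $\mathfrak{C}_{\mathcal{V}}$ via Proposition \ref{proposition: nerve gen. by strong mon. diagram has left-adj.}. To obtain the explicit formula I would apply Theorem \ref{theorem: explicitation of left-adjoint for free temp. obj.} with $\pi = \mathrm{ev}_n: S\mathcal{V} \to \mathcal{V}$ the evaluation at level $n$; this is a cocontinuous $\mathcal{V}$-functor which preserves tensorings because both are pointwise. The composite $\mathrm{ev}_n \circ \hc$ sends $T$ to $F$ of the set of $n$-simplices of $N(\mathcal{P}_T)$, which is exactly the set of length-$n$ flags on $T$. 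The heart of the argument is a natural bijection
\[
\{\text{length-}n\text{ flags on }T\} \;\longleftrightarrow\; \coprod_{U \hookrightarrow T \text{ in } \nec_{+}}\{\text{flanked length-}n\text{ flags on }U\},
\]
sending $(T \subseteq T_0 \subseteq \dots \subseteq T_n \subseteq [p])$ to the injective necklace map realizing $T_n$ as a subset of $[p]$ (with $U = T_0$ viewed as a necklace inside $T_n$), together with the induced flanked flag $T_0 \subseteq \dots \subseteq T_n$ read inside $T_n$. By Lemma \ref{lemma: left Kan ext. of surj. necklace maps} this exhibits $\mathrm{ev}_n \circ \hc \simeq \Lan_{\iota} D'_n$ with $D'_n(U) = F(\{\text{flanked length-}n\text{ flags on }U\})$, and Theorem \ref{theorem: explicitation of left-adjoint for free temp. obj.} delivers the stated formula.

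For part (3), Theorem \ref{theorem: D-nerve is quasi-cat.} reduces matters to solving, for each $0 < j < n$, the prescribed lifting problem in $S\mathcal{V}$ with source $\mathfrak{l}^{\hc}((\Lambda^n_j)_{\bullet}(0,n))$ and target $\hc(\Delta^n) = FN([1]^{n-1})$, mapping into $\mathcal{C}(A,B)$. Since $\hc = F \circ N \circ \dim$ and $F: \SSet \to S\mathcal{V}$ preserves colimits, this is the image under $F$ of the analogous problem in $\SSet$, which by the adjunction $F \dashv U$ and Lemma \ref{lemma: weighted colimit for simp. sets} is equivalent to lifting the canonical simplicial map
\[
\mathfrak{C}[\Lambda^n_j](0,n) \longrightarrow \mathfrak{C}[\Delta^n](0,n)
\]
against the underlying simplicial set of $\mathcal{C}(A,B)$. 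This is a classical acyclic cofibration of simplicial sets (Dugger--Spivak; see also Lurie's HTT), so the lift exists because $\mathcal{C}(A,B)$ has a Kan underlying simplicial set. Part (4) is immediate from Corollary \ref{corollary: cubical nerve has Frob. structure}, as $\hc$ factors through $\dim: \nec \to \square$. The main obstacle across the four parts is the combinatorial bijection in part (2): though easy to describe, verifying its naturality in $T$ under both active and inert necklace maps requires some careful bookkeeping, whereas the remaining three parts are direct consequences of the general framework.
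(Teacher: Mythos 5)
Your proposal is correct and follows essentially the same route the paper takes: parts (1), (3), (4) are direct applications of Theorem \ref{theorem: underlying D-nerve is cat. nerve assoc. to D}, Theorem \ref{theorem: D-nerve is quasi-cat.}, and Corollary \ref{corollary: cubical nerve has Frob. structure} to $D = \hc$, and part (2) is Theorem \ref{theorem: explicitation of left-adjoint for free temp. obj.} applied with $\pi = \mathrm{ev}_n$ and $D'$ given by (the free $\mathcal{V}$-object on) flanked flags of length $n$ — exactly the choice the paper indicates. Two small remarks: in part (1) you should note, as in the proof of the analogous statement for the templicial nerve, that the point is that $\mathcal{F}$ (applying $F$ to hom-simplicial sets) is left adjoint to $\mathcal{U}$, so that $\mathcal{V}\Cat_{\Delta}(\Phi(\hc)^n, \mathcal{C}) \simeq \Cat_{\Delta}(\mathfrak{C}[\Delta^n], \mathcal{U}\mathcal{C})$; and in part (2) you correctly flag that the naturality of the flag bijection under active surjective and injective necklace maps is the nontrivial verification — since the paper merely cites \cite{lowen2024enriched} for this, carrying it out would be the one place where your write-up would need to do genuine work, but the bijection you describe (send a flag $T\subseteq T_0\subseteq\cdots\subseteq T_n\subseteq[p]$ to the injection $[r]\hookrightarrow[p]$ onto $T_n$ together with the flanked flag obtained by pulling everything back along that injection) is the right one and is compatible with the left Kan extension description from Lemma \ref{lemma: left Kan ext. of surj. necklace maps}.
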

\begin{proof}
Statements $1$-$3$ were already shown in \cite{lowen2024enriched} using the techniques of Section \ref{section: Enriched nerves of enriched categories}, applied ad hoc to the case $D = \hc: \nec\rightarrow S\mathcal{V}$. In particular, the second statement follows from Theorem \ref{theorem: explicitation of left-adjoint for free temp. obj.} by setting $D'(T)$ to be the set of all flanked flags of length $n$ on $T$. Further, statement $4$ immediately follows from Corollary \ref{corollary: cubical nerve has Frob. structure}.
\end{proof}

The following results comparing the homotopy coherent to the ordinary nerve and Duskin nerve were already known in the literature (see \cite[Tag 00KY]{kerodon} for instance). We recover them swiftly, using the diagrams generating these nerves.

\begin{Cor}
There are natural isomorphisms
\begin{enumerate}[1.]
\item $N^{\hc}_{\mathcal{V}}\circ (-)^{\const}\simeq N_{\mathcal{V}}$
\item $N^{\hc}\circ (-)^{N}\simeq N^{\Dusk}$ (for $\mathcal{V} = \Set$)
\end{enumerate}
where $\const: \mathcal{V}\rightarrow S\mathcal{V}$ sends every object $V\in \mathcal{V}$ to the constant functor on $V$, and $N: \Cat\rightarrow\SSet$ is the ordinary nerve.
\end{Cor}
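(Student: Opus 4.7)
The plan is to apply Proposition~\ref{proposition: basic comparison of nerves} twice, once for each statement, by exhibiting a suitable monoidal adjunction between the enriching categories and identifying the resulting generating diagram.

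For (1), I would use the $\mathcal{V}$-enriched adjunction
\[
\colim : S\mathcal{V}\leftrightarrows \mathcal{V}: \const,
\]
where $\colim$ denotes the colimit over $\simp^{op}$. The right adjoint $\const$ is strong monoidal for the pointwise monoidal structure on $S\mathcal{V}$, since $\const(V)\otimes\const(W) = \const(V\otimes W)$; consequently $\colim$ inherits a colax monoidal structure via the mate calculus, and Proposition~\ref{proposition: basic comparison of nerves} applied to $D = \hc$ yields $N^{\hc}_{\mathcal{V}}\circ(-)^{\const}\simeq N^{\colim\circ\hc}_{\mathcal{V}}$. The remaining task is to identify $\colim\circ\hc$ with $\const_I$. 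Since $\hc = F\circ N\circ \dim$ and $F: \SSet\to S\mathcal{V}$ is applied pointwise and preserves colimits (as a left adjoint), this reduces to checking that $\pi_0\circ N\circ \dim\simeq \const_{*}$, where $\pi_0 = \colim: \SSet\to\Set$. But for every necklace $(T,p)$ the poset $\mathcal{P}_T$ has a greatest element $[p]$, so its nerve is connected, and $\pi_0(N\mathcal{P}_T)\simeq \{*\}$ naturally in $T$. Thus $\colim\circ\hc\simeq \const_I$, giving (1).

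For (2), I would use the classical adjunction
\[
h: \SSet\leftrightarrows \Cat: N,
\]
where $h$ is the homotopy category functor. The nerve $N$ preserves products, so it is strong monoidal with respect to the cartesian monoidal structures on $\SSet$ and $\Cat$, and therefore $h$ is colax monoidal. Applying Proposition~\ref{proposition: basic comparison of nerves} with $D = \hc$ (and $\mathcal{V} = \Set$, so that $F = \id$ and $\hc = N\circ \dim$) gives $N^{\hc}\circ (-)^{N}\simeq N^{h\circ \hc}$. Because $N$ is fully faithful, the counit $h\circ N\Rightarrow \id_{\Cat}$ is an isomorphism, so $h\circ \hc = h\circ N\circ \dim\simeq \dim = \Dusk$, which concludes the argument.

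Both steps are fundamentally routine applications of Proposition~\ref{proposition: basic comparison of nerves}; the only points requiring care are the monoidal and $\mathcal{V}$-enrichment properties of the two adjunctions (both standard for these well-known functors) and the observation that $\mathcal{P}_T$ is connected, which is immediate from the existence of a terminal element in $\mathcal{P}_T$. There is no substantial obstacle beyond these verifications.
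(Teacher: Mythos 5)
Your proof is correct and follows essentially the same route as the paper: both apply Proposition \ref{proposition: basic comparison of nerves} to the adjunctions $\pi_{0}\dashv \const$ (your $\colim$ over $\simp^{op}$ is precisely $\pi_{0}$) and $h\dashv N$, identify $\pi_{0}\circ\hc\simeq\const_{I}$ via connectedness of $N(\mathcal{P}_{T})$, and identify $h\circ\hc\simeq\Dusk$ via full faithfulness of $N$. The only cosmetic difference is that the paper phrases the left adjoint directly as $\pi_{0}$ rather than as a colimit.
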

\begin{proof}
By Proposition \ref{proposition: basic comparison of nerves}, it suffices to note the following.
\begin{enumerate}[1.]
\item Let $\pi_{0}: S\mathcal{V}\rightarrow \mathcal{V}$ be the left-adjoint of $\const$. For $\mathcal{V} = \Set$, this coincides with the functor taking connected components. Then $\pi_{0}\circ \hc = \pi_{0}\circ F\circ N\circ \Dusk\simeq F\circ \pi_{0}\circ N\circ \Dusk\simeq \const_{I}$ since $N(\Dusk(T))$ only has a single connected component.
\item Let $h: \SSet\rightarrow \Cat$ denote the homotopy category functor, which is left-adjoint to the nerve $N$. Then $h\circ \hc = h\circ N\circ \Dusk\simeq \Dusk$ (for $\mathcal{V} = \Set$).
\end{enumerate}
\end{proof}

\subsubsection*{\textbf{The homotopy coherent nerve of $(\infty,n)$-categories}}

Let $\Theta$ be a small category. In what follows, we consider $\mathcal{W} = \mathcal{V} = \SSet^{\Theta^{op}}$ with the cartesian monoidal closed structure. In \cite[Definition 2.3.1]{moser2024homotopy}, an enriched variant of the homotopy coherent nerve is constructed:
$$
\mathfrak{N}: \SSet^{\Theta^{op}}\Cat\rightarrow \mathbf{PC}(\SSet^{\Theta^{op}})
$$
where $\mathbf{PC}(\SSet^{\Theta^{op}})$ denotes the category of $\SSet^{\Theta^{op}}$-enriched precategories, which we may identify with $S_{\times}(\SSet^{\Theta^{op}})$ by Proposition \ref{proposition: temp. obj. results}.3. Specifically, they consider the case where $\Theta = \Theta_{n-1}$ is Joyal's cell category \cite{joyal1997disks}. Below we first recall the definition of $\mathfrak{N}$ and then show it is generated through the general procedure of \S\ref{subsection: A general procedure} by the diagram $\hc: \nec\rightarrow \SSet^{\Theta^{op}}$ \eqref{diagram: hc}.

The category $\mathbf{PC}(\SSet^{\Theta^{op}})\simeq S_{\times}(\SSet^{\Theta^{op}})$ is also equivalent to the full subcategory of $(\SSet^{\Theta^{op}})^{\simp^{op}}$ of simplicial objects $X: \simp^{op}\rightarrow \SSet^{\Theta^{op}}$ such that $X_{0}$ is a set (i.e. a constant functor $\simp^{op}\times \Theta^{op}\rightarrow \Set$). Moreover, we have an isomorphism $(\SSet^{\Theta^{op}})^{\simp^{op}}\simeq \SSet^{\simp^{op}\times \Theta^{op}}$ which sends a functor $X: \simp^{op}\rightarrow \SSet^{\Theta^{op}}$ to the functor
$$
\hat{X}: \simp^{op}\times \Theta^{op}\rightarrow \SSet: ([k],\theta)\mapsto \hat{X}_{k,\theta}\quad \text{with }(\hat{X}_{k,\theta})_{m} = ((X_{m})_{\theta})_{k}
$$
We thus obtain a fully faithful embedding:
$$
\hat{(-)}: S_{\times}(\SSet^{\Theta^{op}})\hookrightarrow \SSet^{\simp^{op}\times \Theta^{op}}
$$

Secondly, consider the category of enriched categories $\mathrm{SSSet}^{\Theta^{op}}\Cat$. We may identify it with the full subcategory of $\Cat_{\Delta}^{\simp^{op}\times \Theta^{op}}$ of functors $\mathcal{C}: \simp^{op}\times \Theta^{op}\rightarrow \Cat_{\Delta}$ such that $\Ob(\mathcal{C}_{k,\theta})$ is constant in $k\geq 0$ and $\theta\in \Theta$. We thus obtain another fully faithful embedding:
$$
\hat{(-)}: \mathrm{SSSet}^{\Theta^{op}}\Cat\hookrightarrow \Cat_{\Delta}^{\simp^{op}\times \Theta^{op}}
$$
sending every $\mathrm{SSSet}^{\Theta^{op}}$-enriched category $\mathcal{C}$ to the functor $\hat{\mathcal{C}}: \simp^{op}\times \Theta^{op}\rightarrow \Cat_{\Delta}$ with $(\hat{\mathcal{C}}_{k,\theta}(a,b))_{m} = (\mathcal{C}(a,b)_{\theta})_{m,k}$ for all $m,k\geq 0$, $\theta\in \Theta$ and $a,b\in \Ob(\mathcal{C})$.

Then the classical homotopy coherent nerve $N^{hc}$ and its left-adjoint $\mathfrak{C}$ induce an adjunction $\mathfrak{C}_{*}: \SSet^{\simp^{op}\times \Theta^{op}}\leftrightarrows \Cat_{\Delta}^{\simp^{op}\times \Theta^{op}}: N^{hc}_{*}$ by post-composition. This restricts to
$$
\mathfrak{C}_{*}: S_{\times}(\SSet^{\Theta^{op}})\leftrightarrows \mathrm{SSSet}^{\Theta^{op}}\Cat: N^{hc}_{*}
$$
Finally, the diagonal functor $\simp\rightarrow \simp\times \simp$ induces a left-adjoint strong monoidal functor $diag: \mathrm{SSSet}^{\Theta^{op}}\twoheadrightarrow \SSet^{\Theta^{op}}$ by pre-composition. Then the nerve $\mathfrak{N}$ is defined as the right-adjoint of the following composite, where we applied $diag$ to hom-objects:
\begin{equation}\label{diagram: left-adjoint of hc nerve for (infty,n)-cats.}
S_{\times}(\SSet^{\Theta^{op}})\xrightarrow{\mathfrak{C}_{*}} \mathrm{SSSet}^{\Theta^{op}}\Cat\xrightarrow{diag} \SSet^{\Theta^{op}}\Cat
\end{equation}

In the following proposition, we identify $\SSet^{\Theta^{op}}\simeq S(\Set^{\Theta^{op}})$. Then for all $T\in \nec$, $\hc(T) = FN(\mathcal{P}_{T})$ where $F: \SSet\rightarrow \SSet^{\Theta^{op}}$ sends every simplicial set $K$ to the constant functor $\Theta^{op}\rightarrow \SSet$ on $K$.

\begin{Prop}\label{proposition: (infty,n)-hc-nerve}
The nerve functor $\SSet^{\Theta^{op}}\Cat\rightarrow S_{\times}(\SSet^{\Theta^{op}})$ generated by $\hc: \nec\rightarrow \SSet^{\Theta^{op}}$ is naturally isomorphic to the homotopy coherent nerve $\mathfrak{N}$ of \cite{moser2024homotopy}.
\end{Prop}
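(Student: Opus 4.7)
The plan is to compare the left adjoints of both nerves. Since $\mathfrak{N}$ and $N^{\hc}_{\SSet^{\Theta^{op}}}$ are both right adjoints, it suffices to exhibit a natural isomorphism between their left adjoints. By Proposition \ref{proposition: nerve gen. by strong mon. diagram has left-adj.}, the left adjoint of $N^{\hc}_{\SSet^{\Theta^{op}}}$ is the functor $L^{\hc}_{\SSet^{\Theta^{op}}}: S_{\times}(\SSet^{\Theta^{op}}) \to \SSet^{\Theta^{op}}\Cat$, with hom-objects given by an explicit weighted colimit; by \eqref{diagram: left-adjoint of hc nerve for (infty,n)-cats.}, the left adjoint of $\mathfrak{N}$ is the composite $diag \circ \mathfrak{C}_{*}$. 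I would compare these two functors on a templicial object $X \in S_{\times}(\SSet^{\Theta^{op}})$ componentwise over $(k, \theta) \in \simp^{op} \times \Theta^{op}$.

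Fix vertices $a, b$ of $X$. The hom-object $L^{\hc}_{\SSet^{\Theta^{op}}}(X)(a,b)$ in $\SSet^{\Theta^{op}}$ is the weighted colimit $\colim_{T}^{X_{T}(a,b)} \hc(T)$ taken with respect to the cartesian tensoring. Since both colimits and products in $\SSet^{\Theta^{op}}$ are pointwise, and since by construction $\hc(T) = FN\dim(T)$ is constant over $\Theta^{op}$ with value $N(\mathcal{P}_{T})$, evaluating at $(k, \theta)$ yields $\colim_{T} (X_{T}(a,b))_{k,\theta} \times N(\mathcal{P}_{T})_{k}$. On the other hand, unwinding the definitions of $\mathfrak{C}_{*}$ and $diag$ gives $(diag \circ \mathfrak{C}_{*}(X))(a,b)_{k,\theta} = \mathfrak{C}(\hat{X}_{k,\theta})(a,b)_{k}$, which by the explicit description of the classical left adjoint (Proposition \ref{proposition: nerve gen. by strong mon. diagram has left-adj.} applied with $\mathcal{V} = \Set$ and $D = \hc$) equals $\colim_{T} (\hat{X}_{k,\theta})_{T}(a,b) \times N(\mathcal{P}_{T})_{k}$.

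Comparing the two expressions reduces the proposition to the quiver-level identification $(X_{T}(a,b))_{k,\theta} \simeq (\hat{X}_{k,\theta})_{T}(a,b)$, which I expect to be the main obstacle. The single-bead case $(X_{m}(a,b))_{k,\theta} = (\hat{X}_{k,\theta})_{m}(a,b)$ is immediate from the definition of the embedding $\hat{(-)}: S_{\times}(\SSet^{\Theta^{op}}) \hookrightarrow \SSet^{\simp^{op} \times \Theta^{op}}$, and the general case follows by induction on the length of the necklace, using once more that products and coproducts in $\SSet^{\Theta^{op}}$ are pointwise. Naturality in $(k, \theta)$, together with compatibility with compositions and identities of the resulting $\SSet^{\Theta^{op}}$-enriched categories, then follow from the naturality of Cordier's construction in $\mathcal{V} = \Set$ and from the functoriality established in Construction \ref{construction: nerve generated by necklicial diagram}.
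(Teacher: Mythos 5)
Your proof is correct and the computational core is the same as the paper's: both compare left adjoints pointwise over $(k,\theta)\in\simp^{op}\times\Theta^{op}$ and both reduce to the identification $(X_T(a,b))_{k,\theta}\simeq(\hat{X}_{k,\theta})_T(a,b)$. The one organizational difference is how the diagonal is handled. The paper first introduces the strong monoidal sections $\iota_1,\iota_2:\SSet^{\Theta^{op}}\hookrightarrow\mathrm{SSSet}^{\Theta^{op}}$ induced by the projections, notes $diag\circ\iota_1=\id$, and invokes Proposition \ref{proposition: basic comparison of nerves} to split $diag$ off; this reduces the statement to $N^{\iota_1\circ\hc}_{\SSet^{\Theta^{op}}}\simeq N^{hc}_*$, so the remaining comparison is between two functors landing in $\mathrm{SSSet}^{\Theta^{op}}\Cat$, where the $\hat{(-)}$-embedding turns it into a componentwise comparison of simplicial categories with no diagonal to track. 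You instead absorb the diagonal into the index bookkeeping, reading $(diag\circ\mathfrak{C}_*(X))(a,b)_{k,\theta}=(\mathfrak{C}(\hat{X}_{k,\theta})(a,b))_k$ and matching it against $(L^{\hc}_{\SSet^{\Theta^{op}}}(X)(a,b))_{k,\theta}$. Both routes are valid; the paper's is slightly cleaner in that the auxiliary tensoring of $\mathrm{SSSet}^{\Theta^{op}}$ over $\SSet^{\Theta^{op}}$ via $\iota_2$ makes the weighted-colimit formula apply verbatim, whereas your version requires keeping the two $\simp$-indices straight by hand until the diagonal collapses them. Two small presentational remarks: the explicit weighted-colimit formula for $L^D_{\mathcal V}$ that you use is really the Remark following Proposition \ref{proposition: nerve gen. by strong mon. diagram has left-adj.} rather than the proposition itself, and writing $\colim_T (X_T(a,b))_{k,\theta}\times N(\mathcal{P}_T)_k$ with a bare $\times$ is notationally ambiguous — it should be made clear this is the weighted colimit $\colim_T^{(X_T(a,b))_{k,\theta}}N(\mathcal{P}_T)_k$ in $\Set$, not a product of a colimit with something.
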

\begin{proof}
Consider the projections
$$
\pi_{1}: \simp\times \simp\rightarrow \simp: ([m],[k])\mapsto [m]\quad \text{and}\quad \pi_{2}: \simp\times \simp\rightarrow \simp: ([m],[k])\mapsto [k]
$$
which induce strong monoidal left-adjoints $\iota_{1},\iota_{2}: \SSet^{\Theta^{op}}\hookrightarrow \mathrm{SSSet}^{\Theta^{op}}$ by pre-composition. Explicitly, $(\iota_{1}(X)_{\theta})_{m,k} = (X_{\theta})_{m}$ and $(\iota_{2}(X)_{\theta})_{m,k} = (X_{\theta})_{k}$ for all $X\in \SSet^{\Theta^{op}}$ and $m,k\geq 0$. We can consider $\mathrm{SSSet}^{\Theta^{op}}$ as tensored over $\SSet^{\Theta^{op}}$ via the inclusion $\iota_{2}$. Since, $diag\circ \iota_{1} = \id$, it suffices by Proposition \ref{proposition: basic comparison of nerves} to show that $N^{\iota_{1}\circ \hc}_{\SSet^{\Theta^{op}}}$ is naturally isomorphic to $N^{hc}_{*}: \mathrm{SSSet}^{\Theta^{op}}\Cat\rightarrow S_{\times}(\SSet^{\Theta^{op}})$. We proceed by comparing their left-adjoints.

Take $X\in S_{\times}(\SSet^{\Theta^{op}})$ with vertices $a$ and $b$. Consider the functors $\hat{L^{\iota_{1}\circ \hc}_{\SSet^{\Theta^{op}}}(X)}$, $\mathfrak{C}_{*}[\hat{X}]: \simp^{op}\times \Theta^{op}\rightarrow \Cat_{\Delta}$. Then observe that for all $k\geq 0$ and $\theta\in \Theta$:
\begin{align*}
\hat{L^{\iota_{1}\circ \hc}_{\SSet^{\Theta^{op}}}(X)}_{k,\theta}(a,b) &= (L^{\iota_{1}\circ \hc}_{\SSet^{\Theta^{op}}}(X)(a,b)_{\theta})_{\bullet,k} = {\colim_{T\in \nec}}^{(\iota_{2}(X_{T}(a,b))_{\theta})_{\bullet,k}}(\iota_{1}\hc(T))_{\theta})_{\bullet,k}\\
&= {\colim_{T\in \nec}}^{(X_{T}(a,b)_{\theta})_{k}}\hc(T)_{\theta} = {\colim_{T\in \nec}}^{(X_{T}(a,b)_{\theta})_{k}}N\mathcal{P}_{T}\\
&= {\colim_{T\in \nec}}^{(\hat{X}_{k,\theta})_{T}(a,b)}N\mathcal{P}_{T}\simeq \mathfrak{C}[\hat{X}_{k,\theta}](a,b) = \mathfrak{C}_{*}[\hat{X}]_{k,\theta}(a,b)
\end{align*}
It is clear that this induces an isomorphism $\hat{L^{\iota_{1}\circ \hc}_{\SSet^{\Theta^{op}}}(X)}_{k,\theta}\simeq \mathfrak{C}_{*}[\hat{X}]_{k,\theta}$ of simplicial categories which is natural in $k$, $\theta$ and $X$.
\end{proof}

Statement 2 of the following corollary of course already appears in \cite{moser2024homotopy} by construction. Here we see it also follows from Section \ref{section: Enriched nerves of enriched categories}.

\begin{Cor}
The following statements are true.
\begin{enumerate}[1.]
\item There is a natural isomorphism $\tilde{U}\circ \mathfrak{N}\simeq N^{hc}$
\item $\mathfrak{N}$ has a left-adjoint.
\item Let $\mathcal{C}$ be a small $\SSet^{\Theta^{op}}$-category. Then $\mathfrak{N}(\mathcal{C})$ has a Frobenius structure.
\end{enumerate}
\end{Cor}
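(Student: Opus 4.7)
The plan is to reduce all three claims to the general results of Section \ref{section: Enriched nerves of enriched categories} using the identification of $\mathfrak{N}$ with $N^{\hc}_{\SSet^{\Theta^{op}}}$ provided by Proposition \ref{proposition: (infty,n)-hc-nerve}, where $\hc: \nec \to \SSet^{\Theta^{op}}$ is the strong monoidal diagram of \eqref{diagram: hc}.

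Parts (2) and (3) are immediate. Since $\hc$ is strong monoidal, Proposition \ref{proposition: nerve gen. by strong mon. diagram has left-adj.} directly furnishes a left-adjoint to $\mathfrak{N}\simeq N^{\hc}_{\SSet^{\Theta^{op}}}$, settling (2). For (3), note that $\hc$ factors as $\nec\xrightarrow{\dim}\square\hookrightarrow\Cat\xrightarrow{N}\SSet\xrightarrow{F_\Theta}\SSet^{\Theta^{op}}$, where $F_\Theta$ denotes the constant-diagram functor; in particular $\hc$ factors through $\dim$, so Corollary \ref{corollary: cubical nerve has Frob. structure} produces the required Frobenius structure on $\mathfrak{N}(\mathcal{C})$.

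For (1), I would first apply Theorem \ref{theorem: underlying D-nerve is cat. nerve assoc. to D} to the diagram $\hc: \nec\to\SSet^{\Theta^{op}}$, yielding $\tilde{U}\circ\mathfrak{N}\simeq N^{\Phi(\hc)}$ as functors $\SSet^{\Theta^{op}}\Cat\to\SSet$. Since $F_\Theta:\SSet\to\SSet^{\Theta^{op}}$ is strong monoidal with right-adjoint $R_\Theta$ (limit over $\Theta^{op}$), and since $\hc = F_\Theta\circ\hc'$ for $\hc':\nec\to\SSet$ the diagram generating the classical homotopy coherent nerve, Construction \ref{construction: cosimplicial object generated by necklicial diagram} exhibits $\Phi(\hc)^n$ as obtained from the simplicial category $\Phi(\hc')^n\simeq\mathfrak{C}[\Delta^n]$ by applying $F_\Theta$ to its hom-simplicial sets. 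A short adjunction argument then gives $N^{\Phi(\hc)}\simeq N^{\Phi(\hc')}\circ(-)^{R_\Theta}=N^{hc}\circ(-)^{R_\Theta}$, which is the desired identification once $N^{hc}$ on the right-hand side is interpreted as precomposed with the canonical passage $(-)^{R_\Theta}:\SSet^{\Theta^{op}}\Cat\to\Cat_{\Delta}$.

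The main obstacle is the bookkeeping in (1), namely pinning down precisely how the symbol $N^{hc}$ in the statement is meant to act on a $\SSet^{\Theta^{op}}$-enriched category; once this is fixed to be $N^{hc}\circ(-)^{R_\Theta}$, the argument becomes a clean application of Proposition \ref{proposition: basic comparison of nerves} on the level of generating diagrams, combined with the identification in Proposition \ref{proposition: (infty,n)-hc-nerve}.
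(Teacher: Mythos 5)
Parts (2) and (3) of your proposal match the paper exactly: strong monoidality of $\hc$ gives the left-adjoint via Proposition \ref{proposition: nerve gen. by strong mon. diagram has left-adj.}, and the factorization $\hc = F_\Theta N\dim$ through $\dim$ invokes Corollary \ref{corollary: cubical nerve has Frob. structure}.

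For part (1) your argument is correct but takes a longer route than the paper. The paper's entire proof is ``This follows from Theorem \ref{theorem: underlying D-nerve is cat. nerve assoc. to D}'', which gives $\tilde{U}\circ\mathfrak{N}\simeq N^{\Phi(\hc)}$, and then (by the convention established at the end of that theorem) reads $N^{\Phi(\hc)}$ as $N^{\hc}$, the $\Set$-valued necklicial nerve $\SSet^{\Theta^{op}}\Cat\to\SSet$ generated by $\hc:\nec\to\SSet^{\Theta^{op}}$. So the symbol $N^{hc}$ in the statement is, in the paper's reading, \emph{already} a functor on $\SSet^{\Theta^{op}}\Cat$; no conversion functor is required and there is nothing further to prove. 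You correctly flag the notational ambiguity --- elsewhere in \S\ref{subsection: Homotopy coherent nerves} $N^{\hc}$ denotes the Cordier nerve on $\Cat_\Delta$, and the analogous corollary for the templicial hc-nerve indeed carries an explicit forgetful factor $\mathcal{U}$ --- and you resolve it by identifying $N^{\Phi(\hc)}\simeq N^{\hc}\circ(-)^{R_\Theta}$ via Proposition \ref{proposition: basic comparison of nerves} applied to the monoidal adjunction $F_\Theta\dashv R_\Theta$ and the factorization $\hc = F_\Theta\circ \hc'$. That identification is correct and makes the statement genuinely more informative (it shows how the $(\infty,n)$-nerve restricts along evaluation to the classical one), but it is an extra step beyond what the paper actually does; under the paper's own reading it amounts to stating and proving a separate comparison result.
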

\begin{proof}
\begin{enumerate}[1.]
\item This follows from Theorem \ref{theorem: underlying D-nerve is cat. nerve assoc. to D}.
\item This follows from Proposition \ref{proposition: nerve gen. by strong mon. diagram has left-adj.}.
\item This follows from Corollary \ref{corollary: cubical nerve has Frob. structure}.
\end{enumerate}
\end{proof}

\begin{Rem}
Note that we cannot apply Theorem \ref{theorem: explicitation of left-adjoint for free temp. obj.} to describe the left-adjoint of $\mathfrak{N}$ because there is no functor $D': \nec_{-}\rightarrow \SSet^{\Theta^{op}}$ such that $\Lan_{\iota}D'\simeq hc$. Further, while Theorem \ref{theorem: D-nerve is quasi-cat.} is technically speaking applicable here, a templicial object is a quasi-category in $\SSet^{\Theta^{op}}$ if and only if its underlying simplicial set is a quasi-category, which is not very meaningful.
\end{Rem}

\subsection{The differential graded nerve}\label{subsection: The differential graded nerve}

Fix a commutative unital ring $k$ and let $\Ch(k)$ denote the category of chain complexes over $k$. The differential graded (dg) nerve implicitly goes back to \cite{block2009Riemann} and was named and studied in \cite{lurie2016higher}.
$$
L^{\dg}: \SSet\leftrightarrows k\Cat_{\dg}: N^{\dg}
$$
where $k\Cat_{dg}$ denotes the category of small differential graded (that is $\Ch(k)$-enriched) categories over $k$. Let us see how it fits in the general procedure of \S\ref{subsection: A general procedure}.

Define a strong monoidal functor $\dg$ as the composite
\begin{equation}\label{diagram: dg}
\dg: \nec\xrightarrow{\dim} \square\xrightarrow{\yo} \CSet\xrightarrow{F} C\Mod(k)\xrightarrow{N^{\square}_{\bullet}} \Ch(k)
\end{equation}
where $\yo$ is the Yoneda embedding and $N^{\square}_{\bullet}$ is the cubical normalized chain functor \cite{antolini2002geometric}\cite{rivera2018cubical}, which is strong monoidal. Let us describe the diagram $\dg$ in a little more detail.

\begin{enumerate}[1.]
\item For any $(T,p)\in \nec$, $\dg(T)_{\bullet}$ is the chain complex given by
$$
\dg(T)_{n}\simeq \bigoplus_{\substack{g: U\hookrightarrow T\\\text{in }\nec_{+}\\ \dim(U) = n}}k.g
{}$$
for all integers $n$. The differential $\partial$ on $\dg(T)_{\bullet}$ is given by
$$
\partial_{n}(g) = \sum_{j=1}^{n}(-1)^{j-1}(g\delta_{i_{j}} - g\nu_{i_{j},q-i_{j}})
$$
for all integers $n$ and all injective necklace maps $g: (U,q)\hookrightarrow (T,p)$ with $\dim(U) = n$, where we have written $U^{c} = \{i_{1} < ... < i_{n}\}$.
\item For any necklace map $f: T\rightarrow T'$, the induced chain map $\dg(f): \dg(T)_{\bullet}\rightarrow \dg(T')_{\bullet}$ is as follows. For any injective necklace map $g: U\hookrightarrow T$, factor $fg$ as an active surjective map $\sigma: U\rightarrow U'$ followed by an injective map $g': U'\rightarrow T'$. Then
$$
\dg(f)_{n}(g) =
\begin{cases}
g' & \text{if }\dim(U) = \dim(U')\\
0 & \text{otherwise}
\end{cases}
$$
\end{enumerate}

\begin{Rem}
Given a necklace $(T,p)$, note that $\dg(T)_{n}$ is concentrated in degrees $0\leq n\leq \dim(T)$. In particular, we have
$$
\dg(T)_{0}\simeq \bigoplus_{\substack{g: [q]\hookrightarrow [p]\\ \text{in }\fint^{inj}}}\!\!\!\!k\quad \text{and}\quad \dg(T)_{\dim(T)}\simeq k.g
$$
\end{Rem}

In \cite[Definition 3.15]{lowen2023frobenius}, we constructed a templicial lift $N^{dg}_{k}$ of the classical dg-nerve:
$$
N^{dg}_{k}: k\Cat_{dg}\rightarrow \ts\Mod(k)
$$
We then have the following proposition, the proof of which we postpone to Appendix \ref{section: The generating diagram of the differential graded nerve} because it is rather technical.

\begin{Prop}\label{proposition: dg-nerve}
The nerve $k\Cat_{dg}\rightarrow \ts\Mod(k)$ generated by $\dg: \nec\rightarrow \Ch(k)$ is naturally isomorphic to the templicial dg-nerve of \cite{lowen2023frobenius}.
\end{Prop}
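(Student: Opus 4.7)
The plan is to match $n$-simplices on both sides by unpacking the chain-complex data on the templicial side using Corollary \ref{corollary: n-simplex of underlying D-nerve} and comparing it against the explicit definition of $N^{dg}_{k}$ in \cite{lowen2023frobenius}. Equivalently, by Construction \ref{construction: nerve generated by necklicial diagram} together with the fully faithfulness of $(-)^{nec}$ from Theorem \ref{theorem: nec-temp adjunction}, it would suffice to exhibit a natural isomorphism of necklace categories $\mathfrak{n}^{\dg}_{\Mod(k)}(\mathcal{C}) \simeq (N^{dg}_{k}(\mathcal{C}))^{nec}$; but a direct simplex-level comparison is more transparent given the combinatorics of $\dg$.

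First, by Corollary \ref{corollary: n-simplex of underlying D-nerve}, an $n$-simplex of $N^{\dg}_{\Mod(k)}(\mathcal{C})$ consists of objects $A_{0},\dots,A_{n} \in \mathcal{C}$ together with chain maps $\alpha_{i,j}: \dg(\Delta^{j-i}) \rightarrow \mathcal{C}(A_{i},A_{j})$ for $0\leq i<j\leq n$, compatible with the inert maps $\nu_{k-i,j-k}$ via composition in $\mathcal{C}$. Using the explicit description of $\dg$ following \eqref{diagram: dg}, a degree-$m$ generator of $\dg(\Delta^{j-i})$ corresponds to a pair of subsets $\{0,j-i\}\subseteq T\subseteq S\subseteq [j-i]$ with $|S|-|T|=m$: namely the injective necklace map whose image in $\Delta^{j-i}$ is $S$ and whose set of joints is $T$. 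Those generators which lie in the image of some $\dg(\nu_{k-i,j-k})$ are exactly the ones with $|T|\geq 3$.

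Second, by induction on $|T|$, the inert compatibility forces the value of $\alpha_{i,j}$ on a generator $(T,S)$ with $|T|\geq 3$ to be an iterated composition in $\mathcal{C}$ of values on single-bead generators, i.e.\ those with $T=\{0,j-i\}$. Thus the family $(\alpha_{i,j})$ is entirely determined by its single-bead values, which form a system of elements $f_{I}\in \mathcal{C}(A_{\min I},A_{\max I})$ of the expected degree, indexed by subsets $I\subseteq [n]$ with $|I|\geq 2$. Conversely, such a family extends uniquely to compatible $(\alpha_{i,j})$; imposing the further requirement that each $\alpha_{i,j}$ be a chain map, and expanding this via the explicit differential on $\dg(\Delta^{j-i})$, yields precisely the Maurer--Cartan-style equations that define a Lurie dg-simplex. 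Since $N^{dg}_{k}(\mathcal{C})$ is by construction the $k$-linear assembly of exactly this data, a natural bijection of $n$-simplices follows.

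It then remains to verify that this bijection commutes with faces, degeneracies and comultiplications (which amounts to restricting the data along the corresponding maps in $\simp$ and $\nec$) and preserves the $k$-module structure; these checks are routine unwindings. The main obstacle lies in the previous paragraph: tracking the signs in the differential of $\dg$ (the alternating sum indexed by $U^{c}=\{i_{1}<\dots<i_{n}\}$) through the inductive reduction in $|T|$, and matching them against Lurie's sign conventions. It is this sign bookkeeping, rather than any conceptual difficulty, that renders the proof ``rather technical'' and motivates postponing it to the appendix.
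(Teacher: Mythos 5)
Your proposal takes a genuinely different route from the paper. The paper does not compare $n$-simplices directly; instead, it observes that both nerves factor through $\Fs\Mod(k)$ (the generated nerve by Corollary \ref{corollary: cubical nerve has Frob. structure}, since $\dg$ factors through $\dim$) and compares at the level of the \emph{left adjoints}: it constructs a natural bijection
$k\Cat_{dg}(\mathfrak{l}^{\overline{\dg}}_{k}(X^{nec}),\mathcal{C})\simeq \Fs\Mod(k)(X,\overline{N}^{\dg}_{k}(\mathcal{C}))$
for an arbitrary Frobenius templicial module $X$. The data on the left is unpacked into collections $(H_{g})_{g}$ indexed by injective necklace maps, reduced to a collection $(\beta_{n}: X_{n}\rightarrow \mathcal{C}_{n-1})_{n>0}$ by a Lemma analogous to your single-bead reduction, and then the differential and degeneracy/Frobenius conditions are translated in two further Lemmas into exactly the conditions characterizing $\Fs$-morphisms into $\overline{N}^{\dg}_{k}(\mathcal{C})$ from \cite[Corollary 3.22]{lowen2023frobenius}. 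The key gain of the paper's route is that it never has to reconstruct the templicial module structure level-by-level from its underlying set of simplices: passing through $(-)^{nec}$, Frobenius structures, and the adjunction $\mathfrak{l}^{\overline{\dg}}_{k}\dashv\mathfrak{n}^{\overline{\dg}}_{k}$ packages the $k$-module structure, the comultiplications, and the Frobenius multiplications all at once, and \cite[Corollary 3.22]{lowen2023frobenius} provides a ready-made interface on the other side.

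Your proposal, by contrast, works with Corollary \ref{corollary: n-simplex of underlying D-nerve}, which only describes the \emph{underlying set} of $n$-simplices of $\tilde{U}N^{\dg}_{\Mod(k)}(\mathcal{C})$. To conclude an isomorphism in $\ts\Mod(k)$ you would still need to promote your bijection to a $k$-linear one on each hom-object and check compatibility with the comultiplications $\mu_{k,l}$, which is exactly the data that the set-level description does not see — this is not just a ``routine unwinding'' but a genuine chunk of work that (in effect) re-derives what \cite[Corollary 3.22]{lowen2023frobenius} already provides. Combined with the sign-bookkeeping you explicitly defer, your write-up is closer to a plan of attack than a completed proof. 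The conceptual kernel — identify degree-$m$ generators of $\dg(\Delta^{j-i})$ with chains of inclusions $\{0,j-i\}\subseteq T\subseteq S$, use inert compatibility to reduce to single-bead values, match against Maurer--Cartan equations — is correct and parallels the paper's Lemma \ref{lemma: dg-nerve lemma 1} and Lemma \ref{lemma: dg-nerve lemma 2}, but the structural scaffolding that makes the verification finite (Frobenius structures and the $(-)^{nec}\dashv(-)^{temp}$ adjunction) is what your approach would have to rebuild by hand.
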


Statements $1$, $3$ and $4$ of the following corollary were already shown in \cite{lowen2023frobenius}, but they are now simple consequences of the results from Section \ref{section: Enriched nerves of enriched categories}. Statements 1 and 3 also recover the fact that $N^{dg}(\mathcal{C})$ is an ordinary quasi-category for any small dg-category $\mathcal{C}$ (\cite[Proposition 1.3.1.10]{lurie2016higher}). The description of the left-adjoint in statement 2 is novel. Note that it also applies to the left-adjoint of $N^{dg}$ by choosing $X = \tilde{F}(K)$ for $K\in \SSet$.

\begin{Cor}\label{corollary: results for the dg-nerve}
The following statements are true.
\begin{enumerate}[1.]
\item There is a natural isomorphism $\tilde{U}\circ N^{\dg}_{k}\simeq N^{\dg}$.
\item $N^{dg}_{k}$ has a left-adjoint $L^{dg}_{k}: \ts\Mod(k)\rightarrow k\Cat_{dg}$. Moreover, for any templicial $k$-module $(X,S)$ with non-degenerate simplices, and $a,b\in S$ and $n\in \mathbb{Z}$, we have an isomorphism of $k$-modules
$$
L^{\dg}_{k}(X)_{n}(a,b)\simeq \bigoplus_{\substack{T\in \nec\\ \dim(T) = n}}X^{nd}_{T}(a,b)
$$
\item Let $\mathcal{C}$ be a small dg-category. Then $N^{\dg}_{k}(\mathcal{C})$ is a quasi-category in $\Mod(k)$.
\item Let $\mathcal{C}$ be a small dg-category. Then $N^{\dg}_{k}(\mathcal{C})$ has a Frobenius structure.
\end{enumerate}
\end{Cor}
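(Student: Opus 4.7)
My plan is to apply the results of Section~\ref{section: Enriched nerves of enriched categories} to $\mathcal{V} = \Mod(k)$, $\mathcal{W} = \Ch(k)$, $D = \dg$, using Proposition~\ref{proposition: dg-nerve} to identify $N^{\dg}_{k}$ with the nerve generated by $\dg$. Statement 1 will then be an instance of Theorem~\ref{theorem: underlying D-nerve is cat. nerve assoc. to D}, the remaining matching of $\Phi(\dg)$ with the cosimplicial dg-category underlying Lurie's classical $N^{\dg}$ being the content of Appendix~\ref{section: The generating diagram of the differential graded nerve}. Statement 4 will be immediate from Corollary~\ref{corollary: cubical nerve has Frob. structure}, since $\dg = N^{\square}_{\bullet}\circ F\circ \yo_{\square}\circ \dim$ factors through $\dim: \nec\to \square$ by construction.

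For statement 2, the plan is to apply Theorem~\ref{theorem: explicitation of left-adjoint for free temp. obj.} one degree at a time, using the colimit- and tensor-preserving $\Mod(k)$-functor $\pi_{n}: \Ch(k)\to \Mod(k)$ that extracts the $n$-th component; tensor preservation holds because the tensoring of $\Ch(k)$ over $\Mod(k)$ is pointwise. I would define $D'_{n}: \nec_{-}\to \Mod(k)$ by $D'_{n}(U) = k$ when $\dim(U)=n$ and $D'_{n}(U)=0$ otherwise; this is functorial on $\nec_{-}$, since by Lemma~\ref{lemma: spine collapsing equiv.} the only dimension-preserving active surjections are spine collapses, which can be made to act as the identity of $k$. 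The explicit formula for $\dg(T)_{n}$ combined with Lemma~\ref{lemma: left Kan ext. of surj. necklace maps} then yields $\pi_{n}\circ \dg \simeq \Lan_{\iota}D'_{n}$, and plugging this into Theorem~\ref{theorem: explicitation of left-adjoint for free temp. obj.} delivers the stated direct-sum decomposition (the coproduct in $\Mod(k)$ being the direct sum and the tensoring $X^{nd}_{T}(a,b)\cdot k$ being just $X^{nd}_{T}(a,b)$).

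For statement 3, I would invoke Theorem~\ref{theorem: D-nerve is quasi-cat.}, reducing the task to showing that, for every dg-category $\mathcal{C}$, every pair $A,B\in \Ob(\mathcal{C})$, and every $0 < j < n$, any chain map from $\mathfrak{l}^{\dg}((\Lambda^{n}_{j})_{\bullet}(0,n))$ to $\mathcal{C}(A,B)$ extends along the inclusion into $\dg(\Delta^{n})$. The domain is the subcomplex of $\dg(\Delta^{n})$ spanned by those injective necklace maps $g: U\hookrightarrow \Delta^{n}$ whose underlying bipointed simplicial image lies in $\Lambda^{n}_{j}$, and the hard part will be a careful bookkeeping of its complement. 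My plan is to proceed by downward induction on $\dim(U)$ over the missing generators, starting with the top-dimensional generator $\id_{\Delta^{n}}$ in degree $n-1$, whose image can be specified freely, and then defining the value of the extension on each lower-dimensional missing $g$ as a preimage, under the differential of $\mathcal{C}(A,B)$, of the element already determined on $\partial g$ by the input and the previous inductive steps. The cycle obstructions that must be checked to make these preimages available should reduce, via the Leibniz rule in $\mathcal{C}(A,B)$, to identities already forced on the input map, paralleling Lurie's verification for the classical dg-nerve in \cite[Proposition~1.3.1.10]{lurie2016higher}.
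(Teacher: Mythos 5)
Your treatment of statements 2 and 4 matches the paper exactly: the same $\pi_{n}$, the same $D'$ supported on dimension $n$ sending spine collapses to the identity (justified by Lemma~\ref{lemma: spine collapsing equiv.} and Lemma~\ref{lemma: left Kan ext. of surj. necklace maps}), plugged into Theorem~\ref{theorem: explicitation of left-adjoint for free temp. obj.}; and Corollary~\ref{corollary: cubical nerve has Frob. structure} for the Frobenius structure.

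For statement 1 there is a misattribution that leaves a gap. Appendix~\ref{section: The generating diagram of the differential graded nerve} proves Proposition~\ref{proposition: dg-nerve}, i.e.\ that the nerve generated by $\dg$ coincides with the \emph{templicial} dg-nerve $N^{\dg}_{k}$ of \cite{lowen2023frobenius}; it says nothing about Lurie's classical $N^{\dg}$, so it does not give you the ``remaining matching of $\Phi(\dg)$ with the cosimplicial dg-category underlying Lurie's $N^{\dg}$.'' After invoking Theorem~\ref{theorem: underlying D-nerve is cat. nerve assoc. to D} and Proposition~\ref{proposition: dg-nerve} you are left with $\tilde{U}\circ N^{\dg}_{k}\simeq N^{\Phi(\dg)}$, and you still need $N^{\Phi(\dg)}\simeq N^{\dg}$. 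The paper closes this by factoring $\dg = N^{\square}_{\bullet}\circ F\yo_{\square}\circ\dim$, applying Proposition~\ref{proposition: basic comparison of nerves} to identify the $\Set$-level nerve generated by $\dg$ with $N^{\cub}\circ (-)^{\Gamma^{\square}}$, and quoting \cite[Theorem~6.1]{rivera2018cubical} to identify the latter with Lurie's $N^{\dg}$. You need either this argument or an explicit appeal to the fact that $N^{\dg}_{k}$ was \emph{constructed} in \cite{lowen2023frobenius} as a lift of $N^{\dg}$.

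For statement 3 the inductive scheme as described does not work. First, there are exactly two generators of $\dg_{\bullet}(\Delta^{n})$ missing from $C_{\bullet} := \mathfrak{l}^{\dg}((\Lambda^{n}_{j})_{\bullet}(0,n))$, namely $\id_{\Delta^{n}}$ in degree $n-1$ and $\delta_{j}$ in degree $n-2$, so ``downward induction over the missing generators'' has two steps. Second, you propose to first specify the image of $\id_{\Delta^{n}}$ freely and then define the image of $\delta_{j}$ as a preimage under $\partial_{\mathcal{C}(A,B)}$ of the already-determined cycle $H(\partial\delta_{j})$; but $\mathcal{C}(A,B)$ is an arbitrary complex, so such a preimage need not exist, and no amount of Leibniz-rule bookkeeping creates one. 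The two determinations are in fact linked: the chain-map condition at $\id_{\Delta^{n}}$, $\partial H(\id_{\Delta^{n}}) = \sum_{i}(-1)^{i-1}(H(\delta_{i}) - H(\nu_{i,n-i}))$, \emph{forces} $H(\delta_{j})$ once $H(\id_{\Delta^{n}})$ is chosen (e.g.\ equal to $0$), and only then does one verify $\partial H(\delta_{j}) = H(\partial\delta_{j})$ using $\partial^{2}=0$ on $\id_{\Delta^{n}}$. The paper sidesteps all of this with a cleaner observation: the quotient $\dg_{\bullet}(\Delta^{n})/C_{\bullet}$ is the complex $k\xrightarrow{=}k$ concentrated in degrees $n-1$ and $n-2$, which is degreewise free and contractible, so the short exact sequence $0\to C_{\bullet}\to\dg_{\bullet}(\Delta^{n})\to\dg_{\bullet}(\Delta^{n})/C_{\bullet}\to 0$ splits in $\Ch(k)$ and the lift exists against any target. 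I would adopt that argument, or at least reorder your inductive construction to respect the forced relationship above.
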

\begin{proof}
\begin{enumerate}[1.]
\item In \cite[Theorem 6.1]{rivera2018cubical}, it was shown that the left-adjoint $L^{\dg}$ of the dg-nerve $N^{\dg}$ is given by $L^{\dg}\simeq (-)^{N^{\square}_{\bullet}}\circ L^{\cub}$ where $L^{\cub}$ is the left-adjoint of the cubical nerve $N^{\cub}$ (see \S\ref{subsection: Necklaces versus cubes}), and $(-)^{N^{\square}_{\bullet}}$ applies the cubical normalized chains functor to hom-objects. Denoting $\Gamma^{\square}: \Ch(k)\rightarrow C\Mod(k)$ for the right-ajoint of $N^{\square}_{\bullet}$, we thus find $N^{\dg}\simeq N^{\cub}\circ (-)^{\Gamma^{\square}}$. On the other hand, by \eqref{diagram: dg} and Proposition \ref{proposition: basic comparison of nerves}, the necklicial nerve $k\Cat_{dg}\rightarrow \SSet$ generated by $\dg$ is also isomorphic to $N^{\cub}\circ (-)^{\Gamma^{\square}}$. Thus the result follows from Theorem \ref{theorem: underlying D-nerve is cat. nerve assoc. to D} and Proposition \ref{proposition: dg-nerve}.
\item The left-adjoint exists by Proposition \ref{proposition: nerve gen. by strong mon. diagram has left-adj.} since $\dg$ is strong monoidal. Given $n\in \mathbb{Z}$, consider $\pi_{n}: \Ch(k)\rightarrow \Mod(k): C_{\bullet}\mapsto C_{n}$ and
$$
D': \nec_{-}\rightarrow \Mod(k): T\mapsto
\begin{cases}
k & \text{if }\dim(T) = n\\
0 & \text{if }\dim(T) = 0
\end{cases}
$$
which sends any map in $\nec_{-}$ to the identity if it is spine collapsing, and to the zero map otherwise, which is well-defined by Lemma \ref{lemma: spine collapsing equiv.}. Then in view of Lemma \ref{lemma: left Kan ext. of surj. necklace maps}, we have a canonical isomorphism $\Lan_{\iota}D'\simeq \pi_{n}\dg$. Thus the result follows from Theorem \ref{theorem: explicitation of left-adjoint for free temp. obj.}.
\item Take $0 < j < n$. For the lifting problem of Theorem \ref{theorem: D-nerve is quasi-cat.} to have a solution, it suffices to show that the inclusion $\mathfrak{l}^{\dg}((\Lambda^{n}_{j})_{\bullet}(0,n))\hookrightarrow \dg_{\bullet}(\Delta^{n})$ splits. Denote $C_{\bullet} = \mathfrak{l}^{\dg}((\Lambda^{n}_{j})_{\bullet}(0,n))$. Then note that $C_{\bullet}$ is the subcomplex of $\dg_{\bullet}(\Delta^{n})$ generated by all injective necklace maps $g: U\hookrightarrow \Delta^{n}$ different from $\id_{\Delta^{n}}$ and $\delta_{j}$. Now the quotient $\dg_{\bullet}(\Delta^{n})/C_{\bullet}$ is the complex $k\xrightarrow{=} k$ concentrated in degrees $n-1$ and $n-2$, which is acyclic and degreewise free. Consequently, the following exact sequence splits in $\Ch(k)$:
$$
0\rightarrow C_{\bullet}\hookrightarrow \dg_{\bullet}(\Delta^{n})\twoheadrightarrow \dg_{\bullet}(\Delta^{n})/C_{\bullet}\rightarrow 0
$$

\item This is immediate from Corollary \ref{corollary: cubical nerve has Frob. structure}.
\end{enumerate}
\end{proof}

\begin{Rem}
In the proof of Corollary \ref{corollary: results for the dg-nerve}.3, a retraction $\pi: \dg_{\bullet}(\Delta^{n})\rightarrow C_{\bullet}$ of $C_{\bullet}\hookrightarrow \dg_{\bullet}(\Delta^{n})$ can also easily be defined as follows:
$$
\pi(g) = g\text{ if }g\neq \id_{\Delta^{n}}, g\neq \delta_{j},\qquad \pi(\id_{\Delta^{n}}) = 0,\qquad \pi(\delta_{j}) = \sum_{i=1, i\neq j}^{n-1}(-1)^{i+j-1}\left(\delta_{i}-\nu_{i,n-i}\right).
$$
\end{Rem}

We end the subsection by comparing the templicial dg-nerve with the templicial homotopy coherent nerve of \S\ref{subsection: Homotopy coherent nerves}. Faonte showed in \cite[Proposition 3.3.2]{faonte2015simplicial} that the dg-nerve of a dg-category is equivalent to the homotopy coherent nerve of its associated simplicial category. This equivalence was moreover strengthened to a trivial Kan fibration in \cite[Tag 00SV]{kerodon}. In Corollary \ref{corollary: comparisons with dg-nerve}, we lift this to a trivial fibration of templicial modules, by comparing the diagrams which generate both nerves.

Recall the normalized chain functor $N_{\bullet}: S\Mod(k)\rightarrow \Ch(k)$ and its right-adjoint $\Gamma$ \cite{dold1958homology}. The functor $N_{\bullet}$ is colax monoidal with comultiplications given by the Alexander-Whitney maps \cite[Definition 29.7]{may1967simplicial}, whereby $\Gamma$ has a canonical lax structure. Further, we have the diagram $\hc: \nec\rightarrow S\Mod(k)$ \eqref{diagram: hc}. Thus $N_{\bullet}\circ \hc: \nec\rightarrow \Ch(k)$ is a colax monoidal diagram as well. Note that for $(T,p)\in \nec$, $N_{\bullet}(\hc(T))$ is the normalized chain complex of a simplicial cube of dimension $\dim(T)$, and is thus concentrated in degrees $0,\dots,\dim(T)$. We call a flag $\hat{T} = (T_{0}\subseteq T_{1}\subseteq \dots\subseteq T_{m})$ on $T$ (see \S\ref{subsection: Homotopy coherent nerves}) \emph{non-degenerate} if it is non-degenerate as an $m$-simplex of $N(\mathcal{P}_{T})$. That is, all of the inclusions $T_{i-1}\subsetneq T_{i}$ are strict. Then for every $m\in \mathbb{Z}$, $N_{m}(\hc(T))$ is freely generated by the set of non-degenerate flags of length $m$ on $T$.

Let $(T,p)$ be a necklace with $T^{c} = \{i_{1} < \dots < i_{n}\}$. Following \cite[Tag 00SJ]{kerodon}, define
$$
[\square^{T}] = (-1)^{n}\sum_{\tau\in \mathbb{S}(T)}\sgn(\tau)\hat{\tau}(T)\quad \in N_{n}(\hc(T))
$$
as the \emph{fundamental chain of $T$}, where $\mathbb{S}(T)$ is the group of bijections $T^{c}\xrightarrow{\simeq} T^{c}$, $\sgn(\tau) = \pm 1$ is the sign of a permutation $\tau\in \mathbb{S}(T)$ and $\hat{\tau}(T)$ is the non-degenerate flanked flag
$$
(T\subsetneq T\cup \{\tau(i_{1})\}\subsetneq T\cup \{\tau(i_{1}),\tau(i_{2})\}\subsetneq ... \subsetneq [p])
$$
of length $n$ on $T$. Note that we put an extra sign $(-1)^{n}$ compared to \cite{kerodon} to accommodate for the difference in convention for $\mathcal{P}_{T}$ (also see Remark \ref{remark: convention for partitions}).

\begin{Prop}\label{proposition: comparison of dg and hc nerve}
There is a unique monoidal natural transformation
$$
\mathfrak{z}: \dg\rightarrow N_{\bullet}\circ \hc
$$
For all necklaces $T$, the chain map $\mathfrak{z}_{T}: \dg_{\bullet}(T)\rightarrow N_{\bullet}(\hc(T))$ sends the generator $\id_{T}$ to the fundmental chain $[\square^{T}]$.
\end{Prop}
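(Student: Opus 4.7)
The plan is to define $\mathfrak{z}_T(g) := N_n(\hc(g))([\square^U])$ for every basis element $g\colon U\hookrightarrow T$ of $\dg(T)_n$ (i.e., every $g\in\nec_+$ with $\dim(U)=n$), and then verify chain-compatibility, naturality, monoidality, and uniqueness. The stated formula $\mathfrak{z}_T(\id_T) = [\square^T]$ is built into the definition, and this definition is in fact forced by naturality of $\mathfrak{z}$ at $g$ once the value $\mathfrak{z}_U(\id_U)$ is known.

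For uniqueness, naturality in $\nec_+$ combined with monoidality of $\vee$ (applied to any wedge decomposition $U = \Delta^{n_1} \vee \cdots \vee \Delta^{n_k}$) jointly reduce any monoidal natural transformation to the data $\mathfrak{z}_{\Delta^n}(\id_{\Delta^n}) \in N_{n-1}(\hc(\Delta^n))$ for $n \geq 0$. I would then argue inductively: unitality fixes $n\leq 1$, and for $n \geq 2$ the constraint $\partial\mathfrak{z}_{\Delta^n}(\id_{\Delta^n}) = \mathfrak{z}_{\Delta^n}(\partial\id_{\Delta^n})$ pins the element down modulo top-dimensional cycles. But $\hc(\Delta^n) \simeq FN([1]^{n-1})$ is the free module on a contractible simplicial set of dimension $n-1$, whose normalized chain complex satisfies $N_n = 0$ and $H_{n-1} = 0$, so there are no nonzero $(n-1)$-cycles and the differential constraint alone suffices.

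The main obstacle is verifying the chain-map identity
$$
\partial[\square^T] = \sum_{j=1}^{\dim T}(-1)^{j-1}\!\left(N_\bullet\hc(\delta_{i_j})[\square^{\delta_{i_j}^{-1}(T)}] - N_\bullet\hc(\nu_{i_j,p-i_j})[\square^{T\cup\{i_j\}}]\right),
$$
where $T^c = \{i_1 < \cdots < i_{\dim T}\}$. The cleanest conceptual route is to recognize $T\mapsto[\square^T]$ as the image of the top-dimensional cube generator under the classical shuffle chain map $\Xi\colon N^\square_\bullet \Rightarrow N_\bullet\circ tr$ applied to $\cub(T)\in C\Mod(k)$, exploiting the factorizations $\dg\simeq N^\square_\bullet\circ\cub$ and $\hc\simeq tr\circ\cub$ from the diagram in the introduction; the chain-map identity then reduces to the standard Eilenberg--Zilber compatibility of $\Xi$, and naturality and monoidality of $\mathfrak{z}$ follow from those of $\Xi$ together with the strong monoidality of $\cub$. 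A direct combinatorial verification also works: each codimension-one face of the cube $\mathcal{P}_T$ contributes a shuffle of permutations on $T^c$, and the Koszul signs yield the required cancellations among pairs differing by a single transposition.
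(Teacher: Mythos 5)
Your proposal is correct in outline but follows a genuinely different route from the paper's proof, and the uniqueness and existence halves are at quite different levels of completeness. For uniqueness, the paper parametrizes $\mathfrak{z}(\id_T)=\sum_{\tau}\lambda^T_{\tau}\hat\tau(T)$ and explicitly solves the combinatorial system of equations (naturality along spine-collapsing maps, monoidality, and the chain condition) on the coefficients $\lambda^T_\tau$; your homological argument --- reduce to $\mathfrak{z}_{\Delta^n}(\id_{\Delta^n})$ via naturality in $\nec_+$ and monoidality, then observe that $N_{\bullet}(\hc(\Delta^n))=N_\bullet(FN[1]^{n-1})$ has no nonzero $(n-1)$-cycles since $N[1]^{n-1}$ is contractible of dimension $n-1$ --- is cleaner and avoids bookkeeping, and I find it convincing. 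One small gloss: ``unitality fixes $n\leq 1$'' only handles $n=0$; for $n=1$, where $\dim\Delta^1 = 0$ and $N_0(\hc(\Delta^1))=k$, you also need naturality with respect to the spine-collapsing map $\sigma_0\colon\Delta^1\to\Delta^0$ (which $\dg$ sends to the identity on generators, while $\hc(\sigma_0)$ is also the identity), which forces $\mathfrak{z}_{\Delta^1}(\id_{\Delta^1})=1$; this is how the paper pins down $\lambda^T_{\id}$ on spines as well, so you are implicitly relying on the same ingredient.

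For existence, the paper simply exhibits $\lambda^T_{\tau}=(-1)^{\dim T}\sgn(\tau)$ and observes (leaving the verification to the reader) that it satisfies its three numbered identities. Your conceptual route through the factorizations $\dg\simeq N^{\square}_\bullet\circ\cub$ and $\hc\simeq tr\circ\cub$ and a natural transformation $\Xi\colon N^{\square}_\bullet\Rightarrow N_\bullet\circ tr$ is plausible and attractive, but it is not quite ``off the shelf'': for the induced $\mathfrak{z}=\Xi\cub$ to be a \emph{monoidal} natural transformation of colax monoidal functors you need $\Xi$ itself to be compatible with the Alexander--Whitney colax structure on $N_\bullet\circ tr$ and the (inverse of the) strong monoidal isomorphism on $N^{\square}_\bullet$. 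This follows from the on-the-nose identity $AW\circ\nabla=\id$ in the Eilenberg--Zilber contraction, but that compatibility is not what people usually mean by ``Eilenberg--Zilber compatibility'' and would need to be stated and checked. So while I believe your existence argument can be completed, as written it shifts the hard part onto a claimed-standard lemma that isn't immediately citable; the paper's explicit coefficient computation, by contrast, is self-contained. The ``direct combinatorial verification'' you mention as an alternative would in fact reproduce essentially the paper's computation.
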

\begin{proof}
We reduce the data of a monoidal natural transformation $\mathfrak{z}: \dg\rightarrow N_{\bullet}\circ \hc$ in a couple of steps. First note that $\mathfrak{z}$ is completely determined by a collection of chains $\mathfrak{z}(g)\in N_{\dim(U)}(\hc(T))$ for all injective necklace maps $g: U\hookrightarrow T$. By the naturality of $\mathfrak{z}$ in $T$ it follows in particular that for all injective necklace maps $g: U\hookrightarrow T$, we have $\mathfrak{z}(g) = N_{\bullet}(\hc(g))(\mathfrak{z}(\id_{U}))$. Hence $\mathfrak{z}$ is completely determined by $\mathfrak{z}(\id_{T})\in N_{\dim(T)}(\hc(T))$ for all $T\in \nec$. Now note that any non-degenerate flag on $T$ of length $n = \dim(T)$ is necessarily flanked and thus
$$
\mathfrak{z}(\id_{T}) = \sum_{\tau\in \mathbb{S}_{n}}\lambda^{T}_{\tau}\hat{\tau}(T)
$$
for some unique $\lambda^{T}_{\tau}\in k$. Now, carefully going through the definitions, yields the following.
\begin{itemize}
\item The naturality of $\mathfrak{z}$ in $T\in \nec$ is equivalent to having, for any spine collapsing necklace map $\sigma: T\twoheadrightarrow T'$ and $\tau\in \mathbb{S}(T)$:
\begin{equation}\label{equation: comparison natural}
\lambda^{T}_{\tau} = \lambda^{T'}_{\sigma\tau\sigma^{-1}}
\end{equation}
where we used that $\sigma$ induces a bijection $T^{c}\xrightarrow{\simeq} (T')^{c}$ (Lemma \ref{lemma: spine collapsing equiv.}).

\item The monoidality of $\mathfrak{z}$ is equivalent to having, for all $T_{1},T_{2}\in \nec$ and all $\tau_{1}\in \mathbb{S}(T_{1})$ and $\tau_{2}\in \mathbb{S}(T_{2})$:
\begin{equation}\label{equation: comparison monoidal}
\lambda^{T_{1}\vee T_{2}}_{\tau_{1}*\tau_{2}} = \lambda^{T_{1}}_{\tau_{1}}\lambda^{T_{2}}_{\tau_{2}}
\qquad \text{and}\quad \lambda^{\Delta^{0}}_{\id} = 1 
\end{equation}
where we used the canonical group embedding $*: \mathbb{S}(T_{1})\times \mathbb{S}(T_{2})\hookrightarrow \mathbb{S}(T_{1}\vee T_{2})$.

\item The fact that $\mathfrak{z}_{T}: \dg_{\bullet}(T)\rightarrow N_{\bullet}(\hc(T))$ is a chain map for all $T\in \nec$ with $T^{c} = \{i_{1} < \dots < i_{n}\}$, is equivalent to having, for all $\tau\in \mathbb{S}(T^{c})$, $j\in \{1,\dots,n\}$, $\rho\in \mathbb{S}(\delta_{i_{j}}^{-1}(T))$, $\theta\in \mathbb{S}(T\cup \{i_{j}\})$ and $l\in \{1,\dots,n-2\}$:
\begin{equation}\label{equation: comparison chain map}
\lambda^{T}_{(\rho\vert j)} = (-1)^{d+j-1}\lambda^{\delta^{-1}_{i_{j}}(T)}_{\rho},\quad \lambda^{T}_{(j\vert \theta)} = (-1)^{j}\lambda^{T\cup \{i_{j}\}}_{\theta}\quad \text{and}\quad \lambda^{T}_{\tau(i_{l},i_{l+1}
)} = -\lambda^{T}_{\tau}
\end{equation}
where $(\rho\vert j)(i_{k}) = \delta_{i_{j}}\rho\delta^{-1}_{i_{j}}(i_{\delta_{j}(k)})$ if $k < n$ and $(\rho\vert j)(i_{n}) = i_{j}$, and $(j\vert\theta)(i_{1}) = i_{j}$ and $(j\vert\theta)(i_{k}) = \theta(i_{\delta_{j}(k-1)})$ if $k > 1$. Then $\sgn(\rho\vert j) = (-1)^{n-j}\sgn(\rho)$ and $\sgn(j\vert\theta) = (-1)^{j-1}\sgn(\theta)$. Further, $(i_{l},i_{l+1})$ is the transposition swapping $i_{l}$ and $i_{l+1}$.
\end{itemize}
Note that by \eqref{equation: comparison natural} and \eqref{equation: comparison monoidal}, $\lambda^{T}_{\id} = 1$ for any spine $T = \Delta^{1}\vee \dots \vee \Delta^{1}$. Then it follows by the first equation of \eqref{equation: comparison chain map} that $\lambda^{T}_{\id} = (-1)^{\dim(T)}$ for any necklace $T$. Further by \eqref{equation: comparison chain map}, we have $\lambda^{T}_{\tau} = (-1)^{\dim(T)}\sgn(\tau)$ for any necklace $T$ and $\tau\in \mathbb{S}(T)$. Now it is clear that this formula indeed satisfies \eqref{equation: comparison natural}, \eqref{equation: comparison monoidal} and \eqref{equation: comparison chain map} so that we have a unique solution.
\end{proof}

\begin{Cor}\label{corollary: comparisons with dg-nerve}
The following statements are true.
\begin{enumerate}[1.]
\item There is a natural isomorphism
$$
N^{\dg}_{k}\circ (-)^{\iota_{0}}\simeq N_{k}
$$
where $\iota_{0}: \Mod(k)\rightarrow \Ch(k)_{\geq 0}$ places every module in degree $0$.
\item There is a pointwise trivial fibration of templicial objects
$$
\mathfrak{Z}: N^{\hc}_{k}\circ (-)^{\Gamma}\rightarrow N^{\dg}_{k}
$$
where $\Gamma: \Ch(k)\rightarrow S\Mod(k)$ is the right-adjoint of the normalized chain functor $N_{\bullet}: S\Mod(k)\rightarrow \Ch(k)$. In particular, $\tilde{U}\mathfrak{Z}: N^{\hc}\circ \mathcal{U}\circ(-)^{\Gamma}\rightarrow N^{\dg}$ is a pointwise trivial Kan fibration between quasi-categories.
\end{enumerate}
\end{Cor}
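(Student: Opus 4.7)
The plan is to deduce both parts from the general comparison results of Section \ref{section: Enriched nerves of enriched categories}. For part $1$, I apply Proposition \ref{proposition: basic comparison of nerves} to the $\Mod(k)$-enriched adjunction whose right adjoint is $\iota_{0}$ and whose left adjoint is the functor $C_{\bullet}\mapsto \mathrm{coker}(\partial_{1}: C_{1}\to C_{0})$; on non-negatively graded complexes the latter agrees with $H_{0}$ and is strong monoidal by the degree-zero Künneth formula. Since $\dg$ factors through $\Ch(k)_{\geq 0}$ and $\dg(T) = N^{\square}_{\bullet}F\yo_{\square}\dim(T)$ is the normalised cubical chain complex of the contractible standard cube, $H_{0}(\dg(T))\simeq k = I$ naturally in $T\in \nec$; that is, $H_{0}\circ \dg\simeq \const_{I}$. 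Proposition \ref{proposition: basic comparison of nerves} combined with Proposition \ref{proposition: templicial nerve} then yields $N^{\dg}_{k}\circ (-)^{\iota_{0}}\simeq N^{\const_{I}}_{k}\simeq N_{k}$.

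For part $2$, I first apply Proposition \ref{proposition: basic comparison of nerves} to the Dold--Kan adjunction $N_{\bullet}\dashv \Gamma$, in which $N_{\bullet}$ is colax monoidal via the Alexander--Whitney maps, to obtain a natural isomorphism $N^{\hc}_{k}\circ (-)^{\Gamma}\simeq N^{N_{\bullet}\hc}_{k}$. The monoidal natural transformation $\mathfrak{z}: \dg\to N_{\bullet}\hc$ of Proposition \ref{proposition: comparison of dg and hc nerve} induces, via the contravariant functoriality of Construction \ref{construction: nerve generated by necklicial diagram}, a natural templicial morphism $N^{N_{\bullet}\hc}_{k}\to N^{\dg}_{k}$, and composing with the above isomorphism defines $\mathfrak{Z}$.

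To establish that $\mathfrak{Z}$ is a pointwise trivial fibration I invoke Corollary \ref{corollary: trivial fib. between nerves}.2 with $D = N_{\bullet}\hc$, $D' = \dg$ and $\alpha = \mathfrak{z}$ (the terminal object of $\mathcal{W} = \Ch(k)$ is the zero complex). Its hypothesis reduces, for each $n > 0$, to showing that the pushout--corner map
$$
P_{n} := \mathfrak{l}^{N_{\bullet}\hc}(\partial\Delta^{n}_{\bullet}(0,n))\amalg_{\mathfrak{l}^{\dg}(\partial\Delta^{n}_{\bullet}(0,n))}\dg(\Delta^{n})\longrightarrow N_{\bullet}\hc(\Delta^{n})
$$
is a split monomorphism in $\Ch(k)$, since only then does every chain map $P_{n}\to \mathcal{C}(A,B)$ extend to $N_{\bullet}\hc(\Delta^{n})$ irrespective of $\mathcal{C}(A,B)$. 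This is the main technical obstacle. The key inputs are that $\mathfrak{z}_{T}$ is an injective quasi-isomorphism of bounded complexes of free $k$-modules for every $T\in \nec$ (in degree zero it canonically identifies both $\dg(T)_{0}$ and $N_{\bullet}\hc(T)_{0}$ with $k[\mathcal{P}_{T}]$, while both sides are acyclic in positive degrees since $[1]^{\dim T}$ is contractible), and that the same holds for the induced map between the boundary subcomplexes $\mathfrak{l}^{\dg}(\partial\Delta^{n}_{\bullet}(0,n))\to \mathfrak{l}^{N_{\bullet}\hc}(\partial\Delta^{n}_{\bullet}(0,n))$ (both compute, up to quasi-isomorphism, the chain complex of the boundary of the cube $[1]^{\dim\Delta^{n}}$). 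Thus both columns of the pushout square are quasi-isomorphisms, the horizontal maps are inclusions of subcomplexes with degreewise free cokernels, so the square is a homotopy pushout and $P_{n}\hookrightarrow N_{\bullet}\hc(\Delta^{n})$ is an injective quasi-isomorphism whose cokernel is bounded, acyclic and degreewise free, hence contractible. The corresponding short exact sequence therefore splits in $\Ch(k)$. The final assertion is then formal: by Corollary \ref{corollary: underlying simp. set functor preserves trivial fibrations} $\tilde{U}\mathfrak{Z}$ is a pointwise trivial Kan fibration, its target is a quasi-category by Corollary \ref{corollary: results for the dg-nerve}.3, and its source by the analogous corollary of \S\ref{subsection: Homotopy coherent nerves} applied to $\mathcal{C}^{\Gamma}$, whose hom-simplicial sets are underlying simplicial sets of simplicial $k$-modules and therefore Kan complexes.
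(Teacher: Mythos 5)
Your proposal is correct and follows essentially the same route as the paper. Part 1 matches the paper's argument exactly: identify $H_{0}$ as the left adjoint to $\iota_{0}$, observe $H_{0}\circ\dg\simeq\const_{I}$ since each $\dg(T)$ is a contractible complex, and conclude via Propositions \ref{proposition: basic comparison of nerves} and \ref{proposition: templicial nerve}. Part 2 likewise matches: the paper also uses Proposition \ref{proposition: basic comparison of nerves} to identify $N^{\hc}_{k}\circ(-)^{\Gamma}\simeq N^{N_{\bullet}\hc}_{k}$, takes the comparison map induced by $\mathfrak{z}$, and reduces to Corollary \ref{corollary: trivial fib. between nerves}.2 by showing the pushout-corner map into $N_{\bullet}(\hc(\Delta^{n}))$ is a split monomorphism because its cokernel is acyclic and degreewise free (the paper defers to \cite[Tag 00SV]{kerodon} for this; your homotopy-pushout sketch fills in the same idea). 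Your closing justifications that both source and target are quasi-categories are also the ones implicit in the paper's references to Corollary \ref{corollary: results for the dg-nerve}.3 and the hc-nerve quasi-category criterion.
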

\begin{proof}
\begin{enumerate}[1.]
\item By Proposition \ref{proposition: basic comparison of nerves}, it suffices to note that $\iota_{0}$ is right-adjoint to $H_{0}: \Ch(k)_{\geq 0}\rightarrow \Mod(k)$ and that $H_{0}\circ dg$ is constant on $k$.
\item By Proposition \ref{proposition: basic comparison of nerves}, the nerve generated by $N_{\bullet}\circ \hc$ is given by the composite $N^{hc}_{k}\circ (-)^{\Gamma}$. The monoidal natural transformation $\mathfrak{z}$ of Proposition \ref{proposition: comparison of dg and hc nerve} thus induces a comparison map $\mathfrak{Z}: N^{\hc}_{k}\circ (-)^{\Gamma}\rightarrow N^{\dg}_{k}$. Take $n > 0$, and define
$$
C_{\bullet} = \mathfrak{l}^{N_{\bullet}\hc}(\partial\Delta^{n}_{\bullet}(0,n))\amalg_{\mathfrak{l}^{\dg}(\partial\Delta^{n}_{\bullet}(0,n))}\dg_{\bullet}(\Delta^{n})
$$
In order for the lifting problem of Corollary \ref{corollary: trivial fib. between nerves}.2 to have a solution for all small dg-categories, it suffices to show that the map $C_{\bullet}\rightarrow N_{\bullet}(\hc(\Delta^{n}))$ is a split monomorphism. This can be done by noting that the quotient is acyclic and degreewise free, as was done in the proof of \cite[Tag 00SV]{kerodon}.
\end{enumerate}
\end{proof}

\subsection{The cubical nerve}\label{subsection: The cubical nerve}

Recall the cubical nerve and its left-adjoint by Le Grignou \cite[Definition 28]{legrignou2020cubical}, which we already discussed in \S\ref{subsection: Necklaces versus cubes}:
$$
L^{\cub}: \SSet\leftrightarrows \Cat_{\square}: N^{\cub}
$$
In Proposition \ref{proposition: cubical nerve is necklicial}, we showed that it is necklicial and generated by the diagram $\yo\circ \dim: \nec\rightarrow \CSet$. We can then easily construct an enriched version by considering the diagram
\begin{equation}\label{diagram: cub}
\cub: \nec\xrightarrow{\dim} \square\xrightarrow{\yo} \CSet\xrightarrow{F} C\mathcal{V}
\end{equation}
where $C\mathcal{V} = \mathcal{V}^{\square^{op}}$ denotes the category of cubical objects in $\mathcal{V}$ equipped with the Day convolution so that $\cub$ is a strong monoidal diagram. We denote by $\mathcal{V}\Cat_{\square}$ the category of small $C\mathcal{V}$-enriched categories.

\begin{Def}
Define the \emph{templicial cubical nerve} as the nerve functor $N^{\cub}_{\mathcal{V}}$ generated by $\cub$. In other words, it is the composite
$$
N^{\cub}_{\mathcal{V}}: \mathcal{V}\Cat_{\square}\xrightarrow{\mathfrak{n}^{\cub}_{\mathcal{V}}} \mathcal{V}\Cat_{\nec}\xrightarrow{(-)^{temp}} \ts\mathcal{V}
$$
\end{Def}

Let us call a map $[1]^{m}\rightarrow [1]^{n}$ in $\square$ \emph{injective} (resp. \emph{surjective}) if it is injective (resp. surjective) on objects. Note that every map in $\square$ can be uniquely decomposed as a surjective map followed by an injective map. This yields an orthogonal factorization system $(\square_{-},\square_{+})$ on $\square$.

\begin{Lem}\label{lemma: dim. discrete fibration on injective maps}
The functor $\dim: \nec_{+}\rightarrow \square_{+}$ is a discrete fibration. That is, for any injective map $g: [1]^{n}\hookrightarrow [1]^{\dim(T)}$ in $\square$ with $(T,p)\in \nec$, there exists a unique injective necklace map $f: U\hookrightarrow T$ such that $\dim(U) = n$ and $\dim(f) = g$.
\end{Lem}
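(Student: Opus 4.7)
The plan is to reduce the lifting problem to a concrete combinatorial bijection via the identification $\mathcal{P}_{(-)} \simeq [1]^{\dim(-)}$ from \eqref{diagram: equiv. descriptions of cubes} together with the active/inert factorization on $\nec$. Fix a necklace $(T,p)$ and an injective cube map $g: [1]^n \hookrightarrow [1]^{\dim(T)}$ throughout.

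First I would translate $g$ into combinatorial data. An injective map in $\square$ is a composition of the face inclusions $\delta^0_i$ and $\delta^1_i$, and hence picks out a subcube; under \eqref{diagram: equiv. descriptions of cubes} this corresponds to an interval $[V_0, V_1]$ in the poset $\mathcal{P}_T$ with $|V_1\setminus V_0| = n$, that is, a chain $T\subseteq V_0\subseteq V_1\subseteq [p]$ of that length difference. Next I would analyze the shape of $\dim(f)$ for an injective necklace map $f: U\hookrightarrow T$. Using the active/inert factorization on $\nec$, write $f$ uniquely as $U \xrightarrow{f_1} U_1 \xrightarrow{f_2} T$ with $f_1$ active injective and $f_2$ inert. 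Inspecting the formulas for $\dim$ on the generators $\delta_k$ and $\nu_{k,l}$, one sees that $\dim(f_2): \mathcal{P}_{U_1}\hookrightarrow \mathcal{P}_T$ identifies $\mathcal{P}_{U_1}$ with the principal upset $\{V\in \mathcal{P}_T \mid U_1\subseteq V\}$, while $\dim(f_1): \mathcal{P}_U\hookrightarrow \mathcal{P}_{U_1}$ identifies $\mathcal{P}_U$ with the principal downset $\{V\in \mathcal{P}_{U_1} \mid V\subseteq f_1([p'])\}$, where $p'$ is the ambient index of $U$. Composing, $\dim(f)$ has image exactly the interval $[U_1, f_1([p'])]$ in $\mathcal{P}_T$.

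With this dictionary $f \leftrightarrow [U_1, f_1([p'])]$ in hand, uniqueness is immediate: from the interval $[V_0, V_1]$ produced by $g$, one must have $U_1 = V_0$ (which forces $f_2$), and $f_1$ must be the unique order-preserving injection $[p']\hookrightarrow [p]$ with image $V_1$ (where $p' = |V_1|-1$), which in turn forces $U = f_1^{-1}(V_0)$. For existence, I would define $U_1$, $f_1$, $U$ by exactly these formulas and verify three points: $U$ is a valid necklace, i.e. $\{0, p'\}\subseteq U$, which holds because $0, p\in T\subseteq V_0\subseteq V_1$ forces $f_1(0) = 0$ and $f_1(p') = p$ to lie in $U_1 = V_0$; $f_1(U) = U_1$, so that $f_1$ is genuinely active; and $\dim(U) = |V_1| - |V_0| = n$. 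The computed $\dim(f)$ is then the interval $[V_0, V_1]$, which is exactly $g$.

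The main obstacle I anticipate is purely bookkeeping: one must carefully track the roles of $U$, $U_1$, $T$ as subsets of various ambient $[p']$'s, and match the images of the generators $\delta_k, \nu_{k,l}$ of $\nec_+$ under $\dim$ against the generators $\delta^0_i, \delta^1_i$ of $\square_+$, so that the two descriptions of $\dim(f)$ genuinely agree under \eqref{diagram: equiv. descriptions of cubes}. Once this dictionary is set up, the discrete fibration property reduces to the tautology that every interval of fixed length $n$ in $\mathcal{P}_T$ is uniquely of the form $[U_1, f_1([p'])]$ for such data.
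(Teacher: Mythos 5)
Your plan is correct; it recovers the same combinatorics as the paper's proof in a different packaging. The paper works directly with the canonical form $g = \delta^{\epsilon_1}_{j_m}\cdots\delta^{\epsilon_m}_{j_1}$ of an injective cube map: for uniqueness it reads off $[p]\setminus\mathrm{Im}(f) = \{i_{j_s} \mid \epsilon_s = 0\}$ and $f(U)\setminus T = \{i_{j_s} \mid \epsilon_s = 1\}$, which together determine $f$; for existence it reduces to a single coface $\delta^\epsilon_j$. You instead pass through the active/inert factorization $f = f_2 f_1$ to identify the image of $\dim(f)$ with the interval $[f(U),\mathrm{Im}(f)]$ in $\mathcal{P}_T$, turning the statement into a bijection between injective necklace maps into $T$ and rank-$n$ intervals of $\mathcal{P}_T$. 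Your $V_0 = f(U)$ and $V_1 = \mathrm{Im}(f)$ are precisely the complements of the paper's two subsets, so the two arguments perform the same bookkeeping. The one step you should spell out is what the paper cites as the canonical form: an injective map in $\square_{+}$ is uniquely determined by its image subcube. Your uniqueness step compares only $\mathrm{Im}(\dim f)$ with $\mathrm{Im}(g)$ and so quietly relies on that fact; once it is stated, your argument is complete. Your route makes the poset structure and the $\nec_{+}\to\square_{+}$ dictionary more visible, at the cost of an extra factorization; the paper's version is a little shorter.
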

\begin{proof}
Let us first show existence. Since $g$ is a composite of coface maps $\delta_{j}^{\epsilon}$, we may assume that $g = \delta^{\epsilon}_{j}$ and $\dim(T) = n + 1$. Write $T^{c} = \{i_{1} < \dots < i_{n+1}\}$ We distinguish two cases:
\begin{itemize}
\item If $\epsilon = 0$, then $\dim(\delta_{i_{j}}) = \delta^{0}_{j}$ for the necklace map $\delta_{i_{j}}: \delta_{i_{j}}^{-1}(T)\hookrightarrow T$.
\item If $\epsilon = 1$, then $\dim(\nu_{i_{j},p-i_{j}}) = \delta^{1}_{j}$ for the necklace map $\nu_{i_{j},p-i_{j}}: T\cup \{i_{j}\}\hookrightarrow T$. 
\end{itemize}

Further, to show uniqueness, let $f: U\hookrightarrow T$ be an injective necklace map such that $\dim(f) = g$. Let us write $T^{c} = \{i_{1} < \dots < i_{dim(T)}\}$. By the relations in $\square$ (see \cite{brown1981algebra}), there exist unique $1\leq j_{1} < \dots < j_{m}\leq \dim(T)$ and $\epsilon_{1},\dots ,\epsilon_{m}\in \{0,1\}$ such that $g = \delta^{\epsilon_{1}}_{j_{m}}\dots \delta^{\epsilon_{m}}_{j_{1}}$. Then it follows from the definition of $\dim$ that precisely
$$
[p]\setminus \mathrm{Im}(f) = \{i_{j_{s}}\mid 1\leq s\leq m, \epsilon_{s} = 0\}\quad \text{and}\quad f(U)\setminus T = \{i_{j_{s}}\mid 1\leq s\leq m, \epsilon_{s} = 1\}
$$
The first set completely determines the underlying map of $f$ in $\fint$, while the second set determines $U$. Thus the necklace map $f: U\hookrightarrow T$ is determined by $g$ and $T$.
\end{proof}

Recall the cubical sets $\sqcap^{n}_{j,\epsilon}\in \CSet$ for $j\in \{1,\dots,n\}$ and $\epsilon\in \{0,1\}$, which is the union of all faces of $\square^{n}$ except the face $\delta^{\epsilon}_{j}$. A precise definition can be found in \cite[Example 1]{legrignou2020cubical} or \cite[Tag 00LN]{kerodon}. For the case $\mathcal{V} = \Set$, statement $3$ of the next corollary is also a consequence of \cite[Corollary 4]{legrignou2020cubical}.

\begin{Cor}\label{corollary: results for the cubical nerve}
The following statements are true.
\begin{enumerate}[1.]
\item There is a natural isomorphism $\tilde{U}\circ N^{\cub}_{\mathcal{V}}\simeq N^{\cub}\circ \mathcal{U}$ where $\mathcal{U}: \mathcal{V}\Cat_{\square}\rightarrow \Cat_{\square}$ is the forgetful functor. In particular, if $\mathcal{V} = \Set$, then $N^{\cub}_{\mathcal{V}}$ coincides with $N^{\cub}$.
\item $N^{\cub}_{\mathcal{V}}$ has a left-adjoint $L^{\cub}_{\mathcal{V}}: \ts\mathcal{V}\rightarrow \mathcal{V}\Cat_{\square}$. For any templicial object $(X,S)$ that has non-degenerate simplices, $a,b\in S$ and $n\geq 0$, we have
$$
L^{\cub}_{\mathcal{V}}(X)_{n}(a,b)\simeq \coprod_{\substack{T\in \nec\\ [1]^{n}\twoheadrightarrow [1]^{\dim(T)}\\ \text{in }\square_{-}}}X^{nd}_{T}(a,b)
$$
\item Let $\mathcal{C}$ be small $C\mathcal{V}$-category such that for all $A,B\in \mathcal{C}$ and $1\leq j\leq n$, the following lifting problem admits a solution in $\CSet$:
\[\begin{tikzcd}
	{\sqcap^{n}_{j,1}} & {U(\mathcal{C}(A,B))} \\
	{\square^{n}}
	\arrow[from=1-1, to=1-2]
	\arrow[hook, from=1-1, to=2-1]
	\arrow[dashed, from=2-1, to=1-2]
\end{tikzcd}\]
Then $N^{\cub}_{\mathcal{V}}(\mathcal{C})$ is a quasi-category in $\mathcal{V}$.
\item Let $\mathcal{C}$ be small $C\mathcal{V}$-category. Then $N^{\cub}_{\mathcal{V}}(\mathcal{C})$ has a Frobenius structure.
\end{enumerate}
\end{Cor}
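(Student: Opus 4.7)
The plan is to treat each of the four parts by specialising one of the general results of Section \ref{section: Enriched nerves of enriched categories} to the diagram $\cub$ of \eqref{diagram: cub}, with the main effort concentrated in parts (2) and (3).

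Part (1) follows by combining Theorem \ref{theorem: underlying D-nerve is cat. nerve assoc. to D}, which yields $\tilde{U}\circ N^{\cub}_{\mathcal{V}}\simeq N^{\Phi(\cub)}$, with Proposition \ref{proposition: cubical nerve is necklicial}, which identifies $N^{\cub}\simeq N^{\Phi(\yo\circ\dim)}$. Because the free-forgetful adjunction $F\dashv U$ lifts levelwise to a monoidal adjunction $F: \CSet\leftrightarrows C\mathcal{V}: U$ with $F$ strong monoidal, the two $\Phi$-diagrams differ only by applying $F$ to hom-objects; applying the adjunction at each simplicial level then produces the natural isomorphism $\tilde{U}\circ N^{\cub}_{\mathcal{V}}\simeq N^{\cub}\circ \mathcal{U}$. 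The final clause for $\mathcal{V} = \Set$ is immediate from Proposition \ref{proposition: temp. obj. results}.2.

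For part (2), the existence of the left adjoint comes from Proposition \ref{proposition: nerve gen. by strong mon. diagram has left-adj.}, and the explicit description will follow from Theorem \ref{theorem: explicitation of left-adjoint for free temp. obj.} applied to the evaluation $\pi = \mathrm{ev}_{[1]^{n}}: C\mathcal{V}\rightarrow \mathcal{V}$, which visibly preserves colimits and tensors. The key input, and what I expect to be the main obstacle, is to exhibit $D': \nec_{-}\rightarrow \mathcal{V}$ with $\pi\circ\cub\simeq \Lan_{\iota}D'$. My candidate is
\[
D'(T) = F\bigl(\square_{-}([1]^{n},[1]^{\dim T})\bigr),
\]
acting on a map $\sigma: T\twoheadrightarrow T'$ of $\nec_{-}$ by post-composition with $\dim(\sigma): [1]^{\dim T}\twoheadrightarrow [1]^{\dim T'}$. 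To verify the desired isomorphism, I decompose every element of $\square([1]^{n},[1]^{\dim T})$ uniquely via the factorisation system $(\square_{-},\square_{+})$ and apply Lemma \ref{lemma: dim. discrete fibration on injective maps} to identify $\square_{+}([1]^{k},[1]^{\dim T})$ with the set of injective necklace maps $U\hookrightarrow T$ satisfying $\dim(U)=k$; combined with Lemma \ref{lemma: left Kan ext. of surj. necklace maps}, this produces the identification $\Lan_{\iota}D'(T)\simeq \pi\cub(T)$, and naturality in $T$ is then a matter of bookkeeping. The stated coproduct formula falls out of Theorem \ref{theorem: explicitation of left-adjoint for free temp. obj.} upon rewriting $X^{nd}_{T}(a,b)\cdot D'(T)$ as a coproduct of copies of $X^{nd}_{T}(a,b)$ indexed by surjections $[1]^{n}\twoheadrightarrow [1]^{\dim T}$ in $\square_{-}$.

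For part (3), Theorem \ref{theorem: D-nerve is quasi-cat.} reduces the claim to solving a lifting problem in $C\mathcal{V}$ whose source is $\mathfrak{l}^{\cub}((\Lambda^{n}_{j})_{\bullet}(0,n))$. Using the decomposition \eqref{equation: horn as colimit}, the action of $\dim$ on the monoidal generators of $\nec$ ($\delta_{i}\mapsto \delta^{0}_{i}$ and $\nu_{k,n-k}\mapsto \delta^{1}_{k}$), and the strong monoidality of $\cub$, this iterated colimit is identified with $F$ applied to a cubical horn of the required form in $\square^{n-1}$; passing through the adjunction $F\dashv U$ then converts the hypothesised cubical lift into the desired lift in $C\mathcal{V}$. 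Finally, part (4) is immediate from Corollary \ref{corollary: cubical nerve has Frob. structure}, since $\cub$ factors through $\dim: \nec\rightarrow \square$ by construction.
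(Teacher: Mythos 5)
Your proposal is correct and follows essentially the same route as the paper: part (1) via Theorem \ref{theorem: underlying D-nerve is cat. nerve assoc. to D} and Proposition \ref{proposition: cubical nerve is necklicial}; part (2) via Proposition \ref{proposition: nerve gen. by strong mon. diagram has left-adj.}, Theorem \ref{theorem: explicitation of left-adjoint for free temp. obj.}, the evaluation functor $\pi_n$, and the identical choice $D' = F(\square_{-}([1]^{n},[1]^{\dim(-)}))$ verified through Lemmas \ref{lemma: left Kan ext. of surj. necklace maps} and \ref{lemma: dim. discrete fibration on injective maps}; part (3) via Theorem \ref{theorem: D-nerve is quasi-cat.} and the identification of the horn colimit with $\sqcap^{n-1}_{j,1}$ using \eqref{equation: horn as colimit}; and part (4) via Corollary \ref{corollary: cubical nerve has Frob. structure}. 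No gaps or deviations to report.
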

\begin{proof}
\begin{enumerate}[1.]
\item In view of Theorem \ref{theorem: underlying D-nerve is cat. nerve assoc. to D}, it suffices to show that $\Phi(\cub)\simeq \mathcal{F}(W_{c})$ where $\mathcal{F}$ is induced by the free functor $F:\Set\rightarrow \mathcal{V}$. But this follows from Proposition \ref{proposition: cubical nerve is necklicial}.
\item The left-adjoint exists by Proposition \ref{proposition: nerve gen. by strong mon. diagram has left-adj.} since $\cub$ is strong monoidal. Given $n\geq 0$, consider the functor $\pi_{n}: C\mathcal{V}\rightarrow \mathcal{V}: Y\mapsto Y_{n}$. In view of Theorem \ref{theorem: explicitation of left-adjoint for free temp. obj.}, it suffices to show that $\pi_{n}\circ \cub\simeq \Lan_{\iota}D'$ where $\iota: \nec_{-}\hookrightarrow \nec$ is the inclusion and
$$
D' = F(\square_{-}([1]^{n},[1]^{\dim(-)})): \nec_{-}\rightarrow \mathcal{V}
$$
This now easily follows from Lemmas \ref{lemma: left Kan ext. of surj. necklace maps} and \ref{lemma: dim. discrete fibration on injective maps} since for all $T\in \nec$:
\begin{align*}
\pi_{n}(\cub(T)) &= F(\square^{\dim(T)}_{n}) = F(\square([1]^{n},[1]^{\dim(T)}))\\
&\simeq \coprod_{m\geq 0}F(\square_{-}([1]^{n},[1]^{m}))\otimes F(\square_{+}([1]^{m},[1]^{\dim(T)}))\simeq \coprod_{\substack{U\hookrightarrow T\\ \text{in }\nec_{+}}}D'(U)
\end{align*}
and this isomorphism is natural in $T$.
\item In view of Theorem \ref{theorem: D-nerve is quasi-cat.}, it suffices to verify that $\mathfrak{l}^{cub}((\Lambda^{n}_{j})_{\bullet}(0,n))\simeq \sqcap^{n-1}_{j,1}$, which follows from \eqref{equation: horn as colimit}.
\item This immediately follows from Corollary \ref{corollary: cubical nerve has Frob. structure}.
\end{enumerate}
\end{proof}

In the following corollary, we consider the adjunctions:
\begin{itemize}
\item $N^{\square}_{\bullet}: C\Mod(k)\leftrightarrows \Ch(k): \Gamma^{\square}$\\
with $N^{\square}_{\bullet}$ the cubical normalized chains functor.
\item $tr: C\mathcal{V}\leftrightarrows S\mathcal{V}: sq$\\
with $tr$ the left Kan extension of $FN: \square\subseteq \Cat\rightarrow S\mathcal{V}$ along $F\yo: \square\hookrightarrow C\mathcal{V}$.
\item $h: \SSet\leftrightarrows \Cat: N$\\
with $N$ the classical nerve functor.
\item $\pi_{0}: C\mathcal{V}\leftrightarrows \mathcal{V}: \const$\\
with $\const$ sending every $V\in \mathcal{V}$ to the constant functor $\square^{op}\rightarrow \mathcal{V}$ on $V$.
\end{itemize}  

\begin{Cor}
There exist natural isomorphisms
\begin{enumerate}[1.]
\item $N^{\cub}_{k}\circ (-)^{\Gamma^{\square}}\simeq N^{\dg}_{k}$ (for $\mathcal{V} = \Mod(k)$)
\item $N^{\cub}_{\mathcal{V}}\circ (-)^{sq}\simeq N^{\hc}_{\mathcal{V}}$
\item $N^{\cub}\circ (-)^{sq N}\simeq N^{\Dusk}$ (for $\mathcal{V} = \Set$)
\item $N^{\cub}_{\mathcal{V}}\circ (-)^{\const}\simeq N_{\mathcal{V}}$
\end{enumerate}
\end{Cor}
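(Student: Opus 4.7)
The plan is to apply Proposition \ref{proposition: basic comparison of nerves} uniformly to each of the four items, reducing each isomorphism of nerves to a natural isomorphism of colax monoidal diagrams $\nec \to \mathcal{W}$ obtained by post-composing $\cub$ with the appropriate left adjoint. Recall from \eqref{diagram: cub} that $\cub = F \circ \yo_{\square} \circ \dim$, and each right adjoint appearing — $\Gamma^{\square}$, $sq$, $sq\circ N$, $\const$ — comes from a strong monoidal adjunction, so the proposition applies after applying it to hom-objects.

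For (1), the left adjoint is $N^{\square}_{\bullet}$, and by the very definition \eqref{diagram: dg} of $\dg$ we have $N^{\square}_{\bullet} \circ \cub = N^{\square}_{\bullet} \circ F \circ \yo_{\square} \circ \dim = \dg$, whence Proposition \ref{proposition: basic comparison of nerves} gives $N^{\cub}_{k} \circ (-)^{\Gamma^{\square}} \simeq N^{\dg}_{k}$. For (2), I would use that $tr$ is defined as the left Kan extension of $F \circ N : \square \to S\mathcal{V}$ along $F \circ \yo_{\square} : \square \hookrightarrow C\mathcal{V}$, so $tr \circ F \circ \yo_{\square} \simeq F \circ N$ and therefore $tr \circ \cub \simeq F \circ N \circ \dim = \hc$. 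Then (3) reduces to (2) in the case $\mathcal{V} = \Set$: composing with $h$ and using that $h \circ N \simeq \id_{\Cat}$ (as $N$ is fully faithful), we get $h \circ tr \circ \cub \simeq h \circ N \circ \dim \simeq \dim = \Dusk$.

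For (4), I would check that $\pi_{0} \circ \cub \simeq \const_{I}$. The key point is that $\pi_{0}$ commutes with $F$: both $\pi_{0} \circ F : \CSet \to \mathcal{V}$ and $F \circ \pi_{0} : \CSet \to \mathcal{V}$ are left adjoints to the common functor obtained from $\const$ and the forgetful $U$, which agree up to natural isomorphism. Combined with the fact that every standard cube $\square^{n}$ satisfies $\pi_{0}(\square^{n}) \simeq *$ (since $[1]^{n}$ has a terminal object and is therefore connected), this gives $\pi_{0} \circ \cub \simeq F \circ \pi_{0} \circ \yo_{\square} \circ \dim \simeq F \circ \const_{*} = \const_{I}$, and Proposition \ref{proposition: basic comparison of nerves} then yields $N^{\cub}_{\mathcal{V}} \circ (-)^{\const} \simeq N_{\mathcal{V}}$ via Proposition \ref{proposition: templicial nerve}.

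I expect no serious obstacle here: the one verification worth pausing on is that the various left adjoints involved — notably $tr$, defined as a left Kan extension — are colax (in fact strong) monoidal, so that Proposition \ref{proposition: basic comparison of nerves} genuinely applies; but this follows from Lemma \ref{lemma: weighted colimit colax monoidal} together with the strong monoidality of $F \circ \yo_{\square}$ and $F \circ N$ with respect to Day convolution. Everything else is a formal computation chasing the definitions of $\cub$, $\dg$, $\hc$, $\Dusk$ and $\const_{I}$ as specified in the summary table.
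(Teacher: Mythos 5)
Your proof is correct and takes essentially the same route as the paper: apply Proposition \ref{proposition: basic comparison of nerves} and compute the relevant post-compositions of $\cub$ with the left adjoints $N^{\square}_{\bullet}$, $tr$, $h\circ tr$ and $\pi_{0}$. The paper's proof is terser — it simply records the identities $\hc\simeq tr\circ\cub$, $\Dusk\simeq h\circ tr\circ\cub$ and $\pi_{0}\circ\cub\simeq\const_{I}$ (with item $1$ being immediate from the definition of $\dg$) — whereas you supply the verifications, including the Kan-extension identity for $tr$, fully faithfulness of $N$, and the check that $\pi_{0}$ commutes with $F$; these are exactly the right justifications.
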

\begin{proof}
All of these follows from Proposition \ref{proposition: basic comparison of nerves} by noting that $\hc\simeq tr\circ \cub$, $\Dusk\simeq h\circ tr\circ \cub$ and $\pi_{0}\circ \cub\simeq \const_{I}$.
\end{proof}

\subsection{Change of enriching category}\label{subsection: Change of enriching category}

Let $(\mathcal{V}',\otimes',I')$ be another cocomplete and finitely complete symmetric monoidal closed category. In this subsection, we show how changing the enriching category $\ts\mathcal{V}\rightarrow \ts\mathcal{V}'$ from $\mathcal{V}$ to $\mathcal{V'}$ can be realized by the general procedure of \S\ref{subsection: A general procedure}. An important disctinction with previous examples is that the generating diagram \eqref{diagram: change of enriching category} does not factor through $\dim: \nec\rightarrow \square$ in this case.

Suppose we have a monoidal adjunction
$$
L: \mathcal{V}\leftrightarrows \mathcal{V}': R
$$
Then since $L$ is strong monoidal, post-composition with $L$ induces a functor
$$
\tilde{L}: \ts\mathcal{V}\rightarrow \ts\mathcal{V}'
$$
This functor always has a right-adjoint, which we will construct as follows. First note that $(\mathcal{V}')^{\nec^{op}}$ is canonically tensored and enriched over $\mathcal{V}'$. We denote its tensoring by $-\cdot-$ and its $\mathcal{V}'$-enrichment by $[-,-]_{\mathcal{V}'}$. Then we can also consider it as tensored over $\mathcal{V}$ by defining its tensoring and $\mathcal{V}$-enrichment $[-,-]_{\mathcal{V}}$ via the adjunction $L\dashv R$:
$$
V\cdot Y = L(V)\cdot Y\qquad \text{and}\qquad [X,Y]_{\mathcal{V}} = R([X,Y]_{\mathcal{V}'})
$$
for all $V\in \mathcal{V}$ and $X,Y\in (\mathcal{V}')^{\nec^{op}}$. Now consider the following diagram
\begin{equation}\label{diagram: change of enriching category}
\nec\xrightarrow{\yo} \Set^{\nec^{op}}\xrightarrow{F} (\mathcal{V}')^{\nec^{op}}
\end{equation}
with $\yo$ the Yoneda embedding, which is strong monoidal for the Day convolution.

\begin{Prop}\label{proposition: change of enriching category}
Let $N^{F\yo}_{\mathcal{V}}: \mathcal{V}'\Cat_{\nec}\rightarrow \ts\mathcal{V}$ be the nerve generated by \eqref{diagram: change of enriching category}. Then $\tilde{R} = N^{F\yo}_{\mathcal{V}}\circ (-)^{nec}: \ts\mathcal{V}'\rightarrow \ts\mathcal{V}$ is right-adjoint to $\tilde{L}$.
\end{Prop}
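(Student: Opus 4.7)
The plan is to unravel the definition of $\tilde{R}$ and assemble the desired adjunction from a chain of known adjunctions, with the diagram $F\yo$ playing the role of a bridge between the two enriching categories. The key observation, which I would establish first, is that $\mathfrak{n}^{F\yo}_{\mathcal{V}}$ is nothing more than post-composition with $R$. Indeed, for $W\in (\mathcal{V}')^{\nec^{op}}$ and $T\in \nec$, the $\mathcal{V}'$-enriched Yoneda lemma together with the pointwise free-forgetful adjunction $F\dashv U$ yields
$$
\mathfrak{n}^{F\yo}_{\mathcal{V}}(W)_T = [F\yo(T),W]_{\mathcal{V}} = R([F\yo(T),W]_{\mathcal{V}'}) \simeq R(W_T),
$$
so $\mathfrak{n}^{F\yo}_{\mathcal{V}} = R_*$, the functor applying $R$ levelwise.

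Next I would note that, since the adjunction $L\dashv R$ is monoidal and $L$ preserves colimits, it lifts pointwise to a monoidal adjunction $L_*\dashv R_*$ between $\mathcal{V}^{\nec^{op}}$ and $(\mathcal{V}')^{\nec^{op}}$ equipped with Day convolution. This in turn induces an adjunction $L_*:\mathcal{V}\Cat_{\nec}\leftrightarrows \mathcal{V}'\Cat_{\nec}:R_*$ on enriched categories. A quick check using the colimit preservation and strong monoidality of $L$ shows that the functor $(-)^{nec}$ intertwines the change-of-enrichment functors, in the sense that
$$
\tilde{L}(X)^{nec}\simeq L_*(X^{nec})
$$
naturally in $X\in \ts\mathcal{V}$, as both sides have the same object set and their hom-objects are obtained by applying $L$ to the coproducts of tensor products defining $X_T(a,b)$.

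With these ingredients in place, the adjunction follows by stringing together standard isomorphisms:
\begin{align*}
\ts\mathcal{V}'(\tilde{L}(X),Y)
&\simeq \mathcal{V}'\Cat_{\nec}(\tilde{L}(X)^{nec},Y^{nec})\\
&\simeq \mathcal{V}'\Cat_{\nec}(L_*(X^{nec}),Y^{nec})\\
&\simeq \mathcal{V}\Cat_{\nec}(X^{nec},R_*(Y^{nec}))\\
&\simeq \ts\mathcal{V}(X,R_*(Y^{nec})^{temp})
= \ts\mathcal{V}(X,\tilde{R}(Y)),
\end{align*}
where the first and last isomorphisms use respectively the fully-faithfulness of $(-)^{nec}$ and the adjunction $(-)^{nec}\dashv (-)^{temp}$ of Theorem \ref{theorem: nec-temp adjunction}, and the middle isomorphism is the induced $L_*\dashv R_*$ on necklace categories.

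The main obstacle, such as it is, lies in verifying cleanly that $(-)^{nec}$ is compatible with the change-of-base functors, i.e. that $\tilde{L}(X)^{nec}\simeq L_*(X^{nec})$ not merely at the level of underlying quivers but as $\mathcal{V}'$-enriched necklace categories. This requires checking that the composition laws and unit morphisms, which are encoded in the action of $(-)^{nec}$ on inert necklace maps via the comultiplications $\mu_{k,l}$, are transported correctly; but since these are built from coproducts and tensor products which $L$ preserves and is strong monoidal for, the verification is routine.
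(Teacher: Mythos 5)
Your proposal is correct and takes essentially the same route as the paper's proof: both identify $\mathfrak{n}^{F\yo}_{\mathcal{V}}$ as post-composition with $R$ (so $\mathfrak{l}^{F\yo}_{\mathcal{V}}$ is post-composition with $L$), both use the compatibility $\tilde{L}(X)^{nec}\simeq \mathfrak{l}^{F\yo}_{\mathcal{V}}(X^{nec})$ via colimit preservation and strong monoidality of $L$, and both assemble the adjunction from the necklace-templicial adjunction together with fully-faithfulness of $(-)^{nec}$. The only difference is the order in which these ingredients are deployed in the final chain of isomorphisms, which is cosmetic.
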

\begin{proof}
Note that for all $X\in (\mathcal{V}')^{\nec^{op}}$ and $T\in \nec$, we have a natural isomorphism
$$
\mathfrak{n}^{F\yo}_{\mathcal{V}}(X)_{T} = [F\yo(T),X]_{\mathcal{V}} = R([F\yo(T),X]_{\mathcal{V'}})\simeq R(X(T))
$$
so that $\mathfrak{n}^{F\yo}_{\mathcal{V}}: (\mathcal{V'})^{\nec^{op}}\rightarrow \mathcal{V}^{\nec^{op}}$ is simply given by post-composition with $R$. Therefore, its left-adjoint $\mathfrak{l}^{F\yo}_{\mathcal{V}}$ is given by post-composition with $L$. Then since $L$ is strong monoidal and preserves colimits, we have $(-)^{nec}\circ \tilde{L}\simeq \mathfrak{l}^{F\yo}_{\mathcal{V}}\circ (-)^{nec}$. Now for templicial objects $X\in \ts\mathcal{V}$ and $Y\in \ts\mathcal{V}'$, we have natural isomorphisms:
$$
\ts\mathcal{V}(X,\tilde{R}(Y))\simeq \mathcal{V}'\Cat_{\nec}(\mathfrak{l}^{F\yo}_{\mathcal{V}}(X^{nec}),Y^{nec})\simeq \mathcal{V}'\Cat_{\nec}(\tilde{L}(X)^{nec},Y^{nec})\simeq \ts\mathcal{V}'(\tilde{L}(X),Y)
$$
where we used the fact that $(-)^{nec}$ is fully faithful (Theorem \ref{theorem: nec-temp adjunction}).
\end{proof}

\begin{Cor}\label{corollary: results for change of enriching category}
The following statements are true.
\begin{enumerate}[1.]
\item There is a natural isomorphism $\tilde{U}_{\mathcal{V}}\circ \tilde{R}\simeq \tilde{U}_{\mathcal{V}'}$ where $\tilde{U}_{\mathcal{V}}: \ts\mathcal{V}\rightarrow \SSet$ and $\tilde{U}_{\mathcal{V}'}: \ts\mathcal{V}'\rightarrow \SSet$ denote the forgetful functors.
\item Let $(X,S)$ be a quasi-category in $\mathcal{V}'$, then $\tilde{R}(X)$ is a quasi-category in $\mathcal{V}'$.
\item The adjunction $\tilde{L}\dashv \tilde{R}$ lifts to an adjunction $\Fs\mathcal{V}\leftrightarrows \Fs\mathcal{V}'$ along the forgetful functors $\Fs\mathcal{V}\rightarrow \ts\mathcal{V}$ and $\Fs\mathcal{V}'\rightarrow \ts\mathcal{V}'$.
\end{enumerate}
\end{Cor}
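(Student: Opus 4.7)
The plan is to handle the three items in sequence, each relying on earlier structural results.

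For item (1), I would unfold $\tilde{R} = N^{F\yo}_{\mathcal{V}} \circ (-)^{nec}$ and apply Theorem \ref{theorem: underlying D-nerve is cat. nerve assoc. to D} together with the description $N^{D} \simeq (-)^{temp}\circ \mathcal{U}\circ \mathfrak{n}^{D}_{\mathcal{V}}$ used in the proof of Corollary \ref{corollary: n-simplex of underlying D-nerve}, to get $\tilde{U}_{\mathcal{V}}\circ\tilde{R}\simeq (-)^{temp}\circ \mathcal{U}_{\mathcal{V}}\circ \mathfrak{n}^{F\yo}_{\mathcal{V}}\circ (-)^{nec}$. As noted in the proof of Proposition \ref{proposition: change of enriching category}, $\mathfrak{n}^{F\yo}_{\mathcal{V}}$ is simply post-composition with $R$, so $\mathcal{U}_{\mathcal{V}}\circ\mathfrak{n}^{F\yo}_{\mathcal{V}}$ is post-composition with $U\circ R\simeq \mathcal{V}'(L(I),-)\simeq \mathcal{V}'(I',-)=U'$, using $L(I)\simeq I'$ from the strong monoidality of $L$. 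This coincides with $\mathcal{U}_{\mathcal{V}'}$, whence $\tilde{U}_{\mathcal{V}}\circ\tilde{R}\simeq \tilde{U}_{\mathcal{V}'}$.

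For item (2), I would show that the necklace category $\mathfrak{n}^{F\yo}_{\mathcal{V}}(X^{nec})$, whose hom-objects are $R\circ X_{\bullet}(a,b)$, has hom-objects that lift inner horns in $\mathcal{V}^{\nec^{op}}$; then Proposition 5.10 of \cite{lowen2024enriched} (as invoked in the proof of Theorem \ref{theorem: D-nerve is quasi-cat.}) gives that $\tilde{R}(X)=\mathfrak{n}^{F\yo}_{\mathcal{V}}(X^{nec})^{temp}$ is a quasi-category in $\mathcal{V}$ (note the target category should read $\mathcal{V}$, not $\mathcal{V}'$, in the statement). A lifting problem against $R\circ X_{\bullet}(a,b)$ in $\mathcal{V}^{\nec^{op}}$ transports by adjunction to one against $X_{\bullet}(a,b)$ in $(\mathcal{V}')^{\nec^{op}}$ along the Day-convolution left-adjoint $\mathcal{V}^{\nec^{op}}\to (\mathcal{V}')^{\nec^{op}}$ induced by post-composition with $L$; since $L$ is strong monoidal and colimit-preserving this left-adjoint carries $\tilde{F}(\Lambda^{n}_{j})_{\bullet}(0,n)\hookrightarrow \tilde{F}(\Delta^{n})_{\bullet}(0,n)$ to its analogue in $(\mathcal{V}')^{\nec^{op}}$, reducing the problem to the inner-horn filling for $X$ which holds by hypothesis.

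For item (3), my plan is to use Theorem \ref{theorem: Frob. temp. nec. adjunction}, which encodes Frobenius templicial objects as enriched categories in $\mathcal{V}^{\overline{\nec}^{op}}$ (and likewise for $\mathcal{V}'$). Post-composition with $L$ and $R$ extends to a monoidal adjunction $\mathcal{V}^{\overline{\nec}^{op}}\leftrightarrows (\mathcal{V}')^{\overline{\nec}^{op}}$ for the Day convolutions (left-adjoint strong monoidal because $L$ is strong monoidal and colimit preserving), which in turn induces an adjunction on enriched categories $\mathcal{V}\Cat_{\overline{\nec}}\leftrightarrows \mathcal{V}'\Cat_{\overline{\nec}}$. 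Combining with the Frobenius equivalence of Theorem \ref{theorem: Frob. temp. nec. adjunction} produces the desired lift $\Fs\mathcal{V}\leftrightarrows \Fs\mathcal{V}'$. Compatibility with the original $\tilde{L}\dashv \tilde{R}$ on $\ts$-objects follows because the restriction of the $\overline{\nec}^{op}$-extended hom-functor post-composed with $R$ agrees, after restricting to $\nec^{op}$, with $\mathfrak{n}^{F\yo}_{\mathcal{V}}$ applied to the underlying hom-functor, while the forgetful functors $\Fs(-)\to \ts(-)$ are faithful, forcing unit and counit to be Frobenius morphisms.

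The main obstacle is the coherence bookkeeping in part (3): one must verify that the newly-defined lift of $\tilde{R}$ to Frobenius templicial objects matches, after forgetting the Frobenius structure, the $\tilde{R}$ from Proposition \ref{proposition: change of enriching category}. This is essentially a matter of matching definitions, facilitated by the fact that $\mathfrak{n}^{F\yo}_{\mathcal{V}}(Y)_{T}\simeq R(Y(T))$ makes $\mathfrak{n}^{F\yo}_{\mathcal{V}}$ simply a post-composition functor, so that its extension to $\overline{\nec}^{op}$ is transparent.
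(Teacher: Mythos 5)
Your proposal takes essentially the same route as the paper's proof for all three items, and you correctly flag the typo in the statement of item (2): the conclusion should be that $\tilde{R}(X)$ is a quasi-category in $\mathcal{V}$, not $\mathcal{V}'$. Items (1) and (2) are sound: for (1) you make explicit the isomorphism $U\circ R\simeq U'$ which the paper leaves tacit, and for (2) you transport the inner-horn lifting problem along the $L\dashv R$ adjunction (using $L\circ F\simeq F'$), which is the unpacked form of the paper's appeal to Theorem \ref{theorem: D-nerve is quasi-cat.} and the fact that $\mathfrak{l}^{F\yo}$ is post-composition with $F:\Set\to\mathcal{V}'$.

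One point to tighten in item (3): the phrase \emph{``Frobenius equivalence of Theorem \ref{theorem: Frob. temp. nec. adjunction}''} is misleading, since that theorem gives an \emph{adjunction} $(-)^{nec}\dashv(-)^{temp}$ in which $(-)^{nec}$ is fully faithful (Proposition \ref{proposition: nec functor for Frob. temp. obj. fully faithful}) but not an equivalence. Because of this, composing the adjunctions $\Fs\mathcal{V}\leftrightarrows\mathcal{V}\Cat_{\overline{\nec}}$, $\mathcal{V}\Cat_{\overline{\nec}}\leftrightarrows\mathcal{V}'\Cat_{\overline{\nec}}$ and $\Fs\mathcal{V}'\leftrightarrows\mathcal{V}'\Cat_{\overline{\nec}}$ does not by itself hand you an adjunction $\Fs\mathcal{V}\leftrightarrows\Fs\mathcal{V}'$; the last adjunction sits the wrong way round. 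Instead, one first observes that $\tilde{L}$ lifts directly to $\Fs\mathcal{V}\to\Fs\mathcal{V}'$ (applying a strong monoidal $L$ levelwise preserves the Frobenius identities), and then verifies the adjunction as in Proposition \ref{proposition: change of enriching category}: chain $\Fs\mathcal{V}(X,(-)^{temp}\mathcal{R}(Y^{nec}))\simeq\mathcal{V}\Cat_{\overline{\nec}}(X^{nec},\mathcal{R}Y^{nec})\simeq\mathcal{V}'\Cat_{\overline{\nec}}(\mathcal{L}X^{nec},Y^{nec})\simeq\mathcal{V}'\Cat_{\overline{\nec}}(\tilde{L}(X)^{nec},Y^{nec})\simeq\Fs\mathcal{V}'(\tilde{L}(X),Y)$, where the last step uses full-faithfulness. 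Your ``main obstacle'' paragraph correctly anticipates the compatibility check, but the existence of the adjunction itself rests on this extra argument, not on the wished-for equivalence.
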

\begin{proof}
\begin{enumerate}[1.]
\item By Theorem \ref{theorem: underlying D-nerve is cat. nerve assoc. to D}, we have $\tilde{U}_{\mathcal{V}}\circ \tilde{R}\simeq N^{F\yo}\circ (-)^{nec}$. Now as before, $\mathfrak{n}^{F\yo}: (\mathcal{V}')^{\nec^{op}}\rightarrow \Set^{\nec^{op}}$ is given by post-composition with $U: \mathcal{V}'\rightarrow \Set$  and thus $\tilde{U}\circ \tilde{R}\simeq (-)^{temp}\circ \mathcal{U}\circ (-)^{nec}$ which coincides with $\tilde{U}_{\mathcal{V}'}$ by \cite[Proposition 3.14]{lowen2024enriched}.
\item By Theorem \ref{theorem: D-nerve is quasi-cat.} and Proposition \ref{proposition: change of enriching category}, it suffices to show that for all $0 < j < n$ and $a,b\in S$, any morphism $\mathfrak{l}^{F\yo}((\Lambda^{n}_{j})_{\bullet}(0,n))\rightarrow X_{\bullet}(a,b)$ extends to a morphism $F(\Delta^{n}_{\bullet}(0,n))$. But since $\mathfrak{l}^{F\yo}$ is simply given by post-composing with $F: \Set\rightarrow \mathcal{V}'$, this is exactly the condition that $X$ is a quasi-category in $\mathcal{V'}$.
\item Since $L$ is strong monoidal, $\tilde{L}: \ts\mathcal{V}\rightarrow \ts\mathcal{V}'$ clearly lifts to a functor $\Fs\mathcal{V}\rightarrow \Fs\mathcal{V}'$. Similar to the proof of Proposition \ref{proposition: change of enriching category}, this lift is left-adjoint to the functor $(-)^{temp}\circ \mathcal{R}\circ (-)^{nec}$ where $\mathcal{R}: (\mathcal{V}')^{\overline{\nec}^{op}}\Cat\rightarrow \mathcal{V}^{\overline{\nec}^{op}}\Cat$ is given by post-composition with $R$ on the level of hom-objects. It then follows from Theorem \ref{theorem: Frob. temp. nec. adjunction} that $(-)^{temp}\circ \mathcal{R}\circ (-)^{nec}$ also lifts $\tilde{R}$.
\end{enumerate}
\end{proof}

\begin{Rem}
While Theorem \ref{theorem: explicitation of left-adjoint for free temp. obj.} is applicable in this case, it will not return a more explicit description of $L$. We already have from the definition that $\tilde{L}(X)_{n}(a,b) = L(X_{n}(a,b))$ for all $(X,S)\in \ts\mathcal{V}$, $n\geq 0$ and $a,b\in S$.
\end{Rem}

\begin{Ex}
If we choose the adjunction $L\dashv R$ to be the free-forgetful adjunction $F: \Set\leftrightarrows \mathcal{V}: U$, then Proposition \ref{proposition: change of enriching category} recovers the adjunction $\tilde{F}: \SSet\leftrightarrows \ts\mathcal{V}: \tilde{U}$ of Proposition \ref{proposition: temp. obj. results}.2. In particular, Corollary \ref{corollary: results for change of enriching category}.2 recovers \cite[Corollary 5.13]{lowen2024enriched}.
\end{Ex}

\subsection{Free Frobenius structures}\label{subsection: Free Frobenius structures}

Consider the functor forgetting Frobenius structures, which we'll denote in this subsection by
$$
\mathfrak{u}: \Fs\mathcal{V}\rightarrow \ts\mathcal{V}
$$
We make use of the general procedure of \S\ref{subsection: A general procedure} to show that it has a left-adjoint $(-)^{Frob}: \ts\mathcal{V}\rightarrow \Fs\mathcal{V}$ and describe it more explicitly.

Note that $\mathcal{V}^{\overline{\nec}^{op}}$ is canonically tensored over $\mathcal{V}$. Consider the following diagram
\begin{equation}\label{diagram: free Frobenius structures}
\nec\xrightarrow{i} \overline{\nec}\xrightarrow{\yo} \Set^{\overline{\nec}^{op}}\xrightarrow{F} \mathcal{V}^{\overline{\nec}^{op}}
\end{equation}
where $i$ is the inclusion and $\yo$ is the Yoneda embedding, which are both strong monoidal. Note that again, this diagram does not factor through $\dim: \nec\rightarrow \square$.

\begin{Prop}\label{proposition: free Frobenius structure}
Let $N^{F\yo i}_{\mathcal{V}}: \mathcal{V}^{\overline{\nec}^{op}}\Cat\rightarrow \ts\mathcal{V}$ be the nerve functor generated by \eqref{diagram: free Frobenius structures}. Then $N^{F\yo i}_{\mathcal{V}}\circ (-)^{nec}: \Fs\mathcal{V}\rightarrow \ts\mathcal{V}$ is naturally isomorphic to $\mathfrak{u}$.
\end{Prop}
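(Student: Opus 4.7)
The plan is to identify the functor $\mathfrak{n}^{F\yo i}_{\mathcal{V}}: \mathcal{V}^{\overline{\nec}^{op}}\Cat \to \mathcal{V}\Cat_{\nec}$ with the restriction functor $res_i$, and then to combine this identification with Theorem \ref{theorem: Frob. temp. nec. adjunction} and Proposition \ref{proposition: nec functor for Frob. temp. obj. fully faithful}.

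First I would establish the pointwise identification $\mathfrak{n}^{F\yo i}_{\mathcal{V}} \simeq res_i$. For any $X \in \mathcal{V}^{\overline{\nec}^{op}}$ and $T \in \nec$, by Construction \ref{construction: nerve generated by necklicial diagram} we have $\mathfrak{n}^{F\yo i}_{\mathcal{V}}(X)_T = [F\yo(i(T)), X]$, where $[-,-]$ denotes the canonical $\mathcal{V}$-enrichment of the functor category $\mathcal{V}^{\overline{\nec}^{op}}$. This computation parallels the one already performed in the proof of Proposition \ref{proposition: change of enriching category}: combining the enriched Yoneda lemma with the free--forgetful adjunction $F \dashv U$, we obtain a natural isomorphism $[F\yo(i(T)), X] \simeq X(i(T)) = res_i(X)(T)$.

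Next I would upgrade this pointwise identification to an isomorphism of functors on $\mathcal{V}$-enriched categories. This requires checking that the isomorphism intertwines the two lax monoidal structures on $\mathfrak{n}^{F\yo i}_{\mathcal{V}}$ and on $res_i$, both of which ultimately stem from the strong monoidality of $i: \nec \hookrightarrow \overline{\nec}$ and reduce to the wedge-sum isomorphism in $\overline{\nec}$. Applying the identification to hom-objects then yields $\mathfrak{n}^{F\yo i}_{\mathcal{V}} \simeq res_i$ as functors $\mathcal{V}^{\overline{\nec}^{op}}\Cat \to \mathcal{V}\Cat_{\nec}$.

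Then I would invoke the construction of $(-)^{temp}: \mathcal{V}\Cat_{\overline{\nec}} \to \Fs\mathcal{V}$ from the paragraph preceding Theorem \ref{theorem: Frob. temp. nec. adjunction}, which is defined by taking the ordinary templicial object $res_i(\mathcal{C})^{temp}$ and equipping it with a Frobenius structure determined by the coinert maps. In particular, $\mathfrak{u}(\mathcal{C}^{temp}) = res_i(\mathcal{C})^{temp}$ strictly. Putting these pieces together, for any $X \in \Fs\mathcal{V}$:
\[
N^{F\yo i}_{\mathcal{V}}(X^{nec}) \;=\; \mathfrak{n}^{F\yo i}_{\mathcal{V}}(X^{nec})^{temp} \;\simeq\; res_i(X^{nec})^{temp} \;=\; \mathfrak{u}((X^{nec})^{temp}) \;\simeq\; \mathfrak{u}(X),
\]
where the last isomorphism uses that the unit $X \to (X^{nec})^{temp}$ of the adjunction of Theorem \ref{theorem: Frob. temp. nec. adjunction} is invertible, by the full faithfulness of $(-)^{nec}$ from Proposition \ref{proposition: nec functor for Frob. temp. obj. fully faithful}. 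All isomorphisms are natural in $X$, which yields the desired natural isomorphism.

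The principal obstacle I anticipate is the bookkeeping in the second step: making sure the pointwise Yoneda identification genuinely lifts from functor categories to enriched categories, i.e.\ that it is compatible with compositions and identities after being applied to hom-objects, and that the comparison of the induced lax monoidal structures is coherent enough to extend from $\mathcal{V}^{\overline{\nec}^{op}}$ to $\mathcal{V}^{\overline{\nec}^{op}}\Cat$.
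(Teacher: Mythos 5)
Your proof is correct and follows essentially the same route as the paper: identify $\mathfrak{n}^{F\yo i}_{\mathcal{V}}$ with $res_i$ via the Yoneda computation, then deduce the result from Theorem \ref{theorem: Frob. temp. nec. adjunction}. The only difference is that you spell out explicitly where the full faithfulness of $(-)^{nec}: \Fs\mathcal{V}\to\mathcal{V}\Cat_{\overline{\nec}}$ (Proposition \ref{proposition: nec functor for Frob. temp. obj. fully faithful}) enters, which the paper leaves implicit in the phrase ``the result then follows from Theorem \ref{theorem: Frob. temp. nec. adjunction}.''
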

\begin{proof}
Note that for all $X\in \Fs\mathcal{V}$ and $T\in \nec$, we have a natural isomorphism $\mathfrak{n}^{F\yo i}_{\mathcal{V}}(X)_{T} = [F\yo(T),X]\simeq X(T)$ and thus $\mathfrak{n}^{F\yo i}_{\mathcal{V}}$ coincides with the restriction functor $res_{i}: \mathcal{V}^{\overline{\nec}^{op}}\rightarrow \mathcal{V}^{\nec^{op}}$. The result then follows from Theorem \ref{theorem: Frob. temp. nec. adjunction}.
\end{proof}

Recall the unique factorization of morphisms in $\overline{\nec}$ presented in Remark \ref{remark: unique decomp. of extended necklace map}. In order to describe the left-adjoint of the forgetful functor $\mathfrak{u}: \Fs\mathcal{V}\rightarrow \ts\mathcal{V}$ by means of Theorem \ref{theorem: explicitation of left-adjoint for free temp. obj.}, we require a different factorization in $\overline{\nec}$.

\begin{Def}\label{definition: extended necklaces factorization system 2}
We denote
\begin{enumerate}[1.]
\item $\overline{\nec}_{+} = \nec_{+}$, the monoidal subcategory of $\overline{\nec}$ of all injective necklace maps.
\item $\overline{\nec}_{-}$ for the monoidal subcategory of all maps $(f,U'): (T,p)\rightarrow (U,q)$ in $\overline{\nec}$ such that $U' = U$ and $U\cup f([p]) = [q]$.
\end{enumerate}
Note that $\overline{\nec}_{+}$ contains all active injective and inert necklace maps, while $\overline{\nec}_{-}$ contains all active surjective necklace maps and all coinert maps. In addition, it also contains the maps $\nu^{co}_{p,q}\delta_{p}: \Delta^{p+q-1}\rightarrow \Delta^{p}\vee \Delta^{q}$ for all $p,q > 0$.

It is an easy verification to see that both $\overline{\nec}_{+}$ and $\overline{\nec}_{-}$ are indeed closed under composition and taking wedges $\vee$.
\end{Def}

\begin{Prop}\label{proposition: extended necklace factorization}
The subcategories $(\overline{\nec}_{-},\overline{\nec}_{+})$ form an (orthogonal) factorization system on $\overline{\nec}$.
\end{Prop}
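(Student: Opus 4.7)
The plan is to verify the two defining axioms of an orthogonal factorization system: existence of a $(\overline{\nec}_{-}, \overline{\nec}_{+})$-factorization for each morphism of $\overline{\nec}$, and orthogonality of the two classes. As a preliminary observation, the only isomorphisms in $\overline{\nec}$ are identities: since isomorphisms in $\fint$ are identities, any iso has the form $(\id_{[r]}, W'): (V_{1},r) \to (V_{2},r)$, and unpacking the composition rule forces $V_{1} = V_{2} = W'$. Together with existence and orthogonality, this will automatically upgrade the usual uniqueness-up-to-unique-iso of the factorization to uniqueness on the nose.

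For existence, given $(f, U'): (T, p) \to (U, q)$ in $\overline{\nec}$, I set $r := |U' \cup f([p])| - 1$, let $g: [r] \hookrightarrow [q]$ be the unique order-preserving bijection onto $U' \cup f([p])$, and take as intermediate object $(V, r)$ where $V := g^{-1}(U')$ (which contains $\{0, r\}$ since $\{0,q\} \subseteq U'$). Defining $h := g^{-1} \circ f: [p] \to [r]$, one checks by direct computation that $(h, V): (T,p)\to (V,r)$ lies in $\overline{\nec}_{-}$ — the key equality $V \cup h([p]) = [r]$ reduces to $g^{-1}(U') \cup g^{-1}(f([p])) = g^{-1}(g([r])) = [r]$ — while $(g, U'): (V, r) \to (U, q)$ lies in $\overline{\nec}_{+} = \nec_{+}$, using $U \subseteq U' = g(V)$. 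Unfolding the composition formula in $\overline{\nec}$ then recovers $(f, U')$.

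For orthogonality, I consider a commutative square with $(a, C): (A, p) \to (C, r)$ in $\overline{\nec}_{-}$ (so $C \cup a([p]) = [r]$), with $(b, b(B)): (B, q) \to (D, s)$ in $\overline{\nec}_{+}$ (so $b$ injective), and with top and bottom arrows $(t, B')$ and $(u, D')$ respectively. Commutativity of the square forces $b t = u a$ and $b(B') = D'$, and the natural candidate for the diagonal filler is $(d, B'): (C,r) \to (B,q)$ with $d := b^{-1} \circ u$. The crucial verification is that $u([r])$ lies in the image of $b$; this follows from $C \cup a([p]) = [r]$, since then $u([r]) = u(C) \cup b(t([p]))$ with $u(C) \subseteq D' = b(B') \subseteq b([q])$. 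Uniqueness of $d$ is immediate from injectivity of $b$, and the residual checks (that $(d, B')$ is a valid morphism of $\overline{\nec}$, i.e.\ $d(C) \cup B \subseteq B'$, and that both triangles commute) follow formally.

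The main obstacle I anticipate is careful bookkeeping of the two components of a morphism in $\overline{\nec}$, especially because the composition law $(g, V') \circ (f, U') = (g f, V' \cup g(U'))$ intertwines them. Keeping straight which set-theoretic inclusions (such as $B \subseteq B'$, $f(T) \subseteq U'$, $D \subseteq D'$) are forced by the defining conditions of $\overline{\nec}_{-}$ and $\overline{\nec}_{+}$, and which must be explicitly established, is where the bulk of the care is required.
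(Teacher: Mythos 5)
Your proof is correct, but it takes a genuinely different route from the paper's. The paper establishes \emph{strict uniqueness} of the $(\overline{\nec}_{-},\overline{\nec}_{+})$-factorization: it first reduces to the case where $f$ is injective and $U' = U$ (using the unique active/coinert/inert decomposition of Remark~\ref{remark: unique decomp. of extended necklace map}), then proves existence and uniqueness of the intermediate object directly. You instead verify the two axioms in the form ``existence of factorizations'' plus ``orthogonality/unique lifting'': you handle arbitrary $U'$ directly in the existence step (without the reduction), and then construct the unique diagonal filler for a lifting square with an $\overline{\nec}_{-}$-map on the left and an $\overline{\nec}_{+}$-map on the right. Your preliminary observation that all isomorphisms of $\overline{\nec}$ are identities is exactly what makes these two characterizations equivalent here, since unique-up-to-iso factorization then collapses to strict uniqueness. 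The explicit construction of the intermediate object $(V,r)$ is essentially the same in both treatments ($r = |U' \cup f([p])| - 1$ versus the paper's $r = |U \cup f([p])| - 1$ after reducing to $U'=U$); the difference lies entirely in which axiom is verified afterward. Your route follows the textbook definition more literally and makes the lifting property explicit (which could be convenient if one later needs to use it), while the paper's is slightly more compact because it avoids the square-filling bookkeeping. One small point worth making fully explicit in your orthogonality step: the equality $b(B') = D'$ of second components uses that $B \subseteq B'$ (from $(t,B')$ being a morphism out of $(A,p)$ into $(B,q)$) and $u(C) \subseteq D'$ (from $(u,D')$ being a morphism out of $(C,r)$), so that $b(B) \cup b(B') = b(B')$ and $D' \cup u(C) = D'$; you assert the conclusion without writing out this step, but it is correct.
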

\begin{proof}
Let $(f,U'): (T,p)\rightarrow (U,q)$ be a map in $\overline{\nec}$. We wish to show that $(f,U')$ factors uniquely as a map in $\overline{\nec}_{-}$ followed by a map in $\overline{\nec}_{+}$. Since any active surjective necklace map belongs to $\overline{\nec}_{-}$ and any inert map belongs to $\overline{\nec}_{+}$, we may assume by Remark \ref{remark: unique decomp. of extended necklace map} that $f$ is the composite of an active injective necklace map and a coinert map, so that $f: [p]\rightarrow [q]$ is injective and $U' = U$. Let us first show uniqueness. Suppose we have maps $g_{i}: (T,p)\rightarrow (V_{i},r_{i})$ in $\overline{\nec}_{-}$, and $h_{i}: (V_{i},r_{i})\rightarrow (U,q)$ in $\overline{\nec}_{+}$ such that $f = h_{i}\circ g_{i}$ in $\overline{\nec}$ for $i\in \{1,2\}$. This implies that $h_{i}(V_{i}) = U$. Then applying $h_{i}$ to the equation $V_{i}\cup g_{i}([p]) = [r_{i}]$, we find $U\cup f([p]) = h_{i}(V_{i})\cup f([p]) = h_{i}([r_{i}])$. As $h_{i}$ is injective, this implies that $r_{1} = r_{2}$ and $h_{1} = h_{2}$. It follows that also $g_{1} = g_{2}$.

To show existence, set $r = \vert f([p])\cup U\vert - 1$ and define $h: [r]\hookrightarrow [q]$ to be the unique injective morphism in $\fint$ such that $h([r]) = f([p])\cup U$. Then there is clearly a unique morphism $g: [p]\rightarrow [r]$ such that $f = h\circ g$ in $\fint$. Now consider the necklace $(V,r)$ with $V = h^{-1}(U)$. As $U$ belongs to the image of $h$, we have $h(V) = U$ and thus $h: (V,r)\rightarrow (U,q)$ is a necklace map. Further, $(g,V): (T,p)\rightarrow (V,r)$ belongs to $\overline{\nec}_{-}$ since $V\cup g([p]) = h^{-1}(U\cup f([p])) = [r]$. Finally, it follows from $h(U) = V$ that also $h\circ g = f$ in $\overline{\nec}$.
\end{proof}

\begin{Prop}\label{proposition: extended necklace cat. generators}
The monoidal subcategory $\overline{\nec}_{-}$ of $\overline{\nec}$ is generated by the maps $\sigma_{i}: \Delta^{n+1}\rightarrow \Delta^{n}$, $\nu^{co}_{p,q}: \Delta^{p+q}\rightarrow \Delta^{p}\vee \Delta^{q}$ and $\nu^{co}_{p,q}\delta_{p}: \Delta^{p+q-1}\rightarrow \Delta^{p}\vee \Delta^{q}$, for $0\leq i\leq n$ and $p,q > 0$.
\end{Prop}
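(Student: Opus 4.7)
My plan is to let $\mathcal{G}$ denote the monoidal subcategory of $\overline{\nec}$ generated by the three listed families; the inclusion $\mathcal{G}\subseteq \overline{\nec}_{-}$ is immediate from the definitions, so the task is to show the reverse inclusion, i.e.\ that every morphism $(f, U)\colon (T,p)\to (U,q)$ of $\overline{\nec}_{-}$ decomposes into these generators. I will proceed in two stages: first reducing to the case where the underlying $\fint$-map $f$ is injective, and then inducting on $q - p$.

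For the reduction, I will factor $f = f_{+}\circ f_{-}$ in $\fint$ via the standard epi-mono factorisation, with $f_{-}\colon [p]\twoheadrightarrow [r]$ surjective and $f_{+}\colon [r]\hookrightarrow [q]$ injective. Using the composition law $(g,V')\circ (f,U') = (gf, V'\cup g(U'))$ in $\overline{\nec}$, together with the hypothesis $f(T)\subseteq U$, one computes
\[
(f, U) = (f_{+}, U)\circ (f_{-}, f_{-}(T)).
\]
The first factor is an active surjective necklace map, which lies in $\nec\subseteq \overline{\nec}_{-}$ and is already generated by the degeneracies $\sigma_{i}$ under composition and $\vee$ by the facts on $\nec$ recalled in \S\ref{subsection: Necklaces and necklace categories}. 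Hence it suffices to treat $(f_{+}, U)$, which remains in $\overline{\nec}_{-}$ and has injective underlying map.

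For the main induction on $q - p\geq 0$ with $f$ injective, the base case $q = p$ forces $f = \id$ and $T\subseteq U$, and $(\id, U)$ can be written as an iterated wedge of coinert maps $\nu^{co}_{k,l}$ by inserting the points of $U\setminus T$ one at a time inside each $T$-bead. For the inductive step $q > p$, the fact that $f$ is not surjective together with $0, q\in f([p])$ and $U\cup f([p]) = [q]$ guarantees an interior joint $u_{l}\in U$ with $u_{l}\notin f([p])$. I will define $\tilde{f}\colon [p]\to [q-1]$ and $\tilde{U} = \delta_{u_{l}}^{-1}(U)\subseteq [q-1]$ by collapsing position $u_{l}$, so that $\delta_{u_{l}}\tilde{f} = f$. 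A direct check shows $(\tilde{f}, \tilde{U})\in \overline{\nec}_{-}$ with $(q-1)-p < q-p$, and
\[
(f, U) = (\delta_{u_{l}}, U)\circ (\tilde{f}, \tilde{U}),
\]
so $(\tilde{f}, \tilde{U})$ lies in $\mathcal{G}$ by induction.

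The remaining and most delicate step, which I expect to be the main obstacle, is realising the factor $(\delta_{u_{l}}, U)\colon (\tilde{U}, q-1)\to (U, q)$ itself as a wedge of generators. The plan is to decompose $\tilde{U}$ into its beads: all of them but one are preserved identically by $\delta_{u_{l}}$, whereas the unique ``gap'' bead $[u_{l-1}, u_{l+1}-1]$ of $\tilde{U}$ maps into the target interval $[u_{l-1}, u_{l+1}]$, which carries the newly inserted joint $u_{l}$. On this gap bead the map is precisely $\nu^{co}_{u_{l}-u_{l-1},\, u_{l+1}-u_{l}}\circ \delta_{u_{l}-u_{l-1}}$; wedging with identities on the remaining beads then exhibits $(\delta_{u_{l}}, U)$ as an element of $\mathcal{G}$ and closes the induction. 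The bookkeeping here — keeping the bead decompositions of $\tilde{U}$ and $U$ aligned under $\delta_{u_{l}}$ and verifying the identification of the gap bead piece with the listed generator — is the only nontrivial computation in the argument.
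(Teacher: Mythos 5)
Your proof is correct, and it takes a genuinely different route from the paper's on the key step. Both you and the paper begin by factoring the underlying $\fint$-map $f$ as surjective followed by injective, and both handle the active surjective part via the $\sigma_i$'s. The difference is in how the injective residue is treated. The paper factors $(f,U)$ further into three pieces, $(f,U) = (\id_{[q]}, U)\circ (\delta, V)\circ (\sigma, \sigma(T))$ with the auxiliary necklace $V = f(T)\cup([q]\setminus f([p]))$: the last factor is coinert, and the middle factor $(\delta, V)$ is shown directly, by restricting to a single bead and writing out an explicit nested wedge formula, to be a composite of wedges of $\nu^{co}_{p,q}\delta_p$'s. You instead keep $(f_+, U)$ as one piece and run an induction on $q-p$: the base case $q=p$ is precisely the coinert case (a wedge of $\nu^{co}_{k,l}$'s), and the inductive step peels off one coface $(\delta_{u_l}, U)$, which you then realise as a wedge of identities with a single generator $\nu^{co}_{p,q}\delta_p$ on the ``gap'' bead. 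Your inductive strategy avoids having to guess the auxiliary necklace $V$ and to write down the paper's somewhat opaque nested formula, at the cost of giving a recursive rather than closed-form decomposition; the paper's approach is more explicit but requires one to verify a multi-term identity in $\overline{\nec}$ by hand. The only thing worth tightening in your write-up is the phrase ``the first factor'' after the display $(f,U) = (f_+,U)\circ(f_-,f_-(T))$: you mean the first factor \emph{applied}, i.e.\ $(f_-,f_-(T))$, the rightmost one; being explicit there would avoid a moment's confusion.
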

\begin{proof}
Let $(f,U): (T,p)\rightarrow (U,q)$ be a map in $\overline{\nec}_{-}$, so $f(T)\subseteq U$ and $U\cup f([p]) = [q]$. Factor the map $f: [p]\rightarrow [q]$ in $\fint$ as $\delta\circ \sigma$ with $\sigma: [p]\twoheadrightarrow [r]$ surjective and $\delta: [r]\hookrightarrow [q]$ injective. Then we can factor $(f,U)$ in $\overline{\nec}$ as
$$
(f,U): (T,p)\xrightarrow{(\sigma,\sigma(T))} (\sigma(T),q)\xrightarrow{(\delta,V)} (V,q)\xrightarrow{(\id,U)} (U,q) 
$$
where $V = f(T)\cup ([q]\setminus f([p]))\subseteq U$. Then $(\sigma,\sigma(T))$ is an active surjective necklace map and $(\id_{[q]},U)$ is coinert, which are monoidally generated by the maps $\sigma_{i}$ and $\nu^{co}_{p,q}$ respectively. Then it remains to show that $(\delta,V)$ is monoidally generated by the maps $\nu^{co}_{p,q}\delta_{p}$. Since $\delta(\sigma(T))\subseteq V$, we can write $(\delta,V)$ as a wedge sum of maps $\Delta^{n}\rightarrow V'$, so we may assume that $\sigma(T) = \{0 < n\}$. In that case, $[q]\setminus \delta([r]) = V\setminus \{0 < q\}$. Writing $V = \{0 = v_{0} < v_{1} < \dots < v_{k} = q\}$, we thus have $\delta = \delta_{v_{k-1}}\dots\delta_{v_{1}}$ in $\fint$ and therefore
\begin{align*}
(\delta,V) &= \nu^{co}_{v_{1},v_{2}-v_{1},\dots,q-v_{k-1}}\delta_{v_{k-1}}\dots \delta_{v_{1}}\\
&= \left(\id\vee\dots \vee \id\vee \nu^{co}_{v_{k-1}-v_{k-2},q-v_{k-1}}\delta_{v_{k-1}-v_{k-2}}\right)\dots (\id\vee \nu^{co}_{v_{2}-v_{1},q-v_{2}}\delta_{v_{2}-v_{1}})\nu^{co}_{v_{1},q-v_{1}}\delta_{v_{1}}
\end{align*}
\end{proof}

\begin{Lem}\label{lemma: splitting of necklace maps}
Let $T_{1}$, $T_{2}$ and $U$ be necklaces and $f: T_{1}\vee T_{2}\rightarrow U$ a map in $\overline{\nec}$. Then there exist unique necklaces $U_{i}$ and maps $f_{i}: T_{i}\rightarrow U_{i}$ in $\overline{\nec}$ for $i\in \{1,2\}$ such that $U_{1}\vee U_{2} = U\cup \{p\}$ and $f = \nu(f_{1}\vee f_{2})$ with $\nu: U\cup\{p\}\hookrightarrow U$ the inert map. Moreover, if $f$ is a necklace map, then so are $f_{1}$ and $f_{2}$.
\end{Lem}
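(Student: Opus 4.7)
My plan is to reduce both existence and uniqueness to the observation that the composite $\nu \circ (f_1 \vee f_2) = f$ forces every piece of data, given the wedge structure. Writing $T_i = (T_i, p_i)$ and $f = (g, U')$ with $g: [p_1 + p_2] \to [q]$ in $\fint$ and $U' \supseteq g(T_1 \vee T_2) \cup U$, the wedge point $p_1$ lies in $T_1 \vee T_2$ (as the common endpoint of the two beads) and $f_1 \vee f_2$ must send it to the wedge point of $U_1 \vee U_2$. This immediately forces $p = g(p_1)$; then $U_1$ and $U_2$ are uniquely determined as the two halves of $U \cup \{p\}$ split at $p$.

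Writing $f_i = (g_i, V_i)$ and unfolding the composition law of Definition \ref{definition: extended necklace category}, the equation $\nu \circ (f_1 \vee f_2) = (g_1 + g_2, U \cup (V_1 \vee V_2))$ equated with $(g, U')$ forces $g_i$ to be the appropriate restriction of $g$ and yields $U \cup (V_1 \vee V_2) = U'$. The main subtle point, which is where I expect the principal obstacle to lie, is that this last equation alone does not pin down $V_1 \vee V_2$. The remedy is that since $f_i$ is a morphism to $U_i$, we already have $V_i \supseteq U_i$, hence $V_1 \vee V_2 \supseteq U_1 \vee U_2 = U \cup \{p\} \supseteq U$; combined with $V_1 \vee V_2 \subseteq U' = U \cup (V_1 \vee V_2)$ this forces $V_1 \vee V_2 = U'$, so each $V_i$ is uniquely determined by splitting $U'$ at $p$.

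For existence I would then take the forced values above and verify they define valid morphisms in $\overline{\nec}$. The only nontrivial check is $g_i(T_i) \cup U_i \subseteq V_i$: by monotonicity of $g$ one has $g_1(T_1) = g(T_1) \subseteq g(T_1 \vee T_2) \cap [0, p] \subseteq U' \cap [0, p] = V_1$, while $U_1 \subseteq V_1$ follows from $U \subseteq U'$ together with $p = g(p_1) \in g(T_1 \vee T_2) \subseteq U'$; the $i = 2$ case is symmetric. The compatibility $\nu \circ (f_1 \vee f_2) = f$ is then immediate from $V_1 \vee V_2 = U'$ and $U \subseteq U'$. Finally, if $f$ is a necklace map, then $U' = g(T_1 \vee T_2) = g(T_1) \cup g(T_2 + p_1)$, and since $g(T_2 + p_1) \cap [0, p] = \{g(p_1)\} = \{p\} \subseteq g(T_1)$, we obtain $V_1 = U' \cap [0, p] = g(T_1) = g_1(T_1)$, so $f_1$ belongs to $\nec$; the argument for $f_2$ is analogous.
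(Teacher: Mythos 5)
Your proof is correct and follows essentially the same approach as the paper's: split at the image $p = g(p_1)$ of the wedge point, determine $g_1, g_2$ as the halves of $g$, and determine $V_1, V_2$ as the halves of $U'$ at $p$. You are somewhat more explicit than the paper on why $p$ and the $V_i$ are forced (the paper mostly asserts the construction without spelling out the uniqueness argument), and your check that the resulting $f_i$ are valid morphisms in $\overline{\nec}$ and that the necklace-map case gives $V_1 = g_1(T_1)$ are all sound.
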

\begin{proof}
Consider necklaces $(T_{1},k)$, $(T_{2},l)$ and $(U,n)$ and a map $(f,U'): T_{1}\vee T_{2}\rightarrow U$ in $\overline{\nec}$ with $f: [k+l]\rightarrow [n]$ in $\fint$ and $f(T_{1}\vee T_{2})\cup U\subseteq U'\subseteq [n]$. There exist unique morphisms $f_{1}: [k]\rightarrow [p]$ and $f_{2}: [l]\rightarrow [q]$ in $\fint$ such that $f_{1} + f_{2}$, where $p = f(k)$ and $q = n-p$. Then there exist unique necklaces $(U_{1},p)$ and $(U_{2},q)$ such that $U_{1}\vee U_{2} = U\cup\{p\}$, as well as $(U'_{1},p)$ and $(U'_{2},q)$ such that $U'_{1}\vee U'_{2} = U'\cup\{p\}$. It follows that $(f_{i},U'_{i}): T_{i}\rightarrow U_{i}$ is a map in $\overline{\nec}$ for $i\in \{1,2\}$. To verify that $\nu(f_{1}\vee f_{2}) = f$, it suffices to note that
$$
(U_{1}\vee U_{2})\cup (U'_{1}\vee U'_{2}) = U\cup U'\cup \{p\} = U'
$$
Finally, suppose that $f$ is a necklace map, i.e. $U' = f(T_{1}\vee T_{2})$. Then for $i\in \{1,2\}$ we necessarily have $U'_{i} = f_{i}(T_{i})$ and thus $f_{i}$ is a necklace map as well.
\end{proof}

\begin{Cor}\label{corollary: free Frobenius structure}
The functor $\mathfrak{u}$ has a left-adjoint $(-)^{Frob}: \ts\mathcal{V}\rightarrow \Fs\mathcal{V}$. For any templicial object $(X,S)$ that has non-degenerate simplices, $a,b\in S$ and $n\geq 0$, we have
$$
X^{Frob}_{n}(a,b)\simeq \coprod_{\substack{T\in \nec\\ f: \Delta^{n}\rightarrow T\\ \text{in }\overline{\nec}_{-}}}X^{nd}_{T}(a,b)
$$
\end{Cor}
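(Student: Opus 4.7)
My plan is to split the proof into two parts: first establish the existence of $(-)^{Frob}$ as a left-adjoint to $\mathfrak{u}$ using abstract arguments, then derive the explicit formula by direct construction and verification of the universal property.

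For existence, I would show that $\mathfrak{u}: \Fs\mathcal{V} \to \ts\mathcal{V}$ creates limits: any limit in $\ts\mathcal{V}$ of a diagram of Frobenius templicial objects carries a uniquely determined Frobenius structure, whose multiplications $Z^{p,q}$ are induced by the universal property and whose Frobenius identities descend componentwise. Since $\ts\mathcal{V}$ is locally presentable (Proposition \ref{proposition: temp. obj. results}) and $\Fs\mathcal{V}$ is locally presentable as a category of algebras for an accessible monad on $\ts\mathcal{V}$, the Special Adjoint Functor Theorem then produces the required left-adjoint $(-)^{Frob}$.

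For the explicit formula, assume $X$ has non-degenerate simplices and let $\widetilde{X}^{Frob}_n(a,b)$ denote the stated coproduct, indexed by pairs $(T,f)$ with $T\in \nec$ and $f:\Delta^n \to T$ in $\overline{\nec}_-$. I would equip $\widetilde{X}^{Frob}$ with templicial and Frobenius structure as follows. Any morphism $\varphi \in \overline{\nec}$ with source $\Delta^m$ acts on the summand indexed by $(T, f, x)$ by forming $f \circ \varphi$ in $\overline{\nec}$, uniquely factoring it as $g_+ \circ g_-$ via Proposition \ref{proposition: extended necklace factorization}, and then using $g_+ \in \overline{\nec}_+ = \nec_+$ to transform $x$ via $X^{nec}$ (followed by projection onto its non-degenerate component using the decomposition $X_U \simeq \coprod_{U \twoheadrightarrow V} X^{nd}_V$), while taking $g_-$ as the new indexing map. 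The Frobenius multiplications $Z^{p,q}$ arise from the coinert maps $\nu^{co}_{p,q}$ among the generators of $\overline{\nec}_-$ identified in Proposition \ref{proposition: extended necklace cat. generators}.

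To conclude, I would verify directly that morphisms $\widetilde{X}^{Frob} \to Y$ in $\Fs\mathcal{V}$ correspond naturally to templicial morphisms $X \to \mathfrak{u}(Y)$. Given such a $\varphi: X \to \mathfrak{u}(Y)$, its values on non-degenerate simplices $X^{nd}_T(a,b) \to Y_T(\varphi(a),\varphi(b))$ (obtained via the comultiplications of $Y$) should extend uniquely to a Frobenius morphism $\widetilde{\varphi}: \widetilde{X}^{Frob} \to Y$ defined on $(T, f, x)$ by $Y^{nec}(f)(\varphi_T(x))$, where $Y^{nec}$ is the $\mathcal{V}^{\overline{\nec}^{op}}$-enriched category encoding both the templicial and Frobenius structure on $Y$. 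The main obstacle will be verifying that $\widetilde{\varphi}$ is a well-defined Frobenius morphism: this amounts to checking that the $(\overline{\nec}_-, \overline{\nec}_+)$ factorization is compatible with the necklace and Frobenius operations, for which Lemma \ref{lemma: splitting of necklace maps} handles the wedge product case and Proposition \ref{proposition: extended necklace cat. generators} reduces the verification to a manageable finite set of generating relations among $\sigma_i$, $\nu^{co}_{p,q}$, and $\nu^{co}_{p,q}\delta_p$.
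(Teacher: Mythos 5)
Your proposal takes a genuinely different route from the paper, and the route has a gap at the existence step.

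The paper establishes existence and the explicit formula at once by leveraging the nerve machinery already built: Proposition \ref{proposition: free Frobenius structure} identifies $\mathfrak{u}$ with $N^{F\yo i}_{\mathcal{V}}\circ(-)^{nec}$; the left-adjoint $L^{F\yo i}_{\mathcal{V}}$ exists by Proposition \ref{proposition: nerve gen. by strong mon. diagram has left-adj.} and equals $(-)^{\Lan_{i}}\circ(-)^{nec}$; the key work is to show this composite factors through $(-)^{nec}: \Fs\mathcal{V}\to \mathcal{V}\Cat_{\overline{\nec}}$, which is done by proving the comparison map of colimits $\colim_{(T\downarrow i)\times(U\downarrow i)}X_{T'\vee U'}\to\colim_{(T\vee U\downarrow i)}X_{V}$ is an isomorphism via the final-functor argument built on Lemma \ref{lemma: splitting of necklace maps}. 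Full-faithfulness of $(-)^{nec}$ (Proposition \ref{proposition: nec functor for Frob. temp. obj. fully faithful}) then yields the adjoint, and the formula drops out of Theorem \ref{theorem: explicitation of left-adjoint for free temp. obj.} applied to $\pi_{\Delta^n}\circ F\yo i\simeq\Lan_{\iota}F(\overline{\nec}_{-}(\Delta^{n},-))$, which in turn rests on Proposition \ref{proposition: extended necklace factorization}. You bypass all of this and go through a Special Adjoint Functor Theorem existence argument followed by a from-scratch construction of $\widetilde{X}^{Frob}$ and a hands-on verification of the universal property.

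The gap is in the existence step. You invoke SAFT together with the claim that $\ts\mathcal{V}$ is locally presentable and that $\Fs\mathcal{V}$ is locally presentable as algebras for an accessible monad on $\ts\mathcal{V}$. Neither claim follows from the paper's standing hypotheses: $\mathcal{V}$ is only assumed to be symmetric monoidal closed, cocomplete, and finitely complete (\S\ref{subsection: Notations and conventions}), which does not give local presentability (e.g.\ compactly generated spaces satisfy the hypotheses and are not locally presentable). Proposition \ref{proposition: temp. obj. results} records cocompleteness of $\ts\mathcal{V}$ but nothing about presentability or accessibility. Moreover $\mathfrak{u}$ is not obviously monadic: a Frobenius structure is not purely extra algebraic structure layered over $\ts\mathcal{V}$, since the Frobenius identities \eqref{equation: Frobenius identities} couple the new multiplications $Z^{p,q}$ to the pre-existing comultiplications $\mu_{k,l}$. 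You would also need $\mathfrak{u}$ to create (or at least preserve) limits, and since $\otimes_{S}$ only preserves colimits (not limits) in each variable, the claim that the Frobenius structure on a limit is ``uniquely determined'' deserves justification. For the second half, your explicit construction of $\widetilde{X}^{Frob}$ and the verification of the universal property are plausible and reduce, as you note, to checking that the $(\overline{\nec}_{-},\overline{\nec}_{+})$ factorization interacts correctly with the generating morphisms of $\overline{\nec}_{-}$ (Proposition \ref{proposition: extended necklace cat. generators}) and with wedges (Lemma \ref{lemma: splitting of necklace maps}); this is essentially re-deriving the relevant special case of Theorem \ref{theorem: explicitation of left-adjoint for free temp. obj.}. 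That part could be made to work with care, but it is substantially more labour than the paper's two-line application of that theorem, and it still leaves you without a valid existence proof for templicial objects lacking non-degenerate simplices. I would recommend replacing the SAFT step with the paper's route through Proposition \ref{proposition: free Frobenius structure} and the final-functor argument, after which the explicit formula follows from the already-proved Theorem \ref{theorem: explicitation of left-adjoint for free temp. obj.}.
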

\begin{proof}
In view of Proposition \ref{proposition: free Frobenius structure}, we will show that the left-adjoint $L^{F\yo i}_{\mathcal{V}}$ factors as
$$
\ts\mathcal{V}\xrightarrow{(-)^{Frob}} \Fs\mathcal{V}\xrightarrow{(-)^{nec}} \mathcal{V}^{\overline{\nec}^{op}}\Cat
$$
Then $(-)^{Frob}$ must necessarily be the left-adjoint of $\mathfrak{u}$ since $(-)^{nec}$ is fully faithful. In other words, given a templicial object $(X,S)$, we show that for all $T,U\in \nec$, the quiver morphism $L^{F\yo i}_{\mathcal{V}}(X)_{T}\otimes_{S} L^{F\yo i}_{\mathcal{V}}(X)_{U}\rightarrow L^{F\yo i}_{\mathcal{V}}(X)_{T\vee U}$ is an isomorphism.

As before, $\mathfrak{n}^{F\yo i}_{\mathcal{V}}$ coincides with the restriction $res_{i}: \mathcal{V}^{\overline{\nec}^{op}}\rightarrow \mathcal{V}^{\nec^{op}}$, which is left-adjoint to $\Lan_{i}$, the left Kan extension along $i$. Thus we have $L^{F\yo i}_{\mathcal{V}}\simeq (-)^{\Lan_{i}}\circ (-)^{nec}$. Hence, the quiver morphism above is
\begin{equation}\label{equation: free Frob. structure functor preserves temp. obj.}
\colim_{\substack{(T\rightarrow T')\in (T\downarrow i)\\ (U\rightarrow U')\in (U\downarrow i)}}X_{T'\vee U'}\rightarrow \colim_{\substack{(T\vee U\rightarrow V)\\ \in (T\vee U\downarrow i)}}X_{V}
\end{equation}
induced by $\vee: (T\downarrow i)\times (U\downarrow i)\rightarrow (T\vee U\downarrow i): (T\rightarrow T',U\rightarrow U')\mapsto (T\vee U\rightarrow T'\vee U')$. Given a map $f: T\vee U\rightarrow V$ in $\overline{\nec}$, Lemma \ref{lemma: splitting of necklace maps} provides unique maps $f_{1}: T\rightarrow V_{1}$ and $T_{2}\rightarrow V_{2}$ in $\overline{\nec}$ such that $V_{1}\vee V_{2} = V\cup \{p\}$ and $f = \nu(f_{1}\vee f_{2})$ with $\nu$ inert. It follows that the assignment $f\mapsto (f_{1},f_{2})$ extends to a functor $(T\vee U\downarrow i)\rightarrow (T\downarrow i)\times (U\downarrow i)$ which is right-adjoint to $\vee$. Hence, between opposite categories, $\vee$ is a right-adjoint and thus a final functor. Consequently, the quiver morphism \eqref{equation: free Frob. structure functor preserves temp. obj.} is an isomorphism.

Now assume $X$ has non-degenerate simplices. Given $T\in \nec$, let $\pi_{T}: \mathcal{V}^{\overline{\nec}^{op}}\rightarrow \mathcal{V}: Y\mapsto Y_{T}$. Since $\nec_{-}\subseteq \overline{\nec}_{-}$ the functor $F(\overline{\nec}_{-}(T,-)): \nec_{-}\rightarrow \mathcal{V}$ is well-defined. Moreover, it directly follows from Proposition \ref{proposition: extended necklace factorization} that $\pi_{T}F\yo i = F(\overline{\nec}(T,-))\simeq \Lan_{\iota}F(\overline{\nec}_{-}(T,-))$ where $\iota: \nec_{-}\hookrightarrow \nec$ is the inclusion. Hence, the description of $X^{Frob}_{n}(a,b)$ for $n\geq 0$ follows from Theorem \ref{theorem: explicitation of left-adjoint for free temp. obj.}.
\end{proof}

\begin{Rem}
While Theorem \ref{theorem: underlying D-nerve is cat. nerve assoc. to D} is applicable here, it will just returns Corollary \ref{corollary: results for change of enriching category}.3. Further Theorem \ref{theorem: D-nerve is quasi-cat.} doesn't provide a very informative condition for a Frobenius templicial object to be a quasi-category in $\mathcal{V}$.
\end{Rem}

\begin{Exs}
\begin{enumerate}[1.]
\item Given a necklace $(T,p)$, we have $T^{Frob}_{n}(0,p)\simeq \overline{\nec}(\Delta^{n},T)$ for all $n > 0$. This can be seen through Corollary \ref{corollary: free Frobenius structure} or more directly from the definition.
\item Consider the simplicial circle $S^{1}$, defined here as the coequalizer of the coface maps $\delta_{0},\delta_{1}: \Delta^{0}\rightrightarrows \Delta^{1}$. Recall that $S^{1}$ has exactly one vertex $*$ and one edge $e$ which are its only non-degenerate simplices. We have by Corollary \ref{corollary: free Frobenius structure} that
$$
(S^{1})_{n}^{Frob}(*,*)\simeq \coprod_{\substack{T\in \nec\\ f: \Delta^{n}\rightarrow T\\ \text{in }\overline{\nec}_{-}}}(S^{1})^{nd}_{T}(*,*)\simeq \coprod_{p\geq 0}\fint([n],[p])
$$
for all $n > 0$, where we used that $(S^{1})^{nd}_{T}(*,*)$ is only non-empty when $T$ is a spine, in which case $(S^{1})^{nd}_{T}(*,*)$ is a singleton. For a morphism $[m]\rightarrow [n]$ in $\fint$, the induced map $(S^{1})^{Frob}_{n}\rightarrow (S^{1})^{Frob}_{m}$ is given by pre-composition in the obvious way. This defines the degeneracy and inner face maps. Note that while $S^{1}$ is a finite simplicial set, $(S^{1})^{Frob}$ has infinitely many non-degenerate simplices in each dimension.
\end{enumerate}
\end{Exs}

\appendix

\section{The generating diagram of the differential graded nerve}\label{section: The generating diagram of the differential graded nerve}

We return to the proof of Proposition \ref{proposition: dg-nerve}, showing that the templicial dg-nerve of \cite[Definition 3.15]{lowen2023frobenius} is generated by $\dg: \nec\rightarrow \Ch(k)$ \eqref{diagram: dg}. The templicial dg-nerve is defined by a factorization through $\Fs\Mod(k)$:
$$
k\Cat_{dg}\xrightarrow{\overline{N}^{dg}_{k}} \Fs\Mod(k)\rightarrow \ts\Mod(k)
$$
and $\overline{N}^{dg}_{k}$ is even an equivalence when restricted to non-negatively graded dg-categories $k\Cat_{dg,\geq 0}$. On the other hand, the nerve generated by $\dg$ also factors through $\Fs\Mod(k)$ by Corollary \ref{corollary: cubical nerve has Frob. structure}. We will prove Proposition \ref{proposition: dg-nerve} by characterizing the Frobenius templicial maps from an arbitrary Frobenius templicial module into both factorizations.

Fix a Frobenius templicial $k$-module $(X,S)$ and a small dg-category $\mathcal{C}$ with object set $S$. We denote the comultiplication and counit of $X$ by $\mu$ and $\epsilon$ respectively, and we denote the composition law and identities of $\mathcal{C}$ by $m$ and $u$ respectively. Define
\begin{itemize}
\item the set $\mathcal{S}_{1}$ of all collections of morphisms in $k\Quiv_{S}$:
$$
\left(\beta_{n}: X_{n}\rightarrow \mathcal{C}_{n-1}\right)_{n > 0}
$$
\item the set $\mathcal{S}_{2}$ of all collections of morphisms in $k\Quiv_{S}$:
$$
\left(H_{g}: X_{T}\rightarrow \mathcal{C}_{\dim U}\right)_{\substack{g: U\hookrightarrow T\text{ inj.}\\ \text{in }\nec}}
$$
such that
\begin{enumerate}[(a)]
\item\label{item: dg-nerve lemma condition 1} for all injective necklace maps $g_{1}$ and $g_{2}$, we have
$$
m(H_{g_{1}}\otimes H_{g_{2}}) = H_{g_{1}\vee g_{2}}
$$
\item\label{item: dg-nerve lemma condition 2} we have
$$
u\epsilon = H_{\id_{\Delta^{0}}}
$$
\item\label{item: dg-nerve lemma condition 3} for all injective necklace maps $g: U\hookrightarrow T$ and $f: T\hookrightarrow T'$, we have
$$
H_{g}\circ X(f) = H_{fg}
$$
\end{enumerate}
\end{itemize}

\begin{Lem}\label{lemma: dg-nerve lemma 1}
The following maps are inverse bijections:
$$
\mathcal{S}_{2}\rightarrow \mathcal{S}_{1}: (H_{g})_{g}\mapsto (H_{\id_{\Delta^{n}}})_{n > 0}\quad \text{and}\quad \mathcal{S}_{1}\rightarrow \mathcal{S}_{2}: (\beta_{n})_{n}\mapsto (m\beta_{U}X(g))_{g: U\hookrightarrow T}
$$
where $\beta_{U} = \beta_{n_{1}}\otimes ...\otimes \beta_{q-n_{l-1}}$ for any necklace $U = \{0 = u_{0} < u_{1} < ... < u_{l} = q\}$.
\end{Lem}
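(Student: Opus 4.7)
The plan is to verify well-definedness of both maps and then check that the two composites are identities. Since $\mathcal{S}_1$ carries no conditions, the assignment $(H_g)_g \mapsto (H_{\id_{\Delta^n}})_n$ is automatically well-defined, so the main bookkeeping is to show that starting from $(\beta_n)_n \in \mathcal{S}_1$, the collection $(H_g := m\beta_U X(g))_g$ satisfies the three conditions \eqref{item: dg-nerve lemma condition 1}, \eqref{item: dg-nerve lemma condition 2}, \eqref{item: dg-nerve lemma condition 3} defining $\mathcal{S}_2$.

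Condition \eqref{item: dg-nerve lemma condition 3} will follow immediately from the contravariance of the functor $X_\bullet: \nec^{op}\to \Mod(k)\Quiv_S$ attached to $X$, since $X(fg) = X(g)\circ X(f)$ gives $m\beta_U X(g)\circ X(f) = m\beta_U X(fg)$. Condition \eqref{item: dg-nerve lemma condition 2} will reduce to interpreting the empty tensor $\beta_{\Delta^0}$ as $\epsilon: X_0\xrightarrow{\simeq} I_S$ and the empty composition as the unit $u: I_S \to \mathcal{C}_0$, so that $m\beta_{\Delta^0}X(\id) = u\epsilon$. For \eqref{item: dg-nerve lemma condition 1}, the key ingredient is that on $\nec$ the functor $X_\bullet$ is strictly compatible with wedges, $X_{T_1\vee T_2} = X_{T_1}\otimes_S X_{T_2}$, under which $X(g_1\vee g_2) = X(g_1)\otimes X(g_2)$; combined with the definition $\beta_{U_1\vee U_2} = \beta_{U_1}\otimes \beta_{U_2}$ and the associativity of composition in $\mathcal{C}$, this yields $m(H_{g_1}\otimes H_{g_2}) = H_{g_1\vee g_2}$.

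For the composites: starting from $(\beta_n)_n$ and taking $g = \id_{\Delta^n}$ gives $U = T = \Delta^n$, $\beta_U = \beta_n$, $X(g) = \id$ and a trivial single-factor $m$, recovering $H_{\id_{\Delta^n}} = \beta_n$. Conversely, starting from $(H_g)_g \in \mathcal{S}_2$, I plan to show by induction on the number of beads of $U$, using \eqref{item: dg-nerve lemma condition 1} for the inductive step and \eqref{item: dg-nerve lemma condition 2} for the base case $U = \Delta^0$, that $m\beta_U = H_{\id_U}$; then \eqref{item: dg-nerve lemma condition 3}, applied to $\id_U: U\to U$ followed by $g: U\hookrightarrow T$, yields $m\beta_U X(g) = H_{\id_U}\circ X(g) = H_g$. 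The main delicate point is handling the empty-wedge base case consistently with the strong unitality convention for $X_0$, which is precisely what condition \eqref{item: dg-nerve lemma condition 2} is designed to encode; the rest is a direct unpacking of the monoidal coherence of $X_\bullet$ on $\nec$ and of the composition in $\mathcal{C}$.
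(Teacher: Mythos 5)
Your proof is correct and follows essentially the same approach as the paper: verify that $(m\beta_U X(g))_g$ lands in $\mathcal{S}_2$, note that taking $g=\id_{\Delta^n}$ recovers $\beta_n$, and use the unique wedge decomposition of injective necklace maps together with conditions (a)–(c) to show that any $(H_g)_g\in\mathcal{S}_2$ is determined by the $H_{\id_{\Delta^n}}$. The only cosmetic difference is that you first use (c) to reduce $H_g$ to $H_{\id_U}\circ X(g)$ and then split $\id_U$ with (a), whereas the paper splits the codomain $T$ directly; both hinge on the same wedge-splitting observation.
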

\begin{proof}
Given $(\beta_{n})_{n > 0}\in \mathcal{S}_{1}$, note that $(m\beta_{U}X(g))_{g: U\hookrightarrow T}$ clearly satisfies conditions \eqref{item: dg-nerve lemma condition 1}-\eqref{item: dg-nerve lemma condition 3} above. Moreover, $m\beta_{\{0 < n\}}X(\id_{\Delta^{n}}) = \beta_{n}$ for all $n > 0$ so that the map $\mathcal{S}_{2}\rightarrow \mathcal{S}_{1}$ is surjective. Further, note that for every injective necklace map $g: U\hookrightarrow T_{1}\vee T_{2}$, there exist unique injective necklace maps $g_{i}: U_{i}\hookrightarrow T_{i}$ such that $g = g_{1}\vee g_{2}$. So it follows from conditions \eqref{item: dg-nerve lemma condition 1}-\eqref{item: dg-nerve lemma condition 3} that a collection $(H_{g})_{g}\in \mathcal{S}_{2}$ is completely determined by $H_{\id_{\Delta^{n}}}$ for $n > 0$. Hence, the map $\mathcal{S}_{2}\rightarrow \mathcal{S}_{1}$ is injective as well.
\end{proof}

\begin{Lem}\label{lemma: dg-nerve lemma 2}
For any $(H_{g})_{g}\in \mathcal{S}_{2}$ and $(\beta_{n})_{n > 0} = (H_{\id_{\Delta^{n}}})_{n > 0}$, the following statements are equivalent:
\begin{enumerate}
\item for all injective necklace maps $g: U\hookrightarrow T$ with $U^{c} = \{i_{1} < ... < i_{n}\}$,
$$
\partial H_{g} = \sum_{j=1}^{n}(-1)^{j-1}\left(H_{g\delta_{i_{j}}} - H_{g\nu_{i_{j},n-i_{j}}}\right)
$$
\item for all $n > 0$,
$$
\partial\beta_{n} = \sum_{j=1}^{n-1}(-1)^{j-1}\left(\beta_{n-1}d^{X}_{j} - m(\beta_{j}\otimes \beta_{n-j})\mu^{X}_{j,n-j}\right)
$$
\end{enumerate}
\end{Lem}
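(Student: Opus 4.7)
I would exploit the bijection of Lemma~\ref{lemma: dg-nerve lemma 1}, under which $H_g = m(\beta_U) \circ X(g)$, together with the fact that the dg-structure lives only on the target: $\partial H_g = \bigl(\partial\, m(\beta_U)\bigr) \circ X(g)$ because $X(g)$ is a plain $k$-linear map of $k$-modules. The implication $(1) \Rightarrow (2)$ is then immediate by taking $g = \id_{\Delta^n}$: one has $U^{c} = \{1 < \dots < n-1\}$, and the three occurring terms identify as $H_{\id_{\Delta^n}} = \beta_n$, $H_{\delta_j} = \beta_{n-1} X(\delta_j) = \beta_{n-1} d_j^X$, and $H_{\nu_{j,n-j}} = m(\beta_j \otimes \beta_{n-j}) X(\nu_{j,n-j}) = m(\beta_j \otimes \beta_{n-j}) \mu_{j,n-j}^X$, which turns condition~(1) for $g = \id_{\Delta^n}$ into condition~(2).

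For $(2) \Rightarrow (1)$, I would decompose an arbitrary injective necklace map $g \colon U \hookrightarrow T$ along its bead structure: if $U = \Delta^{n_1} \vee \dots \vee \Delta^{n_k}$ and $T = T_1 \vee \dots \vee T_k$, then $g = g_1 \vee \dots \vee g_k$ with each $g_l \colon \Delta^{n_l} \hookrightarrow T_l$. The wedge-compatibility axiom of $\mathcal{S}_2$ gives $H_g = m(H_{g_1} \otimes \dots \otimes H_{g_k})$, and each single-bead factor simplifies to $H_{g_l} = \beta_{n_l} \circ X(g_l)$. Applying the graded Leibniz rule for $m$ in the dg-category $\mathcal{C}$, with $\deg H_{g_l} = \dim \Delta^{n_l} = n_l - 1$, yields
\[
\partial H_g = \sum_{l=1}^{k} (-1)^{s_l}\, m\bigl(H_{g_1} \otimes \dots \otimes \partial H_{g_l} \otimes \dots \otimes H_{g_k}\bigr), \quad s_l := \sum_{l' < l}(n_{l'} - 1).
\]
By condition~(2) combined with the contravariance of $X$ on $\nec$ (so $d_j^X X(g_l) = X(g_l \delta_j)$ and $\mu_{j, n_l - j}^X X(g_l) = X(g_l \nu_{j, n_l - j})$), each $\partial H_{g_l}$ expands as $\sum_{j=1}^{n_l - 1}(-1)^{j-1}\bigl(H_{g_l \delta_j} - H_{g_l \nu_{j, n_l - j}}\bigr)$, and re-wedging via the same wedge axiom rewrites every resulting term as $H_{g\delta_{i_{j'}}}$ or $H_{g \nu_{i_{j'}, p - i_{j'}}}$ for the appropriate $i_{j'} \in U^{c}$.

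The main obstacle is the sign and index bookkeeping: one must check that the enumeration of $U^{c} = \{i_1 < \dots < i_m\}$ in condition~(1) matches the lexicographic order inherited from the bead decomposition, so that $i_{j'}$ is the $j$-th interior vertex of the $l$-th bead precisely when $j' = s_l + j$, and that the total sign $(-1)^{j'-1}$ factors as $(-1)^{s_l}(-1)^{j-1}$, which is exactly the Leibniz sign times the sign coming from condition~(2). Once this correspondence is set up, both sides of the claimed identity agree term by term.
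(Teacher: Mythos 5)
Your $(1)\Rightarrow(2)$ argument is exactly the paper's: specialize to $g=\id_{\Delta^n}$ and identify the three kinds of terms using Lemma~\ref{lemma: dg-nerve lemma 1}; that part is fine.

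The $(2)\Rightarrow(1)$ direction has a genuine gap. You decompose $g\colon (U,q)\hookrightarrow (T,p)$ as $g = g_1\vee\dots\vee g_k$ with $T = T_1\vee\dots\vee T_k$ parallel to the bead decomposition $U = \Delta^{n_1}\vee\dots\vee\Delta^{n_k}$, and build the Leibniz formula out of the resulting $H_{g_l}$. But such a decomposition of $T$ does not exist in general: a necklace map $g\colon U\hookrightarrow T$ satisfies $T\subseteq g(U)$, so $T$ can have strictly fewer beads than $U$ (it has the same number iff $g$ is active). The simplest failure is the inert map $\nu_{1,1}\colon\Delta^1\vee\Delta^1\hookrightarrow\Delta^2$: here $U$ has two beads but $T=\Delta^2$ is a single bead, so $T$ is not a wedge of two nontrivial necklaces and the objects $H_{g_1}$, $H_{g_2}$ you want to feed into the Leibniz rule are not defined. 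Since condition $(1)$ must hold for \emph{all} injective $g$, including the inert ones, the argument as written does not cover the cases that actually carry the comultiplication terms.

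The repair is small and is what the paper does: do not split $g$ at all. Write $H_g = m\,\beta_U\,X(g)$ with $\beta_U = \beta_{n_1}\otimes\dots\otimes\beta_{n_k}$, apply the graded Leibniz rule to the chain map $m\,\beta_U$ (each $\beta_{n_l}$ has degree $n_l-1$, which is exactly your sign $(-1)^{s_l}$), and only then post-compose with the degree-zero quiver map $X(g)$. Expanding each $\partial\beta_{n_l}$ by condition $(2)$ and absorbing $X(g)$ via $d_j^X X(g) = X(g\delta_{i_{j'}})$ and $\mu^X_{j,n_l-j}X(g) = X(g\nu_{i_{j'},q-i_{j'}})$, together with the re-association from conditions \eqref{item: dg-nerve lemma condition 1} and \eqref{item: dg-nerve lemma condition 3}, yields condition $(1)$. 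Your index bookkeeping $j' = s_l + j$ and the sign factorization $(-1)^{j'-1} = (-1)^{s_l}(-1)^{j-1}$ are exactly the crux and are correct; it is only the intermediate wedge decomposition of the map $g$ that should be discarded.
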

\begin{proof}
The implication $(1)\Rightarrow (2)$ is immediate from Lemma \ref{lemma: dg-nerve lemma 1}. Conversely, assume that $(2)$ holds and take $g: (U,q)\hookrightarrow (T,p)$ an injective necklace map. Write $U^{c} = \{i_{1} < ... < i_{n}\}$ and $U = \{0 = u_{0} < u_{1} < ... < u_{l} = q\}$. Then
\begin{align*}
&\partial H_{g} = \partial m(\beta_{U})X(g) = \sum_{i=1}^{l}(-1)^{u_{i-1}-i+1}m(\beta_{u_{1}}\otimes ...\otimes \partial\beta_{u_{i}-u_{i-1}}\otimes ...\beta_{q-u_{l-1}})X(g)\\
&\, = \sum_{i=1}^{l}\sum_{j=1}^{u_{i}-u_{i-1}-1}(-1)^{u_{i-1}-i+j}\Big[ m(\beta_{u_{1}}\otimes ...\otimes \beta_{u_{i}-u_{i-1}-1}\otimes ...\otimes \beta_{q-u_{l-1}})X(g\delta_{u_{i-1}+j})\\
&\,\quad - m(\beta_{u_{1}}\otimes ...\otimes \beta_{j}\otimes \beta_{u_{i}-u_{i-1}-j}\otimes ...\otimes \beta_{q-u_{l-1}})X(g\nu_{u_{i-1}+j,q-u_{i-1}-j})\Big]\\
&\, = \sum_{k=1}^{n}(-1)^{k-1}\left(H_{g\delta_{i_{k}}} - H_{g\nu_{i_{k},q-i_{k}}}\right)
\end{align*}
where we used that for all $k\in \{1,...,n\}$, $i_{k} = u_{i-1} + j$ for a unique $i\in \{1,...,l\}$ and $0 < j < u_{i}-u_{i-1}$. Moreover, note that for this $i$, we have $i_{k} = i + k - 1$.
\end{proof}

\begin{Lem}\label{lemma: dg-nerve lemma 3}
For any $(H_{g})_{g}\in \mathcal{S}_{2}$ and $(\beta_{n})_{n > 0} = (H_{\id_{\Delta^{n}}})_{n > 0}$, the following statements are equivalent:
\begin{enumerate}
\item for any injective necklace map $g: U\hookrightarrow T$ and any map $f: T\rightarrow T'$ in $\overline{\nec}_{-}$, factor $fg = \sigma g'$ with $g: U'\hookrightarrow T'$ an injective necklace map and $\sigma$ in $\overline{\nec}_{-}$ (by Proposition \ref{proposition: extended necklace factorization}). Then
$$
H_{g}X(f) =
\begin{cases}
H_{g'} & \text{if } \dim(U) = \dim(U')\\
0 & \text{otherwise}
\end{cases}
$$
\item for all $n\geq 0$ and $0\leq i\leq n$,
$$
\beta_{n+1}s^{X}_{i} =
\begin{cases}
u\epsilon & \text{if }n = i = 0\\
0 & \text{otherwise}
\end{cases}
$$
and for all $p,q > 0$,
$$
\beta_{p+q}Z^{p,q}_{X} = 0\quad \text{and}\quad \beta_{p+q-1}d^{X}_{p}Z^{p,q}_{X} = m(\beta_{p}\otimes \beta_{q})
$$
\end{enumerate}
\end{Lem}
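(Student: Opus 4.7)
The plan is to use Proposition \ref{proposition: extended necklace cat. generators}, which presents $\overline{\nec}_{-}$ as a monoidal category generated by $\sigma_i$, $\nu^{co}_{p,q}$ and $\nu^{co}_{p,q}\delta_p$, together with Lemma \ref{lemma: dg-nerve lemma 1}, which expresses $H_g$ entirely in terms of $(\beta_n)_{n > 0}$.

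For $(1)\Rightarrow(2)$: I would specialize condition $(1)$ three times, each with $g$ the identity on the appropriate standard simplex and $f$ one of the three generators. In each case $fg = f$ is already in $\overline{\nec}_{-}$, so the factorization of Proposition \ref{proposition: extended necklace factorization} is trivial with $g'$ the identity on the codomain of $f$. For $f = \sigma_i\colon \Delta^{n+1}\to \Delta^n$ the dimensions are $n$ versus $n-1$, matching only when $n=0$ (where \eqref{item: dg-nerve lemma condition 2} identifies $H_{g'}$ with $u\epsilon$); for $f=\nu^{co}_{p,q}$ they are $p+q-1$ versus $p+q-2$, forcing zero; and for $f=\nu^{co}_{p,q}\delta_p$ they coincide at $p+q-2$, so \eqref{item: dg-nerve lemma condition 1} identifies $H_{g'}$ with $m(\beta_p\otimes \beta_q)$. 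Computing $H_g X(f)$ as $\beta_{n+1}s^X_i$, $\beta_{p+q}Z^{p,q}_X$, and $\beta_{p+q-1}d^X_p Z^{p,q}_X$ respectively, this recovers exactly the three clauses of $(2)$.

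For $(2)\Rightarrow(1)$: I would first reduce to the intermediate claim that for every $\sigma\colon U\to U'$ in $\overline{\nec}_{-}$,
\[
m\beta_U X(\sigma) = \begin{cases} m\beta_{U'} & \text{if } \dim(U)=\dim(U'), \\ 0 & \text{otherwise.} \end{cases}
\]
Granting this, factoring $fg = g'\sigma$ via Proposition \ref{proposition: extended necklace factorization} and applying Lemma \ref{lemma: dg-nerve lemma 1} gives $H_g X(f) = m\beta_U X(\sigma)X(g')$, which the claim converts to either $H_{g'}$ or $0$ as required. The claim itself I would prove by induction, based on Proposition \ref{proposition: extended necklace cat. generators}: verify it on the three monoidal generators (which is a direct re-expression of $(2)$, with the $n=0$ case of $\sigma_0$ supplying $u\epsilon$ via \eqref{item: dg-nerve lemma condition 2}); then propagate through wedge products (using $\beta_{U_1\vee U_2} = \beta_{U_1}\otimes \beta_{U_2}$, the monoidal compatibility $X(\sigma_1\vee \sigma_2) = X(\sigma_1)\otimes X(\sigma_2)$, additivity of $\dim$ under $\vee$, and associativity of $m$) and through composition (dimensions are non-increasing along generators, so the two-case match-or-zero dichotomy composes cleanly).

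The main obstacle I anticipate lies in the wedge-compatibility step of the induction: one must verify that the match-or-zero dichotomy splits multiplicatively over $\sigma = \sigma_1\vee \sigma_2$. This rests on the observation that each generator drops dimension by at most one, so a wedge preserves total dimension if and only if each factor does; thus a non-matching wedge has at least one zero factor (forcing the tensor to vanish), while a matching wedge assembles the two $m\beta_{U'_i}$'s into $m\beta_{U'_1\vee U'_2}$ by associativity of $m$. Once this dimensional bookkeeping is in place, the composition step is essentially routine, and Lemma \ref{lemma: dg-nerve lemma 1} completes the translation between the two formulations.
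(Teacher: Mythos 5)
Your proof is correct and follows the same approach as the paper: both directions rely on specializing to, and propagating from, the monoidal generators of $\overline{\nec}_{-}$ given by Proposition \ref{proposition: extended necklace cat. generators}, using the translation of Lemma \ref{lemma: dg-nerve lemma 1}. The paper's own proof is very terse (the $(2)\Rightarrow(1)$ direction is reduced to a one-line appeal to conditions \eqref{item: dg-nerve lemma condition 1}--\eqref{item: dg-nerve lemma condition 3}), so your intermediate claim and the wedge/composition bookkeeping simply make explicit what the paper compresses.
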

\begin{proof}

Note that $\sigma_{i}: \Delta^{n+1}\twoheadrightarrow \Delta^{n}$ for $n > 0$ and $\nu^{co}_{p,q}: \Delta^{p+q}\rightarrow \Delta^{p}\vee \Delta^{q}$ strictly decrease dimenions, while $\nu^{co}_{p,q}\delta_{p}$ and $\sigma_{0}$ preserve dimensions. Thus by Lemma \ref{lemma: dg-nerve lemma 1}, statement $(1)$ specializes to $(2)$ by choosing $g$ to be the identity and $f$ to be $\sigma_{i}$, $\nu^{co}_{p,q}$ and $\nu^{co}_{p,q}\delta_{p}$ respectively. By Proposition \ref{proposition: extended necklace cat. generators}, $\overline{\nec}_{-}$ is generated by these maps as a monoidal category and thus it follows by conditions \eqref{item: dg-nerve lemma condition 1}-\eqref{item: dg-nerve lemma condition 3} that $(2)\Rightarrow (1)$ also holds. 
\end{proof}

\begin{proof}[Proof of Proposition \ref{proposition: dg-nerve}]
Let $\overline{\dg}: \overline{\nec}\rightarrow \Ch(k)$ be the extension of $\dg$ from Corollary \ref{corollary: cubical nerve has Frob. structure} and consider the induced adjunction:
$$
\mathfrak{l}^{\overline{\dg}}_{k}: \Ch(k)\leftrightarrows \Mod(k)^{\overline{\nec}^{op}}: \mathfrak{n}^{\overline{\dg}}_{k}$$
Then the factorization of the nerve generated by $\dg$ through $\Fs\Mod(k)\rightarrow \ts\Mod(k)$ is given by $(-)^{temp}\circ \mathfrak{n}^{\overline{\dg}}_{k}$ and the statement will follow by providing a natural bijection:
$$
k\Cat_{dg}(\mathfrak{l}^{\overline{\dg}}_{k}(X^{nec}),\mathcal{C})\simeq \Fs\Mod(k)(X,\overline{N}^{\dg}_{k}(\mathcal{C}))
$$
A dg-functor $\mathfrak{l}^{\overline{\dg}}_{k}(X^{nec})\rightarrow \mathcal{C}$ consists of a map of sets $f: S\rightarrow \Ob(\mathcal{C})$ along with a quiver chain map $H: \colim_{T\in \nec}^{X_{T}}\overline{dg}(T)_{\bullet}\rightarrow f^{*}(\mathcal{C}_{\bullet})$ compatible with the identities and composition laws. Replacing $\mathcal{C}$ by $f^{*}(\mathcal{C})$, we may safely assume that $f = \id_{S}$. Then we see that $H$ is determined by a collection of quiver linear maps $H_{g}: X_{T}\rightarrow \mathcal{C}_{\dim U}$ satisfying condition $(1)$ of Lemma \ref{lemma: dg-nerve lemma 3}. The fact that $H$ is compatible with the composition laws and identities precisely translates to conditions \eqref{item: dg-nerve lemma condition 1} and \eqref{item: dg-nerve lemma condition 2} above. Moreover, the fact that $H$ is compatible with the differentials exactly comes down to condition $(1)$ of Lemma \ref{lemma: dg-nerve lemma 2}. Hence the dg-functor $\mathfrak{l}^{\overline{\dg}}(X^{nec})\rightarrow \mathcal{C}$ is equivalent to a collection of quiver maps $(\beta_{n}: X_{n}\rightarrow \mathcal{C}_{n-1})_{n > 0}$ satisfying conditions $(2)$ of the same lemmas. By \cite[Corollary 3.22]{lowen2023frobenius}, this data is exactly equivalent to a Frobenius templicial map $X\rightarrow \overline{N}^{\dg}_{k}(\mathcal{C})$. Clearly this bijection is natural in $X$ and $\mathcal{C}$.
\end{proof}

\printbibliography

\end{document}